\def\C{{\mathbf C}}
\def\R{{\mathbf R}}
\def\Z{{\mathbf Z}}
\def\Q{{\mathbf Q}}
\def\A{{\mathbf A}}
\def\ZZ{\widehat{\mathbf Z}}
\newtheorem{theorem}{Theorem}[subsection]
\newtheorem{lemma}[theorem]{Lemma}
\newtheorem{proposition}[theorem]{Proposition}
\newtheorem{corollary}[theorem]{Corollary}
\theoremstyle{definition}
\newtheorem{definition}[theorem]{Definition}
\theoremstyle{remark}
\newtheorem{remark}[theorem]{Remark}
\newcommand{\mm}[4]{\left(\begin{smallmatrix} #1 & #2\\ #3 & #4\end{smallmatrix}\right)}
\DeclareMathOperator{\val}{val}
\DeclareMathOperator{\tr}{tr}
\DeclareMathOperator{\SO}{SO}
\DeclareMathOperator{\Sp}{Sp}
\DeclareMathOperator{\GSp}{GSp}
\DeclareMathOperator{\SU}{SU}
\DeclareMathOperator{\SL}{SL}
\DeclareMathOperator{\GL}{GL}
\DeclareMathOperator{\diag}{diag}
\DeclareMathOperator{\Span}{Span}
\def\g{{\mathfrak g}}
\def\t{{\mathfrak t}}
\def\h{{\mathfrak h}}
\def\k{{\mathfrak k}}
\def\p{{\mathfrak p}}
\def\m{{\mathfrak m}}
\def\sl{{\mathfrak {sl}}}
\def\G{\widetilde{G}}
\def\Vm{{\mathbb{V}}}
\def\W{{\mathbf W}}
\newcommand{\la}{\langle}
\newcommand{\ra}{\rangle}
\newcommand{\lra}{\longrightarrow}
\newcommand{\al}{\alpha}
\newcommand{\be}{\beta}
\newcommand{\ga}{\gamma}
\newcommand{\de}{\delta}
\newcommand{\De}{\Delta}
\newcommand{\Ga}{\Gamma}
\newcommand{\ep}{\epsilon}
\newcommand{\lam}{\lambda}
\DeclareMathOperator{\Ind}{Ind}
\DeclareMathOperator{\Sym}{Sym}
\DeclareMathOperator{\Spec}{Spec}
\begin{document}
\title{Modular forms of half-integral weight on exceptional groups}
\author{Spencer Leslie}
\address{Department of Mathematics\\ Boston College\\ Chestnut Hill, MA USA}
\email{spencer.leslie@bc.edu}
\author{Aaron Pollack}
\address{Department of Mathematics\\ The University of California San Diego\\ La Jolla, CA USA}
\email{apollack@ucsd.edu}

\thanks{SL has been supported by an AMS-Simons Travel Award and by NSF grant DMS-1902865. AP has been supported by the Simons Foundation via Collaboration Grant number 585147, by the NSF via grant numbers 2101888 and 2144021, and by an AMS Centennial Research Fellowship.}

\begin{abstract} We define a notion of modular forms of half-integral weight on the quaternionic exceptional groups.  We prove that they have a well-behaved notion of Fourier coefficients, which are complex numbers defined up to multiplication by $\pm 1$.  We analyze the minimal modular form $\Theta_{F_4}$ on the double cover of $F_4$, following Loke--Savin and Ginzburg.  Using $\Theta_{F_4}$, we define a modular form of weight $\frac{1}{2}$ on (the double cover of) $G_2$.  We prove that the Fourier coefficients of this modular form on $G_2$ see the $2$-torsion in the narrow class groups of totally real cubic fields.
\end{abstract}
\maketitle

\setcounter{tocdepth}{1}
\tableofcontents



\chapter{Introduction}

\section{Main result}
We introduce our main result by way of an analogy.  Let $\Theta(z) = \sum_{n \in \Z}{q^{n^2}}$, where $q = e^{2\pi i z}$.  As is well-known, $\Theta(z)$ is a classical holomorphic modular form of weight $\frac{1}{2}$ and level $\Ga_1(4) \subseteq \SL_2(\Z)$.  Consider the weight $\frac{3}{2}$ modular form
\[E_{CZ}(z) := \Theta(z)^3 = \sum_{n \geq 0}{r_3(n)q^n};\] here $r_3(n):= \#\{(n_1,n_2,n_3)\in \Z^3: n = n_1^2+n_2^2+n_3^2\}$ is the number of ways $n$ can be written as the sum of three squares. We have named this modular form after Cohen and Zagier, in light of their papers \cite{cohen}, \cite{zagier}.

Recall now the following theorem of Gauss:
\begin{theorem}[Gauss] Suppose $n$ is squarefree, $n \equiv 1,2 \pmod{4}$ and $n \geq 4$.  Then $r_3(n) = 12 \cdot |\mathrm{Cl}(\Q(\sqrt{-n}))|$, $12$ times the class number of the associated quadratic imaginary field.
\end{theorem}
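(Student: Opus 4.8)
The plan is to deduce the identity from the Minkowski--Siegel mass formula, an explicit evaluation of local densities, and Dirichlet's class number formula. The starting point is the classical fact that the ternary form $Q(x_1,x_2,x_3)=x_1^2+x_2^2+x_3^2$ is alone in its genus. The mass formula, applied to this single class, then expresses $r_3(n)$ --- with no averaging --- as a product of local representation densities,
\[
r_3(n)=\beta_\infty(n)\prod_{p}\beta_p(n),\qquad
\beta_p(n)=\lim_{k\to\infty}p^{-2k}\,\#\{x\in(\Z/p^k)^3:Q(x)\equiv n \bmod p^k\},
\]
where $\beta_\infty(n)$ is a fixed positive multiple of $\sqrt n$.

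Next I would compute the $\beta_p$. For an odd prime $p\nmid n$ one gets $\beta_p(n)=(1-p^{-2})^{-1}\bigl(1-p^{-1}\chi_{-4n}(p)\bigr)$, where $\chi_{-4n}$ is the quadratic character of $K:=\Q(\sqrt{-n})$ (which has discriminant $-4n$ since $n\equiv1,2\pmod4$ is squarefree); the remaining finitely many densities --- at $p=2$ and at the odd primes dividing $n$ --- are evaluated directly, the hypotheses on $n$ keeping them simple. The product over all $p$ telescopes the main factors into $\zeta(2)^{-1}L(1,\chi_{-4n})$ times an elementary constant from the bad primes, and combining this with $\beta_\infty(n)$ and $\zeta(2)=\pi^2/6$ yields $r_3(n)=c\sqrt n\,L(1,\chi_{-4n})$ for an explicit rational $c$. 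Since $n\geq4$, the discriminant $-4n$ satisfies $-4n<-4$, so $\mathcal{O}_K^\times=\{\pm1\}$, and Dirichlet's formula gives $L(1,\chi_{-4n})=\pi\,h(-4n)/(2\sqrt n)$ with $h(-4n)=|\mathrm{Cl}(K)|$; substituting, $r_3(n)=\tfrac{c\pi}{2}h(-4n)$, and tracking the constants forces $\tfrac{c\pi}{2}=12$.

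The only real difficulty is bookkeeping: the $2$-adic density $\beta_2(n)$ and the overall normalization of the mass formula (equivalently the volume constant in $\beta_\infty$) must be pinned down exactly, since the assertion is an equality of integers and stray factors of $2$ enter at several steps. Two alternative routes are worth recording. Gauss's original argument builds a bijection --- with careful automorphism and orientation bookkeeping --- between the primitive representations $n=a^2+b^2+c^2$ (all of them, as $n$ is squarefree) modulo $\mathrm{O}_3(\Z)$ and the $\SL_2(\Z)$-classes of positive-definite binary forms of discriminant $-4n$ in a suitable genus, via $v\mapsto v^\perp\cap\Z^3$ (a binary lattice of discriminant $n$, hence of form-discriminant $-4n$); there the stabilizer/genus combinatorics is the crux. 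Alternatively, and closest to the spirit of this paper, one compares $\Theta(z)^3\in M_{3/2}(\Gamma_0(4))$ with the weight-$\tfrac32$ Cohen--Eisenstein series (equivalently Zagier's non-holomorphic Eisenstein series $\mathcal{H}(z)$) whose Fourier coefficients are Hurwitz class numbers: using that the relevant space of forms on $\Gamma_0(4)$ is small and that the non-holomorphy of $\mathcal{H}$ is controlled, one obtains $r_3(n)=12\,H(4n)$ for $n\equiv1,2\pmod4$, which again reduces to Dirichlet's formula via $H(4n)=h(-4n)=|\mathrm{Cl}(K)|$ in the stated range.
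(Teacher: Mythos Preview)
The paper does not prove this theorem. It is stated in the introduction purely as classical background and motivation---the authors attribute it to Gauss and move on immediately to describing the analogous $G_2$ result. So there is no ``paper's own proof'' to compare against.

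Your sketch is a legitimate route to the result. The mass-formula approach works precisely because $x_1^2+x_2^2+x_3^2$ is a one-class genus, and your identification of the critical bookkeeping (the $2$-adic density and the normalization of $\beta_\infty$) is accurate---that is indeed where most expositions of this argument either wave hands or make errors. Your two alternatives are also standard: Gauss's original method via the correspondence between primitive representations and reduced binary forms, and the modular-forms proof via Cohen--Eisenstein series. Any of the three would be acceptable, though as written your proposal is a plan rather than a proof---the phrases ``tracking the constants forces $\tfrac{c\pi}{2}=12$'' and ``the hypotheses on $n$ keeping them simple'' hide exactly the computations that constitute the substance of the argument. If you were actually asked to supply a proof, you would need to carry out at least one of these in full.
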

Thus the Fourier coefficients of $E_{CZ}(z)$ see the class numbers of imaginary quadratic fields.  Our main result is the construction of an analogous modular form $\Theta_{G_2}$ of weight $\frac{1}{2}$ on $G_2$, whose Fourier coefficients see the $2$-torsion in the narrow class groups of totally real cubic fields. In particular, we define a notion of modular forms of half-integral weight on certain exceptional groups, very similar to the integral weight theory \cite{ganGrossSavin}. We prove that these modular forms, which are now automorphic forms on certain non-linear double covers of these exceptional groups, have a robust notion of Fourier coefficients. We then construct a particular interesting example $\Theta_{G_2}$ on $G_2$ and partially calculate its Fourier expansion.

To motivate our construction of $\Theta_{G_2}$, observe that one has a commuting pair $\SL_2 \times \SO(3) \subseteq \Sp_6$.  One can also think of $E_{CZ}(z)$ as the restriction to $\SL_2$ of a weight $\frac{1}{2}$ Siegel modular theta-function: $E_{CZ}(z) = \Theta_{Sp_6}(\diag(z,z,z))$, where
\[\Theta_{Sp_6}(Z) = \sum_{v = (n_1,n_2,n_3) \in \Z^3}{e^{2\pi i v Z v^t}}\]
and $Z$ is in the Siegel upper half space of degree three.  Now, there is a commutative diagram of inclusions
\[\begin{array}{ccc}
\Sp_{6} & & F_4 \\
\bigcup & \subseteq & \bigcup \\
\SL_2 \times \SO(3) & & G_2 \times \SO(3) \end{array}.\]
Following Loke--Savin \cite{lokeSavin} and Ginzburg \cite{ginzburgF4} we consider the automorphic minimal representation on the double cover of $F_4$.  We show that the minimal representation can be used to define a weight $\frac{1}{2}$ modular form $\Theta_{F_4}$ on $F_4$, and define $\Theta_{G_2}$ as the pullback to $G_2$ of $\Theta_{F_4}$.

The Fourier coefficients of modular forms $\varphi$ on $G_2$ are parametrized by integral binary cubic forms $f(u,v) = au^3 + bu^2v+cuv^2 + dv^3$, $a,b,c,d \in \Z$, for which $f(u,v)$ splits into three linear factors over the real numbers.  So, for each such binary cubic $f$, there is an associated Fourier coefficient $a_{\varphi}(f)$, which is a complex number well-defined up to multiplication by $\pm 1$.  Our main result is the explicit description of the Fourier coefficients of the weight $\frac{1}{2}$ modular form $\Theta_{G_2}$.  More precisely, we can explicitly compute these Fourier coefficients $a_{\Theta_{G_2}}(f)$ when the binary cubic $f(u,v)$ has $d=1$.  We explicate the special case of this result when the cubic ring $\Z[y]/(f(1,y))$ is a maximal order in a totally real cubic field.

\begin{theorem}\label{thm:introMain} There is a modular form $\Theta_{G_2}$ of weight $\frac{1}{2}$ on $G_2$ whose Fourier coefficients satisfy the following: Suppose $f(u,v) =  au^3 + bu^2v+cuv^2 + dv^3$ is an integral binary cubic form with $d=1$, and that the cubic ring $R=\Z[y]/(f(1,y))$ is a maximal order in a totally real cubic field $E = R \otimes \Q$.
\begin{enumerate}
	\item If the inverse different $\mathfrak{d}_R^{-1}$ is not a square in the narrow class group of $E$, then the Fourier coefficient $a_{\Theta_{G_2}}(f) = 0$.
	\item If the inverse different $\mathfrak{d}_R^{-1}$ is a square in the narrow class group of $E$, then the Fourier coefficient $a_{\Theta_{G_2}}(f) = \pm 24 |\mathrm{Cl}_{E}^+[2]|$, plus or minus $24$ times the size of the two-torsion in the narrow class group of $E$.
\end{enumerate}
\end{theorem}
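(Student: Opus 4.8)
The plan is to follow the classical template behind Gauss's theorem — where $E_{CZ} = \Theta_{\Sp_6}|_{\SL_2}$ and one computes $r_3(n)$ through the genus theory of ternary quadratic forms — but now with $\Theta_{\Sp_6}$ replaced by the minimal modular form $\Theta_{F_4}$ on the double cover $\widetilde{F_4}$ and the pair $\SL_2 \times \SO(3) \subseteq \Sp_6$ replaced by $G_2 \times \SO(3) \subseteq F_4$. The first step is to make the generalized Fourier expansion of $\Theta_{F_4}$ along the Heisenberg parabolic $P_{F_4} = M_{F_4}N_{F_4}$ completely explicit: using the Loke--Savin description of the local minimal representation of $\widetilde{F_4}$ together with Ginzburg's global realization, the Fourier coefficients of $\Theta_{F_4}$ are supported on characters of $N_{F_4}$ of "rank at most one," and each such coefficient can be written as an essentially explicit product of local integrals. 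I would then pin down the weight-$\tfrac12$ archimedean component precisely, so that the archimedean contribution to a rank-one coefficient is a definite nonzero constant exactly when the relevant real parameter is "positive" — this is the analogue of positive-definiteness of $x^2+y^2+z^2$, and is the reason the binary cubic $f$ must split over $\R$.

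Next I would use that the embedding $G_2 \hookrightarrow F_4$ can be taken to carry the Heisenberg parabolic of $G_2$ into that of $F_4$, so that $N_{G_2} \subseteq N_{F_4}$ and a character $\psi_f$ of $N_{G_2}$ attached to an integral binary cubic $f$ is the restriction of a family of characters $\chi$ of $N_{F_4}$. Integrating $\Theta_{G_2} = \Theta_{F_4}|_{\widetilde{G_2}}$ over $N_{G_2}(\Q)\backslash N_{G_2}(\A)$ against $\psi_f$ and inserting the $F_4$-expansion, the Fourier coefficient $a_{\Theta_{G_2}}(f)$ becomes a sum of the explicit rank-one $F_4$-Fourier coefficients over the set of characters $\chi$ of $N_{F_4}$ lying above $\psi_f$. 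Minimality of $\Theta_{F_4}$ forces all but finitely many of these $\chi$ to contribute zero, and archimedean positivity leaves a finite arithmetic sum of unramified local factors; carried out for every $f$ with $d = 1$, this already yields a uniform formula for $a_{\Theta_{G_2}}(f)$ in terms of the cubic ring $R = \Z[y]/(f(1,y))$, and the stated theorem is its specialization to the case that $R$ is a maximal order in a totally real cubic field.

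The heart of the argument is the identification of this finite set of contributing characters $\chi$ with an arithmetic object attached to $R$. When $d = 1$, the Delone--Faddeev/Gan--Gross--Savin dictionary identifies $f$ with $R$, and I expect the contributing $\chi$ to be parametrized by fractional $R$-ideals $I$ (up to the natural equivalence) subject to a norm/"balancing" constraint that, over $E = R\otimes\Q$, becomes the condition that $I$ lie in a fixed ideal class within a fixed genus, the genus being pinned down by $\mathfrak{d}_R^{-1}$. Classical genus theory for the narrow class group then shows this set is empty unless $\mathfrak{d}_R^{-1}$ is a square in $\mathrm{Cl}_E^+$, and otherwise is a torsor under $\mathrm{Cl}_E^+[2]$, hence of size $|\mathrm{Cl}^+_E[2]|$; the remaining local factors, including the metaplectic normalizations and the archimedean constant, multiply to the uniform $24$, and the $\pm$ reflects that Fourier coefficients of half-integral weight modular forms on $G_2$ are only defined up to $\pm 1$. \textbf{The main obstacle} is precisely this last identification: extracting the rank-one Fourier coefficients of the minimal representation of $\widetilde{F_4}$ along the Heisenberg parabolic in a form adapted to the sub-Heisenberg group $N_{G_2}$, handling the local minimal representation at $p = 2$ (where both the metaplectic cover and the $2$-torsion in the class group live), and translating the "balancing" constraint into the language of narrow ideal classes. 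Once the set of contributing $\chi$ is correctly matched with ideals of $R$ in a prescribed genus, the count $|\mathrm{Cl}^+_E[2]|$ and the constant $24$ follow from genus theory and a bookkeeping of local volumes.
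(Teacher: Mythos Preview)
Your overall architecture is correct and matches the paper: realize $\Theta_{G_2}$ as $\Theta_{F_4}|_{\widetilde{G}_2}$, use minimality to restrict to rank-one $F_4$ coefficients, and sum the $F_4$ coefficients over those $\chi$ on $N_{F_4}$ restricting to $\psi_f$. The arithmetic interpretation via balanced pairs and $\mathrm{Cl}_E^+[2]$ is also on target. However, two points in your sketch diverge from what the paper actually does, and the second hides a genuine gap.

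First, you propose computing each rank-one $F_4$ coefficient as ``an essentially explicit product of local integrals.'' The paper avoids this entirely. Instead, it normalizes $\Theta_{F_4}$ so that its $(0,0,0,1)$-coefficient is $\pm 1$, and then uses automorphy under $\Gamma_{F_4}(4) \cap H_J^1(\R)$: the lower-unipotent $\overline{n}(T) = \exp(\delta_2 \otimes T)$ with $T \in J_0 = H_3(\Z)$ sends $(0,0,0,1)$ to $(\det T, T^\#, T, 1)$, so every rank-one $\omega \in W_J(\Z)$ with last coordinate $1$ is in this orbit and has coefficient $\pm 1$. The contributing $\chi$ above $\psi_f$ (for $f$ with $d=1$) are then literally in bijection with the finite set $Q_p = \{T \in H_3(\Z) : \det(tI + T) = f(1,t)\}$, and $a_{\Theta_{G_2}}(f)$ is a signed sum of $1$'s over $Q_p$. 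No local Whittaker integrals are ever evaluated.

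This pins down where the $24$ really comes from: it is $|\SO_3(\Z)| = 24$, via a classical piece of arithmetic invariant theory giving a bijection $\SO_3(\Z) \backslash Q_p \longleftrightarrow Q_R$ (your set of balanced pairs $(I,\mu)$), together with freeness of the $\SO_3(\Z)$-action when $E$ is a field. It is \emph{not} a product of local volumes or metaplectic constants. Your statement that ``the contributing $\chi$ are parametrized by fractional $R$-ideals $I$'' is off by exactly this stabilizer: the $\chi$ are parametrized by $T \in Q_p$, and only their $\SO_3(\Z)$-orbits correspond to ideal classes.

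Second, and this is the genuine gap: since half-integral weight Fourier coefficients are only defined up to sign, the sum $\sum_{T \in Q_p} (\pm 1)$ could a priori collapse. You note the $\pm$ ambiguity but do not explain why the individual signs are coherent. The paper proves this with a separate continuity argument: for $T_1, T_2 \in Q_p$ one connects them by a path $g_t \in \SO_3(\R)$ (which acts transitively on real symmetric matrices of fixed characteristic polynomial), observes that the square-root function $\alpha_{2\pi\omega}$ restricted to $\widetilde{G}_2$ varies continuously along this path while its square $\det(g_t T_1 g_t^t \cdot i + 1)$ stays constant, and concludes all signs agree. Without this (or an equivalent), the count $|Q_p|$ does not follow.
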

Thus, in both cases of Theorem \ref{thm:introMain}, the Fourier coefficient of $\Theta_{G_2}$ corresponding to the binary cubic $f$ is $\pm 24$ times the number of square roots of the inverse different $\mathfrak{d}_R^{-1}$ in the narrow class group $\mathrm{Cl}_E^{+}$ of $E$.

\section{Extended introduction}
In this section we outline the contents of the paper.

\subsection{Quaternionic modular forms} As our main results concern modular forms of \emph{half-integral} weight on the quaternionic exceptional groups, we begin by reviewing the integral weight theory. To set the stage for these quaternionic modular forms, we first recall holomorphic modular forms.

Thus suppose $G$ is a semisimple algebraic $\Q$-group whose associated symmetric space is a Hermitian tube domain.  Then $G$ has a notion of \emph{holomorphic modular forms}.  These can be thought of as very special automorphic forms for $G$, which are closely connected to arithmetic.  They have a classical Fourier expansion and Fourier coefficients, and these Fourier coefficients often encode arithmetic data.

Among the exceptional Dynkin types, only $E_6$ and $E_7$ have a real form with a Hermitian symmetric space, and only $E_7$ has a real form with an Hermitian tube domain.  So, if one is interested in studying a class of special automorphic forms on, say, $G_2, F_4$ or $E_8$, there is not an obvious place to look for such objects.  Nevertheless, beginning with work of Gross and Wallach \cite{grossWallachI,grossWallachII} and developed in work of Wallach \cite{wallach} and Gan--Gross--Savin \cite{ganGrossSavin}, a theory of special automorphic forms on the exceptional algebraic groups began to emerge.

These special automorphic forms have been dubbed \emph{quaternionic modular forms}.  For each exceptional Dynkin type, there is a so-called \emph{quaternionic} real form: for $G_2$ and $F_4$, this is the split real form, while for $E_6$, $E_7$ and $E_8$ this is the real form with real rank equal to four. The quaternionic modular forms are special automorphic forms on reductive groups $G$ over $\Q$ for which $G(\R)$ is a quaternionic real group.

The real quaternionic exceptional groups never have a symmetric space with complex structure.  However, these groups share similar structures, and the quaternionic modular forms on these groups share similar properties.  To be more specific, suppose $G$ is an adjoint exceptional group with $G(\R)$ quaternionic.  Then the maximal compact subgroup $K_G$ of $G(\R)$ is of the form $(\SU(2) \times L)/\mu_2(\R)$, for a compact group $L$ that depends upon $G$.  Let $\Vm_2$ denote the standard representation of $\SU(2)$ and for a positive integer $\ell$ let $\mathbf{V}_{\ell}$ denote the representation of $K_G$ that is the representation $Sym^{2\ell}(\Vm_2)$ of the $\SU(2)$ factor and the trivial representation of the $L$-factor.  If $\Gamma \subseteq G(\R)$ is a congruence subgroup, a quaternionic modular form on $G$ of weight $\ell$ and level $\Gamma$ is an automorphic function $\varphi: \Gamma \backslash G(\R) \rightarrow \mathbf{V}_{\ell}$ satisfying
\begin{enumerate}
	\item $\varphi(gk) = k^{-1} \cdot \varphi(g)$ for all $k \in K_G$ and $g \in G(\R)$
	\item $D_{\ell} \varphi \equiv 0$ for a certain specific differential operator $D_{\ell}$.
\end{enumerate}
This is the definition from \cite{pollackQDS}, which is a slight generalization and paraphrase of the definition from \cite{ganGrossSavin}, where quaternionic modular forms are defined in terms of the quaternionic discrete series representations of the group $G(\R)$.

To make this definition precise, of course we must specify the differential operator $D_{\ell}$.  We do this now.  Let the notation be as above.  Write $\g_0 = \k_0 \oplus \p_0$ for the Cartan decomposition of the Lie algebra $\g_0$ of $G(\R)$.  Then, as a representation of $K_G$, one has $\p := \p_0 \otimes \C \simeq \Vm_2 \otimes W$ for a certain symplectic representation $W$ of $L$.  Let $\{X_\alpha\}_\alpha$ be a basis of $\p$ and $\{X_\alpha^\vee\}_\alpha$ be the dual basis of $\p^\vee$.  For $\varphi$ satisfying $\varphi(gk) = k^{-1} \cdot \varphi(g)$, define $\widetilde{D}_{\ell} \varphi = \sum_{\alpha}{X_\alpha \varphi \otimes X_\alpha^\vee}$.  Here $X_\alpha \varphi$ denotes the right regular action, and $\widetilde{D}_{\ell} \varphi$ is valued in
\[\mathbf{V}_{\ell} \otimes \p^\vee \simeq Sym^{2\ell+1}(\Vm_2) \boxtimes W \oplus Sym^{2\ell-1}(\Vm_2) \boxtimes W.\]
We let $\mathrm{pr}: \mathbf{V}_{\ell} \otimes \p^\vee \rightarrow Sym^{2\ell-1}(\Vm_2) \boxtimes W$ be the $K_G$-equivariant projection and define $D_{\ell} = \mathrm{pr}\circ \widetilde{D}_{\ell}$.

The relationship of the definition of quaternionic modular forms with representation theory is as follows.  Suppose $\pi$ is an irreducible $(\g_0,K_G)$-module embedded in the space of automorphic forms on $\Gamma\backslash G(\R)$ via a map $\alpha$.  Suppose moreover that $\pi$ has minimal $K_G$-type $\mathbf{V}_{\ell}$.  Then out of $\mathbf{V}_{\ell}$ and $\alpha$ one can construct a quaternionic modular form of weight $\ell$: One defines $\varphi(g) = \sum_{j = -\ell}^{\ell}{\alpha(x_j)(g) \otimes x_j^\vee}$, where $\{x_j\}$ is a basis of $\mathbf{V}_\ell \subseteq \pi_{\ell}$ and $x_j^\vee$ is the dual basis of $\mathbf{V}_{\ell}^\vee \simeq \mathbf{V}_{\ell}$.  Using the fact that $\mathbf{V}_{\ell}$ is the minimal $K$-type of $\pi$, it is easy to show that $\varphi$ is a quaternionic modular form of weight $\ell$.  If $\ell$ is sufficiently large depending on $G$, then there is a discrete series representation $\pi_\ell$ of $G(\R)$ whose minimal $K_G$-type is $\mathbf{V}_{\ell}$, so embeddings of these discrete series representations into the space of automorhpic forms on $G$ give rise to quaternionic modular forms of weight $\ell$.

Modular forms of integral weight $\ell$ have been studied in \cite{ganGrossSavin}, \cite{weissmanD4}, \cite{pollackQDS, pollackE8, pollackCuspidal, pollackNTM} and \cite{dalal}.  For an introduction to what is known about them, we refer to \cite{pollackAWSNotes}.  The main result of \cite{pollackQDS} is that quaternionic modular forms have a robust, semi-classical Fourier expnansion, similar to the Fourier expansion of classical holomorphic modular forms on tube domains.  This result generalized and refined work of Wallach \cite{wallach}.

To explain this Fourier expansion, we recall another common feature of the quaternionic exceptional groups.  While none of them has a parabolic with abelian unipotent radical, they all have a Heisenberg parabolic $P = MN$ whose unipotent radical $N \supseteq Z = [N,N] \supseteq 1$ is two-step, with the center $Z$ one-dimensional.  Thus if $\varphi$ is an automorphic form on $G$, one can take the constant term $\varphi_Z$ of $\varphi$ along $Z$, and Fourier expand the result along $N/Z$: $\varphi_Z = \sum_{\chi}{\varphi_\chi}$ where $\varphi_\chi(g) = \int_{N(\Z)\backslash N(\R)}{\chi^{-1}(n)\varphi(ng)\,dn}$.  The main result of \cite{pollackQDS} is an explication of this Fourier expansion for quaternionic modular forms $\varphi$ of weight $\ell$.  Namely, it is proved in \cite{pollackQDS} that there are certain completely explicit functions $W_\chi: G(\R) \rightarrow \mathbf{V}_{\ell}$ so that if $\varphi$ is a weight $\ell$ modular form, then $\varphi_\chi(g) = a_{\varphi}(\chi) W_\chi(g)$ for some complex number $a_{\varphi}(\chi)$.  The complex numbers $a_\varphi(\chi)$ are called the Fourier coefficients of $\varphi$.

While defined in a purely transcendental way, the Fourier coefficients $a_\varphi(\chi)$ of a quaternionic modular form $\varphi$ appear to have arithmetic significance; for evidence of this claim, see \cite{pollackE8,pollackCuspidal,pollackNTM}.  One purpose of the present paper is to add to this growing evidence that quaternionic modular forms have arithmetically-interesting Fourier coefficients.

\subsection{The double cover of quaternionic exceptional groups}
In this paper, we define and study certain quaternionic modular forms of \emph{half-integral} weight and their Fourier coefficients.  To define these notions, suppose again that $G$ is an adjoint quaternionic exceptional group.  Then, because $G(\R)$ deformation retracts onto $K_G \simeq (\SU(2) \times L)/\mu_2(\R)$, and $K_G$ has a two-cover $\widetilde{K}_G \simeq \SU(2) \times L$, the group $G(\R)$ has a two cover $\widetilde{G}$.  Choosing a basepoint of $\widetilde{G}$ above $1 \in G(\R)$ makes $\widetilde{G}$ into a connected Lie group, which is a central $\mu_2(\R)$-extension of $G(\R)$
\[
1 \rightarrow \mu_2(\R) \rightarrow \widetilde{G} \rightarrow G(\R) \rightarrow 1,\] and $\widetilde{K}_G$ can be identified with a maximal compact subgroup of $\widetilde{G}$.

Suppose $\ell \geq 1$ is an odd integer.  Let $\mathbf{V}_{\ell/2} = Sym^{\ell}(\Vm_2)$ be the representation of $\widetilde{K}_G$ that is the $\ell^{th}$ symmetric power of $\Vm_2$, as a representation of $\SU(2)$, and is the trivial representation of $L$.  Suppose moreover that  $\Gamma \subseteq G(\R)$ is a congruence subgroup, equipped with a splitting $s_{\Gamma}: \Gamma \rightarrow \widetilde{G}$.  We define a quaternionic modular form $\varphi$ for $G$ of weight $\ell/2$ and level $(\Gamma,s_\Gamma)$ to be a $\mathbf{V}_{\ell/2}$-valued automorphic function  $\varphi:s_\Gamma(\Gamma)\backslash\widetilde{G} \rightarrow \mathbf{V}_{\ell/2}$ that satisfies
\begin{enumerate}
	\item $\varphi(gk) = k^{-1} \cdot \varphi(g)$ for all $g \in \widetilde{G}$ and $k \in \widetilde{K}_G$ and
	\item $D_{\ell/2} \varphi \equiv 0.$
\end{enumerate}
Here the differential operator $D_{\ell/2}$ is defined exactly as $D_{\ell}$ was above.

Our first result, which is perhaps of independent interest, is an explicit description of these Lie groups $\widetilde{G}$.  To motivate it, let $\mathfrak{h} = \SL_2(\R)/\SO(2)$ denote the upper half plane and recall that one can identify the double cover of $\SL_2(\R)$ with pairs $(g,j_g)$ where $g = \mm{a}{b}{c}{d} \in \SL_2(\R)$ and $j_g: \mathfrak{h} \rightarrow \C^\times$ is a holomorphic function that satisfies $j_g(z)^2 = cz+d$.  If now $G$ is an adjoint quaternionic exceptional group, with symmetric space $X_G = G(\R)/K_G$, we define a factor of automorphy $j_{lin}: G(\R) \times X_G \rightarrow \GL_3(\C)$; it satisfies $j_{lin}(g_1g_2,x) = j_{lin}(g_1, g_2 x) j_{lin}(g_2,x)$.  We then consider the set of pairs $(g,j_g)$ with $g \in G(\R)$ and $j_g: X_G \rightarrow \GL_2(\C)$ continuous that satisfy $Sym^2(j_g(x)) = j_{lin}(g,x)$. It is easy to see that this set forms a group with multiplication $(g_1,j_{g_1}(x))(g_2,j_{g_2}(x)) = (g_1 g_2, j_{g_1}(g_2 x)j_{g_2}(x))$.

\begin{theorem}\label{thm:introCover} With a certain topology on the set of pairs $(g,j_g)$ above, this set can be identified with the connected topological group $\widetilde{G}$.
\end{theorem}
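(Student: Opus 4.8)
Write $\widetilde{G}'$ for the group of pairs $(g,j_g)$ above, and $p\colon\widetilde{G}'\to G(\R)$, $(g,j_g)\mapsto g$. First I would verify, using only the cocycle identity $j_{lin}(g_1g_2,x)=j_{lin}(g_1,g_2x)j_{lin}(g_2,x)$, that $\widetilde{G}'$ is a group, that $p$ is a homomorphism, and that $\ker p=\{(1,\pm I_2)\}$ is central and isomorphic to $\mu_2(\R)=\ker(\Sym^2\colon\GL_2(\C)\to\GL_3(\C))$. The crux is that every fiber of $p$ has exactly two elements. For the bound: if $j_g,j_g'$ both lift $g$, then $x\mapsto j_g(x)j_g'(x)^{-1}$ is a continuous map $X_G\to\{\pm I_2\}$, hence constant since $X_G$ is connected, so $j_g'=\pm j_g$. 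For existence: $X_G\cong\mathfrak{p}_0$ is contractible, hence simply connected; $\Sym^2$ restricts to a two-sheeted covering homomorphism $\GL_2(\C)\to\Sym^2(\GL_2(\C))\subseteq\GL_3(\C)$; and by the construction of $j_{lin}$ the continuous map $x\mapsto j_{lin}(g,x)$ takes values in $\Sym^2(\GL_2(\C))$, so it lifts through $\Sym^2$ and yields a lift $j_g$ of $g$. Thus $1\to\mu_2(\R)\to\widetilde{G}'\xrightarrow{p}G(\R)\to1$ is a central extension.

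Next I would put a topology on $\widetilde{G}'$. Give $C(X_G,\GL_2(\C))$ the compact-open topology; since $X_G$ is locally compact Hausdorff and $\GL_2(\C)$ is a topological group, this is a topological group and $(j,g)\mapsto j(g\cdot)$ is continuous, so $\widetilde{G}'$, viewed as a subspace of $G(\R)\times C(X_G,\GL_2(\C))$, is a topological group and $p$ is continuous. To see $p$ is a covering map, fix a contractible open $U\subseteq G(\R)$; then $U\times X_G$ is simply connected, so $(g,x)\mapsto j_{lin}(g,x)$ lifts continuously through $\Sym^2$ over $U\times X_G$, giving a continuous section $U\to\widetilde{G}'$ of $p$ and hence a trivialization $p^{-1}(U)\cong U\times\mu_2(\R)$. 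So $p$ is a two-sheeted covering and $\widetilde{G}'$ is a Lie group.

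Finally I would identify $\widetilde{G}'$ with $\widetilde{G}$. Since $G(\R)$ deformation retracts onto $K_G$, lifting the homotopy shows $\widetilde{G}'$ deformation retracts onto $p^{-1}(K_G)$. Let $x_0\in X_G$ be the base point fixed by $K_G$. Evaluation $j_k\mapsto j_k(x_0)$ embeds $p^{-1}(K_G)$ into $K_G\times\GL_2(\C)$, and by the construction of $j_{lin}$ one has $j_{lin}(k,x_0)=\rho(k)$, where $\rho\colon K_G=(\SU(2)\times L)/\mu_2(\R)\to\SU(2)/\mu_2(\R)=\SO(3)\hookrightarrow\GL_3(\C)$ is the natural map (the action of $K_G$ on $\Sym^2(\Vm_2)$). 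Since $\Sym^2(m)\in\SO(3)$ forces $m\in\SU(2)$, evaluation identifies $p^{-1}(K_G)$ with the subgroup $\{(k,m)\in K_G\times\SU(2):\Sym^2(m)=\rho(k)\}$, and the homomorphism $\SU(2)\times L\to p^{-1}(K_G)$, $(u,\ell)\mapsto([u,\ell],u)$, is readily checked to be an isomorphism compatible with the projections to $K_G$; that is, $p^{-1}(K_G)\cong\widetilde{K}_G$. In particular $\widetilde{G}'$ is connected, and it is a connected two-sheeted cover of $G(\R)$ whose restriction over $K_G$ is $\widetilde{K}_G$. Since connected covers of $G(\R)$ correspond, via the deformation retract, to connected covers of $K_G$, and the above is precisely the property defining $\widetilde{G}$, we conclude $\widetilde{G}'\cong\widetilde{G}$ over $G(\R)$ (equivalently, $\pi_1(G(\R))\cong\mathbb{Z}/2$, so the connected double cover is already unique).

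The only input specific to $G$, and the step needing genuine care, is the pair of assertions about $j_{lin}$: that it takes values in $\Sym^2(\GL_2(\C))$, and that $j_{lin}(k,x_0)=\rho(k)$ for $k\in K_G$. Both should come out of unwinding the construction of $j_{lin}$, since that factor of automorphy is built from the $\Sym^2(\Vm_2)$-part of the quaternionic structure and so manifestly factors through $\Sym^2$ and restricts on the stabilizer $K_G$ to the composite $K_G\to\SO(3)=\SU(2)/\mu_2(\R)$. Granting that, the remaining work---the compact-open bookkeeping in the second step and the covering-homotopy argument in the third---is routine.
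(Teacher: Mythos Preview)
Your argument is correct and complete in outline, but it differs from the paper's in two places worth noting.

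\textbf{Topology.} You use the compact-open topology on $C(X_G,\GL_2(\C))$ and view $\widetilde{G}'$ as a subspace of $G(\R)\times C(X_G,\GL_2(\C))$. The paper instead observes that, since $X_G$ is connected, any lift $j_g$ is determined by its value at the basepoint $x_0$, and simply topologizes $\widetilde{G}'$ via the injection $(g,j_g)\mapsto(g,j_g(x_0))$ into $G(\R)\times\GL_2(\C)$. The two topologies agree, but the paper's is lighter: it avoids the compact-open bookkeeping entirely and makes the covering-space claim an immediate exercise.

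\textbf{Connectedness.} Your route is structural: you identify $p^{-1}(K_G)$ with $\SU(2)\times L$ explicitly via $(u,\ell)\mapsto([u,\ell],u)$, deduce connectedness of $p^{-1}(K_G)$, and then lift the deformation retract $G(\R)\to K_G$. The paper instead gives a direct computation: it writes down the element $h_0=\left(\begin{smallmatrix}0&1\\-1&0\end{smallmatrix}\right)$ in the $\mathfrak{sl}_2$-summand of $\mathfrak{g}(J)$, computes that $\exp(th_0)\in K_G$ acts on $e_\ell,h_\ell,f_\ell$ by $e^{\mp it}$ and $1$, and observes that the unique lift of the loop $t\mapsto\exp(th_0)$, $t\in[0,2\pi]$, ends at $j_{1/2}=\diag(e^{-i\pi},e^{i\pi})=-I_2$. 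This exhibits a concrete path in $\widetilde{G}'$ from $(1,I_2)$ to $(1,-I_2)$. Your approach is cleaner as an existence proof and yields the identification $p^{-1}(K_G)\cong\widetilde{K}_G$ for free; the paper's buys an explicit generator of $\pi_1(K_G)$ and a formula for $j_{1/2}$ along that loop, which is in the spirit of the later explicit computations (e.g.\ Lemma~\ref{lem:j12}).

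Your flagged ``inputs''---that $j_{lin}$ lands in $\Sym^2(\GL_2(\C))$ and that $j_{lin}(k,x_0)$ is the adjoint action of $K_G$ on $\Span(e_\ell,h_\ell,f_\ell)$---are indeed immediate from the paper's definition $f_{lin}(g)=\chi(r)\,\mathrm{Ad}(k)$ with $\chi$ valued in $\R_{>0}^\times$, so no extra work is required there.
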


To study modular forms of half-integral weight on the group $\widetilde{G}$, it helps to have explicit congruence subgroups $\Gamma \subseteq G$ together with an explicit splitting $s_\Gamma: \Gamma \rightarrow \widetilde{G}$.  This is accomplished in the following result in case $G$ is $G_2$ or $F_4$.
\begin{theorem}\label{thm:introSplit} When $G$ is $G_2$ and $F_4$, there are explicit, large congruence subgroups $\Gamma_{G_2}(4)$ and $\Gamma_{F_4}(4)$ that split into the double cover.
\end{theorem}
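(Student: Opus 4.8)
The starting point is Theorem~\ref{thm:introCover}: giving a splitting $s_\Gamma\colon\Gamma\to\widetilde G$ is the same as choosing, for each $\gamma\in\Gamma$, a continuous map $j_\gamma\colon X_G\to\GL_2(\C)$ with $\Sym^2(j_\gamma(x))=j_{lin}(\gamma,x)$ and $j_{\gamma_1\gamma_2}(x)=j_{\gamma_1}(\gamma_2 x)\,j_{\gamma_2}(x)$; equivalently, one must kill the obstruction class in $H^2(\Gamma;\mu_2)$ classifying the pulled-back extension $1\to\mu_2\to\widetilde\Gamma\to\Gamma\to1$. The model to keep in mind is $G=\PGL_2$, where $\Gamma_1(4)$ lifts to the metaplectic group because the theta multiplier $\gamma\mapsto\Theta(\gamma z)/\Theta(z)$ is an honest holomorphic square root of $cz+d$ on $\Gamma_1(4)$; the fact that the level is $4=2^2$ rather than $2$ is essential there, and will be essential here.

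\textbf{Constructing canonical square roots.} Work in explicit integral models: realize $G_2$ as the automorphism group of a $\Z$-form of the octonions and $F_4$ as the automorphism group of a $\Z$-form of the exceptional Jordan algebra, and let $\Gamma_G(4)$ be the subgroup of elements reducing to the identity modulo $4$ (or a finite-index elementary subgroup thereof, which already suffices to be a ``large congruence subgroup''). The split group is generated by its root subgroups, and $\Gamma_G(4)$ is generated by the ``deep'' parts of these root subgroups together with finitely many torus elements congruent to $1$ modulo $4$ and $\bmod 4$ lifts of Weyl group elements. Each root subgroup $U_\alpha(\R)\cong\R$ is simply connected, so the double cover $\widetilde G\to G(\R)$ splits canonically over it; this gives a canonical lift of every unipotent generator, and the only genuine choices are the square roots of the scalars that $j_{lin}$ takes on the torus and Weyl generators. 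These I would pin down exactly as in $\SL_2$: the preimage in $\widetilde G$ of the long-root copy of $\SL_2(\R)\subseteq G(\R)$ is the metaplectic double cover $\Mp_2(\R)$ (the restriction being nontrivial because this $\SL_2$ generates $\pi_1 G(\R)$), and the square roots are determined there by the theta multiplier, hence ultimately by quadratic Gauss sums.

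\textbf{Verifying the cocycle.} It remains to check that $\gamma\mapsto j_\gamma$ is multiplicative, i.e. that the obstruction in $H^2(\Gamma_G(4);\mu_2)$ vanishes, which I would do by checking the cocycle relation against a presentation of $\Gamma_G(4)$. Every relation among the chosen generators reduces to a Steinberg-type relation --- additivity in one root group, a commutator relation for a pair of roots, or a torus/Weyl relation --- taking place inside a subgroup generated by at most two root $\SL_2$'s, hence of type $A_1\times A_1$, $A_2$, $C_2$, or $G_2$. So the whole verification reduces to a finite list of identities among Gauss sums and $2$-adic Hilbert symbols, and the level being $4$ is precisely what makes these symbols trivial, just as $\Gamma_0(4)$, and not $\Gamma_0(2)$, splits in $\Mp_2$.

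The main obstacle is this last step: producing a sufficiently explicit generating set and presentation of $\Gamma_{G_2}(4)$ and $\Gamma_{F_4}(4)$ and pushing through the $2$-adic bookkeeping without error. An alternative that sidesteps presentations is to identify $\widetilde G(\R)$ with the real points of the Brylinski--Deligne two-fold cover attached to the Hilbert symbol $(\,\cdot\,,\cdot\,)_2$ and a Weyl-invariant quadratic form on the cocharacter lattice; one then checks that the cover splits over $G(\Z_p)$ for odd $p$ and over the principal congruence subgroup of level $4$ in $G(\Z_2)$, and Hilbert reciprocity (the product formula) forces a splitting over $\Gamma_G(4)$.
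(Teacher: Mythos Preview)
Your alternative at the end --- work adelically, split over $G(\Z_p)$ for odd $p$, split over a level-$4$ subgroup at $p=2$, and invoke the product formula --- is exactly the paper's strategy, and your first approach (direct cocycle check against a presentation of $\Gamma$) is not pursued; presentations of higher-rank congruence subgroups are hard to produce, and the paper sidesteps them entirely.

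That said, you have correctly located but not engaged with the real content. The odd-$p$ splitting is cited from Loke--Savin, and the global step is the formal Lemma~\ref{Lem: unique splitting}. The work is entirely at $p=2$, and ``one then checks that the cover splits over the principal congruence subgroup of level $4$'' is precisely the step that needs an idea. The paper does \emph{not} use the principal congruence subgroup: it defines a larger, parahoric-shaped subgroup $K_R'(4)\subset\widetilde{F}_4(\Q_2)$ generated by $x_\alpha(\Z_2)$ for some roots, $x_\alpha(4\Z_2)$ for others, and $\widetilde{h}_{\alpha_i}(1+4\Z_2)$, and proves it maps injectively to $F_4(\Q_2)$. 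The mechanism is an Iwahori factorization $K_R'(4)=U_R^-(1,4)\cdot K_{M_R}^*(4)\cdot U_R^+(4,1)$ (Theorem~\ref{thm:KR'4}): once one has this, injectivity reduces to the Levi piece, where it follows from the splitting of $\SL_3$ into $\widetilde{F}_4$. Proving the factorization is a downward induction on depth --- one first gets it for a very deep subgroup via Lemma~\ref{lem:U+N}, then inducts down using the commutator relations and the key input that $x_\alpha(t)x_{-\alpha}(s)$ factors cleanly when $\mathrm{val}_2(s)\geq 2$ (Corollary~\ref{Cor: unipotent nice}), which is where the level $4$ genuinely enters. This argument extends Karasiewicz's simply-laced result to $F_4$ and is the missing ingredient in your sketch. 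Note also that the specific shape of $K_R'(4)$ matters downstream: its Iwahori factorization with respect to the parabolic $Q$ (Corollary~\ref{cor:R'IwahoriQ}) is used later to control Jacquet modules of the minimal representation.
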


To define the groups $\Gamma_{F_4}(4)$ and $\Gamma_{G_2}(4)$ and to prove Theorem \ref{thm:introSplit}, we work adelically. When $G$ is a split, simply-connected algebraic group, Steinberg \cite{steinbergBook} and Matsumoto \cite{matsumoto} have defined a $2$-cover $\G^{(2)}(k)$ of $G(k)$ for every local field $k$.  This group can be constructed by generators and relations \cite{steinbergBook}, as we recall in Section \ref{Sec: double covers gen}.  The groups $\G^{(2)}(\Q_v)$ can be glued together to produce a $2$-cover $\G^{(2)}(\A)$ of $G(\A)$.  It follows from the construction of $\G^{(2)}(\A)$ and the global triviality of the Hilbert symbol that the group of rational points $G(\Q)$ splits into $\G^{(2)}(\A)$.

When $G$ is $G_2$ or $F_4$, the group $\G^{(2)}(\R)$ can be identified with the group $\widetilde{G}$ above.  Using this fact, and the global splitting $G(\Q) \rightarrow \G^{(2)}(\A)$, constructing congruence subgroups $\Gamma \subseteq G(\R)$ that admit a splitting into $\widetilde{G}$ amounts to constructing compact open subgroups $K_p$ of $G(\Q_p)$ that admit a splitting into $\G^{(2)}(\Q_p)$.  When $p > 2$, it is proved by Loke and Savin \cite{lokeSavin} that the hyperspecial maximal compact subgroup of $G(\Q_p)$ splits into $\G^{(2)}(\Q_p)$.  Thus it remains to analyze the case $p=2$, and it is here where we do detailed computations: we produce an explicit (non-maximal) compact open subgroup of $F_4(\Q_2)$ that splits into the double cover.  Our result in this direction can be considered an extension of some work of \cite{karasiewicz}, who considers the simply-laced case.  This yields the split congruence subgroup $\Gamma_{F_4}(4)$, and we define $\Gamma_{G_2}(4) = \Gamma_{F_4}(4) \cap G_2(\R)$ under an embedding $G_2 \rightarrow F_4$.

\subsection{The Fourier expansion of half-integral weight modular forms} With the groups $\widetilde{G}$ defined and split congruence subgroups $(\Gamma,s_\Gamma)$ of $G(\R)$ constructed, it makes sense to ask about examples and properties of quaternionic modular forms of half-integral weight.  The main property we prove is the existence of a robust, semi-classical Fourier expansion.  To make sense of Fourier expansions on the covering groups $\widetilde{G}$, one begins with the observation that the unipotent group $N(\R)$ splits uniquely into $\widetilde{G}$, with corresponding subgroup $N(\Z)$, so that one can ask about the Fourier expansion of $\varphi_Z(g)$ if $\varphi(g)$ is an automorphic function on $\widetilde{G}$.

To produce the desired Fourier expansion, we analyze \emph{generalized Whittaker functions} on the groups $\widetilde{G}$.  If $\chi: N(\R) \rightarrow \C^\times$ is a nontrivial unitary character, and $\ell \geq 1$ is an odd integer, a generalized Whittaker function of type $(N,\chi,\ell/2)$ is a smooth function $F: \widetilde{G} \rightarrow \mathbf{V}_{\ell/2}$ satisfying
\begin{enumerate}
	\item $F(gk) = k^{-1} \cdot F(g)$ for all $g \in \widetilde{G}$ and $k \in \widetilde{K}_G$;
	\item $F(ng) = \chi(n) F(g)$ for all $n \in N(\R)$ and $g \in \widetilde{G}$;
	\item $D_{\ell/2} F \equiv 0$.
\end{enumerate}
With regard to these generalized Whittaker functions, we prove the following theorem, which is the analogue in the half-integral weight case of the main result of \cite{pollackQDS}.  To state the result, we recall that if $G$ is a quaternionic exceptional group  then there is a notion of ``positive semi-definiteness" of nontrivial unitary characters $\chi$ of $N(\R)$. Recall also that we let $M$ denote a particular fixed Levi subgroup of the Heisenberg parabolic $P$.

\begin{theorem}\label{thm:introGWF} Let the notation be as above, with $\chi$ a non-trivial unitary character of $N(\R)$.
	\begin{enumerate}
		\item Suppose $F$ is a moderate growth generalized Whittaker function of type $(N,\chi, \ell/2)$, and $\chi$ is not positive semi-definite.  Then $F$ is identically $0$.
		\item Suppose $\chi$ is positive semi-definite and $\ell$ is fixed.  There are a pair of nonzero functions $W_{\chi}^1(g)$ and $W_\chi^2(g)$ that satisfy the following properties:
		\begin{enumerate}
			\item $W_\chi^{2}(g) = - W_\chi^1(g)$;
			\item the $W_\chi^j$ are moderate growth generalized Whittaker functions of type $(N,\chi,\ell/2)$;
			\item the set $\{W_\chi^1(g),W_\chi^2(g)\}$ depends continuously on $\chi$;
			\item if $r$ is in the derived group $[M,M](\R)$ and $\widetilde{r}$ is a preimage of $r$ in $\widetilde{G}$, then the set $\{W_\chi^1(\widetilde{r}g),W_\chi^2(\widetilde{r}g)\} = \{W_{\chi \cdot r}^1(g), W_{\chi \cdot r}^2(g)\}$.
			\item Moreover, if $F$ is moderate growth generalized Whittaker function of type $(N,\chi,\ell/2)$, then there is a pair of complex numbers $a_{\chi,2}(F) = - a_{\chi,1}(F)$ so that $F(g) = a_{\chi,j}(F)W_\chi^j(g)$ for $j = 1,2$.
		\end{enumerate}
	\end{enumerate}
\end{theorem}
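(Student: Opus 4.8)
The plan is to reduce the classification of generalized Whittaker functions of type $(N,\chi,\ell/2)$ on $\widetilde{G}$ to an ODE analysis, mirroring the strategy of \cite{pollackQDS} in the integral-weight case but keeping track of the metaplectic cover. First I would recall that $N(\R)$ splits uniquely into $\widetilde{G}$ and that $\widetilde{K}_G = \SU(2)\times L$; since $\mathbf{V}_{\ell/2} = Sym^\ell(\Vm_2)$ is trivial on $L$ and $N$ is contained in the Heisenberg parabolic, restriction along a suitable $\R^\times_{>0}$-torus $A$ and the compact group $\widetilde{K}_G$ will show that a generalized Whittaker function $F$ is determined by its restriction to $A\widetilde{K}_G$, hence by finitely many functions of a single real variable $t$ (where $t$ runs over the positive reals parametrizing $A$). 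The condition $F(gk) = k^{-1}\cdot F(g)$ pins down the $L$-behavior and turns the $\SU(2)$-equivariance into the statement that $F$ is a highest-weight-type vector for the $Sym^\ell$-action, so the unknown is essentially a vector-valued function of $t$ of size $\ell+1$.

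Next I would impose $D_{\ell/2}F \equiv 0$. Using the decomposition $\p\otimes\C \simeq \Vm_2\otimes W$ and the explicit formula $\widetilde{D}_{\ell/2}F = \sum_\alpha X_\alpha F\otimes X_\alpha^\vee$, together with the projection $\mathrm{pr}$ onto the $Sym^{\ell-1}(\Vm_2)\boxtimes W$ summand, the equation $D_{\ell/2}F=0$ becomes a first-order linear system of ODEs in $t$ for the components of $F$; the left-invariance under $N(\R)$ and the character $\chi$ feed in through the action of $\mathfrak{n}$, producing the Bessel-type coefficients (involving the "length" of $\chi$, which is where positive semi-definiteness enters). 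This is exactly parallel to \cite{pollackQDS}, except the symmetric power is odd, so the representation $\mathbf{V}_{\ell/2}$ does not descend to $K_G$ — but the ODE system itself only sees $\mathfrak{su}(2)$ and $\mathfrak{n}$, which are unchanged by passing to the cover, so the ODE is formally the same, just with a half-integral shift in the parameter. For part (1), when $\chi$ is not positive semi-definite the relevant modified Bessel equation has no moderate-growth solution, so $F\equiv 0$; this I would deduce by the same asymptotic analysis as in the integral case. For part (2), when $\chi$ is positive semi-definite, the moderate growth solution space of the ODE system is one-dimensional over $\C$, which gives a single function $W_\chi$ up to scalar.

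The remaining point — and this is the genuinely new feature, the $\pm$ ambiguity that produces the pair $\{W_\chi^1,W_\chi^2\}$ rather than a canonical $W_\chi$ — comes from the fact that $Sym^\ell(\Vm_2)$ is a genuine (non-descending) representation of $\widetilde{K}_G$. The solution to the ODE is canonical only after a choice of basis vector in the one-dimensional $\ell$-eigenspace, i.e. after a choice of square root; concretely, $W_\chi$ will be built from the factor of automorphy $j_g$ of Theorem \ref{thm:introCover}, which satisfies $Sym^2(j_g) = j_{lin}(g,\cdot)$ and is therefore defined only up to sign, and the two choices $\pm j_g$ give $W_\chi^1 = -W_\chi^2$. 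I would make this precise by exhibiting $W_\chi$ explicitly: transport the integral-weight Whittaker function $W_\chi^{GGS}$ of \cite{pollackQDS} for weight $\ell$ (valued in $Sym^{2\ell}(\Vm_2) = Sym^2$ of the weight-$\ell/2$ space under a fixed square-root choice), then take its "square root" using $j_g$; the sign indeterminacy of the square root is precisely the two elements of the pair. Properties (a), (b), (c) are then immediate from the corresponding properties of $W_\chi^{GGS}$: (a) is the sign choice, (b) follows since the metaplectic ODE is satisfied by construction, (c) follows from continuity of $W_\chi^{GGS}$ in $\chi$ together with continuity of the square-root branch locally in $\chi$ (positive semi-definite $\chi$ form a connected cone minus its vertex in the relevant chart, so a consistent branch exists locally). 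Property (d) — the intertwining of the $W_\chi^j$ under $[M,M](\R)$ acting through a preimage $\widetilde{r}$ — follows from the analogous equivariance $W_{\chi\cdot r}^{GGS}(g) = W_\chi^{GGS}(rg)$ in the integral-weight theory, once one checks that conjugating the square-root section $j$ by $\widetilde{r}$ sends the branch for $\chi$ to one of the two branches for $\chi\cdot r$ (the ambiguity of which branch is absorbed into the $\pm$, so one gets an equality of unordered pairs, which is exactly what is asserted). Finally, property (e): given any moderate-growth $F$, the one-dimensionality established in part (2) forces $F = c\cdot W_\chi^j$ for some $c\in\C$ and either $j$; setting $a_{\chi,1}(F) = c$ and $a_{\chi,2}(F) = -c$ makes $F(g) = a_{\chi,j}(F)W_\chi^j(g)$ hold simultaneously for $j=1,2$.

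The main obstacle I expect is not the ODE analysis — that is a cover-insensitive adaptation of \cite{pollackQDS} — but rather making the $\pm$-bookkeeping in properties (c) and (d) rigorous: one must show that the sign ambiguity is a genuine global $\mathbf{Z}/2$ torsor over the space of positive semi-definite characters (so that no canonical choice exists, consistent with the theorem only claiming an unordered pair) while simultaneously being locally trivial (so that "depends continuously on $\chi$" makes sense for the pair). Concretely this means analyzing the monodromy of the square-root of $j_g$ as $\chi$ varies, which amounts to understanding the topology of the cone of positive semi-definite characters and the restriction of the cover $\widetilde{G}\to G(\R)$ to the relevant subgroups; I would handle this by working in coordinates on the Heisenberg parabolic where $\chi$ is given by a point of a nondegenerate quadric cone, for which the needed connectivity and local-triviality statements are elementary.
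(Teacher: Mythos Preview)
Your overall strategy—reduce to the differential equations of \cite{pollackQDS} via an Iwasawa-type decomposition, obtain a Bessel equation, and use moderate growth to force the $K$-Bessel solution—is correct and is exactly what the paper does. But your proposed construction of the explicit functions $W_\chi^j$ has a genuine gap.

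You propose to build $W_\chi^j$ by ``taking the square root'' of the integer-weight Whittaker function $W_\chi^{GGS}$ via $j_g$. This is not well-defined: the weight-$\ell$ function of \cite{pollackQDS} is valued in $Sym^{2\ell}(\Vm_2)$, and there is no square-root operation landing in $Sym^{\ell}(\Vm_2)$—your parenthetical identification $Sym^{2\ell}(\Vm_2) = Sym^2(Sym^\ell(\Vm_2))$ is false for $\ell>1$. The paper instead takes a square root of a \emph{scalar}. For $\omega$ positive semi-definite, the polynomial $p_\omega(Z)$ is nowhere zero on the contractible domain $\mathcal{H}_J$, so it admits a continuous square root $p_\omega(Z)^{1/2}$; combining this with the genuine square root $j_{1/2}$ of $j(g,i)$ already built from the cover, one sets $\alpha_\omega(g)= i\, j_{1/2}(g,x_0)\, p_\omega(g\cdot i)^{1/2}$, a genuine function on $\widetilde{H_J^+}$ with $\alpha_\omega(g)^2 = \langle \omega, g\, r_0(i)\rangle$. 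The Whittaker function is then written down explicitly, with $n=\ell/2$, as
\[
\W_{\omega,\alpha_\omega}(g)=\nu(g)^{n+1}\sum_{\substack{v\in \frac{1}{2}+\Z\\ -n\le v\le n}} \left(\frac{|\alpha_\omega(g)|}{\alpha_\omega(g)}\right)^{2v} K_v\bigl(|\alpha_\omega(g)|^2\bigr)\,\frac{x^{n+v}y^{n-v}}{(n+v)!(n-v)!}.
\]
Because $2v$ is odd this is genuine, and the two choices $\pm\alpha_\omega$ give $\W_{\omega,-\alpha_\omega}=-\W_{\omega,\alpha_\omega}$: that is the pair $\{W_\chi^1,W_\chi^2\}$. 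The verification that $D_{\ell/2}\W_{\omega,\alpha_\omega}=0$ is a direct computation repeating the integer-weight one; the key observation is that $|\alpha_\omega(g)|$ descends to the linear group, so the manipulations of \cite{pollackQDS} go through verbatim. Properties (c) and (d) then come for free from the formula, rather than from any monodromy argument.

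A secondary issue: your reduction to a single $\R^\times_{>0}$-torus $A$ is too aggressive. The paper restricts to all of $R_J^+$ (which splits into $\widetilde{G}$) and works in coordinates $(w,X,Y)$ with $Z=X+iY\in\mathcal{H}_J$. The Bessel equation appears only in the $w$-variable; one must separately show the residual $(X,Y)$-dependence is constant, which requires the operators $D_{Z(E)}$ from \cite{pollackQDS}, not just a one-variable ODE.
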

Note that, if $\zeta$ is the non-identity element of the preimage of $\{1\}$ in $\widetilde{G}$, then $W_\chi^1(\zeta g) = W_\chi^1(g\zeta) = - W_\chi^1(g) = W_\chi^2(g)$, so that one really needs both $W_\chi^1$ and $W_\chi^2$ to appear in property 2(d) of Theorem \ref{thm:introGWF}.

The Fourier expansion of quaternionic modular forms on $\widetilde{G}$ of weight $\ell/2$ follows immediately from Theorem \ref{thm:introGWF}:
\begin{corollary} Suppose $\varphi$ is a quaternionic modular form on $\widetilde{G}$ of weight $\ell/2$.  Then
	\[\varphi_Z(g) = \varphi_N(g) +  \sum_{1 \neq \chi \in (N(\R)/Z(\R)N(\Z))^\vee}{a_\varphi^j(\chi) W_\chi^j(g)}\]
for certain complex numbers $a_\varphi^j(\chi)$ that satisfy $a_\varphi^2(\chi) = -a_\varphi^1(\chi)$.
\end{corollary}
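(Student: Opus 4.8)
The plan is to apply Theorem~\ref{thm:introGWF} to the individual Fourier coefficients of $\varphi$ along $N$, so that the corollary becomes essentially a bookkeeping statement. First I would recall that, since $N(\R)$ splits uniquely into $\widetilde G$ with corresponding arithmetic subgroup $N(\Z)$, the constant term $\varphi_Z(g) = \int_{Z(\Z)\backslash Z(\R)}\varphi(zg)\,dz$ is a well-defined smooth function on $\widetilde G$ that is left-invariant under $Z(\R)$, and hence descends in the $N$-variable to a function on the compact torus $Z(\R)N(\Z)\backslash N(\R)$. Expanding in a Fourier series over this torus yields
\[
\varphi_Z(g) = \varphi_N(g) + \sum_{1 \neq \chi \in (N(\R)/Z(\R)N(\Z))^\vee} \varphi_\chi(g), \qquad \varphi_\chi(g) = \int_{N(\Z)\backslash N(\R)} \chi^{-1}(n)\,\varphi(ng)\,dn,
\]
where $\varphi_N$ is the $\chi = 1$ term. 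The convergence of this series (smoothly, uniformly on compact sets) is the usual Fourier expansion of a smooth function on a torus bundle, and the moderate growth of each $\varphi_\chi$ follows from that of $\varphi$ by the standard estimates for constant terms of automorphic forms.

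Next I would verify that each $\varphi_\chi$, for $\chi \neq 1$, is a moderate growth generalized Whittaker function of type $(N,\chi,\ell/2)$. Property (2), namely $\varphi_\chi(ng) = \chi(n)\varphi_\chi(g)$ for $n \in N(\R)$, is immediate from the integral formula (using that $\chi$ is trivial on $Z$ and that $N(\Z)$ is a lattice in $N(\R)$). Property (1), equivariance under $\widetilde K_G$ on the right, is inherited from $\varphi$ because the defining integral is an average over left translation by $N$ and thus commutes with right translation by $\widetilde K_G$. Property (3), $D_{\ell/2}\varphi_\chi \equiv 0$, follows from $D_{\ell/2}\varphi \equiv 0$: the operator $D_{\ell/2}$ is built out of the right-regular action of elements of $\p \subseteq \g$, which commutes with the left-translation integral defining $\varphi_\chi$.

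With this in hand, Theorem~\ref{thm:introGWF} applies to $F = \varphi_\chi$ for each nontrivial $\chi$. If $\chi$ is not positive semi-definite, part (1) forces $\varphi_\chi \equiv 0$; if $\chi$ is positive semi-definite, part 2(e) produces a pair of complex numbers $a_{\chi,2}(\varphi_\chi) = -a_{\chi,1}(\varphi_\chi)$ with $\varphi_\chi(g) = a_{\chi,j}(\varphi_\chi)W_\chi^j(g)$ for $j = 1,2$. Setting $a_\varphi^j(\chi) := a_{\chi,j}(\varphi_\chi)$ for positive semi-definite $\chi$ and $a_\varphi^j(\chi) := 0$ otherwise, and substituting into the Fourier expansion above, gives the claimed formula together with $a_\varphi^2(\chi) = -a_\varphi^1(\chi)$. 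There is no genuine obstacle here beyond Theorem~\ref{thm:introGWF} itself; the only points meriting a word of care are the $Z(\R)$-invariance of $\varphi_Z$ (so that a Fourier expansion along $N/Z$ makes sense) and the inheritance of moderate growth, both of which are routine.
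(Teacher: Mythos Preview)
Your proposal is correct and is precisely the argument the paper has in mind: the paper simply asserts that the corollary ``follows immediately from Theorem~\ref{thm:introGWF}'' without further comment, and your proof is a faithful unpacking of that statement---Fourier expand $\varphi_Z$ along $N/Z$, check that each nontrivial $\varphi_\chi$ is a moderate-growth generalized Whittaker function of type $(N,\chi,\ell/2)$, and apply the theorem termwise.
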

The elements $a_\varphi^j(\chi) \in \C/\{\pm 1\}$ are called the Fourier coefficients of $\varphi$.

\subsection{The automorphic minimal representation}  One of the first examples of quaternionic modular forms of integral weight is given by the automorphic minimal representation on quaternionic $E_8$, which was produced by Gan \cite{ganMin}, see also \cite{pollackE8}.  The double cover of $F_4$ has an automorphic minimal representation; this representation was defined and studied by Loke-Savin \cite{lokeSavin} and further analyzed by Ginzburg \cite{ginzburgF4}.  Our first example of a modular form of half-integral weight, in fact of weight $\frac{1}{2}$, comes from this automorphic minimal representation on $\widetilde{F}_4^{(2)}(\A)$.

The following is our main result concerning the automorphic minimal representation on $\widetilde{F}_4^{(2)}(\A)$.  To state the result, let $J_0 = Sym^2(\Z^3)$ denote the $3 \times 3$ integral symmetric matrices, and let $J_0^\vee$ be the dual lattice with respect to the trace pairing, so that $J_0^\vee$ is the set of half-integral symmetric $3 \times 3$ matrices.  If $N$ denotes the Heisenberg parabolic of $F_4$, then there is an identification $(N(\R)/Z(\R)N(\Z))^\vee$ with the lattice $W(\Z)^\vee = \Z \oplus J_0^\vee \oplus J_0^\vee \oplus \Z$, so that Fourier coefficients of modular forms of $\widetilde{F}_4$ are parameterized by elements of $W(\Z)^\vee$.

\begin{theorem}\label{thm:introMin} Let $\Pi_{min} = \Pi_{min,f} \otimes \Pi_{min,\infty}$ denote the automorphic minimal representation of $\widetilde{F}_4^{(2)}(\A)$.  The minimal $K_{F_4}$-type of $\Pi_{min,\infty}$ is $\Vm_2 = \mathbf{V}_{1/2}$.  Consequently, if $v_f \in \Pi_{min,f}$, there is an associated weight $\frac{1}{2}$ modular form $\theta(v_f)$ on $\widetilde{F}_4$.  Moreover,
	\begin{enumerate}
		\item The $(a,b,c,d) \in W(\Q)^\vee$ Fourier coefficient of $\theta(v_f)$ is zero unless $(a,b,c,d)$ is ``rank one";
		\item the vector $v_f$ can be chosen so that $\theta(v_f)$ has level $\Gamma_{F_4}(4)$ and nonzero $(0,0,0,1) \in W(\Z)^\vee$ Fourier coefficient.
	\end{enumerate}
\end{theorem}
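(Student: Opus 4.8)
The plan is to combine the local analysis of the minimal representation due to Loke--Savin \cite{lokeSavin} with Ginzburg's results \cite{ginzburgF4} on the global Fourier coefficients of $\Theta_{F_4}$. First I would establish the claim about the minimal $\widetilde{K}_{F_4}$-type. The archimedean component $\Pi_{min,\infty}$ is the minimal representation of $\widetilde{F}_4^{(2)}(\R)$, whose $\widetilde{K}_{F_4}$-type decomposition is known (Loke--Savin compute this, and it also follows from the theta correspondence / dual pair techniques); one checks directly that the lowest $\widetilde{K}_{F_4} = \SU(2)\times L$-type is $\Vm_2 \boxtimes \mathbf{1}$, i.e.\ $\mathbf{V}_{1/2}$ in our normalization. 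Since $\mathbf{V}_{1/2}$ occurs with multiplicity one, the recipe recalled in the introduction---packaging a basis $\{x_j\}$ of the minimal $\widetilde{K}_{F_4}$-type inside the automorphic realization $\alpha_{v_f}: \Pi_{min} \hookrightarrow \mathcal{A}(\widetilde{F}_4)$ into $\varphi(g) = \sum_j \alpha(x_j)(g)\otimes x_j^\vee$---produces a genuine weight $\frac{1}{2}$ quaternionic modular form $\theta(v_f)$. One must verify $D_{1/2}\theta(v_f)\equiv 0$: this follows because $\mathbf{V}_{1/2}$ is the minimal $\widetilde{K}_{F_4}$-type and so the ``lowering'' component $\mathrm{pr}\circ\widetilde{D}_{1/2}$ necessarily vanishes, exactly as in the integral-weight argument of \cite{ganGrossSavin, pollackQDS}.

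Next, for part (1), I would invoke the defining property of the minimal representation: its automorphic realization is supported on the minimal nilpotent orbit, so the only nonzero Fourier coefficients along the Heisenberg unipotent $N$ are those attached to characters $\chi$ of rank $\le 1$ (in the sense that $(a,b,c,d)\in W(\Q)^\vee$ is rank one means the associated element of the Freudenthal/cubic-form space lies in the rank-$\le 1$ locus). Concretely this is Ginzburg's computation of the Fourier coefficients of $\Theta_{F_4}$ along $N$ in \cite{ginzburgF4}, transported to our half-integral-weight setting via Theorem \ref{thm:introGWF}: the theorem tells us $\varphi_\chi = a^j_\varphi(\chi)W_\chi^j$, and the minimality forces $a^j_\varphi(\chi)=0$ for $\chi$ of rank $\ge 2$. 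I would also double-check the identification of $(N(\R)/Z(\R)N(\Z))^\vee$ with $W(\Z)^\vee = \Z\oplus J_0^\vee\oplus J_0^\vee\oplus\Z$ and the fact that $N(\R)$ splits into $\widetilde{F}_4^{(2)}(\R)$ with $N(\Z)$ the correct lattice, which is part of the setup feeding into Theorem \ref{thm:introGWF}.

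For part (2), the task is to exhibit a finite vector $v_f\in\Pi_{min,f}$ whose associated modular form has level $\Gamma_{F_4}(4)$ and nonzero $(0,0,0,1)$ coefficient. Here I would use the local structure at each place: at $p>2$, Loke--Savin show the hyperspecial maximal compact splits and $\Pi_{min,p}$ is unramified, so I take $v_p$ to be the spherical vector; at $p=2$, I take $v_2$ to be a suitable vector fixed by the explicit compact open subgroup of $\widetilde{F}_4^{(2)}(\Q_2)$ constructed in the proof of Theorem \ref{thm:introSplit}, which is possible because that subgroup splits into the cover. The resulting global vector $v_f = \otimes_p v_p$ gives a form of level $\Gamma_{F_4}(4)$. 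The nonvanishing of the $(0,0,0,1)$ coefficient is the crux: I would argue it via an unramified local computation at the good places (the rank-one coefficient of the spherical vector in the minimal representation is computed, e.g.\ by a Jacquet-module / Casselman--Shalika-type argument, or by explicit models as in Loke--Savin), combined with an explicit nonvanishing check at $p=2$ and at $\infty$, where at $\infty$ one uses the explicit generalized Whittaker functions $W_\chi^j$ from Theorem \ref{thm:introGWF} applied to the minimal $\widetilde{K}_{F_4}$-type. The pieces multiply together, so nonvanishing everywhere locally yields global nonvanishing.

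The main obstacle I anticipate is the nonvanishing assertion in part (2), specifically controlling the local Fourier coefficient at $p=2$ for the non-maximal compact open subgroup. Unlike the good places, where one can lean on existing unramified computations, at $p=2$ there is no off-the-shelf computation of the relevant Whittaker/Fourier functional on the minimal representation of the double cover; I expect this will require an explicit model of $\Pi_{min,2}$ (or at least of the relevant Jacquet module along $N$) compatible with the generators-and-relations description of $\widetilde{F}_4^{(2)}(\Q_2)$, and a direct evaluation. A secondary subtlety is bookkeeping the $\pm 1$ ambiguities and the genuine-vs-nongenuine distinction consistently across all places so that the product of local functionals is genuinely nonzero rather than formally so.
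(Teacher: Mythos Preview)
Your outline for the minimal $\widetilde{K}_{F_4}$-type and for part~(1) is essentially correct, though the $K$-type determination is not in Loke--Savin; the paper deduces it from \cite{ABPTV} by matching the infinitesimal character of $\Pi_{min,\infty}$ (as a Langlands quotient of a pseudospherical principal series) with that of the Gross--Wallach minimal representation $\Pi_{GW}$, whose $K$-types are computed in \cite{grossWallachII}.

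The real gap is in part~(2). Your plan is to compute the $(0,0,0,1)$-Fourier coefficient as an Euler product of local Whittaker values and check each factor is nonzero. But no such factorization is established; the global Heisenberg Fourier coefficient is a single linear functional on $\Pi_{min,f}$, not obviously Eulerian. The paper's argument has a different logical structure: one knows from Ginzburg that the functional $L_{\omega_1}:\theta\mapsto\theta_{(0,0,0,1)}$ is nonzero on $\Pi_{min}$, and the goal is to show it does not vanish on the $\prod_p K_p^*$-fixed line. This is achieved by proving, for each finite $p$, that \emph{any} $(U_R,\psi_{1,\pm 1})$-functional on $\Pi_{min,p}$ that vanishes on the $K_p^*$-invariants is identically zero.

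Two ingredients make this work, neither of which appears in your proposal. First, Ginzburg's Proposition~4 rewrites the Heisenberg $(0,0,0,1)$-coefficient as a $(U_R,\psi_{1,-1})$-period, where $U_R$ is the unipotent radical of the non-maximal parabolic $R$ (so one may pass through the Jacquet module along $U_Q$). Second, the paper identifies $V_{min,U_Q}$, as a representation of $\widetilde{L}(\Q_p)/\SL_3(\Q_p)\simeq\widetilde{\GL}_2^{(1)}(\Q_p)$, with an explicit Weil representation $\Omega_{\psi,\chi}^{(1)}$ induced from $S^+(\Q_p)$; uniqueness of Whittaker functionals on this Weil representation (bootstrapped from Gelbart--Piatetski-Shapiro) then reduces the nonvanishing to exhibiting a $K_p^*$-invariant vector on which the explicit functional $f\mapsto f(h_{\alpha_2}(t^{-1}))(1)$ is nonzero. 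At $p=2$ the surjection $V^{K_R'(4)}\to V_{U_Q}^{\widetilde{L}\cap K_R'(4)}$ comes from Casselman's theorem, using the Iwahori factorization of $K_R'(4)$ with respect to $Q$. Your proposed ``explicit nonvanishing check at $p=2$'' would have to rediscover this Weil-representation model of the Jacquet module to succeed.
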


The fact that the minimal $K_{F_4}$-type of $\pi_\infty$ is $\mathbf{V}_{1/2}$ follows easily from work of \cite{ABPTV}.  As explained above, this implies the fact that there are associated weight $\frac{1}{2}$ modular forms $\theta(v_f)$ on $\widetilde{F}_4$.  The statement that the Fourier coefficients of $\theta(v_f)$ vanish unless $(a,b,c,d)$ is rank one is the result \cite[Proposition 3]{ginzburgF4} of Ginzburg, imported into our language.  Where we work hard is the last statement, that $v_f$ can be chosen so that $\theta(v_f)$ has large level and nonzero $(0,0,0,1)$-Fourier coefficient.

To prove this result about level and Fourier coefficients, we make some detailed computations of certain twisted Jacquet modules of the automorphic minimal representation $\pi$, especially at the $2$-adic place.  To do these computations, we bootstrap off of twisted Jacquet module computations in \cite{gelbartPSdistinguished}, which concerns the Weil representation of a double cover of $\GL_2(\Q_p)$.

\subsection{A modular form on $G_2$}\label{Sec: intro arith}  Let $\Theta_{F_4}$ denote a weight $\frac{1}{2}$, level $\Gamma_{F_4}(4)$-modular form on $\widetilde{F}_4$, with nonzero $(0,0,0,1)$-Fourier coefficient, as guaranteed by Theorem \ref{thm:introMin}.  We normalize $\Theta_{F_4}$ so that its $(0,0,0,1)$-Fourier coefficient is $\pm 1$.  There is an embedding $\widetilde{G}_2 \subseteq \widetilde{F}_4$, and denote by $\Theta_{G_2}$ the pullback to $\widetilde{G}_2$ of $\Theta_{F_4}$.  Then we check that $\Theta_{G_2}$ is a quaternionic modular form of weight $\frac{1}{2}$ and level $\Gamma_{G_2}(4)$ for $G_2$.  Our main result concerns the Fourier coefficients of $\Theta_{G_2}$.

To describe these Fourier coefficients, first note that if $N$ is the unipotent radical of the Heisenberg parabolic of $G_2$, then $(N(\R)/Z(\R)N(\Z))^\vee$ can be identified with the integral binary cubic forms $f(u,v) = au^3 + bu^2v + cuv^2 + dv^3$.  We can then give a formula for the Fourier coefficient $a_{\Theta_{G_2}}(f)$ for every integral binary cubic form $f$ with $d=1$.

To state (the main part) of this formula, we introduce a notation concerning cubic rings, following Swaminathan \cite{swaminathan2021average}.  Let $R$ be an order in a totally real cubic field $E = R \otimes \Q$.  Let $\mathfrak{d}_R^{-1}$ be the inverse different of $R$, i.e., the fractional $R$ ideal consisting of those $x \in E$ for which $\tr_{E}(x\lambda) \in \Z$ for all $\lambda \in R$.  Say that a pair $(I,\mu)$ of a fractional $R$ ideal $I$ and a totally positive unit $\mu \in E_{>0}^\times$ is \emph{balanced} if
\begin{enumerate}
	\item $\mu I^2 \subseteq \mathfrak{d}_R^{-1}$
	\item $N(\mu) N(I)^2 \mathrm{disc}(R/\Z) = 1$.
\end{enumerate}
Thus, if $R$ is the maximal order in $E$, $(I,\mu)$ is balanced if and only if $\mu I^2 = \mathfrak{d}_R^{-1}$.  Here $N(\mu)$ is the norm of $\mu$ and $N(I)$ (well-defined up to multiplication by $\pm 1$) is the determinant of a linear transformation of $E$ that takes a $\Z$-basis of $R$ to a $\Z$-basis of $I$.

Let $Q_R$ be the set of balanced pairs $(I,\mu)$ up to equivalence, where we say $(I,\mu)$ is equivalent to $(I',\mu')$ if there exists $\beta \in E^\times$ such that $I' = \beta I$, $\mu' = \beta^{-2} \mu$.  The set $Q_R$ is always finite and sometimes empty.  If $R$ is the maximal order and $Q_R$ is nonempty, then we show that $|Q_R| = |\mathrm{Cl}^{+}_E[2]|$ where $\mathrm{Cl}^{+}_E[2]$ is the $2$-torsion in the narrow class group of $E$.

\begin{theorem}\label{thm:introMain2} Let the notation be as above, and suppose the binary cubic form $f(u,v)$ has $d=1$.  Denote by $R = \Z[y]/(f(1,y))$, and suppose that $R \otimes \Q$ is a totally real cubic field.  The weight $\frac{1}{2}$ modular form $\Theta_{G_2}$ on $G_2$ has Fourier coefficient $a_{\Theta_{G_2}}(f) = \pm 24 |Q_R|$.
\end{theorem}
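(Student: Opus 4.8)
The plan is to reduce the statement to the already-established properties of the minimal modular form $\Theta_{F_4}$ on $\widetilde{F}_4$, and then to carry out an arithmetic-invariant-theory count of the relevant lattice points. First I would record that the embedding $\iota\colon\widetilde{G}_2\hookrightarrow\widetilde{F}_4$ respects Heisenberg structures: $\iota(N_{G_2})\subseteq N_{F_4}$, $\iota(Z_{G_2})$ surjects onto $Z_{F_4}$, and, after matching integral structures, $\iota^{-1}(N_{F_4}(\Z))=N_{G_2}(\Z)$. Hence restriction of characters induces a surjection $W_{F_4}(\Z)^\vee\twoheadrightarrow\{\text{integral binary cubics}\}$, and I must describe the fibre over a given $f$. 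Now $\Theta_{F_4}$ is a weight $\tfrac12$ modular form, so by Theorem~\ref{thm:introGWF} its $Z_{F_4}$-constant term Fourier-expands along $N_{F_4}/Z_{F_4}$, and by Theorem~\ref{thm:introMin} this expansion is supported on rank-one $w\in W_{F_4}(\Z)^\vee$. Taking the $Z_{G_2}$-constant term of $\Theta_{F_4}\circ\iota$, then extracting the $\chi_f$-isotypic part, picks out exactly those rank-one $w$ restricting to $\chi_f$ (the constant-term pieces drop out since $\chi_f$ is nontrivial). To get a clean identity I would check that $W^j_{\chi,F_4}\circ\iota$ is a moderate-growth generalized Whittaker function on $\widetilde{G}_2$ of type $(N_{G_2},\chi_f,\tfrac12)$ --- using that the $\SU(2)$-factors of $\widetilde{K}_{G_2}$ and $\widetilde{K}_{F_4}$ are identified, so $\mathbf{V}_{1/2}$ restricts correctly, and that the operators $D_{1/2}$ match --- and then apply the uniqueness in Theorem~\ref{thm:introGWF}(2)(e). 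The outcome is
\[
a_{\Theta_{G_2}}(f)\;=\;\sum_{w}\,\pm\,a_{\Theta_{F_4}}(w),
\]
the sum over rank-one $w\in W_{F_4}(\Z)^\vee$ lying over $f$, with signs absorbed by the $\C/\{\pm1\}$-ambiguity.

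Next I would determine $a_{\Theta_{F_4}}(w)$ for rank-one $w$. Working in the Loke--Savin/Ginzburg model of $\Pi_{min}$ and factoring the global Fourier functional into local twisted Jacquet pairings, one sees that $a_{\Theta_{F_4}}(w)$ equals, up to sign, a product of local contributions: the archimedean generalized Whittaker function of Theorem~\ref{thm:introGWF}, the Loke--Savin computation at $p>2$, and the $p=2$ computation built on the Gelbart--Piatetski-Shapiro twisted Jacquet modules of the metaplectic $\mathrm{GL}_2$. Each local factor is $\pm1$ or $0$; combining this with the $\Gamma_{F_4}(4)$-equivariance (automorphy together with Theorem~\ref{thm:introGWF}(2)(d)) and the normalization $a_{\Theta_{F_4}}(0,0,0,1)=\pm1$ shows that $a_{\Theta_{F_4}}(w)=\pm1$ if $w$ is \emph{good} --- rank one and satisfying the explicit $2$-adic condition cut out by the level and by the chosen vector $v_f$ --- and $a_{\Theta_{F_4}}(w)=0$ otherwise. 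Therefore $a_{\Theta_{G_2}}(f)=\pm\#\{\text{good rank-one }w\in W_{F_4}(\Z)^\vee\text{ over }f\}$.

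Finally I would match this lattice-point count with $24|Q_R|$. For $f$ with $d=1$ and $R=\Z[y]/(f(1,y))$ an order in a totally real cubic field $E$, I would parametrize the rank-one elements of the Freudenthal system $W_{F_4}=\Q\oplus J\oplus J\oplus\Q$ (with $J=\Sym^2$) lying over $f$, in the style of Krutelevich and Loke--Savin: the data amounts to a fractional $R$-ideal $I$, a totally positive $\mu\in E^\times_{>0}$, and additional rigidifying data, and the rank-one together with the integrality and $2$-adic-goodness conditions are equivalent to the balancing conditions $\mu I^2\subseteq\mathfrak{d}_R^{-1}$ and $N(\mu)N(I)^2\,\mathrm{disc}(R/\Z)=1$. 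The rigidifying data forms a torsor under a finite group of order $24$, whose appearance reflects the factor $\SO(3)$ in $G_2\times\SO(3)\subseteq F_4$ --- the exceptional counterpart of Gauss's constant $12$ in the classical analogue --- so the fibre over each class $[(I,\mu)]\in Q_R$ has size $24$. Summing over $Q_R$, whose finiteness is Swaminathan's, yields $a_{\Theta_{G_2}}(f)=\pm24|Q_R|$.

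The principal difficulty is this last step: controlling the integral --- above all $2$-adic --- orbit structure of rank-one elements of the $F_4$-Freudenthal system over a binary cubic precisely enough to read off both the set $Q_R$ and the constant $24$, including the equivalence of the rank-one plus local conditions with the balancing conditions. The $2$-adic Jacquet-module input of the second step and the integral bookkeeping underlying the branching formula of the first step are also delicate, but these are exactly the computations prepared in the earlier sections.
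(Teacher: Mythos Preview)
Your overall architecture is right --- pull back the Fourier expansion along $\widetilde{G}_2\hookrightarrow\widetilde{F}_4$, identify the fibre over $f$, and count --- and this matches the paper. But two steps are mis-specified in ways that matter.

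\textbf{The sign issue is a genuine gap.} You write that the signs in $\sum_w \pm a_{\Theta_{F_4}}(w)$ are ``absorbed by the $\C/\{\pm1\}$-ambiguity.'' They are not: the Fourier coefficient $a_{\Theta_{G_2}}(f)$ carries a \emph{single} global sign ambiguity, while each term in your sum carries its own sign $\epsilon(\omega;\omega')$ coming from the two choices of square root $\alpha_{2\pi\omega}$ when restricted to $\widetilde{G}_2$. If these signs vary with $\omega$, the sum could collapse. The paper handles this with a connectedness argument (Lemma~\ref{lem:notopp2}): for $\gamma_i=\overline{n}(T_i)$ with $\det(T_1t+1)=\det(T_2t+1)$, one uses that $\SO_3(\R)$ acts transitively on symmetric matrices with fixed characteristic polynomial, together with the continuity of the canonical unipotent splitting, to show the restricted square roots $\alpha^{\gamma_1}_{2\pi(0,0,0,1)}$ and $\alpha^{\gamma_2}_{2\pi(0,0,0,1)}$ agree (not merely up to sign). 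This is what makes the count honest.

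\textbf{Your Step 2 is much harder than what is actually needed.} The paper never computes local twisted Jacquet pairings to evaluate $a_{\Theta_{F_4}}(w)$ at general rank-one $w$, and there is no ``$2$-adic goodness'' condition singling out which $w$ contribute. Instead the argument is pure equivariance: every rank-one $\omega\in W_J(\Z)$ with last coordinate $1$ has the form $(\det T,\,T^{\#},\,T,\,1)$ for some $T\in J_0=H_3(\Z)$, and this is exactly $(0,0,0,1)\cdot\overline{n}(T)$. Since $\overline{n}(T)$ lies in $\Gamma_{F_4}(4)\cap H_J^1(\R)$ for all $T\in J_0$ (Proposition~\ref{prop:sGammaUnip}), automorphy gives $a_{\Theta_{F_4}}(\omega)=\pm a_{\Theta_{F_4}}(0,0,0,1)=\pm1$ for \emph{every} such $\omega$. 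No local analysis is required beyond the non-vanishing of the $(0,0,0,1)$-coefficient.

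\textbf{Your Step 3 is morally right but the paper's version is more concrete.} Rather than parametrizing rank-one elements by abstract pairs $(I,\mu)$ with rigidifying data, the paper observes directly that the fibre over $f$ is the finite set $Q_p=\{T\in H_3(\Z):\det(tI+T)=f(1,t)\}$. The factor $24=|\SO_3(\Z)|$ then appears because $\SO_3(\Z)$ acts freely on $Q_p$ (here the field hypothesis on $R\otimes\Q$ is used) and the orbit space is in bijection with $Q_R$ via $T\mapsto$ (the $R$-module $\Z^3$ with action $\omega\cdot m=-Tm$ and pairing $\tr(\mu vw)$). This is Proposition~\ref{prop:AITmain}, and it replaces the Krutelevich-style parametrization you sketch.
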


We also give an arithmetic interpretation of the Fourier coefficients of $\Theta_{G_2}$ in the case that $R \otimes \Q$ is of the form $\Q \times K$ for $K$ a real quadratic field.  See Section \ref{sec:gencase}.

\subsection{Acknowledgements} We thank Benedict Gross for his comments on a previous version of this manuscript, which have improved the exposition of this work. We also thank Gordan Savin for helpful comments.

\chapter{Group theory}
In this chapter, we work out many of the group-theoretic aspects of this paper. We prove Theorems \ref{thm:introCover} and \ref{thm:introSplit} of the introduction.

\section{Central extensions: the general picture and conventions}\label{Sec: general covers}
 Quaterionic modular forms of half-integral weight live on certain central extensions of adjoint forms of exceptional groups. We therefore begin by discussing some generalities about extensions of the group of points of algebraic groups and setting certain conventions. The theory is much more transparent in the simply connected case (which is also our setting when $G=G_2$, $F_4$, or $E_8$), so we recall this setting first. We will only work over $\Q$ and its localizations, so we restrict our discussion to this case. Let $p$ be a place of $\Q$ and let $\Q_p$ be the associated local field; we set $\Q_\infty=\R$.

Assume that $G$ is a simply-connected, simple linear algebraic group over $\Q$ and consider the topological group $G(\Q_p)$ for $p\leq \infty$. In \cite{DeligneCover}, Deligne constructs a canonical extension
\[
1\lra H^2(\Q_p, \mu_n^{\otimes2})\lra \widetilde{G}^{(n)}(\Q_p)\lra G(\Q_p)\lra 1
\]
for any $n\in \mathbb{N}$. This construction relies heavily on the cohomology of the classifying space $BG$ and on the construction of the \emph{Galois symbol} by Tate \cite{TateK2}; we will not review this construction further.

It is known \cite{DeligneCover,MSKtheory} that if $N$ is the number of roots of unity in $\Q_p$, then
 \begin{equation}\label{eqn: what is the kernel}
 H^2(\Q_p, \mu_n^{\otimes2})\cong \mathbb{K}_2(\Q_p)/(n,N)\mathbb{K}_2(\Q_p)\cong \mu_{(n,N)}(\Q_p)
 \end{equation}
 where $\mathbb{K}_2(\Q_p)$ is the Milnor $K$-theory of $\Q_p$. In particular, for any $p\leq \infty$, we obtain a canonical double cover
 \begin{equation}\label{eqn: delignedouble}
 1\lra \mu_2(\Q_p)\lra \G(\Q_p):=\G^{(2)}(\Q_p)\lra G(\Q_p)\lra 1
 \end{equation}
which satisfies the following properties:
 \begin{enumerate}
     \item when $p=\infty$ {and $G(\R)$ is not topologically simply connected}, then $\widetilde{G}$ is the unique connected topological double cover of $G(\R)$ (note that $\pi_1(G(\R))$ is either $\Z$ and $\Z/2\Z$, so this is well defined);
     \item when $G$ is $\Q$-split, then for all $p$ the group $\widetilde{G}(\Q_p)$ agrees with the topological double cover  constructed by  Steinberg and Matsumoto via generators and relations.
 \end{enumerate}
 Both of these facts are relevant to us:  in Section \ref{sec:dcQEG} we give an explicit construction for $\widetilde{G}(\R)$ for quaternionic exceptional groups that is amenable to the definition of generalized Whittaker functions. On the other hand, our main applications to modular forms involve only the split groups $F_4$ and $G_2$. In order to make certain local calculations, we recall the Steinberg--Matsumoto presentation of $\widetilde{G}(\Q_p)$ in Section \ref{Sec: double covers gen}.

If $\A=\A_\Q$ is the adele ring, Deligne similarly constructs a canonical central extension of $G(\A)$ by $\mu_2(\Q)$, so that we have a short exact sequence of locally-compact topological groups
\begin{equation}\label{eqn: global group}
    1\lra \mu_2(\Q)\lra \widetilde{{G}}(\A)\lra {G}(\A)\lra 1.
\end{equation}
This central extension splits canonically over $G(\Q)$, which allows for the definition of automorphic forms on this group. There is a decomposition $\widetilde{{G}}(\A) = \prod_p\widetilde{{G}}(\Q_p)/\mu_2^{+}$, where
$\widetilde{{G}}(\Q_p)$
is the local cover \eqref{eqn: delignedouble} and $\mu_2^{+}$ denotes the subgroup of $\bigoplus_{p}{\mu_2(\Q_p)}$ with product of terms being $1$. When $G$ is a simply-connected, semi-simple group over $\Q$ or $\Q_p$ for $p\leq \infty$ (in particular, when $G$ is of type $G_2$, $F_4$, or $E_8$), we always consider this canonical double cover of Deligne.

 When our reductive group $G$ is no longer semi-simple and simply connected, such as the adjoint forms of $E_6$ and $E_7$ or for Levi subgroups, there is no canonical central extension of $\G(\Q_p)$ by $\mu_2(\Q_2)$; indeed, we will deal with two distinct double covers of $\GL_2(\Q_p)$ in Section \ref{Section: GL2}. The classification of a large class of central extensions (known as Brylinski--Deligne covers) is given in \cite{brylinskiDeligne}, where the authors classify extensions of $G$ by the Milnor $K$-theory sheaf $\mathbb{K}_2$, viewed as sheaves of groups on the big Zariski site over $\Q_p$. Given such a central extension of sheaves of groups over $\Spec(\Q_p)$
 \[
 \mathbb{K}_2\lra \overline{G}\lra G,
 \]
one obtains a topological double cover by taking $\Q_p$-points and pushing out by the Hilbert symbol
 \[
\begin{tikzcd}
\mathbb{K}_2(\Q_p)\ar[d,"{(\cdot,\cdot)_2}"]\ar[r]&\overline{G}(\Q_p)\ar[d]\ar[r]&G(\Q_p)\ar[d,"="]\\
\mu_2(\Q_p)\ar[r]&\G(\Q_p)\ar[r]&G(\Q_p).
\end{tikzcd}
 \]
 Working globally, Brylinski and Deligne also extend the adelic formulation \eqref{eqn: global group} to this more general setting.

 The connection between Deligne's cover and extensions by $\mathbb{K}_2$ may be seen in the identification \eqref{eqn: what is the kernel}. Indeed,  when $G$ is semi-simple and simply connected, it is shown in \cite[Section 4]{brylinskiDeligne} that for any $p$, there exists a central extension of sheaves of groups over $\Spec(\Q_p)$ such that the bottom row of the above diagram recovers the sequence \eqref{eqn: delignedouble}.

 Suppose now that $G$ is an adjoint exceptional group over $\Q$ of type $E_6$ or $E_7$ such that $G(\R)$ is quaternionic (recalled in the next section). In this setting, we construct a double cover $\G$ of $G(\R)$ in Section \ref{sec:dcQEG}. Our convention is that we assume that $\overline{G}$ is a given Brylinski--Deligne cover of $G$ satisfying that the induced double cover of $G(\R)$ agrees with our construction up to isomorphism. This is automatic if the pushout $\widetilde{{G}}(\R)$ is connected and non-linear.

Finally, suppose that $k$ is either a localization of $\Q$ or $k=\A$ and let $\widetilde{G}(k)$ be a given topological double cover of $G(k)$. If $S$ is a subset of $G(k)$, we denote by $\widetilde{S}$ its inverse image in $\G(k)$. If $U\subset G$ is a unipotent subgroup, then it is known that $\G(k)$ splits canonically over $U(k)$; we use a standard abuse of notation and simply denote by $U(k)\subset \widetilde{U}(k)$ the corresponding subgroup of $\G(k)$.

\section{Review of quaternionic exceptional groups}\label{Sec: quatgroups}
In this section, we review notation and constructions from \cite{pollackQDS} concerning quaternionic exceptional groups.  For more details, we refer the reader to \cite[Sections 2,3,4]{pollackQDS}.

First recall the notion of a cubic norm structure $J$.  This is a finite dimensional vector space $J$ over a field $k$ that comes equipped with a homogeneous degree three norm map $N_J: J \rightarrow k$, a non-degenerate trace pairing $(\,,\,): J \otimes J \rightarrow k$, a distinguished element $1_J \in J$, and a quadratic map $\#: J \rightarrow J^\vee \simeq J$.  The relevant examples of cubic norm structures for this paper are $J = k$ and $J = H_3(C)$, the $3 \times 3$ hermitian matrices over a composition $k$-algebra $C$.

Out of a cubic norm structure $J$, one can create various algebraic groups.  First, denote by $M_J$ the identity component of the algebraic group of linear transformations of $J$ that preserve the norm $N_J$ up to scaling.  Let $M_J^1$ denote the subgroup of $M_J$ with scaling factor equal to $1$, and let $A_J$ be the subgroup of $M_J^1$ that fixes the element $1_J$ of $J$.

We next discuss the so-called Freudenthal construction. If $J$ is defined over the field $k$ of characteristic $0$, define $W_J = k \oplus J \oplus J^\vee \oplus k$, another vector space over $k$.  One puts on $W_J$ a certain non-degenerate symplectic form $\langle \,,\,\rangle$ and a quartic form $q: W_J \rightarrow k$.  The algebraic group $H_J$ is defined to be the identity component of the set of pairs $(g,\nu(g)) \in \GL(W_J) \times \GL_1$ that satisfy $\langle gw_1, gw_2 \rangle = \nu(g) \langle w_1, w_2 \rangle$ and $q(g w) = \nu(g)^2 q(w)$.  The map $\nu: H_J \rightarrow \GL_1$ is called the similitude, and $H_J^1$ is defined to be the kernel of $\nu$.

The next algebraic structure defined out of $J$ is a Lie algebra $\g(J)$.  There are two equivalent ways to define $\g(J)$.  In the first way, one defines \[\g(J) = \sl_3 \oplus \m_J^0 \oplus V_3 \otimes J \oplus (V_3 \otimes J)^\vee.\]  Here $\m_J^0$ is the Lie algebra of $M_J^1$ and $V_3$ is the standard three-dimensional representation of $\sl_3$.  A Lie bracket can be put on $\g(J)$; see \cite[section 4.2.1]{pollackQDS}.  We refer to this way of thinking about $\g(J)$ as the ``$\Z/3$-model".  Let $E_{ij}$ be the $3 \times 3$ matrix with a $1$ in the $(i,j)$ position and $0$'s elsewhere.  If $X = \sum_{i,j}{a_{ij} E_{ij}}$ has trace $0$, we will sometimes consider $X$ as an element of $\g(J)$ via the inclusion $\sl_3 \subseteq \g(J)$.

In the second way to define $\g(J)$, one puts \[\g(J) = \sl_2\oplus \h_J^0 \oplus V_2 \otimes W_J.\]   Here $\h_J^0$ is the Lie algebra of $H_J^1$ and $V_2$ is the standard two-dimensional representation of $\sl_2$.  We refer to this way of looking at $\g(J)$ as the $\Z/2$-model.  An explicit isomorphism between the $\Z/3$-model and the $\Z/2$-model is given in \cite[section 4.2.4]{pollackQDS}.  An algebraic group $G_J$ can now be defined as $Aut^0(\g(J))$, the identity component of the automorphisms of the Lie algebra $\g(J)$.

The algebraic groups $A_J, M_J, H_J, G_J$ fit into the Freudenthal magic square, as $J = H_3(C)$ varies with $\dim C = 1,2,4,8$. In table \ref{table:FMS}, we list the absolute Dynkin types of the above groups.  The magic square can be extended to a magic triangle, which was studied in \cite{deligneGross}.  We refer the reader to \cite{deligneGross} for properties of this triangle.
\begin{table}[h]
	\caption{The Freudenthal Magic Square, $J = H_3(C)$}\label{table:FMS}
	\begin{tabular}{|c|c|c|c|c|}
		\hline
		The group & $\dim C = 1$ & $\dim C =2$ & $\dim C = 4$ & $\dim C = 8$ \\ \hline
		$A_J$ & $A_1$ & $A_2$ & $C_3$ & $F_4$ \\ \hline
		$M_J$ & $A_2$& $A_2 \times A_2$ & $A_5$ & $E_6$ \\ \hline
		$H_J$ & $C_3$& $A_5$ & $D_6$& $E_7$ \\ \hline
		$G_J$ & $F_4$ & $E_6$ & $E_7$ & $E_8$ \\ \hline
	\end{tabular}
\end{table}

In the algebraic group $G_J$ we fix a specific parabolic subgroup $P_J$, called the Heisenberg parabolic; see \cite[section 4.3.2]{pollackQDS}.  The subgroup $P_J$ can be defined as the stabilizer of the line $k E_{13} \subseteq \g(J)$.  It has $H_J$ as a Levi subgroup and unipotent radical $N_J \supseteq Z \supseteq 1$ which is two-step.  Here $Z = [N_J,N_J]$ is the exponential of the line $k E_{13}$, and one can identify $N_J/Z$ with $W_J$, as a representation of $H_J$.

Suppose now that $k=\R$ and the trace pairing on $J$ is positive definite.  Then the associated real groups in each row of the magic square share similar properties: the groups $A_J$ are all anistropic, while the groups $M_J$ have real root system of type $A_2$, with root spaces that can be naturally identified with the composition algebra $C$.

In this setting, the groups $H_J$ all have a real root system of type $C_3$, with short root spaces identified with $C$ and long root spaces one-dimensional. Denote by $H_J^+$ the identity component of $H_J(\R)$.  The group $H_J^1$ or $H_J^+$ (which contains $H_J^1$) has a hermitian symmetric domain.  More specifically, let $\mathcal{H}_J = \{Z=X + i Y: X, Y \in J, Y > 0\}$.  Identify $\mathcal{H}_J$ with a subset of $W_J \otimes \C$ via $Z \mapsto r_0(Z) := (1, -Z, Z^\#, - N_J(Z))$.  Then one proves (see \cite[Proposition 2.3.1]{pollackQDS}) that given $g \in H_J^+$ and $Z \in \mathcal{H}_J$, there exists $j(g,Z) \in \C^\times$ so that $g \cdot r_0(Z) = j(g,Z) r_0(gZ)$, for an element $gZ \in \mathcal{H}_J$.  This simultaneously defines an action of $H_J^+$ on $\mathcal{H}_J$ and the factor of automorphy $j(g,Z)$.

Still assuming that $k=\R$ and the trace pairing on $J$ is positive-definite, the group $G_J$ is called a quaternionic group.  The groups $G_J$ in the final row of the Freudenthal magic square now all have real root system of type $F_4$, with short root spaces identified with $C$ and long root spaces one-dimensional.  When $J=\R$ instead of $H_3(C)$, the group $G_J$ is $G_2$.  We refer to these cases by saying that $G_J$ is a quaternionic adjoint exceptional group.  In these cases, the group $G_J(\R)$ is connected \cite{thang}.

Suppose $G_J$ is an adjoint quaternionic exceptional group.  Then a specific Cartan involution on its Lie algebra $\g(J)$ is defined in \cite[section 4.2.3]{pollackQDS}. We denote by $K_J$ the associated maximal compact subgroup of $G_J(\R)$.  The group $K_J$ is of the form $(\SU(2) \times L^0(J))/\mu_2(\R)$, for a certain compact group $L^0(J)$.

In \cite[section 5.1]{pollackQDS}, a specific $\sl_2$-triple $(e_\ell,h_\ell,f_\ell)$ of the complexified Lie algebra of the $\SU(2)$ factor of $K_J$ is defined.  We now recall this $\sl_2$-triple.  Let $e = (1,0)^t$ and $f = (0,1)^t$ denote the standard basis of the two-dimensional representation of $\sl_2 \subseteq \g(J) =  \sl_2\oplus \h_J^0 \oplus V_2 \otimes W_J$.  One sets $e_\ell = \frac{1}{4}(ie +f) \otimes r_0(i \cdot 1_J)$, $f_\ell = -\overline{e_{\ell}}$, and $h_{\ell} = [e_{\ell},f_{\ell}]$.  Here $1_J$ is the identity element of the cubic norm structure $J$.

For $\ell \in \frac{1}{2} \Z_{\geq 0}$, set $\mathbb{V}_2=\C^2$ and $\mathbf{V}_{\ell} = Sym^{2\ell}(\mathbb{V}_2)$, a representation of the Lie algebra of $K_J$ via the projection to the $\SU(2)$ factor.  Using the above $\sl_2$-triple, we fix a basis of $\mathbf{V}_{\ell}$, as follows.  First, let $x,y$ denote a weight basis of $\mathbb{V}_2$ for $h_{\ell}$ with $y = f_{\ell} x$.  Then we let the monomials $x^iy^j$ for $i+j = 2\ell$ be our fixed basis of $\mathbf{V}_{\ell}$.  When $\ell$ is an integer, the representation $\mathbf{V}_{\ell}$ exponentiates to a representation of $K_J$.

\section{The cover in the archimedean case}\label{sec:dcQEG}
In this section, we describe an explicit construction of a connected topological double cover of the quaternionic adjoint groups $G_J(\R)$. This gives the unique non-linear double cover of these groups.

\subsection{Preliminaries}
Now let $J$ be a cubic norm structure over the real numbers $\R$, with positive definite trace pairing.  We assume $J = \R$ or $J = H_3(C)$ with $C$ a composition algebra over $\R$ with positive-definite norm.

Fix the $\sl_2$-triple $e_\ell, h_\ell, f_\ell$ of $\g_J\otimes \C$, recalled above. Identify $Span(e_\ell,h_\ell,f_\ell)$ with $Sym^2(\mathbb{V}_2)$ by sending $e_\ell \mapsto x^2$, $h_\ell \mapsto -2xy$, $f_\ell \mapsto -y^2$.  This identification is $K_J$-equivariant; see right before Lemma 9.0.2 in \cite{pollackQDS}.

We recall an Iwasawa decomposition for the group $G_J(\R)$.  Let $P_J=H_JN_J$ be the Heisenberg parabolic of $G_J$. Let $Q_J$ be the parabolic subgroup associated to the cocharacter $t \mapsto \diag(t,t,t^{-2}) \in \SL_3 \rightarrow G_J$.  The Lie algebra of $Q_J$ contains the root spaces where $E_{11}+E_{22}-2E_{33}$ acts by the weights $0,1,2$ or $3$.  Moreover, $Q_J$ stabilizes $Span(E_{13},E_{23})$ in the $\Z/3$-model of $\g_J$, as one sees by checking this on the Lie algebra level. Define $R_J = P_J \cap Q_J$ and denote by $R_J^+$ the connected component of the identity of $R_J(\R)$.  Recall that $K_J$ denotes the maximal compact subgroup of $G_J(\R)$ associated to the Cartan involution described in \cite{pollackQDS}.

\begin{proposition}\label{prop:iwasawa} Every $g \in G_J(\R)$ can be written as $g = rk$ with $r \in R_J^+$ and $k \in K_J$.  Moreover, if $k \in R_J^+ \cap K_J$, then $k$ acts trivially on $\Span(e_\ell, h_\ell, f_\ell)$.
\end{proposition}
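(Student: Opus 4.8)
The plan is to deduce the decomposition $G_J(\R)=R_J^+K_J$ from the ordinary Iwasawa decomposition, and to control $R_J^+\cap K_J$ by showing it lies inside the stabilizer $A_J(\R)$ of the identity element $1_J$.

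For the decomposition, the point is that $P_J$ and $Q_J$ are both parabolic $\R$-subgroups of $G_J$ (the former as the stabilizer of the line $\R E_{13}$, the latter by construction from a cocharacter) which contain a common minimal parabolic $\R$-subgroup $P_0$. I would verify this directly in the $\Z/3$-model: the $\SL_3$ there is $\R$-split, and one fixes a maximal $\R$-split torus of $G_J$ extending its diagonal torus together with a system of positive roots for which $E_{13}$ spans the highest root space — so that $P_J$ is standard — and for which the cocharacter $\diag(t,t,t^{-2})$ is dominant — so that $Q_J$ is standard. Then the minimal parabolic $P_0$ attached to this data is contained in $R_J=P_J\cap Q_J$, and the Iwasawa decomposition $G_J(\R)=\mathbf N_0\mathbf A_0 K_J$ (where $\mathbf N_0\mathbf A_0$ is connected and contained in $P_0(\R)\subseteq R_J(\R)$, hence in $R_J^+$, and where we use that $G_J(\R)$ is connected) gives $G_J(\R)\subseteq R_J^+K_J$, hence equality.

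For the second assertion I would first reduce it to $R_J^+\cap K_J\subseteq A_J(\R)$. Writing $\theta$ for the Cartan involution with $K_J=G_J(\R)^\theta$, the Levi factors $H_J$ of $P_J$ and $L_{Q_J}$ of $Q_J$ can be chosen $\theta$-stable, and then the standard identity $P(\R)\cap K=L(\R)\cap K$ for a parabolic with $\theta$-stable Levi $L$ gives
\[
R_J(\R)\cap K_J=\bigl(H_J(\R)\cap K_J\bigr)\cap\bigl(L_{Q_J}(\R)\cap K_J\bigr)=\bigl(H_J\cap L_{Q_J}\bigr)(\R)\cap K_J.
\]
Next I would compute $H_J\cap L_{Q_J}$: the group $M_J^1$ lies in $H_J$ as a Levi subgroup, and it also lies in $L_{Q_J}$ because it centralizes the $\SL_3$ in the $\Z/3$-model and hence the cocharacter $\diag(t,t,t^{-2})$; the remaining semisimple factor of $L_{Q_J}$, an $\SL_2$ inside that $\SL_3$, meets $H_J$ only in a maximal torus. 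Thus $H_J\cap L_{Q_J}=M_J^1\times S$ for a split torus $S$. Since $\theta$ acts by inversion on $S$, the $S$-component of a $\theta$-fixed element is $2$-torsion, and intersecting with the connected group $R_J^+$ kills it; therefore $R_J^+\cap K_J\subseteq M_J^1(\R)^\theta$. Finally, the Cartan involution of \cite{pollackQDS} restricts on $M_J^1$ to a Cartan involution whose fixed-point group is exactly $A_J(\R)$ — the stabilizer of $1_J$ in $M_J^1(\R)$ — which completes the reduction.

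It then remains to observe that $A_J(\R)$ acts trivially on $\Span(e_\ell,h_\ell,f_\ell)$. An element $m\in A_J$ fixes $1_J$, preserves the norm $N_J$, the quadratic map $\#$, and the trace pairing, and acts trivially on the $\sl_2$-summand and on the first tensor factor $V_2$ of the $\Z/2$-model $\g(J)=\sl_2\oplus\h_J^0\oplus V_2\otimes W_J$; hence $m$ fixes $r_0(i\cdot 1_J)=(1,-i1_J,-1_J,i)$ and therefore $e_\ell=\tfrac14(ie+f)\otimes r_0(i\cdot 1_J)$. Being $\R$-rational, $m$ commutes with complex conjugation and so also fixes $f_\ell=-\overline{e_\ell}$; being a Lie-algebra automorphism it fixes $h_\ell=[e_\ell,f_\ell]$. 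I expect the main obstacle to be the two group-theoretic inputs in the reduction — the computation $H_J\cap L_{Q_J}=M_J^1\times S$ and the identification $M_J^1(\R)^\theta=A_J(\R)$ — both of which follow from the explicit constructions in \cite{pollackQDS} but have to be unwound carefully; verifying in the first part that $P_J$ and $Q_J$ share a minimal parabolic is routine for split $G_2,F_4$ and needs slightly more care with the $\R$-structure for quaternionic $E_6,E_7$.
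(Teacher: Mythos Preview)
Your argument for the Iwasawa decomposition $G_J(\R)=R_J^+K_J$ is essentially the same as the paper's, which simply cites the usual Iwasawa decomposition.

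For the second assertion, your route is genuinely different. The paper does not attempt to identify $R_J^+\cap K_J$ with (or inside) $A_J(\R)$. Instead it argues directly: first, $R_J^+\cap K_J = M(R_J)^+\cap K_J$ is connected (being a maximal compact of a connected real reductive group), so the action on the lines $\R E_{13}$ and $\R E_{23}$, which it stabilizes, must be trivial. Then it invokes the known fact (Lemma~9.0.1 of \cite{pollackQDS}) that $H_J^1(\R)\cap K_J$ acts on $e_\ell$ by the scalar $j(k,i\cdot 1_J)$; since $R_J^+\cap K_J \subseteq H_J^1(\R)\cap K_J$, the action on $e_\ell$ is scalar, and since the action on $E_{23}$ is trivial this scalar is $1$. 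Triviality on $f_\ell,h_\ell$ follows.

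Your approach is more structural: you compute $H_J\cap L_{Q_J}\simeq M_J^1\times S$, kill the split torus $S$ via $\theta$-inversion and connectedness, and then invoke $M_J^1(\R)^\theta = A_J(\R)$, finishing by observing that $A_J$ fixes $r_0(i\cdot 1_J)$ componentwise. This is correct in outline and gives the sharper statement $R_J^+\cap K_J\subseteq A_J(\R)$, at the cost of verifying the two group-theoretic inputs you flag (the product decomposition and the identification of the $\theta$-fixed points) from the explicit models in \cite{pollackQDS}, as well as the $\theta$-stability of the specific Levis $H_J$ and $L_{Q_J}$. One small point: the sentence ``intersecting with the connected group $R_J^+$ kills it'' hides the fact that you really need $R_J^+\cap K_J$ itself to be connected, which is exactly the observation the paper uses (maximal compact of a connected reductive group); you should make that step explicit. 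The paper's argument avoids all of this structure by going straight through the factor of automorphy $j(k,i)$, which it needs anyway for the construction of the double cover.
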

\begin{proof} The first part follows from the usual Iwasawa decomposition of $G_J$.
	
	For the second part, let $M(R_J)$ denote the standard Levi subgroup of $R_J$, so that $M(R_J)$ is the subgroup of $H_J$ that is the centralizer of the cocharacter defined above.  Then $R_J(\R) \cap K_J = M(R_J)(\R) \cap K_J$.  Thus $R_J(\R) \cap K_J$ stabilizes the lines $\R E_{13}$ and $\R E_{23}$ in the Lie algebra $\g(J)$.  We claim that $R_J^+ \cap K_J$ acts trivially on these lines.  To see this, observe that $R_J^+ \cap K_J = M(R_J)^+ \cap K_J$ is connected as it is a maximal compact subgroup of a real connected reductive group.  The triviality of the action of $R_J^+ \cap K_J$ on $E_{13}$ and $E_{23}$ follows.
	
	Recall that $H_J^1$ denotes the similitude equal one subgroup of the Freudenthal group $H_J$.  One has $H_J^1(\R) \cap K_J$ acts by the scalar $j(k,i\cdot 1_J)$ on $e_{\ell}$; see Lemma 9.0.1 of \cite{pollackQDS}.  Because $R_J^+ \cap K_J \subseteq H_J^1(\R) \cap K_J$, $R_J^+ \cap K_J$ acts by a scalar on $e_\ell$.  Because $R_J^+ \cap K_J$ acts trivially on $E_{23}$, this scalar is $1$.  We deduce that $R_J^+ \cap K_J$ acts trivially on $e_{\ell}$, from which it follows that it also acts trivially on $f_{\ell}$ and $h_{\ell}$.
	
	Note that for the second part, one cannot replace $R_J^+$ with $R_J(\R)$ as some elements of $R_J(\R) \cap K_J$ act nontrivially on $\Span(e_\ell, h_\ell, f_\ell)$.\end{proof}

\subsection{The double cover} For $k \in K_J$, denote by $Ad(k)$ the action of $k$ on the space $\Span(e_\ell, h_\ell, f_\ell) = Sym^2(\mathbb{V}_2)$.  Fix an $\R^\times_{>0}$-valued character $\chi$ of $R_J^+$, to be specified later.  We define
\[f_{lin}: G_J(\R) \rightarrow Aut_\C(Sym^2(\mathbb{V}_2)) \simeq \GL_3(\C)\]
as $f_{lin}(g) = \chi(r)Ad(k)$ if $g = rk$ with $r \in R_J^+$ and $k \in K_J$.  By Proposition \ref{prop:iwasawa}, $f_{lin}$ is well-defined, because $\chi(R_J^+ \cap K_J) = 1$ as the image is a compact subgroup of $\R^\times_{>0}$.

Now, consider the symmetric space $X_J = G_J(\R)/K_J$; it is connected and contractible.  Define $j_{lin}(g,x)$ for $x \in X_J$ and $g \in G_J(\R)$ as $f_{lin}(gh) f_{lin}(h)^{-1}$ if $x = h K_J$.  Note that $j_{lin}$ is well-defined.

One has the following proposition, whose proof we omit; it follows from the fact that the Iwasawa decomposition of $G_J(\R)$ is smooth.
\begin{proposition} The maps
	\[
	f_{lin}: G_J(\R) \longrightarrow Aut_\C(Sym^2(\mathbb{V}_2))\:\text{  and  }\: j_{lin}: G_J(\R) \times X_J \longrightarrow Aut_\C(Sym^2(\mathbb{V}_2))
	\]
	are smooth.
\end{proposition}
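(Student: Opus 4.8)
The plan is to deduce the smoothness of both $f_{lin}$ and $j_{lin}$ from the smoothness of the Iwasawa decomposition $G_J(\R) = R_J^+ K_J$ of Proposition \ref{prop:iwasawa}, by realizing each map as a smooth map that is constant on the fibers of a surjective submersion, and then invoking the general principle that such a map descends to a smooth map on the base.

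First I would treat $f_{lin}$. Consider the map $\psi : R_J^+ \times K_J \to Aut_\C(Sym^2(\mathbb{V}_2))$ given by $\psi(r,k) = \chi(r)\, Ad(k)$. This is smooth: $\chi$ is a continuous (hence smooth) homomorphism $R_J^+ \to \R^\times_{>0}$, the action $Ad$ of $K_J$ on the $\SU(2)$-stable subspace $\Span(e_\ell,h_\ell,f_\ell) \subseteq \g_J \otimes \C$ is smooth, and scalar multiplication and composition of linear maps are smooth. The multiplication map $\mu : R_J^+ \times K_J \to G_J(\R)$ is surjective by Proposition \ref{prop:iwasawa}, and it is a submersion since $\mathfrak{r}_J + \k_J = \g_J$ (translating, this gives surjectivity of $d\mu$ at every point); this is precisely the content of the smoothness of the Iwasawa decomposition. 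Next I would check that $\psi$ is constant on the fibers of $\mu$: the fiber over $g = rk$ is $\{(rm, m^{-1}k) : m \in R_J^+ \cap K_J\}$, and $\psi(rm, m^{-1}k) = \chi(r)\chi(m)\,Ad(m)^{-1}Ad(k) = \chi(r)\,Ad(k)$, because $\chi(R_J^+ \cap K_J)$ is a compact subgroup of $\R^\times_{>0}$ and hence trivial, while $Ad(m) = \mathrm{id}$ on $\Span(e_\ell,h_\ell,f_\ell)$ for $m \in R_J^+ \cap K_J$ by Proposition \ref{prop:iwasawa}. Therefore $\psi$ descends along $\mu$ to a smooth map $G_J(\R) \to Aut_\C(Sym^2(\mathbb{V}_2))$, which is exactly $f_{lin}$ (and well-definedness of $f_{lin}$ is subsumed in this).

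For $j_{lin}$ I would first record the transformation law $f_{lin}(gk_0) = f_{lin}(g)\,Ad(k_0)$ for $k_0 \in K_J$, which is immediate from the definition together with $Ad(kk_0) = Ad(k)Ad(k_0)$. Then the map $G_J(\R) \times G_J(\R) \to Aut_\C(Sym^2(\mathbb{V}_2))$, $(g,h) \mapsto f_{lin}(gh)\, f_{lin}(h)^{-1}$, is smooth, being a composition of group multiplication, $f_{lin}$ on each factor, inversion in $\GL_3(\C)$, and composition of linear maps. By the transformation law it is invariant under $h \mapsto hk_0$ in the second variable, hence constant on the fibers of the surjective submersion $\mathrm{id} \times \pi : G_J(\R) \times G_J(\R) \to G_J(\R) \times X_J$, where $\pi : G_J(\R) \to X_J$ is the quotient projection (a smooth principal $K_J$-bundle). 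It therefore descends to a smooth map on $G_J(\R) \times X_J$, which is $j_{lin}$. The only delicate point is that the Iwasawa factors $r,k$ of $g$ are not globally smooth functions of $g$ — they are determined only up to $R_J^+ \cap K_J$ — which is why the argument is phrased via descent along $\mu$ (equivalently, one could work with local smooth sections of $\mu$ and of $\pi$) rather than by substituting a global section; this is the main thing to be careful about, but it is handled entirely by Proposition \ref{prop:iwasawa} and the submersion principle.
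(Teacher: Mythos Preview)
Your proof is correct and is essentially a careful expansion of the argument the paper alludes to but omits: the paper simply states that smoothness ``follows from the fact that the Iwasawa decomposition of $G_J(\R)$ is smooth,'' and your descent-along-a-surjective-submersion argument is precisely one clean way to make that sentence rigorous. Your handling of the non-uniqueness of the factorization via constancy on fibers (using both conclusions of Proposition~\ref{prop:iwasawa}) is the right thing to do, and the deduction of smoothness of $j_{lin}$ from that of $f_{lin}$ via the transformation law is also correct.
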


We may now define $\G_J$.
\begin{definition}\label{Def: double cover}
	Let $\G_J$ be the set of pairs $(g,j_g)$ with $g \in G_J(\R)$ and
	\[
	j_g: X_J \rightarrow Aut_\C(\mathbb{V}_2)
	\]
	continuous so that $Sym^2(j_g(x)) = j_{lin}(g,x)$.  A multiplication is defined as
	\[(g_1,j_1(x))(g_2,j_2(x)) = (g_1g_2,j_1(g_2x)j_2(x)).\]
The identity is the element $(1,e)$ where $e(x) = 1$ for all $x$.
\end{definition}  With these definitions, it is easily checked that $\G_J$ is a group.

A topology can be put on $\G_J$ as follows.  Let $x_0 = 1 K_J \in X_J$ be the basepoint determined by $K_J$.  Now, note that given $g \in G_J(\R)$, there are exactly two continuous lifts $X_J \rightarrow Aut_\C(\mathbb{V}_2)$ of $j_{lin}(g,-): X_J \rightarrow Aut_\C(Sym^2(\mathbb{V}_2))$, and that these lifts are determined by their value at $x_0$.  Thus there is an injective map of sets $\G_J\rightarrow G_J(\R) \times \GL_2(\C)$ given by $(g,j_g(x)) \mapsto (g, j_g(x_0))$.  We give $\G_J$ the subspace topology of $G_J(\R) \times \GL_2(\C)$ via this map.

For $g'=(g,j_g(x)) \in \G_J$, we write $j_{1/2}(g',x) := j_g(x)$.

\begin{proposition} With the above topology, $\G_J$ is a connected topological group.  The canonical map $\G_J \rightarrow G_J(\R)$ is a covering map with central $\mu_2$ kernel.\end{proposition}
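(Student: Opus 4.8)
The plan is to identify $\widetilde{G}_J$ with a pullback of the two-sheeted covering $\mathrm{Sym}^2\colon\GL_2(\C)\to\GL_3(\C)$, to deduce the covering statement from this, and then to check that the group operations are continuous using the homotopy-lifting property of covering maps. Write $\mathrm{Sym}^2\colon\GL_2(\C)\to\GL_3(\C)$ for the homomorphism on $\mathrm{Sym}^2(\mathbb{V}_2)$; it is surjective onto its (closed) image $S$ with central kernel $\{\pm I_2\}\cong\mu_2$, hence restricts to a two-sheeted covering $\GL_2(\C)\to S$. One first records that $f_{lin}$, and therefore $j_{lin}$, takes values in $S$: for $k\in K_J$ the operator $\mathrm{Ad}(k)$ on $\Span(e_\ell,h_\ell,f_\ell)$ is the adjoint action of the $\SU(2)$-component of $k$ alone (since $L^0(J)$ commutes with that factor), so it equals $\mathrm{Sym}^2$ of a matrix in $\SU(2)$; and $\chi(r)I_3=\mathrm{Sym}^2(\sqrt{\chi(r)}\,I_2)$; so $f_{lin}(g)\in S$, and $j_{lin}(g,x)=f_{lin}(gh)f_{lin}(h)^{-1}\in S$ since $S$ is a subgroup. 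Consequently $E:=\{(g,x,w)\in G_J(\R)\times X_J\times\GL_2(\C):\mathrm{Sym}^2(w)=j_{lin}(g,x)\}$ is, via the (smooth) map $j_{lin}$, the pullback of $\mathrm{Sym}^2\colon\GL_2(\C)\to S$, so $E\to G_J(\R)\times X_J$ is a two-sheeted covering; restricting to $x=x_0$ gives a two-sheeted covering $E_{x_0}:=\{(g,w):\mathrm{Sym}^2(w)=j_{lin}(g,x_0)\}\to G_J(\R)$. Since $\iota\colon\widetilde{G}_J\to G_J(\R)\times\GL_2(\C)$, $(g,j_g)\mapsto(g,j_g(x_0))$, is by construction a homeomorphism onto $E_{x_0}$, the canonical map $\widetilde{G}_J\to G_J(\R)$ is a covering map, and its fiber over $1$ is $\{(1,w):\mathrm{Sym}^2(w)=I_3\}=\{(1,\pm I_2)\}\cong\mu_2$, which is central since these elements are scalars.

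The crux is continuity of the group operations, and the key lemma I would establish is joint continuity of the evaluation map $\Phi\colon E_{x_0}\times X_J\to\GL_2(\C)$, $((g,w),x)\mapsto j_g(x)$, where $j_g$ is the unique continuous $\mathrm{Sym}^2$-lift of $j_{lin}(g,-)$ with $j_g(x_0)=w$. Because $X_J$ is contractible, fix $c\colon X_J\times[0,1]\to X_J$ with $c(x,0)=x_0$ and $c(x,1)=x$; then $H(((g,w),x),s):=(g,c(x,s))$ defines a continuous homotopy $(E_{x_0}\times X_J)\times[0,1]\to G_J(\R)\times X_J$ whose value at $s=0$ lifts continuously to $((g,w),x)\mapsto(g,x_0,w)\in E$. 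The homotopy-lifting property of the covering $E\to G_J(\R)\times X_J$ produces a unique continuous lift $\widetilde H$ extending it; evaluating at $s=1$ and invoking uniqueness of path-lifting (legitimate because $X_J$ is simply connected, so $j_g$ is independent of the chosen path) identifies $\widetilde H(((g,w),x),1)=(g,x,j_g(x))$, so $\Phi$ is continuous. Granting this, multiplication in $\widetilde{G}_J$ becomes, through $\iota$, the map $((g_1,w_1),(g_2,w_2))\mapsto\bigl(g_1g_2,\ \Phi((g_1,w_1),g_2x_0)\,w_2\bigr)$, continuous by $\Phi$, by continuity of the action $(g_1,g_2)\mapsto g_2x_0$, and by matrix multiplication; the output lies in $E_{x_0}$ since $\widetilde{G}_J$ is already known to be a group. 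Likewise inversion becomes $(g,w)\mapsto\bigl(g^{-1},\ \Phi((g,w),g^{-1}x_0)^{-1}\bigr)$, which is continuous. Thus $\widetilde{G}_J$ is a topological group. I expect this joint-continuity step for $\Phi$ to be the main obstacle; the remaining verifications are formal once $\Phi$ is in hand.

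Finally, for connectedness it suffices to exhibit a loop in $G_J(\R)$ that does not lift to a loop in $\widetilde{G}_J$: a two-sheeted cover of a connected space is connected precisely when it is nontrivial, equivalently when some loop fails to lift, and $G_J(\R)$ is connected by \cite{thang}. Writing $K_J=(\SU(2)\times L^0(J))/\mu_2(\R)$ with $L^0(J)$ simply connected and $\mu_2(\R)$ embedded diagonally, $\pi_1(G_J(\R))=\pi_1(K_J)\cong\Z/2$ is generated by the loop $\bar\gamma$ that is the image in $K_J$ of a path $(\rho,\sigma)$ in $\SU(2)\times L^0(J)$ from $(1,1)$ to $(-1,-1)$. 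Taking $x_0=1\cdot K_J$, for $k\in K_J$ the factorization $k=1\cdot k$ together with Proposition~\ref{prop:iwasawa} gives $f_{lin}(k)=\mathrm{Ad}(k)$, which on $\mathrm{Sym}^2(\mathbb{V}_2)$ equals $\mathrm{Sym}^2$ of the $\SU(2)$-component of $k$; hence $j_{lin}(\bar\gamma(t),x_0)=\mathrm{Sym}^2(\rho(t))$, and the continuous lift of $\bar\gamma$ through $\widetilde{G}_J$ starting at $(1,I_2)$ is $t\mapsto(\bar\gamma(t),\rho(t))$, which terminates at $(1,\rho(1))=(1,-I_2)\neq(1,I_2)$. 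So $\bar\gamma$ does not lift to a loop, $\widetilde{G}_J\to G_J(\R)$ is the nontrivial double cover, and $\widetilde{G}_J$ is connected.
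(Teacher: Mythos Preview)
Your argument is correct and follows the same underlying strategy as the paper: covering-space theory for the topological-group and covering-map claims, and a loop in $K_J$ that lifts to a non-closed path for connectedness. The paper simply declares the first two claims an ``exercise in covering space theory'' and omits them, whereas you supply an honest mechanism (the pullback identification with $E_{x_0}$ and the joint-continuity lemma for $\Phi$ via homotopy lifting); this is a genuine contribution over what the paper writes down.

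For connectedness the two executions differ in flavor but not in substance. The paper picks the explicit element $h_0=\left(\begin{smallmatrix}0&1\\-1&0\end{smallmatrix}\right)\in\sl_2\subseteq\g_J$, computes directly that $\exp(th_0)$ acts on $e_\ell,h_\ell,f_\ell$ by $\diag(e^{-it},1,e^{it})$, and tracks the lift $j_{1/2}(\widetilde\gamma(t),x_0)=\diag(e^{-it/2},e^{it/2})$ to see it ends at $-I_2$ when $t=2\pi$. You instead invoke the structure $K_J=(\SU(2)\times L^0(J))/\mu_2(\R)$ and use any path $(\rho,\sigma)$ from $(1,1)$ to $(-1,-1)$; since $\mathrm{Ad}$ on $\Span(e_\ell,h_\ell,f_\ell)$ sees only the $\SU(2)$-component, the lift ends at $\rho(1)=-I_2$. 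The paper's version has the virtue of being entirely self-contained within the coordinates it has already set up; yours is cleaner conceptually but leans on the assertion that $\mu_2$ sits diagonally (and, in an aside you don't actually need, that $L^0(J)$ is simply connected). Neither point is proved in the paper, though both are true for these groups; if you keep your version, you could simply note that all you need is that the projection of $\mu_2$ to the $\SU(2)$ factor is nontrivial, which is forced by the fact that $\mathbf{V}_{\ell/2}$ for odd $\ell$ is genuine.
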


\begin{proof} One first proves that $\G_J$ is a topological group and $\G_J \rightarrow G_J(\R)$ is a covering space.  This is an exercise in covering space theory, so we omit it.
	
	Let us explain the connectedness of $\G_J$.  We will check that $(1,e(x))$ and $(1,-e(x))$ are connected by a path.  Given the other claims, this suffices.
	
	To see that $(1,e(x))$ is connected to $(1,-e(x))$, we consider $h_0 = \mm{0}{1}{-1}{0} \in \sl_2 \subseteq \g_J = \sl_2 \oplus \h_J^0 \oplus \mathbb{V}_2 \otimes W_J$.  Now, by our formulas for the Cartan decomposition, $h_0$ is in the Lie algebra of $K_J$, so $\exp(t h_0)$ is in $K_J \subseteq G_J(\R)$.   One computes that $\exp(th_0)$ acts on $e_{\ell},h_\ell, f_\ell$ as
	\begin{itemize}
		\item $e_\ell \mapsto e^{-it} e_{\ell}$
		\item $h_\ell \mapsto h_\ell$
		\item $f_\ell \mapsto e^{it} f_{\ell}$.
	\end{itemize}
	
	Now consider the path $[0,2\pi] \rightarrow K_J \subseteq G_J(\R)$ given by $t \mapsto \exp(th_0)$.  This path is a loop, with $2\pi \mapsto 1$.  Because $\G_J \rightarrow G_J(\R)$ is a covering space, it lifts to a path $\widetilde{\gamma}:[0,2\pi] \rightarrow \G_J$ satisfying $\widetilde{\gamma}(0) = 1$.  Thus $j_{1/2}(\widetilde{\gamma}(t),x_0) \in \GL_2(\C)$ satisfies that its symmetric square is the action on $e_\ell,h_\ell,f_\ell$ given above. Because it is continuous and the identity at $t=0$, $j_{1/2}(\widetilde{\gamma}(t),x_0) = \diag(e^{-it/2},e^{it/2})$.  Consequently $j_{1/2}(\widetilde{\gamma}(2\pi),x_0) = -1$.  This proves our assertion.
\end{proof}

Note that since $K_J$ is itself connected and the path $\widetilde{\gamma}$ stays in $\widetilde{K}_J$, we see that the inverse image $\widetilde{K}_J$ of $K_J$ is a connected compact Lie group.

Because $\G_J \rightarrow G_J(\R)$ is a covering space, $\G_J$ is uniquely a Lie group.  Note also that the map $j_{1/2}(\,,x_0): \widetilde{K}_J \rightarrow Aut_\C(\mathbb{V}_2)$ is a group homomorphism.  Finally, we remark that $R_J^+$ splits into $\G_J$ as $r \mapsto (r,j_r(x))$ with $j_r(x) = \chi(r)^{1/2}$ for all $x \in X_J$.

\subsection{An application}
Define $\nu: R_J \rightarrow \GL_1$ as $r E_{13} = \nu(r) E_{13}$ and $\lambda: R_J \rightarrow \GL_1$ as $r E_{23} = \lambda(r) E_{23} + * E_{13}$.  In other words, if $\det$ is the determinant of the action of $R_J$ on $\Span(E_{13},E_{23})$ then $\lambda = \det(\cdot) \nu^{-1}$.  Define $\chi$, the character defining $f_{lin}$ as $\chi=\nu \lambda^{-1} = \nu^2 \det(\cdot)^{-1}$.  With this choice, which we will make from now on, one has the following lemma.  Let $K_H = H_J^1(\R) \cap K_J$ be a maximal compact subgroup of $H_J^1(\R)$.
\begin{lemma}\label{lem:j12} With $h \in H_J^+$, one has $j_{lin}(h,x_0) = \diag(j(h,i),1,\overline{j(h,i)})$ via the action on $x^2,xy,y^2$.  Thus if $z \in \mathcal{H}_J = H_J^1(\R)/K_H \subseteq G_J/K_J$, then $j_{lin}(h,z) = \diag(j(h,z),1,\overline{j(h,z)}).$  Consequently, the $(1,1)$-coordinate of $j_{1/2}: \widetilde{H_J^+} \rightarrow \GL_2(\C)$ defines a squareroot of $j(h,z)$.\end{lemma}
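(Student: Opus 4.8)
The plan is to reduce everything to a single identity at the basepoint $x_0 = 1\cdot K_J$. Directly from the definition of $j_{lin}$, using the representative $1$ for $x_0$, one has $j_{lin}(g, x_0) = f_{lin}(g)$ for all $g\in G_J(\R)$ (since $f_{lin}(1)=\mathrm{id}$); thus the first assertion is equivalent to
\[
f_{lin}(h) = \diag\!\big(j(h, i\cdot 1_J),\,1,\,\overline{j(h, i\cdot 1_J)}\big)\qquad(h\in H_J^+)
\]
in the basis $(x^2, xy, y^2) = (e_\ell, -\tfrac12 h_\ell, -f_\ell)$. Granting this, the second assertion follows formally: $\mathcal H_J = H_J^1(\R)/K_H$ is acted on transitively by $H_J^1(\R)^+$, and the embedding $\mathcal H_J \hookrightarrow X_J$ sends $i\cdot 1_J$ to $x_0$; so given $z\in\mathcal H_J$, pick $h_0\in H_J^1(\R)^+$ with $h_0 x_0 = z$, and then using the cocycle relation for $j_{lin}$ and the case already proved,
\[
j_{lin}(h, z) = j_{lin}(hh_0, x_0)\,j_{lin}(h_0, x_0)^{-1} = \diag\!\big(j(h, z),\,1,\,\overline{j(h, z)}\big),
\]
the last step by the cocycle identity $j(hh_0, i\cdot 1_J) = j(h, z)\,j(h_0, i\cdot 1_J)$. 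The third assertion is then immediate: $\Sym^2(j_{1/2}(\widetilde h, z)) = j_{lin}(h, z)$, and the $(1,1)$-entry of $\Sym^2(M)$ is $M_{11}^2$ for any $M\in\GL_2(\C)$, so the $(1,1)$-entry of $j_{1/2}(\widetilde h, z)$ is a squareroot of $j(h, z)$.

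It remains to prove the displayed identity for $f_{lin}(h)$. The first point is that the Iwasawa decomposition $G_J(\R) = R_J^+ K_J$ restricts to $H_J^+$: the subgroup $R_J\cap H_J$ is a parabolic of $H_J$ with $(R_J\cap H_J)(\R)^+ \subseteq R_J^+$, one has $K_H := H_J^1(\R)\cap K_J \subseteq K_J$, and $H_J^+ = (R_J\cap H_J)(\R)^+\,K_H$ by the Iwasawa decomposition of $H_J^+$; by uniqueness of the Iwasawa decomposition, for $h\in H_J^+$ the two components of $h = rk$ lie in $(R_J\cap H_J)(\R)^+$ and $K_H$ respectively. Hence $f_{lin}(h) = \chi(r)\,\Ad(k)$ with $r\in (R_J\cap H_J)(\R)^+$ and $k\in K_H$.

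For the $K_H$-factor, invoke \cite[Lemma 9.0.1]{pollackQDS}: $\Ad(k)\,e_\ell = j(k, i\cdot 1_J)\,e_\ell$ for $k\in K_H$. Since $f_\ell = -\overline{e_\ell}$ and $\overline{h_\ell} = -h_\ell$ while $\Ad(k)$ is a real operator, it follows that $\Ad(k)\,f_\ell = \overline{j(k, i\cdot 1_J)}\,f_\ell$ and $\Ad(k)\,h_\ell = |j(k, i\cdot 1_J)|^2\,h_\ell = h_\ell$, the last equality because $k$ fixes $i\cdot 1_J$, so $|j(k, i\cdot 1_J)| = 1$. Thus $\Ad(k)$ acts as $\diag(j(k, i\cdot 1_J), 1, \overline{j(k, i\cdot 1_J)})$ on $(x^2, xy, y^2)$. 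For the factor $\chi(r)$ one uses the explicit normalization $\chi = \nu^2\det(\cdot)^{-1}$ together with the explicit action of the parabolic $R_J\cap H_J$ on $r_0(i\cdot 1_J) = (1, -i\cdot 1_J, (i\cdot 1_J)^\#, -N_J(i\cdot 1_J))$: passing (via \cite[section 4.2.4]{pollackQDS}) between the incarnation of $\Span(E_{13}, E_{23})$ in the $\Z/3$-model and the incarnation of $W_J$ in the $\Z/2$-model identifies the contribution of $r$ with $\diag(j(r, i\cdot 1_J), 1, \overline{j(r, i\cdot 1_J)})$. Multiplying the two and using $j(rk, i\cdot 1_J) = j(r, i\cdot 1_J)\,j(k, i\cdot 1_J)$ (valid since $k$ fixes $i\cdot 1_J$) gives the desired formula for $f_{lin}(h)$.

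The step I expect to be the main obstacle is this last one: verifying that $\chi = \nu^2\det(\cdot)^{-1}$, defined purely in terms of the $R_J$-action on the plane $\Span(E_{13}, E_{23})$ in the $\Z/3$-model, reproduces the Freudenthal factor of automorphy $j$ on the relevant parabolic of $H_J$. This compatibility is exactly what the normalization of $\chi$ was designed to achieve, but extracting it requires keeping careful track of the isomorphism between the two models of $\g(J)$ and of how $H_J$ acts on $r_0(i\cdot 1_J)$. Everything else — the compatibility of the two Iwasawa decompositions, the $K_H$-computation from \cite[Lemma 9.0.1]{pollackQDS}, the cocycle bootstrapping to general $z\in\mathcal H_J$, and the remark about $\Sym^2$ — is routine.
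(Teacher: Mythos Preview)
Your approach is essentially the paper's: restrict the Iwasawa decomposition to $H_J^+$ (writing $h = pk$ with $p$ in the Siegel parabolic $P_S = H_J\cap R_J$ and $k\in K_H$), compute $\Ad(k)$ on $(e_\ell, h_\ell, f_\ell)$ via \cite[Lemma 9.0.1]{pollackQDS}, and bootstrap to general $z$ by the cocycle relation. The one step you flag as the obstacle---identifying the scalar $\chi(p)$ with $j(p, i\cdot 1_J)$---is exactly where the paper does its only real computation, and it is a one-liner rather than a careful tracking of the two models:
\[
j(p,i) \;=\; \langle p\, r_0(i),\, E_{23}\rangle \;=\; \nu(p)\,\langle r_0(i),\, p^{-1}E_{23}\rangle \;=\; \nu(p)\,\lambda(p)^{-1} \;=\; \chi(p),
\]
using that $E_{23}$ pairs to $1$ against every $r_0(Z)$, and that $p^{-1}$ scales $E_{23}$ by $\lambda(p)^{-1}$ modulo $E_{13}$ (which pairs to $0$ against $r_0(i)$). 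So you had the right target but overestimated its difficulty.

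One wording issue in your write-up: you say the contribution of $r$ to $f_{lin}(h)$ is ``identified with $\diag(j(r,i),\,1,\,\overline{j(r,i)})$.'' By definition $f_{lin}(rk) = \chi(r)\,\Ad(k)$, so the $r$-contribution is the \emph{scalar} $\chi(r)$, not a non-scalar diagonal matrix. The correct statement is $\chi(r) = j(r, i\cdot 1_J)\in\R^\times_{>0}$; one then multiplies by $\Ad(k) = \diag(j(k,i),1,\overline{j(k,i)})$ and uses the cocycle to get the $(1,1)$-entry $j(h,i)$. The final assertion about $j_{1/2}$ only needs this $(1,1)$-entry, since $(\Sym^2 M)_{11} = M_{11}^2$ for any $M\in\GL_2(\C)$.
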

\begin{proof} Let $P_{S}$ denote the Siegel parabolic of $H_J$, so that $P_S = H_J \cap R_J$.  Suppose $h \in H_J^+$ is $h = pk$ with $p \in P_{S}(\R)^+$ and $k \in K_H \subseteq H_J^1(\R)$.  Then
	\[j(p,i) = \langle p r_0(i), E_{23}\rangle = \nu(p) \langle r_0(i), p^{-1} E_{23} \rangle = \chi(p).\]
 Moreover, essentially by definition of $j$, $Ad(k) = \diag(j(k,i),1,\overline{j(k,i)})$.  As $j(h,i) = j(pk,i) = j(p,i)j(k,i)$, one obtains $j_{lin}(h,x_0) = \diag(j(h,i),1,\overline{j(h,i)})$.
	
For the second statement, suppose $h_z \in H_J^+$ satisfies $h_z \cdot i = z$.  Then
	\begin{align*}j_{lin}(h,z) &= f_{lin}(hh_z) f_{lin}(h_z)^{-1} = \diag(j(hh_z,i),1, \overline{j(hh_z,i)}) \diag(j(h_z,i),1,\overline{j(h_z,i)})^{-1} \\ &= \diag(j(h,z),1,\overline{j(h,z)}).\end{align*}
	The proposition follows.
\end{proof}

\section{Steinberg generators and relations}\label{Sec: double covers gen}
In this section, we let $k$ be a local field of characteristic zero and assume that $G$ is a simply connected simple group over $k$. In this setting, Deligne's double cover \eqref{eqn: delignedouble}coincides with the Steinberg--Matsumoto cover. We thus recall this construction for the purposes of certain $p$-adic calculations in later sections. 

Suppose that $\Phi$ is a simple root system and $\De$ a set of simple roots. We let $(\al,\be)$ denote the pairing on $\Phi$ normalized so that $(\alpha,\alpha) = 2$ for a long root (when the root system is simply laced, we assert that all roots are long).  Suppose that $\g$ is the associated split, simple Lie algebra over $\Q$ and $G$ the associated split, simply-connected group. Steinberg \cite{steinbergBook} gives a presentation for the group $G(k)$ in terms of generators and relations.  One has generators $x_\alpha(u)$ for all roots $\alpha$ and $u \in k$, subject to the following relations:
\begin{enumerate}
	\item\label{1} $x_\alpha(u)x_\alpha(v) = x_\alpha(u+v)$.
	\item\label{2} If $\alpha,\beta$ are roots with $\alpha + \beta \neq 0$, then the commutator
	\[\{x_\alpha(u),x_\beta(v)\} = \prod_{i\alpha + j \beta \in \Phi, i,j \in \Z_{> 0}}{x_{i\alpha + j\beta}(C_{ij}u^i v^j)}\]
	for integers $C_{i,j}$ that depend upon the order in the product but are independent of $u,v$.
	\item\label{3} For $t \in k^\times$ set $w_{\alpha}(t) = x_\alpha(t) x_{-\alpha}(-t^{-1})x_\alpha(t)$ and $h_\alpha(t) = w_{\alpha}(t) w_{\alpha}(-1)$.  Then $h_\alpha(t)h_\alpha(s) = h_\alpha(ts).$
	\item \label{4} When $\Phi$ is of type $A_1$, then $w_\alpha(t) x_\alpha(u) w_\alpha(-t) = x_{-\alpha}(-t^{-2} u)$.
\end{enumerate}

 Following Steinberg \cite[Theorem 12]{steinbergBook} (see also \cite[Section 2]{lokeSavin}), a topological double cover of $G(k)$ can now be defined as follows. Recall the Hilbert symbol $(\cdot,\cdot)_2: k^\times \times k^\times \rightarrow \mu_2(k)$. One takes as generators elements $x_{\alpha}(u)$ and $\{1,\zeta\} = \mu_2$ satisfying \eqref{1}, \eqref{2}, and \eqref{4}, along with
\begin{enumerate}
	\setcounter{enumi}{4}
	\item The elements $1,\zeta$ are in the center.
	\item For $t \in k^\times$ set
	\[
	\widetilde{w}_{\alpha}(t) = x_\alpha(t) x_{-\alpha}(-t^{-1})x_\alpha(t)\quad\text{ and }\quad \widetilde{h}_\alpha(t) = \widetilde{w}_{\alpha}(t) \widetilde{w}_{\alpha}(-1).
	\]
	Then $\widetilde{h}_\alpha(t)\widetilde{h}_\alpha(s) = \widetilde{h}_\alpha(ts) (t,s)_2^{\frac{2}{(\alpha,\alpha)}}$.
\end{enumerate}
From \cite[section 3, page 4904]{lokeSavin}, who cite \cite[Lemme 5.4]{matsumoto}, one has \begin{equation}\label{eqn: commutator of tori}
\{\widetilde{h}_{\alpha}(s),\widetilde{h}_{\beta}(t)\} = (s,t)_2^{(\alpha^\vee,\beta^\vee)},
\end{equation}
where $\al^\vee = \frac{2\al}{(\al,\al)}$. We let $\G(k)$ denote the double cover of $G(k)$ here constructed, where the projection $p:\G(k)\lra G(k)$ is given by sending generators to the analogous generators in $G(k)$. As previously noted, this construction recovers Deligne's cover \eqref{eqn: delignedouble} in the split case. In particular, if $J = \R$ or $H_3(\R)$, so that $G=G_J$ is the split group of type $G_2$ or $F_4$ respectively, then $\widetilde{G}(\R)\cong \G_J$.

\section{$2$-adic subgroups of $F_4$} We now specialize to $k=\Q_2$ and $G$ the split group of type $F_4$. We enumerate the $4$ simple roots in the usual way, so that the Dynkin diagram
\[\circ---\circ=>=\circ---\circ\]
has labels $\alpha_1$ through $\alpha_4$ from left to right. In this section and the next section, we define certain compact open subgroups $K_R^*(4)$ and $K_R'(4)$ of $\widetilde{F}_4(\Q_2)$ that we prove inject into $F_4(\Q_2)$.  In Section \ref{sec:splittings}, we use the compact open subgroup $K_R'(4)$ to construct congruence subgroups $\Gamma_{F_4}(4)$ and $\Gamma_{G_2}(4)$ that split into the real groups $\widetilde{F}_4$ and $\widetilde{G}_2$, respectively.

\begin{remark} Strictly speaking, we do not use the subgroup $K_R^*(4)$ in the sequel; we use only the subgroup $K_R'(4)$.  However, our analysis of the group $K_R'(4)$ is similar to, but a bit more complicated than, our analysis of the group $K_R^*(4)$.  Thus, we handle the case of $K_R^*(4)$ first, to make clear the ideas.

In Section \ref{Sec: modular forms}, we use the group $K'_R(4)$ to construct the quaternionic modular forms of half-integral weight described in Theorem \ref{thm:introMain2}. We remark that one can also construct quaternionic modular forms of level $K_R^*(4)$. However, it is unclear if their Fourier coefficients are as interesting.  \end{remark}

\subsection{Preliminaries}
To begin, we record the following slight extension of \cite[Lemma 3.1]{karasiewicz}.
\begin{lemma}\label{Lem: commutator roots}
	Let $k$ be a local field of characteristic zero. Suppose that $\Phi$ is a simple root system and $G(k)$ is the corresponding simply-connected group. For any $\al\in \Phi$ and $s,t\in k$ such that $1+st\neq0$, in the double cover $\G(k)$ we have
	\[
	x_\al(t)x_{-\al}(s) = (1+st,\frac{t}{1+st})_2^{-\frac{2}{(\al,\al)}}x_{-\al}\left(\frac{s}{1+st}\right) \widetilde{h}_\al(1+st)x_\al\left(\frac{t}{1+st}\right).
	\]
	
\end{lemma}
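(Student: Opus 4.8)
The plan is to verify the identity by a direct computation in the Steinberg--Matsumoto double cover $\widetilde{G}(k)$, reducing everything to the rank-one subsystem generated by $\pm\alpha$. Since all the elements $x_\alpha(t), x_{-\alpha}(s), \widetilde{h}_\alpha(\cdot)$ and the relevant $w$'s lie in the subgroup of $\widetilde{G}(k)$ generated by $x_{\pm\alpha}$, which is a double cover of an $\SL_2$ (or $\PGL_2$) associated to $\alpha$, I may as well carry out the computation there, using relations \eqref{1}, \eqref{4}, (5), (6) together with the fact that in $\SL_2$ one has the classical identity
\[
\mm{1}{t}{0}{1}\mm{1}{0}{s}{1} = \mm{1}{0}{s/(1+st)}{1}\mm{1+st}{0}{0}{(1+st)^{-1}}\mm{1}{t/(1+st)}{0}{1}
\]
whenever $1+st \neq 0$. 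This is essentially \cite[Lemma 3.1]{karasiewicz}; the only new content here is tracking the power of the Hilbert symbol correctly for a possibly \emph{short} root $\alpha$, i.e. keeping the exponent $-\frac{2}{(\alpha,\alpha)}$ rather than specializing it to the simply-laced normalization.

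First I would record, purely in the cover, the lift of the above $\SL_2$ identity: writing $x_\alpha(t)$ for $\mm{1}{t}{0}{1}$ and $x_{-\alpha}(s)$ for $\mm{1}{0}{s}{1}$, the matrix identity says that in $G(k)$ we have $x_\alpha(t)x_{-\alpha}(s) = x_{-\alpha}(s/(1+st)) h_\alpha(1+st) x_\alpha(t/(1+st))$, where $h_\alpha(1+st)$ corresponds to $\diag(1+st,(1+st)^{-1})$. Lifting this to $\widetilde{G}(k)$, the products $x_{\pm\alpha}(\cdot)$ lift unambiguously (relation (1) and the canonical splitting of unipotents), so both sides of the lifted identity differ only by a central element $\zeta^{\epsilon} \in \mu_2$, and the whole task is to compute $\epsilon$. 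To pin down $\epsilon$ I would expand $h_\alpha(1+st)$ and the $w_\alpha$'s appearing implicitly via their definitions in terms of $x_{\pm\alpha}$, and compare with the expansion of $x_\alpha(t)x_{-\alpha}(s)$; each rewriting step uses relation (4) (the $A_1$-relation $w_\alpha(t)x_\alpha(u)w_\alpha(-t) = x_{-\alpha}(-t^{-2}u)$, which holds identically in the cover since it involves only unipotent generators) and relation (6), $\widetilde{h}_\alpha(t)\widetilde{h}_\alpha(s) = \widetilde{h}_\alpha(ts)(t,s)_2^{2/(\alpha,\alpha)}$, which is the sole source of Hilbert-symbol factors.

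The cleanest way to run the bookkeeping, which is the route I would actually take, is to follow \cite{karasiewicz} verbatim for the simply-laced case and then re-examine exactly where $(\alpha,\alpha)=2$ was used. In Karasiewicz's computation the $\SL_2$ identity is lifted and the discrepancy is evaluated by repeatedly invoking relation (6) with exponent $1$ (since $2/(\alpha,\alpha)=1$ there); in our setting $\alpha$ may be short, with $(\alpha,\alpha)=1$, so the same manipulations produce Hilbert symbols raised to the power $2/(\alpha,\alpha)$ instead. Tracking the two invocations — once to split $\widetilde{h}_\alpha$ against itself and once from rearranging the $\widetilde{w}_\alpha$'s — gives the combined factor $(1+st, t/(1+st))_2^{-2/(\alpha,\alpha)}$ claimed. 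I would also double-check that the factor's arguments are $1+st$ and $t/(1+st)$ and not some other pair by specializing to $s=0$ (where both sides must reduce to $x_\alpha(t)$ with no cocycle, and indeed the Hilbert symbol $(1,\ldots)_2 = 1$) and, if desired, to the split rank-one case over $\Q_2$ where the cocycle is the explicit Kubota/Steinberg $2$-cocycle, whose value on the relevant pair of matrices is known.

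The main obstacle is purely combinatorial rather than conceptual: correctly counting the number and arguments of the Hilbert-symbol factors produced when one pushes the $\widetilde{w}_\alpha$'s and $\widetilde{h}_\alpha$'s past each other, especially keeping signs straight in relation (3)/(6) (note $h_\alpha(-1)$ conventions) and making sure no spurious factor appears from commuting $x_\alpha$ past $x_{-\alpha}$ inside the $w$'s. Because everything happens inside a rank-one cover, no genuinely $F_4$-specific input is needed; the statement as written is root-length-agnostic precisely because the exponent $-\frac{2}{(\alpha,\alpha)}$ absorbs the only place length enters, namely relation (6).
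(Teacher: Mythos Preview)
Your approach is correct but differs from the paper's. The paper's proof is a single sentence citing \cite[Proposition 2.7]{SteinStable} (Stein's computation in the Steinberg group), whereas you propose to redo the rank-one computation by hand, following Karasiewicz's argument while tracking the root-length exponent $2/(\alpha,\alpha)$ coming from relation (6). Your route is sound and more self-contained, making explicit where the Hilbert-symbol factor originates; the paper's route is more economical, simply appealing to a known identity in the literature. One caution: your phrase ``tracking the two invocations \dots\ gives the combined factor'' is where all the content lives, and in an actual write-up you would need to carry this out rather than assert it, since getting the arguments of the symbol to be exactly $(1+st,\, t/(1+st))$ rather than an equivalent pair requires some care with the Steinberg-symbol identities.
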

\begin{proof}
	This follows from \cite[Proposition 2.7]{SteinStable}.
\end{proof}
\begin{corollary}\label{Cor: unipotent nice} With notation as above, now let $k=\Q_2$ and let $\al\in \Phi$ and $s,t\in \Q_2$.
	\begin{enumerate}
		\item\label{case short} If $\Phi$ is doubly laced and $\al$ is a short root, then
		\[
		x_\al(t)x_{-\al}(s) = x_{-\al}\left(\frac{s}{1+st}\right) \widetilde{h}_\al(1+st)x_\al\left(\frac{t}{1+st}\right);
		\]
		\item\label{case long} Let $\Phi$ be of any type. If $\val_2(s)\geq 2$ and $\val_2(t)\geq 0$, then
		\[
		x_\al(t)x_{-\al}(s) = x_{-\al}\left(\frac{s}{1+st}\right) \widetilde{h}_\al(1+st)x_\al\left(\frac{t}{1+st}\right);
		\]
	\end{enumerate}
\end{corollary}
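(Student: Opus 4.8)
The plan is to derive Corollary \ref{Cor: unipotent nice} directly from Lemma \ref{Lem: commutator roots} by showing that, under the stated hypotheses on $\alpha$, $s$, and $t$, the Hilbert-symbol factor
\[
(1+st, \tfrac{t}{1+st})_2^{-\frac{2}{(\al,\al)}}
\]
is trivial. First I would observe that the exponent $-\frac{2}{(\al,\al)}$ is $-1$ when $\al$ is long and $-2$ when $\al$ is short (in the doubly-laced case, using the normalization $(\al,\al)=2$ for long roots fixed in Section \ref{Sec: double covers gen}). Since the Hilbert symbol $(\cdot,\cdot)_2$ takes values in $\mu_2 = \{\pm 1\}$, any even power of it is automatically $1$; this immediately disposes of case \eqref{case short}, where $\al$ is short and the exponent is $-2$. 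This requires no hypothesis on $s,t$ beyond $1+st \neq 0$, but I should note that when $s \in \Q_2$ and $\val_2(t) \geq 0$, if $1+st = 0$ then $t = -s^{-1}$ has $\val_2(t) = -\val_2(s)$, which can only be nonnegative if $\val_2(s) \le 0$; in any case for the applications we will be in the regime where $1+st$ is a unit, so I would either add the standing hypothesis $1+st \neq 0$ or observe it follows from the valuation conditions imposed.

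For case \eqref{case long}, where $\al$ is arbitrary but we assume $\val_2(s) \geq 2$ and $\val_2(t) \geq 0$, the exponent $-\frac{2}{(\al,\al)}$ may equal $-1$ (when $\al$ is long, or in the simply-laced case), so I genuinely need the Hilbert-symbol factor itself to vanish. The key computation is that $1 + st \in 1 + 4\Z_2$ under these hypotheses: indeed $\val_2(st) = \val_2(s) + \val_2(t) \geq 2$, so $st \in 4\Z_2$ and $1+st \in 1+4\Z_2 \subseteq \Z_2^\times$. The standard fact about the $2$-adic Hilbert symbol is that $1 + 4\Z_2 \subseteq (\Q_2^\times)^2$ — elements of $1+4\Z_2$ are squares in $\Q_2$ — so $(1+st, \cdot)_2 = 1$ for any second argument. (One should be slightly careful: the usual statement is $1 + 8\Z_2 = (\Z_2^\times)^2$, but a square in $\Q_2^\times$ need only be congruent to $1 \bmod 8$ up to a unit square factor; the cleanest route is: $(1+4a, b)_2$ depends only on $1+4a \bmod 8$, and both $1+4a \equiv 1$ and $1+4a \equiv 5 \bmod 8$ give a trivial symbol against any $b$ since $5 = -3$ and... ) — here I would instead appeal to the bilinearity and the explicit formula: writing the second argument $\tfrac{t}{1+st}$, and noting $(1+st, 1+st)_2 = (1+st, -1)_2$, the relevant symbols all reduce to $(u, v)_2$ with $u \in 1+4\Z_2$, and one checks from the table of the $2$-adic Hilbert symbol that $(u,v)_2 = 1$ whenever $u \equiv 1 \bmod 4$ provided $v \in \Z_2^\times$ or, more carefully, that $(1+4\Z_2, \Q_2^\times)_2$ need not be entirely trivial — so the honest statement I want is $(1+st, \tfrac{t}{1+st})_2 = 1$, which I would verify by reducing to $u = 1+st \equiv 1 \bmod 4$ and using that such $u$ is a norm from $\Q_2(\sqrt{d})$ for every $d$, equivalently a square or $5$ times a square, and in the latter case pairs trivially because $5 \equiv 1 \bmod 4$.

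In both cases, once the Hilbert-symbol prefactor is shown to be $1$, the identity of Corollary \ref{Cor: unipotent nice} is literally the identity of Lemma \ref{Lem: commutator roots}, so nothing further is needed. I expect the only real obstacle to be the bookkeeping around the $2$-adic Hilbert symbol in case \eqref{case long}: one must make sure the argument $1+st$ is not merely a unit but lies in the subgroup $1+4\Z_2$ of $\Z_2^\times$ on which $(\cdot,\cdot)_2$ becomes trivial (against arbitrary second arguments), and this is exactly why the hypothesis $\val_2(s) \geq 2$ (rather than $\geq 1$) is imposed — with $\val_2(s) \geq 1$ and $\val_2(t) \geq 0$ one would only get $1+st \in 1+2\Z_2$, which is insufficient. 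I would state this dependence explicitly so the reader sees the role of the numerical hypotheses. The argument for case \eqref{case short} is essentially immediate from the parity of the exponent and deserves only a sentence.
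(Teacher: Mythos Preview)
Your argument for case \eqref{case short} is correct and matches the paper's: the exponent $-2/(\al,\al)$ equals $-2$ for a short root, and any even power of a $\mu_2$-valued symbol is trivial.

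For case \eqref{case long}, however, your argument has a genuine gap. You correctly reduce to showing $(1+st,\,t/(1+st))_2=1$ and correctly observe that $1+st\in 1+4\Z_2$. But then you oscillate between several false or incomplete claims: the assertion $1+4\Z_2\subseteq(\Q_2^\times)^2$ is wrong (the squares among units are $1+8\Z_2$); the claim that $1+4a$ pairs trivially with \emph{any} $b$ is wrong (for instance $(5,2)_2=-1$); and the final sentence, reducing to ``$5\equiv 1\bmod 4$'', is circular. The point is that $1+st\in 1+4\Z_2$ alone is \emph{not} enough --- you must use the separate hypotheses $\val_2(s)\ge 2$ and $\val_2(t)\ge 0$, not just their sum.

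The clean fix is a two-case split on $\val_2(t)$. If $\val_2(t)=0$ then $t/(1+st)\in\Z_2^\times$, and for units $u,v$ one has $(u,v)_2=(-1)^{\epsilon(u)\epsilon(v)}$ with $\epsilon(u)=\tfrac{u-1}{2}\bmod 2$; since $1+st\equiv 1\pmod 4$ we get $\epsilon(1+st)=0$ and the symbol is $1$. If $\val_2(t)\ge 1$ then $\val_2(st)\ge 3$, so $1+st\in 1+8\Z_2=(\Z_2^\times)^2$ is an honest square and pairs trivially with everything. This is essentially what the paper invokes by citing \cite[Lemma 3.1]{karasiewicz}. Your instinct that the hypothesis $\val_2(s)\ge 2$ (rather than $\ge 1$) is essential is exactly right --- it is what forces $1+st$ into $1+8\Z_2$ in the second case.
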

\begin{proof}
	The proof of the first claim is immediate from the lemma and our normalization that $(\be,\be)=2$ for long roots, so that $(\al,\al)=1$ for our short root. The second claim follows precisely as in the proof of  \cite[Lemma 3.1]{karasiewicz} with $\lam=0$.
\end{proof}.

We now return to $G=F_4$. The inclusion of rational Lie algebras $\m_J^0 \rightarrow \g(J)$ discussed in Section \ref{Sec: quatgroups} gives rise to an embedding of algebraic groups $\SL_3 \rightarrow F_4$ when $J = H_3(\Q)$. In terms of roots, the image corresponds to the subroot system with simple roots $\{\al_3,\al_4\}$. When $k$ is a local field, note that this embedding lifts to a splitting $s: \SL_3(k) \rightarrow \widetilde{F}_4(k)$. Indeed, the subgroup $\SL_3(k)$ of $F_4(k)$ is generated by the elements $x_\beta(u)$ for $\beta$ lying in the sub-root system generated by $\{\al_3,\al_4\}$.  We may define this $\SL_3(k)$ via generators and relations as in Section \ref{Sec: double covers gen}, and the relations defining it continue to be satisfied in $\widetilde{F}_4(k)$ due to Corollary \ref{Cor: unipotent nice}.
\begin{lemma}\label{Lem: SL split}
  Let $\SL_3\subset F_4$ be the $\Q$-subgroup just described. For any local field $k$, the double cover $\widetilde{F}_4(k)$ splits uniquely over $\SL_3(k)$.
\end{lemma}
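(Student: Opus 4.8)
The plan is to construct the splitting explicitly on generators and verify it respects the defining relations, then prove uniqueness using the structure of $\SL_3$. First I would recall that $\SL_3(k)$ is generated by the root-subgroup elements $x_\beta(u)$ for $\beta$ ranging over the six roots of the $A_2$ subsystem spanned by $\{\alpha_3,\alpha_4\}$, subject exactly to the Steinberg relations \eqref{1}, \eqref{2}, \eqref{3}, \eqref{4} of Section \ref{Sec: double covers gen} (with \eqref{4} applied to each $A_1$ sub-root system). Inside $\widetilde{F}_4(k)$ we have canonical lifts of these $x_\beta(u)$ (the unipotent subgroups split canonically, per the convention fixed in Section \ref{Sec: general covers}), and I would \emph{define} $s(x_\beta(u))$ to be these lifts. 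The map $s$ will be a well-defined homomorphism precisely if relations \eqref{1}, \eqref{2}, \eqref{3}, \eqref{4} continue to hold among the lifted $x_\beta(u)$ in $\widetilde{F}_4(k)$.

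Relations \eqref{1} and \eqref{2} hold automatically because they only involve unipotent elements, and the cover splits canonically and compatibly with commutators over the unipotent radical of any Borel (more precisely, over any unipotent subgroup, as noted at the end of Section \ref{Sec: general covers}); so the images $x_\beta(u)$ in $\widetilde{F}_4(k)$ multiply and commute exactly as in $F_4(k)$. Relation \eqref{4} is the statement $w_\beta(t)x_\beta(u)w_\beta(-t) = x_{-\beta}(-t^{-2}u)$ for each root $\beta$ of the $A_2$; here I would invoke Corollary \ref{Cor: unipotent nice}\eqref{case short}: every root of the $\{\alpha_3,\alpha_4\}$-subsystem is a \emph{short} root of $F_4$, so $(\beta,\beta)=1$ and the Hilbert-symbol cocycle in Lemma \ref{Lem: commutator roots} trivializes. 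This is exactly the computation already indicated in the paragraph preceding the lemma. The only genuinely new point is relation \eqref{3}: in $\widetilde{F}_4(k)$ one has $\widetilde{h}_\beta(t)\widetilde{h}_\beta(s) = \widetilde{h}_\beta(ts)(t,s)_2^{2/(\beta,\beta)}$, and since $(\beta,\beta)=1$ for short roots this is $\widetilde{h}_\beta(t)\widetilde{h}_\beta(s)=\widetilde{h}_\beta(ts)(t,s)_2^2 = \widetilde{h}_\beta(ts)$, so relation \eqref{3} holds on the nose — I should emphasize that it is the shortness of all $A_2$-roots inside $F_4$ that makes the cover split here, which is the conceptual heart of the lemma. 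Having checked all four relations, the universal property of the Steinberg presentation gives a homomorphism $s:\SL_3(k)\to\widetilde{F}_4(k)$ lifting the inclusion.

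For uniqueness: any two splittings $s,s'$ differ by a homomorphism $\SL_3(k)\to\mu_2(k)$, i.e. a character of $\SL_3(k)$ into an abelian group. Since $\SL_3(k)$ is its own derived group (it is generated by commutators of root elements, as relation \eqref{2} with a suitable pair $i\alpha+j\beta$ shows, or simply because $\SL_3(k)=[\SL_3(k),\SL_3(k)]$ for any field $k$), it admits no nontrivial homomorphism to an abelian group, so $s=s'$. I expect no serious obstacle here; the main (minor) subtlety is making sure that the canonical lifts of the $x_\beta(u)$ used to define $s$ are the same as the elements $x_\beta(u)$ appearing in the Steinberg presentation of $\widetilde{F}_4(k)$ — this is built into the Steinberg--Matsumoto construction recalled in Section \ref{Sec: double covers gen}, where the generators $x_\alpha(u)$ of $\widetilde{G}(k)$ are taken to satisfy \eqref{1}, \eqref{2}, \eqref{4}, so the identification is immediate. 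Thus the real content reduces to the observation, already flagged in the text, that every root of the $\{\alpha_3,\alpha_4\}$-subsystem of $F_4$ is short, so Corollary \ref{Cor: unipotent nice} applies and the modified relations \eqref{3} and the covering analogue of \eqref{4} collapse to their linear-group counterparts.
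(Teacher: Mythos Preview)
Your proof is correct and follows essentially the same approach as the paper: the paper's argument is the paragraph immediately preceding the lemma, which observes that the Steinberg relations defining $\SL_3(k)$ continue to hold among the lifted generators $x_\beta(u)$ in $\widetilde{F}_4(k)$ because the $A_2$-roots are short, and the uniqueness via perfectness of $\SL_3(k)$ is exactly what the paper later invokes in Proposition \ref{Prop: mod sl3}. One small clarification: relation \eqref{4} is only part of the Steinberg presentation when $\Phi$ is of type $A_1$, so for $\SL_3$ it is not a defining relation and need not be checked separately (and in any case it is among the relations imposed in the cover by construction); the genuine content, as you correctly emphasize, is that relation \eqref{3} survives because $(\beta,\beta)=1$ forces the exponent $2/(\beta,\beta)=2$ on the Hilbert symbol.
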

\subsection{The case of $K^\ast_R(4)$}  We define a compact open subgroup $K_R^*(4)$ of $\widetilde{F}_4(\Q_2)$, and prove that the projection map $K_R^*(4) \rightarrow F_4(\Q_2)$ is injective. Our construction can be considered a slight generalization of \cite[Theorem 3.3]{karasiewicz}, which only considers simply laced groups.

Recall that $\alpha_1,\alpha_2,\alpha_3,\alpha_4$ are the simple roots of $F_4$, with $\alpha_1,\alpha_2$ long and $\alpha_3,\alpha_4$ short.  Let $R = M_R U_R$ be the standard non-maximal parabolic subgroup of $F_4$ with simple roots $\alpha_3, \alpha_4$ in the Levi $M_R$. {The notation $R$ here refers to the non-maximal parabolic $R_J$ from Section \ref{sec:dcQEG} as these two parabolic subgroups agree when $G=G_J$ is of type $F_4$.} Set $$\Phi_{M_R}^+ = \{\alpha_3,\alpha_4,\alpha_3+\alpha_4\},$$ set $\Phi_{M_R}^{-} = -\Phi_{M_R}^{+}$ and $\Phi_{M_R} = \Phi_{M_R}^{+} \cup \Phi_{M_R}^{-}$.  Let $\Phi_{U_R}^+ = \Phi^{+} \setminus \Phi_{M_R}^{+}$, so that $\Phi_{U_R}^{+}$ contains the roots in the unipotent radical $U_R$ of $R$.

Set $K_{M_R}^*(4)$ to be the subgroup $\widetilde{M}_R(\Q_2)$ generated by $h_{\alpha_i}(1+4\Z_2)$ for $i=1,2$ and $x_{\beta}(\Z_2)$ for $\beta \in \Phi_{M_R}$.  Let $U_R^{+}(\Z_2)$ be the subgroup of $\widetilde{F}_4(\Q_2)$ generated by $x_{\beta}(\Z_2)$ for all $\beta \in \Phi_{U_R}^{+}$, and let $U_R^{-}(4\Z_2)$ be the subgroup of $\widetilde{F}_4(\Q_2)$ generated by $x_{-\beta}(4\Z_2)$ for all $\beta \in \Phi_{U_R}^{+}$.  Finally, let $K_R^*(4)$ be the subgroup of $\widetilde{F}_4(\Q_2)$ generated by $U_{R}^{-}(4\Z_2)$, $K_{M_R}^*(4)$ and $U_R^{+}(\Z_2)$.  We have the following theorem.

\begin{theorem}\label{thm:iwahoriSplit} Let the notation be as above.
	\begin{enumerate}
		\item One has $K_R^*(4) = U_{R}^{-}(4\Z_2) K_{M_R}^*(4) U_{R}^{+}(\Z_2)$.
		\item The map $K_R^*(4) \rightarrow F_4(\Q_2)$ is injective.
	\end{enumerate}
\end{theorem}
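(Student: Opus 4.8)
The plan is to prove the factorization of part (1) first, and then deduce part (2) from it. Throughout write $p\colon \widetilde F_4(\Q_2)\to F_4(\Q_2)$ for the covering map, so that $\ker p=\mu_2=\{1,\zeta\}$. For part (1), set $S=U_R^-(4\Z_2)\,K_{M_R}^*(4)\,U_R^+(\Z_2)$. Since $S\subseteq K_R^*(4)$ and $S$ contains each of the three generating subgroups of $K_R^*(4)$, it suffices to show $S$ is a subgroup. This will follow formally once we establish: (A) $K_{M_R}^*(4)$ normalizes both $U_R^+(\Z_2)$ and $U_R^-(4\Z_2)$; and (B) $U_R^+(\Z_2)\cdot U_R^-(4\Z_2)\subseteq S$. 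Granting (A) and (B), one rearranges $S\cdot S$ and $S^{-1}=U_R^+(\Z_2)K_{M_R}^*(4)U_R^-(4\Z_2)=U_R^+(\Z_2)U_R^-(4\Z_2)K_{M_R}^*(4)$ back into the form $U_R^-(4\Z_2)K_{M_R}^*(4)U_R^+(\Z_2)$, using (A) to commute $K_{M_R}^*(4)$ past $U_R^\pm$ and (B) to swap $U_R^+(\Z_2)U_R^-(4\Z_2)$.

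Claim (A) is a bookkeeping exercise with the Chevalley commutator relation (2) and the Hilbert-symbol-free conjugation $\widetilde h_\alpha(t)x_\beta(u)\widetilde h_\alpha(t)^{-1}=x_\beta(t^{\langle\beta,\alpha^\vee\rangle}u)$ in the Steinberg cover: a torus generator $h_{\alpha_i}(1+4\Z_2)$ scales root-group coordinates by units in $1+4\Z_2\subseteq\Z_2^\times$, while conjugation by a unipotent generator $x_\gamma(\Z_2)$ with $\gamma\in\Phi_{M_R}$ produces extra terms $x_{i\gamma+j\beta}(C_{ij}a^iu^j)$; grading roots by the sum of their $\alpha_1,\alpha_2$-coefficients shows that each $i\gamma+j\beta$ again lies in $\Phi_{U_R}^+$ (resp. $-\Phi_{U_R}^+$), and each occurrence of the $U_R^-$-coordinate carries a factor of $4$, so both lattices are preserved. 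The crux is Claim (B). Fix an enumeration of $\Phi_{U_R}^+$, so a general element of $U_R^+(\Z_2)U_R^-(4\Z_2)$ is an ordered product of positive-root generators followed by an ordered product of negative-root generators, and move each negative-root generator $x_{-\gamma}(s)$, $s\in 4\Z_2$, leftward past the positive-root generators. When it meets $x_\beta(t)$ with $\beta=\gamma$ and $t\in\Z_2$, Corollary \ref{Cor: unipotent nice}(2) applies --- this is exactly where the hypotheses $\val_2(s)\ge 2$, $\val_2(t)\ge 0$ are used, guaranteeing there is no Hilbert-symbol prefactor --- and rewrites the product as $x_{-\gamma}(s')\widetilde h_\gamma(1+st)x_\gamma(t')$ with $s'\in 4\Z_2$, $t'\in\Z_2$, and $1+st\in 1+4\Z_2$; when $\beta\neq\gamma$, relation (2) produces terms $x_{i\beta-j\gamma}(C_{ij}t^is^j)$, and the $\alpha_1,\alpha_2$-grading places each such root in $\Phi_{U_R}^+$ (integral coordinate), in $-\Phi_{U_R}^+$ (coordinate in $4\Z_2$, from the factor $s^j$), or in $\Phi_{M_R}$ (integral coordinate).

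The Levi-type terms so produced --- root groups $x_\delta(\Z_2)$ with $\delta\in\Phi_{M_R}$, and tori $\widetilde h_\gamma(1+4\Z_2)$ --- all lie in $K_{M_R}^*(4)$: for the root groups this is immediate, and for the tori one writes $\gamma^\vee=\sum_i c_i\alpha_i^\vee$, so that $\widetilde h_\gamma(1+u)=\prod_i\widetilde h_{\alpha_i}(1+u)^{c_i}$ up to Hilbert symbols of pairs in $1+4\Z_2$ --- which are trivial, since the $2$-adic Hilbert symbol of two elements congruent to $1$ modulo $4$ is $1$ --- with the $i=3,4$ factors lying in the split copy $\widetilde L_0=\langle x_\beta(\Z_2):\beta\in\Phi_{M_R}\rangle\subseteq K_{M_R}^*(4)$ of $\SL_3(\Z_2)$ furnished by Lemma \ref{Lem: SL split}, and the $i=1,2$ factors lying in $K_{M_R}^*(4)$ by definition. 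Organizing all these moves by a downward induction on the height of the root being cleared --- following the scheme of \cite[Theorem 3.3]{karasiewicz} in the simply-laced case, the genuinely new inputs being the doubly-laced commutator relations and Corollary \ref{Cor: unipotent nice}(2) in place of its $\val\ge 1$ analogue --- yields (B). I expect this combinatorial bookkeeping (tracking which root groups the commutator terms land in, and with which valuations, as negative-root generators are swept left) to be the main obstacle; everything else is formal.

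For part (2) it suffices to show $\zeta\notin K_R^*(4)$. By part (1) any $g\in K_R^*(4)$ factors as $g=u^-mu^+$ with $u^-\in U_R^-(4\Z_2)$, $m\in K_{M_R}^*(4)$, $u^+\in U_R^+(\Z_2)$, and $p$ is injective on $U_R^\pm$ because the $x_\beta(\Q_2)$ generate the canonical unipotent splittings, on which $\zeta$ does not occur. If $p(g)=1$, then $p(u^-)\,p(m)\,p(u^+)=1$ in $F_4(\Q_2)$, with $p(u^-)$ in the opposite unipotent radical $\overline U_R$ of $R$, $p(m)\in M_R(\Q_2)$, and $p(u^+)\in U_R(\Q_2)$; injectivity of the open-cell multiplication map $\overline U_R(\Q_2)\times M_R(\Q_2)\times U_R(\Q_2)\to F_4(\Q_2)$ forces $p(u^-)=p(m)=p(u^+)=1$, hence $u^-=u^+=1$ and $m\in\ker\bigl(p|_{K_{M_R}^*(4)}\bigr)$. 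It remains to show $p$ is injective on $K_{M_R}^*(4)$. With $\widetilde L_0$ as above and $\widetilde T_0=\widetilde h_{\alpha_1}(1+4\Z_2)\,\widetilde h_{\alpha_2}(1+4\Z_2)$, the cover relation (6) and \eqref{eqn: commutator of tori}, all of whose arguments lie in $1+4\Z_2$ and so carry trivial Hilbert symbols, show that $\widetilde h_{\alpha_i}\colon 1+4\Z_2\to\widetilde F_4(\Q_2)$ is a homomorphism for $i=1,2$ and that $\widetilde T_0$ is abelian; and $\widetilde T_0$ normalizes $\widetilde L_0$ with no Hilbert-symbol correction, so $K_{M_R}^*(4)=\widetilde L_0\widetilde T_0$ is a subgroup. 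Finally $p$ is injective on $\widetilde L_0$ (it lies in the $\SL_3$-splitting of Lemma \ref{Lem: SL split}) and on $\widetilde T_0$, and $p(\widetilde L_0)\cap p(\widetilde T_0)=\{1\}$ by linear independence of the simple coroots, whence $p$ is injective on $\widetilde L_0\widetilde T_0=K_{M_R}^*(4)$. Thus $m=1$, so $g=1$ and $\zeta\notin K_R^*(4)$, completing the proof.
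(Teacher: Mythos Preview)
Your deduction of part (2) from part (1), and the injectivity of $K_{M_R}^*(4)\to F_4(\Q_2)$, are correct and match the paper's argument (Lemma \ref{lem:inj1}). Claim (A) is also fine and is essentially the paper's Lemma \ref{lem:easyU}.

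The gap is in Claim (B). Your proposed ``downward induction on the height of the root being cleared'' is not well-founded in the doubly-laced setting. In simply-laced root systems, the commutator $\{x_\beta(t),x_{-\gamma}(s)\}$ produces at most one term, at $\beta-\gamma$; if this is a negative root it has strictly smaller height than $\gamma$, and the induction proceeds. But in $F_4$ there are length-$3$ root strings, so commutator terms with $j=2$ appear. Concretely, take $\beta=\alpha_2$ and $\gamma=\alpha_2+\alpha_3$, both in $\Phi_{U_R}^+$. The commutator $\{x_{\alpha_2}(t),x_{-(\alpha_2+\alpha_3)}(s)\}$ contains a term $x_{-(\alpha_2+2\alpha_3)}(C_{12}ts^2)$: a new negative-root factor whose underlying root $\alpha_2+2\alpha_3$ has \emph{greater} height than $\gamma$. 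So clearing $-\gamma$ produces a negative root of greater height, and your induction variable does not decrease. The vague appeal to Karasiewicz does not rescue this: it is precisely here that the doubly-laced case departs from the simply-laced one.

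The paper replaces height by a $2$-adic level. It first proves (Proposition \ref{prop:U+N}) that $x_{\alpha_i}(\Z_2)\cdot U\subseteq U$ for the \emph{simple} roots $\alpha_1,\alpha_2$ --- the point being that for simple $\alpha_i$ and any $\alpha\in\Phi_{U_R}^+$ with $\alpha\neq\alpha_i$, every root $a\alpha_i-b\alpha$ with $a,b>0$ is negative, so the commutator produces only $U_B^-$-terms. Combined with Lemma \ref{lem:U+N} this gives $U_R^+(2^A)\cdot U\subseteq U$ for $A\gg 0$. One then inducts downward on $m$ (Proposition \ref{prop:inductDown}): for $u=x_\alpha(2^ms)$ and $g=n_1kn_2\in U$, one writes $ug=(un_1u^{-1})k(k^{-1}uk)n_2$, and $un_1u^{-1}\in K_R^*(4,2^{m+1})$ because every commutator coordinate carries an extra factor of at least $2^{m+2}$. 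In short, the quantity that genuinely improves under the commutator is the $2$-adic valuation, not the root height --- your problematic term $x_{-(\alpha_2+2\alpha_3)}(C_{12}ts^2)$ has coordinate in $16\Z_2$, which is exactly what the paper's scheme exploits.
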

We will prove this theorem below.  It is easy to deduce the following corollary of Theorem \ref{thm:iwahoriSplit}.
\begin{corollary}\label{cor:iwahoriBig} The group $K_R^*(4)$ has an Iwahori decomposition with respect to any standard parabolic subgroup containing $R$.\end{corollary}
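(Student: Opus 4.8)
The plan is to deduce the Iwahori decomposition formally from Theorem \ref{thm:iwahoriSplit}, the two inputs being the factorization $K_R^*(4) = U_R^-(4\Z_2)\,K_{M_R}^*(4)\,U_R^+(\Z_2)$ of part (1) together with the injectivity of part (2).

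First I would fix a standard parabolic $Q = L_Q V_Q$ with $R \subseteq Q$, and let $\overline{V}_Q$ denote the unipotent radical opposite to $V_Q$. Since $R \subseteq Q$ one has $M_R \subseteq L_Q$ and $V_Q \subseteq U_R$, and the positive roots of $U_R$ decompose as a disjoint union
\[\Phi_{U_R}^+ \;=\; \big(\Phi_{L_Q}^+ \setminus \Phi_{M_R}^+\big)\;\sqcup\;\Phi_{V_Q}^+,\]
where the first part consists of the positive roots of $U_R \cap L_Q$. The first step is a sorting argument: writing $V_Q^+(\Z_2)$, $\overline{V}_Q(4\Z_2)$, $(U_R\cap L_Q)^+(\Z_2)$, $(\overline{U_R}\cap L_Q)(4\Z_2)$ for the subgroups of $\widetilde{F}_4(\Q_2)$ generated by the indicated root elements, I would use that $V_Q$ (hence $\overline{V}_Q$) is normalized by $L_Q$ and that the structure constants $C_{ij}$ of relation \eqref{2} are integers to conclude that
\[U_R^+(\Z_2) = (U_R\cap L_Q)^+(\Z_2)\cdot V_Q^+(\Z_2), \qquad U_R^-(4\Z_2) = \overline{V}_Q(4\Z_2)\cdot(\overline{U_R}\cap L_Q)(4\Z_2).\]
Only like-signed roots are commuted in these computations, so relation \eqref{2} is used with no contribution from $\zeta$ and Corollary \ref{Cor: unipotent nice} is not needed.

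Substituting these into Theorem \ref{thm:iwahoriSplit}(1), and using that the torus elements $h_{\alpha_1}, h_{\alpha_2}$ generating part of $K_{M_R}^*(4)$ lie in the maximal torus $T \subseteq M_R \subseteq L_Q$, I would obtain
\[K_R^*(4) = \overline{V}_Q(4\Z_2)\cdot X\cdot V_Q^+(\Z_2), \qquad X := (\overline{U_R}\cap L_Q)(4\Z_2)\;K_{M_R}^*(4)\;(U_R\cap L_Q)^+(\Z_2),\]
a product of three subsets lying (under the projection to $F_4(\Q_2)$) respectively over $\overline{V}_Q(\Q_2)$, $L_Q(\Q_2)$, $V_Q(\Q_2)$, each of which is contained in $K_R^*(4)$. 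Finally I would invoke Theorem \ref{thm:iwahoriSplit}(2): $K_R^*(4) \hookrightarrow F_4(\Q_2)$, while in $F_4(\Q_2)$ the multiplication map $\overline{V}_Q(\Q_2)\times L_Q(\Q_2)\times V_Q(\Q_2)\to F_4(\Q_2)$ is injective onto the open cell. Combining these, the factorization of any $g \in K_R^*(4)$ displayed above is unique, and $K_R^*(4)\cap\overline{V}_Q = \overline{V}_Q(4\Z_2)$, $K_R^*(4)\cap L_Q = X$, $K_R^*(4)\cap V_Q = V_Q^+(\Z_2)$: for instance, if $g \in K_R^*(4)$ maps into $\overline{V}_Q$, then by uniqueness of the open-cell decomposition in $F_4(\Q_2)$ its $X$- and $V_Q^+(\Z_2)$-parts have trivial image, hence are trivial by injectivity, so $g \in \overline{V}_Q(4\Z_2)$. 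This is exactly an Iwahori decomposition of $K_R^*(4)$ with respect to $Q$.

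I expect the only point requiring genuine care to be the sorting step, i.e. checking that the $\Z_2$-structure on the upper unipotent subgroups and the $4\Z_2$-structure on the lower ones really survive the rearrangement along $L_Q$; this comes down to the $C_{ij}$ being integers and to $V_Q$ (resp. $\overline{V}_Q$) being normalized by $L_Q$, so that every commutator term produced is a root element supported on $\Phi_{V_Q}^+$ (resp. $-\Phi_{V_Q}^+$) with entry in $\Z_2$ (resp. $16\Z_2 \subseteq 4\Z_2$). Everything else is formal given Theorem \ref{thm:iwahoriSplit}.
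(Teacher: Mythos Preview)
Your argument is correct and is exactly the approach the paper has in mind; indeed, the paper omits the proof here entirely (``it is easy to deduce''), and the proof it does give for the analogous Corollary~\ref{cor:R'IwahoriQ} proceeds by precisely the same sorting-then-uniqueness reasoning you outline. One small simplification: the sorting step can be bypassed by observing that $U_R^{\pm}(\Q_2)$ are unipotent and hence split uniquely into the cover, so the factorizations $U_R^+(\Z_2)=(U_R\cap L_Q)^+(\Z_2)\cdot V_Q^+(\Z_2)$ and $U_R^-(4\Z_2)=\overline{V}_Q(4\Z_2)\cdot(\overline{U_R}\cap L_Q)(4\Z_2)$ are just the standard linear-group product decompositions transported along the splitting.
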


Recall the subgroup $\SL_3\subset F_4$ from the previous subsection.  The subgroup $s(\SL_3(k))$ of $\widetilde{F}_4(k)$ is that which is generated by the elements $x_{\beta}(u)$ for $\beta \in \Phi_{M_R}$.
Using Lemma \ref{Lem: SL split}, we now observe:
\begin{lemma}\label{lem:inj1} The map $K_{M_R}^*(4) \rightarrow F_4(\Q_2)$ is injective.
\end{lemma}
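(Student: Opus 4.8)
\textbf{Proof plan for Lemma \ref{lem:inj1}.}

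The plan is to realize $K_{M_R}^*(4)$ as a subgroup of $s(\SL_3(\Q_2))$ and then reduce the injectivity statement to injectivity of $s$, which is provided by Lemma \ref{Lem: SL split}. First I would observe that $K_{M_R}^*(4)$ is generated by the elements $x_\beta(\Z_2)$ for $\beta\in\Phi_{M_R}$, which all lie in $s(\SL_3(\Q_2))$ by construction, together with the torus elements $\widetilde{h}_{\alpha_i}(1+4\Z_2)$ for $i=1,2$. The roots $\alpha_1,\alpha_2$ are \emph{not} in $\Phi_{M_R}$, so the second family is not obviously inside $s(\SL_3(\Q_2))$; the crux of the argument is to show that nonetheless $\widetilde{h}_{\alpha_i}(u)$ for $u\in 1+4\Z_2$ normalizes $s(\SL_3(\Q_2))$ trivially enough that the whole group $K_{M_R}^*(4)$ injects. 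Concretely, since $p:\widetilde{F}_4(\Q_2)\to F_4(\Q_2)$ is a homomorphism with kernel $\mu_2=\{1,\zeta\}$, it suffices to show that $\ker\bigl(p|_{K_{M_R}^*(4)}\bigr)$ is trivial, i.e. that $\zeta\notin K_{M_R}^*(4)$, equivalently that $K_{M_R}^*(4)$ splits the cover over its image.

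The key step is to compute the relevant commutators and the $\widetilde h$-multiplication law on the generators using relation (6) of Section \ref{Sec: double covers gen} and the commutator formula \eqref{eqn: commutator of tori}. For $u,v\in 1+4\Z_2$ one has $\widetilde{h}_{\alpha_i}(u)\widetilde{h}_{\alpha_i}(v) = \widetilde{h}_{\alpha_i}(uv)(u,v)_2^{1/(\alpha_i,\alpha_i)}$, and since $\alpha_i$ is long we have $(\alpha_i,\alpha_i)=2$, so the correction is $(u,v)_2$; but the Hilbert symbol $(u,v)_2$ on $\Q_2^\times$ is trivial when $u,v\in 1+4\Z_2$ (these are squares times elements arbitrarily close to $1$ — more precisely $1+4\Z_2\subseteq (\Q_2^\times)^2$ is false, but $1+8\Z_2=(\Z_2^\times)^2$, so one must be slightly careful; the correct statement is that $(u,v)_2=1$ for $u,v\equiv 1\pmod 4$, which is standard). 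Likewise, by \eqref{eqn: commutator of tori}, $\widetilde h_{\alpha_i}(u)$ commutes with $\widetilde h_{\alpha_j}(v)$ up to $(u,v)_2^{(\alpha_i^\vee,\alpha_j^\vee)}=1$. Finally one needs that conjugation of $x_\beta(\Z_2)$ ($\beta\in\Phi_{M_R}$) by $\widetilde h_{\alpha_i}(u)$ stays in $x_\beta(\Z_2)$: this follows from the standard relation $\widetilde h_\alpha(u)\, x_\beta(t)\, \widetilde h_\alpha(u)^{-1} = x_\beta(u^{\langle\beta,\alpha^\vee\rangle}t)$, and one checks the exponent $\langle\beta,\alpha_i^\vee\rangle$ is an integer and $u^{\text{(that integer)}}\in\Z_2^\times$, so $u^{\langle\beta,\alpha_i^\vee\rangle}\Z_2=\Z_2$. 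Assembling these, every element of $K_{M_R}^*(4)$ can be written as $\widetilde h\cdot s(g)$ with $\widetilde h$ a product of $\widetilde h_{\alpha_i}(u)$'s and $s(g)\in s(\SL_3(\Q_2))$, and the subgroup generated by the $\widetilde h_{\alpha_i}(1+4\Z_2)$ maps isomorphically to its image because all the defining $2$-cocycle values vanish.

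With that structure in hand, injectivity is immediate: if $x\in K_{M_R}^*(4)$ maps to $1\in F_4(\Q_2)$, write $x=\widetilde h\cdot s(g)$; then $p(\widetilde h)p(g)=1$ forces $p(g)\in p(\widetilde h)^{-1}\cdot\{1\}$; comparing with the explicit abelian subgroup generated by the $\widetilde h_{\alpha_i}$, which is known to be torsion-free modulo $\mu_2$ in a controlled way and meets $s(\SL_3(\Q_2))$ only in the split torus of $\SL_3$, one concludes $x\in\{1,\zeta\}\cap K_{M_R}^*(4)$; and $\zeta\notin K_{M_R}^*(4)$ since all the generator relations producing $\zeta$ have been shown to be trivial. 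I expect the main obstacle to be the careful bookkeeping of Hilbert symbols $(u,v)_2$ for $u,v\in 1+4\Z_2$ and of the exponents $\langle\beta,\alpha_i^\vee\rangle$ for $\beta\in\Phi_{M_R}$, $i=1,2$ — i.e. verifying that every $2$-cocycle contribution in the Steinberg presentation restricted to these generators is trivial; once that is done the rest is formal. An alternative, perhaps cleaner, route is to cite Lemma \ref{Lem: SL split} together with Corollary \ref{Cor: unipotent nice} to see directly that the defining relations of $K_{M_R}^*(4)$ hold without any $\zeta$-twist in $\widetilde F_4(\Q_2)$, yielding a splitting $K_{M_R}^*(4)\to\widetilde F_4(\Q_2)$ of the restriction of $p$, which is exactly the asserted injectivity.
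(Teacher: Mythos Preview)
Your decomposition $g=\widetilde h\cdot s(g')$ with $\widetilde h\in\widetilde h_{\alpha_1}(1+4\Z_2)\widetilde h_{\alpha_2}(1+4\Z_2)$ and $g'\in\SL_3(\Q_2)$ is exactly the paper's approach, and your Hilbert-symbol bookkeeping correctly justifies why such a decomposition exists (the paper simply says ``it is easy to see''). Where you diverge from the paper is in the \emph{final deduction}, and there your argument is muddled.

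You try to finish by showing $\zeta\notin K_{M_R}^*(4)$, arguing that ``all the generator relations producing $\zeta$ have been shown to be trivial.'' That is not a proof: triviality of the cocycle values on pairs of generators does not by itself preclude $\zeta$ from lying in the generated subgroup, and your phrase ``meets $s(\SL_3(\Q_2))$ only in the split torus of $\SL_3$'' is not the right intersection statement. The paper's finish is cleaner and avoids this circularity entirely: from $p(g)=1$ one gets $h_{\alpha_1}(u_1)h_{\alpha_2}(u_2)\,g'=1$ in $F_4(\Q_2)$; since the coroots $\alpha_1^\vee,\alpha_2^\vee,\alpha_3^\vee,\alpha_4^\vee$ are a basis of the cocharacter lattice ($F_4$ is simply connected), the element $h_{\alpha_1}(u_1)h_{\alpha_2}(u_2)$ lies in $\SL_3(\Q_2)$ only if $u_1=u_2=1$. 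Then $t_j=\widetilde h_{\alpha_j}(1)=1$ in the cover and $g'=1$, so $g=s(1)=1$. The point is that each factor in the decomposition is forced to be the identity \emph{in the cover}, not merely modulo $\mu_2$; the statement $\zeta\notin K_{M_R}^*(4)$ is then a corollary, not an ingredient. Replace your last paragraph with this linear-independence argument and the proof is complete.
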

\begin{proof} If $g \in K_{M_R}^*(4)$, it is easy to see that one can express $g$ as a product $g = t_1 t_2 s(g')$ with $t_j \in h_{\alpha_j}(1+4 \Z_2)$ and $g' \in \SL_3(\Q_2)$.  Consequently, if $g \mapsto 1$ in $F_4(\Q_2)$, then $t_1 = t_2 = 1$ and $g' =1$, proving that $g=1$.
\end{proof}

We will prove part (1) of Theorem \ref{thm:iwahoriSplit} in Paragraph \ref{Sec: iwahori}.  Let us observe now that part (1) implies part (2).  Indeed, suppose $g = n_1 m n_2$ is in $K_R^*(4)$ with $n_1 \in U_R^{-}(4\Z_2)$, $m \in K_{M_R}^*(4)$ and $n_2 \in U_{R}^{+}(\Z_2)$.  If $g \mapsto 1$ in $F_4(\Q_2)$, then we see easily that $n_1 = 1$ and $n_2 =1$.  Thus $m \mapsto 1$, hence $m=1$ by Lemma \ref{lem:inj1}.

\subsubsection{Iwahori decomposition} \label{Sec: iwahori}
For a non-negative integer $m$, let $U_R^{+}(2^m\Z_2)$ be the subgroup of $\widetilde{F}_4(\Q_2)$ generated by $x_{\beta}(2^m\Z_2)$ for all $\beta \in \Phi_{U_R}^{+}$, and let $U_R^{-}(2^m\Z_2)$ be the subgroup of $\widetilde{F}_4(\Q_2)$ generated by $x_{-\beta}(2^m\Z_2)$ for all $\beta \in \Phi_{U_R}^{+}$.

We begin with the following lemma.  Let $U_B$ be the unipotent radical of the standard Borel of $\widetilde{F}_4(\Q_2)$.
\begin{lemma}\label{lem:U+N} Recall that $\Delta = \{\alpha_1,\alpha_2,\alpha_3, \alpha_4\}$ are the simple roots.
	\begin{enumerate}
		\item The unipotent group $U_{B}(\Q_2)$ is generated by the $x_{\alpha_i}(\Q_2)$;
		\item Let $U_s$ be the subgroup of $U_{B}(\Q_2)$ generated by the $x_{\alpha_i}(\Z_2)$.  Then $U_s$ contains $U_{R}^{+}(2^A)$ for some $A\gg 0$.
	\end{enumerate}
\end{lemma}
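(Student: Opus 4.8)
The plan is to prove both parts of Lemma \ref{lem:U+N} by elementary manipulations with the Steinberg generators. For part (1), since $U_B$ is generated by all $x_\beta(\Q_2)$ for $\beta \in \Phi^+$, it suffices to show each such $x_\beta(u)$ lies in the subgroup generated by the simple-root subgroups $x_{\alpha_i}(\Q_2)$. I would argue by induction on the height of $\beta$: if $\beta = \gamma + \alpha_i$ for a positive root $\gamma$ of smaller height and a simple root $\alpha_i$, then relation \eqref{2} (the Chevalley commutator formula) expresses the commutator $\{x_\gamma(u), x_{\alpha_i}(v)\}$ as a product of $x_{i'\gamma + j'\alpha_i}(\ast)$ over positive integer combinations lying in $\Phi$; choosing $u,v$ appropriately and using induction on the remaining factors (all of height $>$ height of $\gamma$, hence handled separately or absorbed), one extracts $x_\beta(\text{scalar}\cdot uv)$ and hence, varying $u$, all of $x_\beta(\Q_2)$. (One must be slightly careful with the doubly-laced structure of $F_4$, where $i'$ or $j'$ can be $2$, but this only changes which scalar multiple of $uv$ appears and does not affect the conclusion that $x_\beta(\Q_2)$ is generated.) Alternatively, one can simply cite that $U_B(\Q_2)$ is generated by the simple-root subgroups as a standard fact about split groups; I would phrase the proof to allow either route.

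For part (2), let $U_s = \langle x_{\alpha_i}(\Z_2) : i \rangle$. The same commutator bootstrapping as in part (1), now run with integral parameters, shows that for every positive root $\beta$ the subgroup $U_s$ contains $x_\beta(c_\beta \Z_2)$ for some nonzero $c_\beta \in \Z_2$: each commutator step only introduces integer structure constants $C_{i'j'}$, so starting from $x_{\alpha_i}(\Z_2)$ we land in $x_\beta(c_\beta\Z_2)$ with $c_\beta$ a product of such integers (a power of $2$ times a unit). Taking $A$ large enough that $2^A \Z_2 \subseteq c_\beta \Z_2$ for all the finitely many $\beta \in \Phi_{U_R}^+$, we get $x_\beta(2^A \Z_2) \subseteq U_s$ for each such $\beta$. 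Since $U_R^+(2^A)$ is by definition generated by these $x_\beta(2^A\Z_2)$ with $\beta \in \Phi_{U_R}^+$, this gives $U_R^+(2^A) \subseteq U_s$, as claimed.

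The main obstacle, such as it is, is bookkeeping rather than anything conceptual: in part (1) the inductive extraction of $x_\beta$ from a commutator produces not a single term but a product indexed by all $i'\gamma + j'\alpha_i \in \Phi$, so one must organize the induction (e.g. by descending height, or by first proving the claim for all roots of a given height simultaneously) so that the "unwanted" higher terms have already been shown to lie in the generated subgroup before one divides them out. In part (2) the only subtlety is tracking that the structure constants are $2$-adic units or small $2$-powers so that $c_\beta \neq 0$ and a uniform $A$ exists; since $\Phi_{U_R}^+$ is finite this is immediate once the structure-constant bound is in hand. No positivity or Hilbert-symbol issues arise here because we are working entirely inside the unipotent radical, over which the cover splits canonically (as recalled at the end of Section \ref{Sec: general covers}), so all computations may be carried out in $U_B(\Q_2) \subseteq F_4(\Q_2)$ itself.
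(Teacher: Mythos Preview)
Your approach to part (1) is the standard one the paper alludes to, and part (2) is correct in spirit, but the paper proceeds differently, and there is one point in your organization worth flagging.

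For part (2) you stay entirely inside $U_B$ and try to build $x_\beta(c_\beta\Z_2)$ from simple-root commutators with integral parameters. This works, but neither of the induction schemes you name quite closes on its own: ascending induction on height leaves you with unwanted factors of \emph{higher} height (not yet known to lie in $U_s$), while descending induction on height needs $x_\gamma(c_\gamma\Z_2)\subseteq U_s$ for a \emph{lower}-height $\gamma$ to even form the commutator. The fix is a two-pass argument: first ascend to prove $x_\beta(c_\beta\Z_2)\subseteq U_s\cdot U_{\ge h+1}(\Z_2)$ for each $\beta$ of height $h$ (using that $U_{\ge h+1}(\Z_2)$ is normal in $U_B(\Z_2)$), then descend from the top to strip off the $U_{\ge h+1}$ factor. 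This is indeed only bookkeeping, but it is a bit more than your proposal indicates.

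The paper avoids this entirely by bringing in the torus. Given $\alpha\in\Phi_{U_R}^+$, part (1) writes $x_\alpha(1)$ as a finite word in $x_{\alpha_i}(r_i)$ with $r_i\in\Q_2$; conjugating this word by a sufficiently dominant $t\in T^{++}$ scales every $r_i$ into $\Z_2$, yielding a single nonzero $r_\alpha$ with $x_\alpha(r_\alpha)\in U_s$. Then for $t\in\Z_2^\times$ the element $h_\alpha(t)$ normalizes $U_s$, so the commutator $\{h_\alpha(t),x_\alpha(r_\alpha)\}=x_\alpha((t^2-1)r_\alpha)$ lies in $U_s$; since $t^2-1$ ranges over $8\Z_2$ as $t$ ranges over $\Z_2^\times$, one gets $x_\alpha(2^{N_\alpha}\Z_2)\subseteq U_s$ directly. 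This sidesteps all commutator-formula combinatorics at the cost of stepping outside $U_B$; your route has the complementary virtue of being self-contained in the unipotent radical.
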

\begin{proof}
	The first part of the lemma is standard.  For the second part, suppose $\alpha \in \Phi_{U_R}^{+}$.  By the first part, there exists a finite word $u$ in elements of the form $x_{\alpha_i}(r_i)$ with $r_i\in\Q_2$, so that $u=x_{\alpha}(1)$.  Let $T^{++}$ denote the subgroup of $t \in T$ with $|\alpha_i(t)| < 1 $ for all $i$.  Conjugating by a sufficiently deep $t \in T^{++}$, one finds that there exists a nonzero $r_{\alpha} \in \Z_2$ so that $x_{\alpha}(r_{\alpha}) \in U_s$.  Now, for $t \in \Z_2^\times$, consider the commutator $\{h_{\alpha}(t),x_{\alpha}(r_\alpha)\}$.  On the one hand, because $t \in \Z_2^\times$, this commutator is in $U_s$.  On the other hand, this commutator is $x_{\alpha}((t^2-1)r_{\alpha})$.  As $t$ varies in $\Z_2^\times$, $t^2-1$ fills out $8\Z_2$.  Thus there is $N_{\alpha} \gg0$ so that $x_{\alpha}(2^{N_\alpha}\Z_2) \subseteq U_s$.  The lemma follows.
\end{proof}

Let $U$ be the set of products of the form $U_{R}^{-}(4\Z_2) K_{M_R}^*(4) U_R^{+}(\Z_2)$.  Let $K_R^*(4,2^m)$ be the subgroup of $\widetilde{F}_4(\Q_2)$ generated by $U_R^{-}(4\Z_2)$, $K_{M_R}^*(4)$ and $U_R^{+}(2^m \Z_2)$, so that $K_R^*(4) = K_R^*(4,1)$.  In order to prove Theorem \ref{thm:iwahoriSplit}, we need to check that $K_{R}^*(4)\cdot U = U$.  We will do this by proving $K_{R}^*(4,2^A) \cdot U = U$ for $A \gg 0$, then inducting down on $A$ to obtain $K_R^*(4,1) \cdot U = U$.

We start with the following lemma.
\begin{lemma}\label{lem:easyU} One has $U_{R}^{-}(4\Z_2) \cdot U = U$, and $K_{M_R}^*(4) \cdot U = U$.
\end{lemma}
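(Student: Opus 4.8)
The plan is to prove each of the two assertions $U_R^-(4\Z_2)\cdot U = U$ and $K_{M_R}^*(4)\cdot U = U$ directly, using only the definitions and the structure of the set $U = U_R^-(4\Z_2)K_{M_R}^*(4)U_R^+(\Z_2)$.

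First, for $U_R^-(4\Z_2)\cdot U = U$: since $U_R^-(4\Z_2)$ is by definition a \emph{group}, left multiplication by an element $n\in U_R^-(4\Z_2)$ sends a product $n_1 m n_2 \in U$ (with $n_1\in U_R^-(4\Z_2)$, $m\in K_{M_R}^*(4)$, $n_2\in U_R^+(\Z_2)$) to $(nn_1)mn_2$, and $nn_1\in U_R^-(4\Z_2)$ again. Hence $U_R^-(4\Z_2)\cdot U\subseteq U$; the reverse inclusion is trivial since $1\in U_R^-(4\Z_2)$. So this half is formal and requires no computation.

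Second, for $K_{M_R}^*(4)\cdot U = U$: again $K_{M_R}^*(4)$ is a group, so it suffices to show $K_{M_R}^*(4)\cdot U\subseteq U$, i.e.\ for $m\in K_{M_R}^*(4)$ and $n_1mn_2\in U$ we must rewrite $m n_1 m' n_2$ as an element of $U$. The only non-formal point is that we need to push $m$ past $n_1\in U_R^-(4\Z_2)$ from the left. Write $m n_1 = (m n_1 m^{-1}) m$; the issue is therefore to see that $m n_1 m^{-1} \in U_R^-(4\Z_2)$, i.e.\ that $K_{M_R}^*(4)$ normalizes $U_R^-(4\Z_2)$. This is where I expect the only real work: one checks that for a simple-root generator of $K_{M_R}^*(4)$, namely $x_\beta(\Z_2)$ with $\beta\in\Phi_{M_R}$ or $h_{\alpha_i}(1+4\Z_2)$ with $i=1,2$, conjugation preserves the group $U_R^-(4\Z_2)$ generated by the $x_{-\gamma}(4\Z_2)$, $\gamma\in\Phi_{U_R}^+$. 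For the torus generators $h_{\alpha_i}(1+4\Z_2)$ this is immediate from relation~\eqref{2} (torus acts on root groups by $x_\gamma(u)\mapsto x_\gamma(\gamma(t)u)$, and $\gamma(t)\in\Z_2^\times$), which keeps the coefficient in $4\Z_2$. For the unipotent generators $x_\beta(\Z_2)$ with $\beta\in\Phi_{M_R}$, one uses the commutator relation~\eqref{2}: conjugating $x_{-\gamma}(4\Z_2)$ by $x_\beta(\Z_2)$ produces $x_{-\gamma}(4\Z_2)$ times a product of $x_{i\beta - j\gamma}(4\,C_{ij}\Z_2)$ over positive integers $i,j$ with $i\beta-j\gamma\in\Phi$. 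Since $\beta\in\Phi_{M_R}$ and $-\gamma$ has a negative $\alpha_1$- or $\alpha_2$-component, every such root $i\beta-j\gamma$ again lies in $-\Phi_{U_R}^+$ (it still has a negative coefficient on one of the long simple roots), so each new factor lies in $U_R^-(4\Z_2)$; the factor of $4$ is retained since the coefficient $4$ of the $x_{-\gamma}$ term survives into $u^i$ only with $i\ge 1$.

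The main obstacle, then, is purely bookkeeping: verifying that the commutators $[x_\beta(\Z_2), x_{-\gamma}(4\Z_2)]$ stay inside $U_R^-(4\Z_2)$, which amounts to the combinatorial fact that for $\beta\in\Phi_{M_R}^+$ and $\gamma\in\Phi_{U_R}^+$, any root of the form $i\beta - j\gamma$ ($i,j>0$) still lies in $-\Phi_{U_R}^+$, together with the elementary observation that the $4\Z_2$-coefficient is preserved. Once normalization of $U_R^-(4\Z_2)$ by $K_{M_R}^*(4)$ is established, the identity $K_{M_R}^*(4)\cdot U = U$ follows by the rewriting $m n_1 m' n_2 = (mn_1m^{-1})(mm')n_2 \in U_R^-(4\Z_2)\,K_{M_R}^*(4)\,U_R^+(\Z_2) = U$, using that $K_{M_R}^*(4)$ is closed under multiplication.
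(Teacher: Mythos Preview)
Your proposal is correct and follows essentially the same approach as the paper. The paper's proof records only the key combinatorial input---that for $\alpha\in\Phi_{M_R}$, $\beta\in\Phi_{U_R}^-$ and positive integers $a,b$, any root of the form $a\alpha+b\beta$ lies again in $\Phi_{U_R}^-$---and then appeals to the commutator formula, which is exactly what you do. (One wording slip: in your last line about the factor of $4$, it is the exponent $j\ge 1$ on the $x_{-\gamma}$-argument $v\in 4\Z_2$ that preserves membership in $4\Z_2$, not the exponent on $u$.)
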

\begin{proof}  That $U_{R}^{-}(4\Z_2) U = U$ is trivial.  For the multiplication by $K_{M_R}^*(4)$, one uses that if $\beta \in \Phi_{U_R}^{-}$, $\alpha \in \Phi_{M_R}$, and $a,b$ are positive integers, then if $\gamma =a \alpha + b \beta$ is a root, then $\gamma \in \Phi_{U_R}^{-}$.  The lemma then follows easily by applying the commutator formula.
\end{proof}

Now we have:
\begin{proposition}\label{prop:U+N} There is $A\gg 0$ such that $U_{R}^{+}(2^A) \cdot U \subseteq U$.
\end{proposition}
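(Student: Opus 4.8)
The plan is to show that conjugating a positive root vector $x_{\alpha}(2^A\Z_2)$ with $\alpha \in \Phi_{U_R}^+$ into (a word defining) an element of $U$ keeps us inside $U$, provided $A$ is large enough that all commutators that appear land in the groups we already control. Recall from Lemma \ref{lem:U+N}(2) that there is a constant, call it $A_0 \gg 0$, such that $U_{R}^{+}(2^{A_0}\Z_2) \subseteq U_s$, the subgroup of $U_B(\Q_2)$ generated by the $x_{\alpha_i}(\Z_2)$ for simple $\alpha_i$. So it suffices to treat the simple positive root vectors $x_{\alpha_i}(2^A\Z_2)$ for $A$ large, and then multiply a finite number of these together; since $U$ is a set of products, once $U_{R}^{+}(2^A) \cdot U \subseteq U$ is known for such simple vectors it holds for all of $U_{R}^{+}(2^A)$ after possibly enlarging $A$ by the bounded word-lengths coming from Lemma \ref{lem:U+N}(2).

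First I would take a generic element $g = n_1 m n_2 \in U$ with $n_1 \in U_R^-(4\Z_2)$, $m \in K_{M_R}^*(4)$, $n_2 \in U_R^+(\Z_2)$, and an element $x = x_{\alpha_i}(r)$ with $r \in 2^A\Z_2$ and $\alpha_i$ simple. I want to rewrite $x g$ in the form $n_1' m' n_2'$ of the same shape. The strategy is to move $x$ rightward past $n_1$, then past $m$, then absorb it into $n_2$. Moving $x$ past $m$ is harmless: $m$ normalizes both $U_R^+$ and $U_R^-$ (it lies in the Levi $M_R$), and conjugating $x_{\alpha_i}(r)$ by $m$ produces an element of $U_R^+(\Z_2)$ — in fact of $U_R^+(2^A \cdot c\,\Z_2)$ for a bounded constant $c$ depending only on the structure constants and on the fixed generators of $K_{M_R}^*(4)$ — so after this step $x$ has been converted into a new element of $U_R^+$ that we can just append to $n_2$, shrinking $A$ only by a bounded amount. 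Moving $x$ past $n_1 \in U_R^-(4\Z_2)$ is the substantive step: here I use Corollary \ref{Cor: unipotent nice}. For each pair $(\alpha_i, -\beta)$ with $\beta \in \Phi_{U_R}^+$ one of the two cases of that corollary applies — if $\alpha_i$ is short, case (1), and in general, since the $x_{-\beta}$-entries lie in $4\Z_2$ (valuation $\geq 2$) and we can arrange the $x_{\alpha_i}$-entry to have valuation $\geq 0$, case (2) — so the exchange relation $x_{\alpha}(t)x_{-\alpha}(s) = x_{-\alpha}(\tfrac{s}{1+st})\,\widetilde{h}_\alpha(1+st)\,x_{\alpha}(\tfrac{t}{1+st})$ holds \emph{without} the Hilbert-symbol error term. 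More precisely, for non-antipodal pairs I use the commutator relation \eqref{2}, which introduces root vectors $x_{i\alpha_i + j(-\beta)}$ with coefficients of valuation at least $A + 2$ roughly; these new root vectors are supported on roots that, by the Lemma \ref{lem:easyU}-type observation ($a\alpha + b\gamma$ with $\alpha \in \Phi_{M_R}$, $\gamma \in \Phi_{U_R}^-$ positive-combinations stay in $\Phi_{U_R}^-$ — here the analogous statement with $\alpha_i$ in place of a root of $M_R$), land either back in $U_R^-$ with coefficients in $4\Z_2$ (fine, absorb into $n_1'$), or in $U_R^+$ with coefficients of high valuation (fine, carry them rightward), or generate a torus element $\widetilde{h}_{\alpha_i}(1 + (\text{something in } 4\Z_2 \cdot 2^A))$, which for $A \geq 0$ lies in $h_{\alpha_i}(1 + 4\Z_2)$ — and this is precisely a generator of $K_{M_R}^*(4)$ when $\alpha_i \in \{\alpha_1, \alpha_2\}$, or lies in $s(\SL_3(\Q_2)) \subseteq K_{M_R}^*(4)$ when $\alpha_i \in \{\alpha_3,\alpha_4\}$. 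So the torus element produced can be absorbed into $m'$.

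Carrying this out, $xg = x n_1 m n_2$ rewrites (after moving $x$ past $n_1$ via the exchange relations above) as $n_1' t' x' m n_2$ where $n_1' \in U_R^-(4\Z_2)$, $t' \in K_{M_R}^*(4)$, and $x'$ is a product of positive-root vectors with coefficients of valuation $\geq A - c_1$ for a bounded $c_1$; then $t' m \in K_{M_R}^*(4)$ and, moving $x'$ past this, $x'$ becomes a positive-root element of valuation $\geq A - c_2$, which together with $n_2$ lies in $U_R^+(\Z_2)$ as soon as $A \geq c_2$. Thus $xg \in U$. Taking $A$ to be the maximum of $A_0$ and all the bounded constants $c_1, c_2, \dots$ that appear (finitely many, since $F_4$ has finitely many roots), and multiplying finitely many simple-root vectors, gives $U_R^+(2^A) \cdot U \subseteq U$.

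\textbf{Main obstacle.} The delicate point, and where I expect to spend the most care, is bookkeeping the valuations through the non-abelian commutator relation \eqref{2}: when I push $x_{\alpha_i}(2^A r)$ past an element $x_{-\beta}(4 s)$ with $\beta \in \Phi_{U_R}^+$, I must check that \emph{every} root $i\alpha_i + j(-\beta) \in \Phi$ arising in the product either lies in $\Phi_{U_R}^-$ (so its coefficient, which has valuation $\geq 2$, can be absorbed into $n_1'$), or lies in $\Phi_{U_R}^+$ (coefficient of high valuation, carry right), or is $0$ — and in the last case that the resulting $\widetilde h_{\alpha_i}$ factor has argument in $1+4\Z_2$, which is exactly what makes it a generator of $K_{M_R}^*(4)$. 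The root combinatorics of $F_4$ (with $\alpha_i$ simple but \emph{not} in $M_R$ when $i \in \{1,2\}$, and in $M_R$ when $i \in \{3,4\}$) has to be checked case-by-case, but it is a finite check; the Hilbert-symbol subtlety that would otherwise obstruct everything is exactly what Corollary \ref{Cor: unipotent nice} is designed to kill, since all the relevant $x_{-\beta}$ coefficients sit in $4\Z_2$.
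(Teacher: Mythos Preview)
Your approach is essentially correct and follows the same route as the paper, but you have made the argument considerably more complicated than it needs to be, and the ``main obstacle'' you flag dissolves entirely with one clean observation.

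The paper proceeds exactly as you do: reduce to $x_{\alpha_i}(\Z_2)\cdot U \subseteq U$ for simple $\alpha_i$ via Lemma~\ref{lem:U+N}(2). For $i=3,4$ this is already contained in Lemma~\ref{lem:easyU}, since $\alpha_3,\alpha_4\in\Phi_{M_R}$. For $i=1,2$ the paper observes the following root fact, which replaces your ``case-by-case finite check'': if $\alpha\in\Phi_{U_R}^+$ with $\alpha\neq\alpha_i$, then \emph{every} root of the form $\gamma=a\alpha_i-b\alpha$ ($a,b>0$) is negative. The proof is one line: if $\gamma$ were positive then $a\alpha_i=\gamma+b\alpha$ would force both $\gamma$ and $\alpha$ to be multiples of the simple root $\alpha_i$, a contradiction. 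Consequently the commutator $\{x_{\alpha_i}(u),x_{-\alpha}(u')\}$ for $u\in\Z_2$, $u'\in4\Z_2$ lands entirely in $U_B^-(4\Z_2)$; your case ``or in $U_R^+$ with coefficients of high valuation'' simply never occurs. Combined with the exchange formula for the single case $\alpha=\alpha_i$ (producing $\widetilde{h}_{\alpha_i}(1+uu')\in K_{M_R}^*(4)$ since $uu'\in4\Z_2$), one gets $x_{\alpha_i}(u)\,g\in U_B^-(4\Z_2)\cdot K_{M_R}^*(4)\cdot U_R^+(\Z_2)$, which is contained in $U$ by Lemma~\ref{lem:easyU}.

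Note in particular that no large $A$ is needed at the simple-root stage: the paper shows $x_{\alpha_i}(\Z_2)\cdot U\subseteq U$ outright, and your running valuation bookkeeping ($A-c_1$, $A-c_2$, \ldots) is unnecessary. The constant $A$ enters only through Lemma~\ref{lem:U+N}(2).
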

\begin{proof} By Lemma \ref{lem:U+N}, it suffices to show that $x_{\alpha_i}(\Z_2) U \subseteq U$ for all simple roots $\alpha_i$. By Lemma \ref{lem:easyU}, we must only check this for $i = 1,2$.
	
	Thus suppose that $\alpha_i$ is a simple root, $i=1,2$, and $\alpha \in \Phi_{U_R}^{+}$.  Note that if $a,b$ are positive integers, and $\alpha \neq \alpha_i$, then if $\gamma = a\alpha_i - b \alpha$ is a root, then $\gamma \in \Phi^{-}$.  Indeed, $a \alpha_i = \gamma + b \alpha$, so that if $\gamma$ were positive, we would have that both $\gamma$ and $\alpha$ are proportional to $\alpha_i$, a contradiction.  It follows that, for such $\alpha_i$ and $\alpha$ and $u \in \Z_2$, $u' \in 4\Z_2$, the commutator $\{x_{\alpha_i}(u),x_{-\alpha}(u')\} \in U_{B}^{-}(4\Z_2)$.  Here $U_B^{-}(4\Z_2)$ is the subgroup of $\widetilde{F}_4(\Q_2)$ generated by $x_{\beta}(4\Z_2)$ for $\beta$ a negative root.
	
	Let us also note that $x_{\alpha_i}(u) x_{-\alpha_i}(u') = x_{-\alpha_i}(u'/(1+uu')) \widetilde{h_{\alpha_i}}(1+uu') x_{\alpha_i}(u/(1+uu'))$.  Combining these two facts, we obtain the following: If $g = n_1 m n_2$ is in $U$, then $x_{\alpha_i}(u) g = n_1' x_{\alpha_i}(u) m' n_2$ with $n_1' \in U_B^{-}(4\Z_2)$ and $m' \in K_{M_R}^*(4)$.
	
	Now, one verifies easily that if $m' \in K_{M_R}^*(4)$ and $u \in U_{R}^{+}(\Z_2)$, then $(m')^{-1} u m' \in U_{R}^+(\Z_2)$.  Consequently, $x_{\alpha_i}(u) g = n_1' m' n_2'$ is in $U_{B}^{-}(4\Z_2) \cdot U$.  The proposition follows from Lemma \ref{lem:easyU}.
\end{proof}

It follows from Proposition \ref{prop:U+N} and Lemma \ref{lem:easyU} that $K_{R}^*(4,2^A) \cdot U \subseteq U$ for $A \gg 0$.  As mentioned, we will now induct downward on $A$ to obtain $K_{R}^*(4) \cdot U = U$.

We require the following lemma.
\begin{lemma} Let the notation be as usual.
	\begin{enumerate}
		\item The sets $\widetilde{h_{\alpha_i}}(1+4\Z_2)$ are subgroups, and they commute with each other.
		\item Suppose $t \in 1+4\Z_2$ and $\beta \in \Phi$.  Then there are $t_1, \ldots, t_4 \in 1+4\Z_2$ so that $\widetilde{h_{\beta}}(t) = \prod_{i=1}^{4} \widetilde{h_{\alpha_i}}(t_i)$.
	\end{enumerate}	
\end{lemma}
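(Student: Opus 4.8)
The plan is to treat the two parts in sequence, part~(1) supplying exactly what part~(2) needs.

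\emph{Part (1).} The only input beyond the Steinberg presentation recalled in Section~\ref{Sec: double covers gen} is the Hilbert symbol identity $(u,v)_2=1$ for all $u,v\in 1+4\Z_2$: either $u\in 1+8\Z_2=(\Z_2^\times)^2$, or $\Q_2(\sqrt u)$ is the unramified quadratic extension, whose norm subgroup contains every unit of $\Z_2$ and in particular $v$ (equivalently, $\epsilon(u):=(u-1)/2$ is even, so $(u,v)_2=1$ by the classical formula). Granting this, the relation $\widetilde h_{\alpha_i}(t)\widetilde h_{\alpha_i}(s)=\widetilde h_{\alpha_i}(ts)\,(t,s)_2^{2/(\alpha_i,\alpha_i)}$ shows that $t\mapsto \widetilde h_{\alpha_i}(t)$ restricts to a homomorphism on the multiplicative subgroup $1+4\Z_2$ (note $\widetilde h_{\alpha_i}(1)=1$, being idempotent by the same relation), so its image $\widetilde h_{\alpha_i}(1+4\Z_2)$ is a subgroup; and \eqref{eqn: commutator of tori} gives $\{\widetilde h_{\alpha_i}(s),\widetilde h_{\alpha_j}(t)\}=(s,t)_2^{(\alpha_i^\vee,\alpha_j^\vee)}=1$ for $s,t\in 1+4\Z_2$, so the four subgroups pairwise commute.

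\emph{Part (2), reduction.} Because these subgroups commute and each is a homomorphic image of $1+4\Z_2$, and because $F_4$ is simply connected so that $\alpha_1^\vee,\dots,\alpha_4^\vee$ form a $\Z$-basis of the cocharacter lattice, the assignment $\iota\colon \prod_i\alpha_i^\vee(t_i)\mapsto \prod_i\widetilde h_{\alpha_i}(t_i)$ is a well-defined homomorphism from $T(4):=\prod_i\alpha_i^\vee(1+4\Z_2)$ into $\widetilde{F}_4(\Q_2)$, splitting the cover over $T(4)$, with image $H^\ast:=\prod_i\widetilde h_{\alpha_i}(1+4\Z_2)$ meeting $\mu_2$ trivially. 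Writing $\beta^\vee=\sum_i n_i\alpha_i^\vee$ with $n_i\in\Z$, so that $t^{n_i}\in 1+4\Z_2$, the content of part~(2) is exactly that $\widetilde h_\beta(t)=\iota(\beta^\vee(t))$ for every root $\beta$ and every $t\in 1+4\Z_2$.

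\emph{Part (2), the induction.} I would prove this by induction on the length of a word for $w$ in an expression $\beta=w(\alpha_j)$ with $\alpha_j$ simple and $w$ a product of simple reflections (every root arises this way), the case $\beta=\alpha_j$ being trivial. The step rests on two identities attached to a simple root $\alpha_i$. First, for all $t$,
\[
\widetilde w_{\alpha_i}(1)\,\widetilde h_\beta(t)\,\widetilde w_{\alpha_i}(1)^{-1}=\widetilde h_\beta(t)\,\widetilde h_{\alpha_i}\!\big(t^{-m}\big),\qquad m:=\tfrac{2(\alpha_i,\beta)}{(\beta,\beta)},
\]
obtained by conjugating the factors $x_{\pm\alpha_i}(\cdot)$ of $\widetilde w_{\alpha_i}(1)=x_{\alpha_i}(1)x_{-\alpha_i}(-1)x_{\alpha_i}(1)$ by $\widetilde h_\beta(t)$: since the cover splits uniquely over the unipotent radicals of the Borel and its opposite, $\widetilde h_\beta(t)$ scales arguments exactly as $h_\beta(t)$ does in $F_4(\Q_2)$, giving $\widetilde h_\beta(t)^{-1}\widetilde w_{\alpha_i}(1)\widetilde h_\beta(t)=\widetilde w_{\alpha_i}(t^{-m})$, and multiplying by $\widetilde w_{\alpha_i}(1)^{-1}=\widetilde w_{\alpha_i}(-1)$ produces $\widetilde h_{\alpha_i}(t^{-m})$. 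Second, and this is the delicate identity, for $t\in 1+4\Z_2$,
\[
\widetilde w_{\alpha_i}(1)\,\widetilde h_\beta(t)\,\widetilde w_{\alpha_i}(1)^{-1}=\widetilde h_{s_{\alpha_i}(\beta)}(t).
\]
Here the same uniqueness of unipotent splittings gives $\widetilde w_{\alpha_i}(1)x_\gamma(u)\widetilde w_{\alpha_i}(1)^{-1}=x_{s_{\alpha_i}(\gamma)}(\eta_\gamma u)$ with $\eta_\gamma=\pm1$ the Chevalley constant and $\eta_{-\gamma}=\eta_\gamma$ (standard, from $\widetilde w_{\alpha_i}(1)$ commuting with the Chevalley involution), so $\widetilde w_{\alpha_i}(1)\widetilde w_\beta(s)\widetilde w_{\alpha_i}(1)^{-1}=\widetilde w_{s_{\alpha_i}(\beta)}(\eta s)$ for a sign $\eta$; writing $\gamma=s_{\alpha_i}(\beta)$ and $\widetilde w_\gamma(a)=\widetilde h_\gamma(a)\widetilde w_\gamma(1)$, the claim reduces to $\widetilde w_\gamma(\eta t)\widetilde w_\gamma(-\eta)=\widetilde h_\gamma(t)$, which follows from the Steinberg relations together with $(t,\pm1)_2=1$ and the identity $\widetilde w_\gamma(1)^2\,\widetilde h_\gamma(-1)^{-1}=\widetilde h_\gamma(-1)^2=(-1,-1)_2^{2/(\gamma,\gamma)}$, whose two occurrences in the computation cancel. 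Combining the two identities with the inductive hypothesis $\widetilde h_\beta(t)=\iota(\beta^\vee(t))$ and the homomorphism property of $\iota$, and using $s_{\alpha_i}(\beta)^\vee=\beta^\vee-m\alpha_i^\vee$, yields $\widetilde h_{s_{\alpha_i}(\beta)}(t)=\iota\big(\beta^\vee(t)\,\alpha_i^\vee(t)^{-m}\big)=\iota\big(s_{\alpha_i}(\beta)^\vee(t)\big)$, completing the induction.

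\emph{Expected main obstacle.} The real work is the second identity of the inductive step: keeping track of the $\mu_2$-ambiguities that appear when conjugating by $\widetilde w_{\alpha_i}(1)$ and verifying that they cancel on $1+4\Z_2$. The two ingredients that make this go through are the uniqueness of the splitting of the cover over unipotent subgroups (which removes the $\mu_2$-factor from conjugating a root-group generator by $\widetilde w_{\alpha_i}(1)$) and the clean evaluation $\widetilde h_\gamma(-1)^2=(-1,-1)_2^{2/(\gamma,\gamma)}$ of the obstruction relating $\widetilde w_\gamma(1)^2$ to $\widetilde h_\gamma(-1)$; these factors are genuinely nontrivial, as $(-1,-1)_2=-1$, so the cancellation is not automatic. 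Everything else is bookkeeping with the relations of Section~\ref{Sec: double covers gen}.
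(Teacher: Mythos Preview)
Your argument is correct and follows the same skeleton as the paper's: part~(1) via triviality of the Hilbert symbol on $1+4\Z_2$, and part~(2) by induction on the length of $w$ in $\beta=w(\alpha_j)$, using conjugation by simple-reflection Weyl elements to peel off one $\widetilde h_{\alpha_i}$ factor at a time.

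The difference is only in how the inductive step is executed. The paper invokes Steinberg's \cite[Lemma~37(c)]{steinbergBook} as a black box: $\widetilde w_\alpha(1)\,\widetilde h_\gamma(t)\,\widetilde w_\alpha(-1)=\widetilde h_{w_\alpha\gamma}(t)\,(c,t)_2$ for some $c=\pm1$, and since $(c,t)_2=1$ for $t\in 1+4\Z_2$, the sign disappears immediately. Combined with \cite[Lemma~37(e)]{steinbergBook} this gives the one-line recursion $\widetilde h_\beta(t)=\widetilde h_\gamma(t)\,\widetilde h_\alpha(t^{-\langle\alpha,\gamma\rangle})$. Your ``second identity'' is exactly Steinberg's Lemma~37(c), and you essentially re-derive it by hand. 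That derivation is sound, but your bookkeeping around it is more tangled than necessary: in particular, the asserted appearance and cancellation of two factors of $(-1,-1)_2^{2/(\gamma,\gamma)}$ does not actually occur --- once one uses $\widetilde w_\gamma(1)=\widetilde w_\gamma(-1)^{-1}$ and $\widetilde w_\gamma(-1)^2=\widetilde h_\gamma(-1)$, the reduction $\widetilde w_\gamma(\eta t)\widetilde w_\gamma(-\eta)=\widetilde h_\gamma(t)$ goes through cleanly using only $(t,\pm1)_2=1$. So your ``expected main obstacle'' is a phantom; citing Steinberg directly, as the paper does, sidesteps it entirely.
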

\begin{proof} The first part of the lemma follows from the usual multiplication formulas, together with the fact that the Hilbert symbol is trivial when restricted to $1+4\Z_2$.  For the second part of the lemma, we mimic the proof of \cite[Lemma 38 (b)]{steinbergBook}.  Thus suppose $\beta = w \alpha_i$ with $\alpha_i$ a simple root.  Write $w = w_{\alpha}w'$ where $length(w') =length(w)-1$.  Set $\gamma = w_{\alpha} \beta$ so that $\beta = w_{\alpha} \gamma$.  Now \cite[Lemma 37 (c)]{steinbergBook} yields that $\widetilde{w_{\alpha}}(1) \widetilde{h_{\gamma}}(t) \widetilde{w_{\alpha}}(-1) = \widetilde{h_{w_{\alpha}\gamma}}(t) (c,t)$ for some $c = \pm 1$.  However, because $t \in 1+4\Z_2$ and $c = \pm 1$, $(c,t) = 1$.  Thus $\widetilde{h_{\beta}}(t)= \widetilde{w_{\alpha}}(1) \widetilde{h_{\gamma}}(t) \widetilde{w_{\alpha}}(-1)$, from which we obtain
	\[\widetilde{h_{\beta}}(t) = \widetilde{h_{\gamma}}(t) (\widetilde{h_{\gamma}}(t)^{-1} \widetilde{w_{\alpha}}(1) \widetilde{h_{\gamma}}(t)) \widetilde{w_{\alpha}}(-1) = \widetilde{h_{\gamma}}(t) \widetilde{w_{\alpha}}(t^{-\langle \alpha,\gamma \rangle}) \widetilde{w_{\alpha}}(-1),\]
	using \cite[Lemma 37 (e)]{steinbergBook} for the second equality.  But this is $\widetilde{h_{\gamma}}(t) \widetilde{h_{\alpha}}(t^{-\langle \alpha,\gamma \rangle})$.  The lemma follows by induction on the length of $w$.
\end{proof}

\begin{proposition}\label{prop:inductDown} For every non-negative integer $m$, one has $K^*(4,2^m) \cdot U \subseteq U$.
\end{proposition}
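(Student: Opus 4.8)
The plan is to argue by downward induction on $m$. For $m \geq A$ one has $U_R^+(2^m\Z_2) \subseteq U_R^+(2^A\Z_2)$ and hence $K_R^*(4,2^m) \subseteq K_R^*(4,2^A)$, so these cases follow from the statement $K_R^*(4,2^A)\cdot U \subseteq U$ already established via Proposition~\ref{prop:U+N} and Lemma~\ref{lem:easyU}. So fix $m$ with $0 \leq m < A$ and assume $K_R^*(4,2^{m+1})\cdot U \subseteq U$. Every element of $K_R^*(4,2^m)$ is a word in elements drawn from $U_R^-(4\Z_2)$, $K_{M_R}^*(4)$, and the root group elements $x_\alpha(u)$ with $\alpha \in \Phi_{U_R}^+$ and $\val_2(u) \geq m$ (together with inverses, which are of the same shape). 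By Lemma~\ref{lem:easyU}, each element of the first two kinds carries $U$ into $U$; and if $\val_2(u) \geq m+1$ then $x_\alpha(u) \in K_R^*(4,2^{m+1})$, which carries $U$ into $U$ by hypothesis. It therefore suffices to prove $x_\alpha(u)\cdot U \subseteq U$ for $\alpha \in \Phi_{U_R}^+$ and $\val_2(u)=m$, since then an arbitrary word in these letters carries $U$ into $U$ one letter at a time.

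The crux is the claim that $x_\alpha(u)\,U_R^-(4\Z_2)\,x_\alpha(-u) \subseteq K_R^*(4,2^{m+1})$ whenever $\val_2(u)\geq m$. As $U_R^-(4\Z_2)$ is generated by the $x_{-\beta}(s)$, $\beta \in \Phi_{U_R}^+$, $s \in 4\Z_2$, it suffices to conjugate one such generator. If $\beta \neq \alpha$, relation~\eqref{2} expands $\{x_\alpha(u),x_{-\beta}(s)\}$ as a product of terms $x_{i\alpha-j\beta}(C_{ij}u^i s^j)$ over roots $i\alpha-j\beta$ with $i,j\geq 1$; each coefficient has valuation at least $mi + 2j \geq m+2$, so each factor lies in $x_\gamma(2^{m+2}\Z_2)$, which sits inside $K_R^*(4,2^{m+1})$ whether $\gamma$ is positive or negative and whether $\gamma\in\Phi_{M_R}$ or $\gamma\in\Phi_{U_R}$, simply because $2^{m+2}\Z_2 \subseteq 4\Z_2$ and $2^{m+2}\Z_2\subseteq 2^{m+1}\Z_2$. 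If $\beta = \alpha$, then Corollary~\ref{Cor: unipotent nice}(\ref{case long}) applies (as $\val_2(s)\geq 2$, $\val_2(u)\geq 0$) and gives $x_\alpha(u)x_{-\alpha}(s) = x_{-\alpha}(s')\,\widetilde h_\alpha(1+us)\,x_\alpha(u')$ with $\val_2(s')\geq 2$ and $\val_2(u')=\val_2(u)$; pushing the $x_\alpha$-tail back past $x_\alpha(-u)$ leaves a coefficient of valuation $\geq 2m+2$, hence an element of $U_R^+(2^{m+1}\Z_2)$, while $1+us \in 1+4\Z_2$, so the preceding lemma writes $\widetilde h_\alpha(1+us) = \prod_{i=1}^4\widetilde h_{\alpha_i}(t_i)$ with $t_i \in 1+4\Z_2$, each $\widetilde h_{\alpha_i}(t_i)$ lying in $K_{M_R}^*(4)$ (for $i=1,2$ by definition, for $i=3,4$ because $\widetilde h_{\alpha_i}(t)$ with $t$ a unit is a word in the $x_{\pm\alpha_i}(\Z_2)$). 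Either way the conjugate lands in $U_R^-(4\Z_2)\,K_{M_R}^*(4)\,U_R^+(2^{m+1}\Z_2) \subseteq K_R^*(4,2^{m+1})$, which gives the claim.

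To conclude, write $g = n_1 m_0 n_2 \in U$ with $n_1 \in U_R^-(4\Z_2)$, $m_0 \in K_{M_R}^*(4)$, $n_2 \in U_R^+(\Z_2)$. By the claim, $x_\alpha(u)\,n_1 = k_1\,x_\alpha(u)$ for some $k_1 \in K_R^*(4,2^{m+1})$. Since any root of the form $i\gamma + j\alpha$ with $\gamma\in\Phi_{M_R}$, $\alpha\in\Phi_{U_R}^+$, $i,j\geq 1$ again lies in $\Phi_{U_R}^+$, the group $K_{M_R}^*(4)$ normalizes $U_R^+(2^m\Z_2)$ (as in the proof of Proposition~\ref{prop:U+N}), so $x_\alpha(u)\,m_0 = m_0\,n_2'$ with $n_2' \in U_R^+(2^m\Z_2) \subseteq U_R^+(\Z_2)$. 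Hence $x_\alpha(u)\,g = k_1\,m_0\,n_2'\,n_2 \in K_R^*(4,2^{m+1})\cdot U_R^+(\Z_2) \subseteq K_R^*(4,2^{m+1})\cdot U$, which lies in $U$ by the inductive hypothesis. The step I expect to be the main obstacle is the uniform valuation bookkeeping in the claim: one must check, over the whole $F_4$ root system, that every root group appearing in the relevant commutators is hit at $2$-adic depth sufficient to be absorbed into $K_R^*(4,2^{m+1})$. The feature that makes this work is that the factor of $4$ built into the negative part $U_R^-(4\Z_2)$ provides enough room to dominate simultaneously the level $4\Z_2$ needed for negative root groups and the level $2^{m+1}\Z_2$ needed for positive root groups, and the possibly-doubled structure constants $C_{ij}\in\{\pm 1,\pm 2\}$ only improve the estimates.
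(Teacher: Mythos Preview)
Your proof is correct and follows essentially the same route as the paper's: downward induction on $m$, reduction to showing $x_\alpha(u)\cdot U\subseteq U$ for $\alpha\in\Phi_{U_R}^+$ with $\val_2(u)\geq m$, and the key conjugation claim $x_\alpha(u)\,U_R^-(4\Z_2)\,x_\alpha(-u)\subseteq K_R^*(4,2^{m+1})$ handled by the commutator formula when $\beta\neq\alpha$ and by the explicit $x_\alpha x_{-\alpha}$ identity when $\beta=\alpha$. Your valuation bookkeeping ($mi+2j\geq m+2$ and $2m+2\geq m+1$) and your explicit justification that $\widetilde h_\alpha(1+us)\in K_{M_R}^*(4)$ via the preceding lemma are exactly the details the paper leaves to the reader, so your write-up is slightly more explicit but substantively identical.
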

\begin{proof} As just noted, Proposition \ref{prop:U+N} implies $K_R^*(4,2^A) \cdot U \subseteq U$ for $A\gg0$.  We will induct downward on $N$ to obtain the proposition.
	
	Thus suppose that we have proved $K_{R}^*(4,2^{m+1}) \cdot U \subseteq U$ for a non-negative integer $m$.  We wish to verify that $K_{R}^*(4,2^m) \cdot U \subseteq U$.  To do this, it suffices to check that $U_{R}^{+}(2^m\Z_2) \cdot U \subseteq U$.  Thus suppose $u = x_{\alpha}(2^m s) \in U_{R}^{+}(2^m\Z_2)$ and $x = n_1 m n_2 \in U$.  We have $u x = (un_1u^{-1}) m (m^{-1}um) n_2$.  It is easy to see that $(m^{-1}um) n_2 \in U_{R}^+(\Z_2)$.  We claim that $un_1 u^{-1} \in K_{R}^*(4,2^{m+1})$.  Granted this claim, the proposition follows.
	
	To prove the claim, suppose $n_1 = v_1 \cdots v_{r}$ with each $v_i$ of the form $x_{-\beta_i}(4 s_i)$ with $s_i \in \Z_2$ and $\beta_i \in \Phi_{U_R}^{+}$.  The commutator formula gives $uv_j u^{-1} = k' $ with $k' \in K_R^*(4,2^{m+1})$. Indeed, if $\alpha \neq \beta_i$ this follows from the commutator formula.  If $\alpha = \beta_i$ this follows from the formula
	\[x_\alpha(t)x_{-\alpha}(s) = x_{-\alpha}(s/(1+ts)) \widetilde{h_{\alpha}}(1+st) x_{\alpha}(t/(1+st))\]
	which implies
	\[x_\alpha(t)x_{-\alpha}(s) x_{\alpha}(-t) = x_{-\alpha}(s/(1+ts)) \widetilde{h_{\alpha}}(1+st) x_{\alpha}(-st^2/(1+st)).\qedhere\]
\end{proof}

\subsection{The case of $K'_R(4)$} We now define a new subgroup, $K_R'(4) \subseteq \widetilde{F}_4(\Q_2)$, and prove that it has appropriate Iwahori factorizations and maps injectively to the linear group $F_4(\Q_2)$. This gives a useful compact open subgroup of $\widetilde{F}_4(\Q_2)$ for global constructions.

We need to introduce a bit more notation. Recall that $P_S = H_J \cap R$ is the Siegel parabolic subgroup of $H_J$; it has Levi decomposition $P_S= M_R N_S$. We set $Q=LU_Q$ denote the standard maximal parabolic of $F_4$ associated to the simple root $\al_2.$ Recalling the notation in Section \ref{sec:dcQEG}, this is the parabolic $Q_J$ of $G_J = F_4$ when $J = H_3(\Q_p)$.

Let $\Phi_{N}^{+}$ be the set of roots in the unipotent radical $N$ of the Heisenberg parabolic $P$.  Let $\Phi_{N_S}^{+}$ be the set of roots in the unipotent group $N_S$.  These are the roots $\sum_{i}{m_i \alpha_i}$ with $m_1 = 0$ and $m_2 =1$.  Note that $\Phi_{U_R}^{+} = \Phi_{N}^{+} \sqcup \Phi_{N_S}^{+} = \{\alpha_1\} \sqcup \Phi_{U_Q}^+$.

We let $N_S^{+}(2^m \Z_2)$ be the subgroup of $\widetilde{F}_4(\Q_2)$ generated by $x_{\alpha}(2^m \Z_2)$ for all $\alpha \in \Phi^{+}_{N_S}$ and let $N^{+}(2^m \Z_2)$ be the subgroup $\widetilde{F}_4(\Q_2)$ generated by $x_{\alpha}(2^m \Z_2)$ for all $\alpha \in \Phi^{+}_{N}$.  Similarly define $N^{-}(2^m \Z_2)$ and $N_S^{-}(2^m \Z_2)$

Set $U_R^+(4,1)$ to be the subgroup generated by $N_S^+(4 \Z_2)$ and $N^+(\Z_2)$.  Let $U_R^{-}(1,4)$ denote the subgroup generated by $N_S^{-}(\Z_2)$ and $N^{-}(4\Z_2)$.  Finally, we define $K_R'(4)$ to be the subgroup generated by $U_R^{-}(1,4)$, $K_{M_R}^*(4)$, and $U_R^{+}(4,1)$.

The goal of this section is to prove the following theorem.
\begin{theorem}\label{thm:KR'4} Let the notation be as above.
	\begin{enumerate}
		\item One has $K_R'(4) = U_{R}^{-}(1,4)\cdot K_{M_R}^*(4) \cdot U_R^{+}(4,1)$.
		\item The map $K_R'(4) \rightarrow F_4(\Q_2)$ is injective.
	\end{enumerate}
\end{theorem}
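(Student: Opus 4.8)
The plan is to follow the same strategy as in the proof of Theorem \ref{thm:iwahoriSplit}, adapting each step to the finer subgroup $K_R'(4)$. First, as in the $K_R^*(4)$ case, I would show that part (1) implies part (2): if $g = n_1 m n_2 \in K_R'(4)$ with $n_1 \in U_R^{-}(1,4)$, $m \in K_{M_R}^*(4)$, $n_2 \in U_R^{+}(4,1)$, and $g \mapsto 1$ in $F_4(\Q_2)$, then comparing images in $F_4(\Q_2)$ forces $n_1 = n_2 = 1$ (since the linear $F_4(\Q_2)$ has an honest Iwahori-type factorization with respect to $R$ and the pieces $U_R^{\pm}$ inject), hence $m \mapsto 1$, so $m = 1$ by Lemma \ref{lem:inj1}. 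Thus the real content is part (1), namely that the set $U' := U_{R}^{-}(1,4)\cdot K_{M_R}^*(4) \cdot U_R^{+}(4,1)$ is stable under left multiplication by each of the three generating subsets, so that $K_R'(4) \cdot U' = U'$ and therefore $K_R'(4) = U'$ (apply to the identity).

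Stability under left multiplication by $U_R^{-}(1,4)$ is essentially trivial, since $U_R^{-}(1,4)$ is the leftmost factor; one only needs that it is a subgroup, which follows from the commutator formulas and Corollary \ref{Cor: unipotent nice} (the negative roots involved include the long $\alpha_1$-direction only with coefficients in $4\Z_2$, so case \eqref{case long} applies, and the short-root directions are handled by case \eqref{case short}). Stability under $K_{M_R}^*(4)$ uses, exactly as in Lemma \ref{lem:easyU}, that $M_R$ normalizes $N_S$ and $N$, that $\Phi_{U_R}^{\pm}$ are each stable under adding elements of $\Phi_{M_R}$, and that $K_{M_R}^*(4)$ conjugates $N_S^{+}(4\Z_2)$ and $N^{+}(\Z_2)$ into themselves (and similarly on the minus side): this is where the specific congruence level $4$ on the $N_S$-part versus level $1$ on the $N$-part must be checked to be preserved, which it is because $M_R$ acts on $N_S$ and on $N$ by integral matrices with integral inverse, up to $\Z_2^\times$-scalars from the $h_{\alpha_i}(1+4\Z_2)$.

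The main work, as before, is stability under $U_R^{+}(4,1)$, and here I expect the main obstacle. As in Lemma \ref{lem:U+N} and Proposition \ref{prop:U+N}, it suffices to show $x_{\alpha_i}(\Z_2) \cdot U' \subseteq U'$ for $i = 1,2$ together with $x_{\alpha}(\Z_2) \cdot U' \subseteq U'$ for $\alpha \in \Phi_{N_S}^{+}$ — but wait: the generators of $U_R^{+}(4,1)$ coming from $N_S$ only have level $4$, so one must be careful that the inductive scheme only ever needs to push $N_S$-directions at level $\geq 4$ and $N$-directions at level $\geq 1$. Concretely, I would run the downward induction on the depth $m$ of a putative extra factor, as in Proposition \ref{prop:inductDown}: starting from $U_R^{+}(2^A,2^A) \cdot U' \subseteq U'$ for $A \gg 0$ (Lemma \ref{lem:U+N}), I would descend, and at each step write $u x = (u n_1 u^{-1}) m (m^{-1} u m) n_2$, checking via the commutator formula that $u n_1 u^{-1}$ lands in the next deeper layer $K_R'(4,\text{deeper})$ and $(m^{-1} u m) n_2 \in U_R^{+}(4,1)$. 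The delicate point is that when $u$ is a long-root element $x_{\alpha_1}(2^m s)$ and $n_1$ contains a factor $x_{-\alpha_1}(4 s_1)$, the formula $x_{\alpha_1}(t) x_{-\alpha_1}(s) x_{\alpha_1}(-t) = x_{-\alpha_1}(s/(1+ts))\,\widetilde{h}_{\alpha_1}(1+st)\,x_{\alpha_1}(-st^2/(1+st))$ produces an $\widetilde{h}_{\alpha_1}(1+st)$ term with $st \in 4\Z_2$, hence in $\widetilde{h}_{\alpha_1}(1+4\Z_2) \subseteq K_{M_R}^*(4)$ and with trivial Hilbert-symbol corrections; and the resulting $x_{\alpha_1}(-st^2/(1+st))$ has valuation $\geq 2m + 2 > m$, so the induction genuinely descends. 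The asymmetry between the level-$4$ constraint on $N_S^{-}$/$N^{-}$ (in $U_R^{-}(1,4)$, the $N^{-}$ part is at level $4$ and $N_S^{-}$ at level $1$) and what appears here must be tracked case by case across the roots $\alpha \in \Phi_{U_R}^{+} = \{\alpha_1\} \sqcup \Phi_{U_Q}^{+}$, using Corollary \ref{Cor: unipotent nice} to ensure no stray Hilbert symbols appear; this bookkeeping is the heart of the proof, but it is the same kind of argument already carried out for $K_R^*(4)$, just with two different congruence levels on the unipotent radical broken up along the $\alpha_1$ versus $\Phi_{U_Q}^{+}$ decomposition.
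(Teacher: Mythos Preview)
Your overall architecture matches the paper's: reduce (2) to (1) via Lemma~\ref{lem:inj1}, set $U' = U_R^{-}(1,4)\cdot K_{M_R}^*(4)\cdot U_R^{+}(4,1)$, and show $K_R'(4)\cdot U' = U'$ by checking stability under each generating piece. The steps for $U_R^{-}(1,4)$ and $K_{M_R}^*(4)$ are fine and indeed proceed as in Lemma~\ref{lem:easyU}.

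The gap is in your treatment of $U_R^{+}(4,1)$. You propose a single downward induction ``starting from $U_R^{+}(2^A,2^A)\cdot U' \subseteq U'$'' and claim it is the same bookkeeping as for $K_R^*(4)$. It is not. In the $K_R^*(4)$ argument the entire negative side $U_R^{-}(4\Z_2)$ sits at level $4$, so conjugating any $x_\alpha(2^m s)$ past it produces torus factors $\widetilde{h}_\alpha(1+st)$ with $st\in 4\Z_2$ automatically. Here $N_S^{-}(\Z_2)$ sits at level $1$, so when $u = x_\alpha(2^m s)$ with $\alpha\in\Phi_{N_S}^{+}$ is pushed past $n_1'' \in N_S^{-}(\Z_2)$, the resulting commutators involve Levi elements, $N_S^{\pm}$ elements, and torus factors whose levels depend on $m$ in a way that does not fit into a single-parameter descent of the type in Proposition~\ref{prop:inductDown}. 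In particular the formula $uxu^{-1}$ you write down does not land in the ``next deeper layer'' unless you have already established $N_S^{+}(4\Z_2)\cdot U' = U'$.

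The paper resolves this by proving an auxiliary Iwahori factorization, Proposition~\ref{prop:HR'4}, for the subgroup $H_R'(4) = \langle N_S^{-}(\Z_2), K_{M_R}^*(4), N_S^{+}(4\Z_2)\rangle$ living entirely inside the Heisenberg Levi $H_J$. This is itself a downward induction (done via \emph{right} multiplication, reversing the roles of positive and negative), and once in hand it directly yields $N_S^{+}(4\Z_2)\cdot U' = U'$: writing $n_1 = n_1' n_1''$ with $n_1'\in N^{-}(4\Z_2)$, $n_1''\in N_S^{-}(\Z_2)$, one has $u n_1' u^{-1}\in N^{-}(4\Z_2)$ while $u n_1'' m\in H_R'(4)$, and Proposition~\ref{prop:HR'4} refactors the latter. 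Only after this, together with the (easier) check $x_{\alpha_1}(\Z_2)\cdot U' = U'$, does the paper run the Proposition~\ref{prop:inductDown}-style induction, now on the $N^{+}$-depth alone (the filtration $K_R'(4,2^m)$ keeps $N_S^{+}$ fixed at level $4$). Your proposal is missing this intermediate step, and without it the inductive scheme you describe does not close.
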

As in the proof of Theorem \ref{thm:iwahoriSplit}, part (1) of Theorem \ref{thm:KR'4} implies part (2).  We set $U' = U_{R}^{-}(1,4)\cdot K_{M_R}^*(4) \cdot U_R^{+}(4,1)$.  In the following subsection, we will prove $U' = K_R'(4)$, thus giving Theorem \ref{thm:KR'4}.

We now state a corollary of Theorem \ref{thm:KR'4} that we will need.  Denote by $\Phi_{1,1}^{+}$ the roots $\sum_{i}{m_i \alpha_i}$ with both $m_1, m_2 > 0$. Then $\Phi_{N}^{+}$ is a disjoint union of $\{\alpha_1\}$ and $\Phi_{1,1}^+$. Set $U_{1,1}^+(\Z_2)$ the subgroup generated by $x_{\alpha}(\Z_2)$ for all $\alpha \in \Phi_{1,1}^+$.  Define $U_{1,1}^{-}(4\Z_2)$ similarly.
\begin{corollary}\label{cor:R'IwahoriQ} The group $K_R'(4)$ has an Iwahori factorization with respect to $Q$.
\end{corollary}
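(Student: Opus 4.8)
The plan is to re-bracket the factorization $K_R'(4)=U_{R}^{-}(1,4)\cdot K_{M_R}^*(4)\cdot U_R^{+}(4,1)$ from Theorem \ref{thm:KR'4}(1) according to the parabolic $Q=LU_Q$ (note $R\subseteq Q$, and $L$ has simple roots $\{\alpha_1,\alpha_3,\alpha_4\}$). The key combinatorial input is the decomposition $\Phi_{U_R}^{+}=\{\alpha_1\}\sqcup\Phi_{1,1}^{+}\sqcup\Phi_{N_S}^{+}$: the root $\alpha_1$ is the unique root of $U_R$ with $\alpha_2$-coefficient $0$, so $x_{\pm\alpha_1}$ lie in $L$, whereas every root in $\Phi_{1,1}^{+}$ and $\Phi_{N_S}^{+}$ has positive $\alpha_2$-coefficient, so the corresponding root groups lie in $U_Q$ (resp.\ $\overline{U}_Q$, the unipotent radical of the opposite parabolic $\overline{Q}=L\overline{U}_Q$). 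Accordingly I set
\[A:=\langle\, U_{1,1}^{+}(\Z_2),\, N_S^{+}(4\Z_2)\,\rangle\subseteq U_Q(\Q_2),\quad C:=\langle\, U_{1,1}^{-}(4\Z_2),\, N_S^{-}(\Z_2)\,\rangle\subseteq\overline{U}_Q(\Q_2),\]
\[B:=\langle\, K_{M_R}^*(4),\, x_{\alpha_1}(\Z_2),\, x_{-\alpha_1}(4\Z_2)\,\rangle\subseteq L(\Q_2),\]
the last containment because every root occurring in $K_{M_R}^*(4)$, namely $\pm\alpha_3,\pm\alpha_4,\pm(\alpha_3+\alpha_4)$, has $\alpha_2$-coefficient $0$. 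I will show $K_R'(4)=C\cdot B\cdot A$ and that $C,B,A$ are exactly the intersections of $K_R'(4)$ with $\overline{U}_Q,L,U_Q$.

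\textbf{The computational core.} The main work is to verify that $A$, $B$, $C$ are subgroups, that $x_{\alpha_1}(\Z_2)$ normalizes $A$ and $x_{-\alpha_1}(4\Z_2)$ normalizes $C$, and hence (using $\Phi_{N}^{\pm}=\{\pm\alpha_1\}\sqcup(\pm\Phi_{1,1}^{+})$) that $U_R^{+}(4,1)=x_{\alpha_1}(\Z_2)\cdot A$ and $U_R^{-}(1,4)=C\cdot x_{-\alpha_1}(4\Z_2)$. All of these reduce to the commutator relation \eqref{2} together with two structural facts about $F_4$: the only root with $\alpha_1$-coefficient $2$ is the highest root (so commutators among $\Phi_{1,1}^{+}$-roots, and of $\alpha_1$ against a $\Phi_{1,1}^{+}$-root, produce at most one extra $\Phi_{1,1}^{+}$-term, in fact a central one), and $N_S$ is abelian (no root of $F_4$ with $\alpha_1$-coefficient $0$ has $\alpha_2$-coefficient exceeding $1$, as the span of $\{\alpha_2,\alpha_3,\alpha_4\}$ is a $C_3$-system with $\alpha_2$ the long end root). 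Alongside this one tracks $2$-adic valuations: since $\alpha_1+\delta$ has $\alpha_1$-coefficient $1$ for $\delta\in\Phi_{N_S}^{+}$, conjugating an $N_S$-factor by $x_{\pm\alpha_1}$ produces only $\Phi_{1,1}^{+}$-factors whose coefficients pick up a factor of $4$, consistently with the levels $\Z_2$ on $\Phi_{1,1}^{+}$ and $4\Z_2$ on $N_S$ (and symmetrically, with the roles of $\Z_2$ and $4\Z_2$ swapped, for $C$). That $B$ is a group is the same Iwahori-type computation as in Lemma~\ref{lem:inj1}: $K_{M_R}^*(4)$ normalizes $x_{\pm\alpha_1}$ (since $\alpha_1$ is orthogonal to $\alpha_3,\alpha_4$ and $\widetilde h_{\alpha_i}(1+4\Z_2)$ acts on $x_{\pm\alpha_1}$ through $1+4\Z_2\subseteq\Z_2^{\times}$), and $x_{\alpha_1}(\Z_2)\,x_{-\alpha_1}(4\Z_2)\subseteq x_{-\alpha_1}(4\Z_2)\,\widetilde h_{\alpha_1}(1+4\Z_2)\,x_{\alpha_1}(\Z_2)$ by Corollary~\ref{Cor: unipotent nice}. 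This bookkeeping is routine but somewhat delicate, and is the main obstacle in the proof.

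\textbf{Assembling the factorization.} Granting the previous paragraph, Theorem~\ref{thm:KR'4}(1) gives
\[K_R'(4)=U_R^{-}(1,4)\,K_{M_R}^*(4)\,U_R^{+}(4,1)=C\,x_{-\alpha_1}(4\Z_2)\,K_{M_R}^*(4)\,x_{\alpha_1}(\Z_2)\,A\subseteq C\,B\,A,\]
where we used that $x_{-\alpha_1}(4\Z_2)$ normalizes $C$ to bring $C$ to the front and that $x_{-\alpha_1}(4\Z_2),K_{M_R}^*(4),x_{\alpha_1}(\Z_2)\subseteq B$; the reverse inclusion $C\,B\,A\subseteq K_R'(4)$ is clear. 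Hence $K_R'(4)=C\,B\,A$ with $C\subseteq\overline{U}_Q(\Q_2)$, $B\subseteq L(\Q_2)$, $A\subseteq U_Q(\Q_2)$.

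\textbf{Identifying the intersections.} By Theorem~\ref{thm:KR'4}(2) the projection $K_R'(4)\to F_4(\Q_2)$ is injective, so we may work inside $F_4(\Q_2)$, where the multiplication map $\overline{U}_Q\times L\times U_Q\to F_4(\Q_2)$ is injective (big-cell uniqueness). Given $g\in K_R'(4)\cap\overline{U}_Q$, write $g=cba$ with $c\in C,b\in B,a\in A$; comparing with the big-cell expression $g=1\cdot 1\cdot g$ forces $b=a=1$, so $g=c\in C$, whence $K_R'(4)\cap\overline{U}_Q=C$; the same argument gives $K_R'(4)\cap L=B$ and $K_R'(4)\cap U_Q=A$. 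Therefore
\[K_R'(4)=(K_R'(4)\cap\overline{U}_Q)\,(K_R'(4)\cap L)\,(K_R'(4)\cap U_Q),\]
and, inverting (each factor being a group), also $K_R'(4)=(K_R'(4)\cap U_Q)\,(K_R'(4)\cap L)\,(K_R'(4)\cap\overline{U}_Q)$. This is the Iwahori factorization with respect to $Q$.
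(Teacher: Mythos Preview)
Your proof is correct and follows essentially the same approach as the paper: both arguments start from the $R$-factorization of Theorem~\ref{thm:KR'4}(1) and separate the $x_{\pm\alpha_1}$ factors out of $U_R^{\mp}$ into the Levi, using commutator computations to show that conjugating $x_{\pm\alpha_1}$ past the remaining roots stays inside the appropriate groups. The paper simply says ``conjugating all terms of the form $x_{-\alpha_1}(4u)$ in $n_1$ to the right'' and concludes with $g=n_1'(n_1''kn_2'')n_2'$; you carry out this rearrangement more explicitly and also go further by identifying $A,B,C$ with the actual intersections $K_R'(4)\cap U_Q$, $K_R'(4)\cap L$, $K_R'(4)\cap\overline U_Q$ via big-cell uniqueness, which the paper leaves implicit. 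One small typo: in your intersection argument, for $g\in K_R'(4)\cap\overline U_Q$ the big-cell expression is $g=g\cdot1\cdot1$ (not $1\cdot1\cdot g$), though your conclusion $g=c\in C$ is unaffected.
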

\begin{proof}  Suppose $g \in K_R'(4)$.  By Theorem \ref{thm:KR'4}, we have $g = n_1 k n_2$ with $n_1 \in U_R^{-}(1,4)$, $k \in K_{M_R}^*(4)$ and $n_2 \in U_{R}^{+}(4,1)$.  Conjugating all terms of the form $x_{-\alpha_1}(4u)$ in $n_1$ to the right, one can write $n_1 = n_1' n_1''$, where $n_1'$ in the group generated by $N_S^{-}(\Z_2)$ and $U_{1,1}^{-}(4\Z_2)$ and $n_1'' \in x_{-\alpha_1}(4\Z_2)$.  Similarly, one can write $n_2 = n_2'' n_2'$, with $n_2'$ in the group generated by $N_S^{+}(4\Z_2)$ and $U_{1,1}^{+}(\Z_2)$ and $n_2'' \in x_{\alpha_1}(\Z_2)$.  This gives $g = n_1'( n_1'' k n_2'') n_2'$, which is the desired Iwahori factorization.
\end{proof}

In order to prove Theorem \ref{thm:KR'4}, we will need an Iwahori factorization of a smaller group.  Thus let $H_R'(4)$ be the subgroup of $\widetilde{F}_4(\Q_2)$ generated by $N_{S}^{-}(\Z_2), K_{M_R}^*(4)$ and $N_S^{+}(4\Z_2)$.  Then we have:
\begin{proposition}\label{prop:HR'4} One has the Iwahori factorization $H_R'(4) = N_{S}^{-}(\Z_2) \cdot K_{M_R}^*(4) \cdot N_S^{+}(4\Z_2)$.
\end{proposition}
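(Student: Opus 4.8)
The plan is to follow the template of the proof of part~(1) of Theorem~\ref{thm:iwahoriSplit}, exploiting that $P_S=M_RN_S$ is the Siegel parabolic of the $C_3$-group $H_J$, so that \emph{both} $N_S$ and its opposite $\overline{N}_S$ are abelian: the set $\Phi_{N_S}^{+}$ consists of the $F_4$-roots $\sum_i m_i\alpha_i$ with $m_1=0$ and $m_2=1$, and since $\alpha_2$ occurs with coefficient at most $1$ in every root of $H_J$, no root has $m_1=0$ and $|m_2|\geq 2$. Set $V=N_{S}^{-}(\Z_2)\cdot K_{M_R}^*(4)\cdot N_S^{+}(4\Z_2)$. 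Since $1\in V$ and obviously $V\subseteq H_R'(4)$, it suffices to prove $H_R'(4)\cdot V=V$; and since $H_R'(4)$ is generated by the three subgroups $N_S^{-}(\Z_2)$, $K_{M_R}^*(4)$, $N_S^{+}(4\Z_2)$, it is enough to show that left multiplication by each of these three sets carries $V$ into itself.

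Left multiplication by $N_S^{-}(\Z_2)$ is immediate, as it is the leftmost factor of $V$ and is a subgroup (its generators $x_{-\beta}(\Z_2)$, $\beta\in\Phi_{N_S}^{+}$, commute because $\overline{N}_S$ is abelian). For $K_{M_R}^*(4)$ I will check that it normalizes $N_S^{-}(\Z_2)$; then $K_{M_R}^*(4)\cdot V=K_{M_R}^*(4)N_S^{-}(\Z_2)K_{M_R}^*(4)N_S^{+}(4\Z_2)=N_S^{-}(\Z_2)K_{M_R}^*(4)N_S^{+}(4\Z_2)=V$. For the normalization it is enough to conjugate the generators $x_{-\beta}(\Z_2)$ of $N_S^{-}(\Z_2)$ by the generators of $K_{M_R}^*(4)$: a torus element $\widetilde h_{\alpha_i}(t)$ with $t\in 1+4\Z_2\subset\Z_2^\times$ rescales a root coordinate by a unit, while conjugating $x_{-\beta}(u)$ by $x_{\gamma}(v)$ with $\gamma\in\Phi_{M_R}$ produces, via relation~\eqref{2}, only terms $x_{i\gamma-j\beta}(\ast)$ with $i,j>0$; since $\gamma$ has $\alpha_2$-coefficient $0$ and $\beta$ has $\alpha_2$-coefficient $1$, this combination has $\alpha_2$-coefficient $-j$, so $j=1$ and $i\gamma-\beta\in-\Phi_{N_S}^{+}$ with coefficient $v^{i}u\in\Z_2$; thus the conjugate lies in $N_S^{-}(\Z_2)$. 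The same computation (with an $\alpha\in\Phi_{N_S}^{+}$ in place of $-\beta$) shows that $K_{M_R}^*(4)$ also normalizes $N_S^{+}(4\Z_2)$.

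The crux is left multiplication by $N_S^{+}(4\Z_2)$, and here the factor of $4$ is essential. Using that $K_{M_R}^*(4)$ normalizes $N_S^{+}(4\Z_2)$, it suffices to show $x_{\alpha}(4s)\cdot N_S^{-}(\Z_2)\subseteq V$ for $\alpha\in\Phi_{N_S}^{+}$ and $s\in\Z_2$, since then $x_\alpha(4s)\,n^{-}kn^{+}\in N_S^{-}(\Z_2)K_{M_R}^*(4)N_S^{+}(4\Z_2)\,kn^{+}=N_S^{-}(\Z_2)K_{M_R}^*(4)\big(kN_S^{+}(4\Z_2)\big)n^{+}=V$. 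To prove $x_\alpha(4s)\cdot N_S^{-}(\Z_2)\subseteq V$ I commute $x_\alpha(4s)$ to the left past an element $n^{-}=\prod_\beta x_{-\beta}(t_\beta)$ of $N_S^{-}(\Z_2)$, by induction on the number of factors. Past a factor $x_{-\beta}(t_\beta)$ with $\beta\neq\alpha$, relation~\eqref{2} gives $\{x_\alpha(4s),x_{-\beta}(t_\beta)\}=\prod x_{i\alpha-j\beta}\big(C_{ij}(4s)^{i}t_\beta^{j}\big)$; because $i\alpha-j\beta$ has $m_1=0$ and $\alpha_2$-coefficient $i-j$, the only possibilities are $(i,j)=(1,1)$, giving a root of $M_R$ with coefficient in $4\Z_2\subseteq\Z_2$ (absorbed into $K_{M_R}^*(4)$); $(i,j)=(1,2)$, giving a root in $-\Phi_{N_S}^{+}$ with coefficient in $4\Z_2$ (absorbed into $N_S^{-}(\Z_2)$); and $(i,j)=(2,1)$, giving a root in $\Phi_{N_S}^{+}$ with coefficient in $16\Z_2\subseteq 4\Z_2$ (absorbed into $N_S^{+}(4\Z_2)$). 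Past the factor $x_{-\alpha}(t_\alpha)$, if present, Corollary~\ref{Cor: unipotent nice} gives, when $\alpha$ is short, $x_\alpha(4s)x_{-\alpha}(t_\alpha)=x_{-\alpha}\big(\tfrac{t_\alpha}{1+4st_\alpha}\big)\,\widetilde h_\alpha(1+4st_\alpha)\,x_\alpha\big(\tfrac{4s}{1+4st_\alpha}\big)$ directly, and when $\alpha$ is long Lemma~\ref{Lem: commutator roots} gives the same identity up to the prefactor $\big(1+4st_\alpha,\tfrac{4s}{1+4st_\alpha}\big)_2$, which is trivial: $1+4st_\alpha\in 1+4\Z_2$ is either $\equiv 1\pmod 8$, hence a square, or $\equiv 5\pmod 8$, which forces $\val_2(4st_\alpha)=2$ and hence $\val_2\big(\tfrac{4s}{1+4st_\alpha}\big)=2$ even, so that $\tfrac{4s}{1+4st_\alpha}$ is a norm from the unramified quadratic extension $\Q_2\big(\sqrt{1+4st_\alpha}\big)$. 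In either case the torus element produced has the form $\widetilde h_\alpha(t')$ with $t'\in 1+4\Z_2$, and by the lemma preceding Proposition~\ref{prop:inductDown} it rewrites as a product $\prod_{i=1}^{4}\widetilde h_{\alpha_i}(t_i)$ lying in $K_{M_R}^*(4)$ (the factors for $i=1,2$ by definition, those for $i=3,4$ because $\alpha_3,\alpha_4\in\Phi_{M_R}$, so $\widetilde h_{\alpha_3}(\Z_2^\times),\widetilde h_{\alpha_4}(\Z_2^\times)\subseteq K_{M_R}^*(4)$). The debris created at each step is a bounded product of terms of these same three kinds together with the residual $x_\alpha(\ast)\in N_S^{+}(4\Z_2)$; since moving a debris term back across the remaining $x_{-\beta}$'s (to the left) or across $x_\alpha(4s)$ (to the right) again produces only such terms, with coefficients in $\Z_2$ (resp.\ $4\Z_2$), and strictly decreases the number of $\overline{N}_S$-factors still to be crossed, the induction terminates with $x_\alpha(4s)\,n^{-}$ in the form $N_S^{-}(\Z_2)K_{M_R}^*(4)N_S^{+}(4\Z_2)=V$.

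The main obstacle, exactly as in Theorem~\ref{thm:iwahoriSplit}, is the $2$-adic bookkeeping in this last step: tracking which congruence subgroup each commutator coefficient lands in, and verifying the vanishing of the Hilbert symbols coming from the $\beta=\alpha$ case. What makes it go through — and lighter than the $K_R^*(4)$ case, which required a descending induction on the depth of the positive part — is that here the positive part already lives at depth $\geq 2$, so commutators only push it deeper, the abelianness of $N_S$ rules out iterated bracketing, and the residual $\Phi_{M_R}$-contributions are precisely what the fixed group $K_{M_R}^*(4)$ is built to absorb.
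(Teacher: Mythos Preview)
Your reduction to the ``hard'' step is sound: $N_S^{-}(\Z_2)\cdot V=V$ and $K_{M_R}^*(4)\cdot V=V$ are immediate (abelianness of $\overline N_S$, and $K_{M_R}^*(4)$ normalizing both $N_S^{\pm}$), and it is correct that the whole problem comes down to $N_S^{+}(4\Z_2)\cdot V\subseteq V$, which you further reduce to $x_\alpha(4s)\cdot N_S^{-}(\Z_2)\subseteq V$. Your Hilbert-symbol check in the $\beta=\alpha$ long case is also fine.

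The gap is in the termination of the sorting. When you commute $x_\alpha(4s)$ past $x_{-\beta}(t)$ with $\beta\neq\alpha$, the $(i,j)=(2,1)$ term produces an $N_S^{+}$ element $x_{2\alpha-\beta}(c)$, and this \emph{does} occur in our $C_3$ subsystem: for instance with $\alpha=e_1+e_2$, $\beta=2e_1$ one gets $2\alpha-\beta=2e_2\in\Phi_{N_S}^{+}$. That new $N_S^{+}$ term sits on the wrong side of the remaining $N_S^{-}$ factors and must itself be commuted past them, spawning yet more $N_S^{+}$ debris. So the quantity ``number of $\overline N_S$-factors still to be crossed'' does not strictly decrease; your induction hypothesis (formulated for a single root element $x_\alpha(4s)$) does not apply to the bundled debris, and the process as written does not close up in finitely many steps. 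The coefficients do get $2$-adically deeper ($4\Z_2\to 16\Z_2\to\cdots$), but that is a convergence/descent statement, not a finite combinatorial one.

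This is exactly what the paper's proof is designed to handle, and it does so differently from what you sketch. The paper works on the \emph{right} (shows $U_H\cdot H_R'(4)=U_H$), checks the easy stabilities $U_H\cdot N_S^{+}(4\Z_2)=U_H$ and $U_H\cdot K_{M_R}^*(4)=U_H$, and then for the hard generator verifies only $U_H\cdot x_{-\alpha_2}(\Z_2)=U_H$ (a single simple root), using the identity $x_{\alpha_2}(t)x_{-\alpha_2}(s)=x_{-\alpha_2}(\tfrac{s}{1+ts})\widetilde h_{\alpha_2}(1+ts)x_{\alpha_2}(\tfrac{t}{1+ts})$ for $t\in 4\Z_2$, $s\in\Z_2$, together with the observation that for $\beta\in\Phi_{N_S}^{+}\setminus\{\alpha_2\}$ every root $a\beta-b\alpha_2$ with $a,b>0$ is positive. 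From this and the analogue of Lemma~\ref{lem:U+N} one gets $U_H\cdot N_S^{-}(2^A\Z_2)=U_H$ for some large $A$, and then one inducts downward on $A$ exactly as in Proposition~\ref{prop:inductDown}. In other words, the paper does \emph{not} avoid the descent; the descent is precisely the mechanism that absorbs the recursively generated $N_S^{+}$ debris that your direct argument cannot finitely bound.
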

\begin{proof} The proof is just like the proof of Theorem \ref{thm:iwahoriSplit}, except easier, and with the following change.  Set $U_H$ to be the product $N_{S}^{-}(\Z_2) \cdot K_{M_R}^*(4) \cdot N_S^{+}(4\Z_2)$.  Then one shows that $U_H \cdot H_R'(4) = U_H$, i.e., one uses right multiplication by $H_R'(4)$.  To do this, one first checks that $U_H \cdot N_S(4\Z_2) = U_H$ and $U_H \cdot K_{M_R}^*(4) = U_H$.  Then one checks $U_H \cdot x_{-\alpha_2}(\Z_2) = U_H$.  For this statement, one uses the formula $x_{\alpha_2}(t) x_{-\alpha_2}(s)  = x_{-\alpha_2}(s/(1+ts))h_{\alpha_2}(1+ts) x_{\alpha_2}(t/(1+ts))$ for $t \in 4\Z_2$ and $s \in \Z_2$, and the fact that if $\beta \in \Phi_{N_S}^+$, $\beta \neq \alpha_2$, then any root $\gamma = a\beta-b \alpha_2$ with $a,b$ positive integers is necessarily in $\Phi^{+}$.
	
	Combining the above facts, one obtains just as in Lemma \ref{lem:U+N} and Proposition \ref{prop:U+N}, that there is $A\gg0$ such that $U_H \cdot N_S^{-}(2^A) = U_H$.  Now one inducts down on $A$, as in the proof of Proposition \ref{prop:inductDown}.
\end{proof}

\subsubsection{The Iwahori factorization}
In this subsection, we prove part (1) of Theorem \ref{thm:KR'4}. We begin with the analogue of Lemma \ref{lem:easyU}.

\begin{lemma}\label{lem:easyU'} One has $U_{R}^{-}(1,4) \cdot U' = U'$ and $K_{M_R}^*(4) \cdot U' = U'$.
\end{lemma}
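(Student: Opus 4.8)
The plan is to follow the pattern of the proof of Lemma~\ref{lem:easyU}. The identity $U_{R}^{-}(1,4)\cdot U' = U'$ is immediate, since $U_{R}^{-}(1,4)$ is a group and is the leftmost factor of $U'$. For the second identity I would reduce to the claim that $K_{M_R}^*(4)$ normalizes $U_{R}^{-}(1,4)$: granting this, and using that $K_{M_R}^*(4)$ is a group,
\[
K_{M_R}^*(4)\cdot U' = K_{M_R}^*(4)\cdot U_{R}^{-}(1,4)\cdot K_{M_R}^*(4)\cdot U_R^{+}(4,1) = U_{R}^{-}(1,4)\cdot K_{M_R}^*(4)\cdot U_R^{+}(4,1) = U'.
\]
Since $K_{M_R}^*(4)$ is generated by the $x_{\alpha}(\Z_2)$ with $\alpha\in\Phi_{M_R}$ and the $\widetilde{h}_{\alpha_i}(1+4\Z_2)$ for $i=1,2$, and $U_{R}^{-}(1,4)$ is generated by the $x_{-\beta}(\Z_2)$ with $\beta\in\Phi_{N_S}^{+}$ and the $x_{-\beta}(4\Z_2)$ with $\beta\in\Phi_{N}^{+}$, it suffices to conjugate each generator of $U_{R}^{-}(1,4)$ by each generator of $K_{M_R}^*(4)$ and check the result lies back in $U_{R}^{-}(1,4)$.

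The combinatorial input I would isolate first is a statement about the root system of $F_4$: every root with zero $\alpha_1$-coefficient lies in the $C_3$ subsystem spanned by $\alpha_2,\alpha_3,\alpha_4$, in which $\alpha_2$ occurs with coefficient in $\{-1,0,1\}$. From this: if $\alpha\in\Phi_{M_R}$, $\beta\in\Phi_{N_S}^{+}$ and $i\alpha-j\beta$ is a root with $i,j\geq 1$, then necessarily $j=1$ and $i\alpha-\beta\in-\Phi_{N_S}^{+}$; and if $\alpha\in\Phi_{M_R}$, $\beta\in\Phi_{N}^{+}$ and $i\alpha-j\beta$ is a root with $i,j\geq 1$, then it has $\alpha_1$-coefficient at most $-1$ and so lies in $-\Phi_{N}^{+}$. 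Both are read off by tracking $\alpha_1$- and $\alpha_2$-coefficients, using that every element of $\Phi_{M_R}$ contributes $0$ to each, while $\beta$ contributes $(0,1)$ in the first case and a positive $\alpha_1$-coefficient in the second.

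Feeding this into the commutator relation \eqref{2} gives $x_\alpha(u)x_{-\beta}(v)x_\alpha(u)^{-1} = \bigl(\prod_{i,j\geq1} x_{i\alpha-j\beta}(C_{ij}u^iv^j)\bigr)x_{-\beta}(v)$ with $C_{ij}\in\Z$. When $\beta\in\Phi_{N_S}^{+}$ and $u,v\in\Z_2$, each factor is an $x_{-\beta'}(v')$ with $\beta'\in\Phi_{N_S}^{+}$ and $v'\in\Z_2$, so the product lies in $N_S^{-}(\Z_2)$; when $\beta\in\Phi_{N}^{+}$, $u\in\Z_2$ and $v\in4\Z_2$, each coefficient $C_{ij}u^iv^j$ lies in $4^j\Z_2\subseteq4\Z_2$, so the product lies in $N^{-}(4\Z_2)$; in both cases the result is in $U_{R}^{-}(1,4)$. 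Conjugation by $\widetilde{h}_{\alpha_i}(t)$ with $t\in1+4\Z_2$ sends $x_{-\beta}(v)$ to $x_{-\beta}(t^{-\langle\beta,\alpha_i^\vee\rangle}v)$, which keeps the root and, since $t\in\Z_2^\times$, the congruence on $v$. This establishes the normalization, hence the lemma.

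The only genuine work is the root-and-level bookkeeping just outlined; I expect the passage to the cover $\widetilde{F}_4(\Q_2)$ to cause no difficulty, since the commutator relation among root-group elements and the conjugation of root groups by the $\widetilde{h}_{\alpha_i}(t)$ carry no factor of $\zeta$, and any Hilbert symbol $(s,t)_2$ that could in principle appear is trivial because $t\in1+4\Z_2$.
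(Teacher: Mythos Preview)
Your proposal is correct and follows essentially the same approach as the paper. The paper's proof likewise reduces to the refined normalization statement that conjugation by $K_{M_R}^*(4)$ preserves $N_S^{-}(\Z_2)$ and $N^{-}(4\Z_2)$ separately, and then simply says these follow from the commutator formula; you have supplied the root-system bookkeeping (tracking $\alpha_1$- and $\alpha_2$-coefficients, and the bound on the $\alpha_2$-coefficient in the $C_3$ subsystem) that the paper leaves implicit.
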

\begin{proof} The proof follows similarly to the proof of Lemma \ref{lem:easyU}.  The only difference is that we now must make a slightly more refined statement: if $k \in K_{M_R}^*(4)$, $n \in N_S^{-}(\Z_2)$ and $n' \in N^{-}(4\Z_2)$ then $k n k^{-1} = \{k,n\}n \in N_S^-(\Z_2)$ and $k n' k^{-1} = \{k,n'\}n' \in N^{-}(4\Z_2)$.  These statements follow from the commutator formula.
\end{proof}

\begin{lemma}\label{lem:KMRcong} If $k \in K_{M_R}^*(4)$ and $n \in N_S^+(4 \Z_2)$ then $k n k^{-1} \in N_S^+(4\Z_2)$.\end{lemma}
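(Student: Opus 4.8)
The plan is to reduce to the commutator formula (relation \eqref{2}) by writing $k$ as a product of the generators of $K_{M_R}^*(4)$ and handling each type of generator separately. Recall that $K_{M_R}^*(4)$ is generated by the torus elements $h_{\alpha_i}(1+4\Z_2)$ for $i=1,2$ and by the root group elements $x_\beta(\Z_2)$ for $\beta \in \Phi_{M_R} = \{\pm\alpha_3, \pm\alpha_4, \pm(\alpha_3+\alpha_4)\}$. Since $N_S^+(4\Z_2)$ is generated by $x_\gamma(4\Z_2)$ for $\gamma \in \Phi_{N_S}^+$, and the property ``$k n k^{-1} \in N_S^+(4\Z_2)$ for all $n \in N_S^+(4\Z_2)$'' is closed under taking products in $k$, it suffices to verify the claim when $k$ is a single generator and $n = x_\gamma(4u)$ with $u \in \Z_2$ and $\gamma \in \Phi_{N_S}^+$.

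First I would dispense with the torus case: if $k = \widetilde{h}_{\alpha_i}(t)$ with $t \in 1+4\Z_2$, then by relation \eqref{eqn: commutator of tori} (or rather the standard conjugation formula $\widetilde{h}_\alpha(t) x_\gamma(u) \widetilde{h}_\alpha(t)^{-1} = x_\gamma(t^{\langle \gamma, \alpha^\vee\rangle} u) \cdot (\text{Hilbert symbol factor})$), conjugation scales the argument $4u$ by $t^{\langle\gamma,\alpha_i^\vee\rangle} \in 1+4\Z_2 \subseteq \Z_2^\times$, so the result is again $x_\gamma(4\Z_2) \subseteq N_S^+(4\Z_2)$; the Hilbert symbol contributions involving $1+4\Z_2$ are trivial as already observed in the excerpt. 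Next, for $k = x_\beta(s)$ with $\beta \in \Phi_{M_R}$ and $s \in \Z_2$, I would invoke the commutator formula: $x_\beta(s) x_\gamma(4u) x_\beta(-s) = x_\gamma(4u) \cdot \prod_{i\beta+j\gamma \in \Phi} x_{i\beta+j\gamma}(C_{ij} s^i (4u)^j)$. The key combinatorial point — this is the crux of the argument — is that every root of the form $i\beta + j\gamma$ with $i,j \geq 1$ that actually occurs is again in $\Phi_{N_S}^+$; this holds because $\beta \in \Phi_{M_R}$ has $\alpha_2$-coefficient $0$ and $\gamma \in \Phi_{N_S}^+$ has $\alpha_2$-coefficient $1$, so $i\beta+j\gamma$ has $\alpha_2$-coefficient $j$, forcing $j=1$ (since root $\alpha_2$-coefficients in $F_4$ are at most $2$, and one checks $j=2$ does not arise here because $2\gamma$ is never a root), hence $i\beta+j\gamma \in \Phi_{N_S}^+$. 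Moreover each such term appears with argument $C_{i1} s^i (4u) \in 4\Z_2$, so $x_{i\beta+\gamma}(C_{i1}s^i \cdot 4u) \in N_S^+(4\Z_2)$.

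The main obstacle, and the only place requiring genuine care, is the combinatorial verification in the $F_4$ root system that the products $i\beta+j\gamma$ never leave $\Phi_{N_S}^+$ and in particular that no term with $j=2$ contributes (which would ruin the divisibility-by-$4$ bookkeeping, since $(4u)^2 \in 16\Z_2$ is fine but one must still land in a root group inside $N_S$). Tracking $\alpha_2$-coefficients handles this cleanly: $N_S$ consists exactly of the roots with $\alpha_2$-coefficient equal to $1$, and this subset is stable under adding any root of $\Phi_{M_R}$ (coefficient $0$) as long as one stays inside $\Phi$, which is automatic for the terms appearing in the commutator formula. Assembling the single-generator cases by induction on the length of a word expressing $k$ — using at each step that the already-established property is preserved under one more conjugation — completes the proof.
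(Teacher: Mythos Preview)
Your proof is correct and is exactly the approach the paper has in mind (the paper's entire proof reads ``This is straightforward, using the commutator formula''). One small correction to the combinatorial justification: $\alpha_2$-coefficients in $F_4$ actually go up to $3$ (the highest root is $2\alpha_1+3\alpha_2+4\alpha_3+2\alpha_4$), and ``$2\gamma$ is never a root'' does not by itself rule out $\beta+2\gamma$ being a root; the clean reason $j=1$ is forced is that $i\beta+j\gamma$ has $\alpha_1$-coefficient $0$, hence lies in the $C_3$ subsystem spanned by $\alpha_2,\alpha_3,\alpha_4$, in which the long root $\alpha_2$ has coefficient at most $1$ --- equivalently, every $\beta \in \Phi_{M_R}$ is short, so the $(i,j)=(1,2)$ term of the commutator formula can never appear.
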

\begin{proof} This is straightforward, using the commutator formula.\end{proof}

\begin{lemma} One has $N_{S}^{+}(4\Z_2) \cdot U' = U'$.
\end{lemma}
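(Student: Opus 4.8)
The plan is to establish the set inclusion $N_S^+(4\Z_2)\cdot U' \subseteq U'$; the reverse inclusion is automatic since $1\in N_S^+(4\Z_2)$. So I would fix $x\in N_S^+(4\Z_2)$ and $g=n_1 k n_2\in U'$ with $n_1\in U_R^-(1,4)$, $k\in K_{M_R}^*(4)$, $n_2\in U_R^+(4,1)$, and push $x$ to the right past $n_1$ and then past $k$.

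The only root-theoretic input needed is elementary. If $\alpha\in\Phi_{N_S}^+$ and $\gamma\in\Phi_N^+$, then whenever $\pm i\alpha-j\gamma$ (with $i,j>0$ integers) is a root, its $\alpha_1$-coefficient is $-j$ times the $\alpha_1$-coefficient of $\gamma$, hence is negative; so such a root is negative and lies in $\Phi_N^-$ (in particular it is never in $\Phi_{M_R}$ or in $\pm\Phi_{N_S}^+$, so no torus elements $\widetilde{h}_\alpha$ or Hilbert symbols enter). Moreover every commutator term $x_{\pm i\alpha-j\gamma}(C_{ij}u^i v^j)$ coming from $x_{\pm\alpha}(u)\,x_{-\gamma}(v)$ with $v\in 4\Z_2$ carries the factor $v^j\in 4\Z_2$. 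From relation \eqref{2} of Section \ref{Sec: double covers gen} it therefore follows that both $N_S^-(\Z_2)$ and $N_S^+(4\Z_2)$ normalize $N^-(4\Z_2)$. In particular $N^-(4\Z_2)\cdot N_S^-(\Z_2)$ is a subgroup, so $U_R^-(1,4)=N^-(4\Z_2)\cdot N_S^-(\Z_2)$, and I may write $n_1=n_1^N n_1^{NS}$ with $n_1^N\in N^-(4\Z_2)$ and $n_1^{NS}\in N_S^-(\Z_2)$.

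Then $x n_1 = (x n_1^N x^{-1})(x n_1^{NS})$, where $x n_1^N x^{-1}\in N^-(4\Z_2)$ by the normalization just noted. Since $x$ and $n_1^{NS}$ both lie in $H_R'(4)$, Proposition \ref{prop:HR'4} gives $x n_1^{NS}=abc$ with $a\in N_S^-(\Z_2)$, $b\in K_{M_R}^*(4)$, $c\in N_S^+(4\Z_2)$; hence $x n_1 = n_1'\,b\,c$ with $n_1':=(x n_1^N x^{-1})a\in N^-(4\Z_2)\cdot N_S^-(\Z_2)=U_R^-(1,4)$. Thus $xg = n_1'\,b\,c\,k\,n_2$. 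Finally, Lemma \ref{lem:KMRcong} yields $ck = k\,(k^{-1}ck)$ with $k^{-1}ck\in N_S^+(4\Z_2)\subseteq U_R^+(4,1)$, while $bk\in K_{M_R}^*(4)$; therefore $xg = n_1'\,(bk)\,\bigl((k^{-1}ck)\,n_2\bigr)\in U_R^-(1,4)\cdot K_{M_R}^*(4)\cdot U_R^+(4,1)=U'$, as desired. The one genuinely delicate interaction — that of $N_S^+(4\Z_2)$ with $N_S^-(\Z_2)$, where the $SL_2$-relations within the $C_3$-type Levi produce torus elements $\widetilde{h}_\alpha(1+4\Z_2)$ — is the main obstacle, and I would dispose of it exactly by routing it through Proposition \ref{prop:HR'4}; everything else reduces to commutators between $\Phi_{N_S}$-root groups and $\Phi_N^-$-root groups, which stay inside $N^-(4\Z_2)$ for the sign reason above.
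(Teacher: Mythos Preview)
Your proof is correct and follows essentially the same approach as the paper: decompose $n_1$ into its $N^-(4\Z_2)$ and $N_S^-(\Z_2)$ parts, use that $N_S^+(4\Z_2)$ normalizes $N^-(4\Z_2)$, and then invoke the Iwahori factorization of $H_R'(4)$ from Proposition~\ref{prop:HR'4} to handle the interaction with $N_S^-(\Z_2)$. The only cosmetic difference is that the paper absorbs $m$ into the $H_R'(4)$ element before applying Proposition~\ref{prop:HR'4} (writing $u n_1'' m \in H_R'(4)$), whereas you factor $x n_1^{NS}$ first and then push the resulting $N_S^+(4\Z_2)$ piece past $k$ via Lemma~\ref{lem:KMRcong}; these are equivalent rearrangements of the same idea.
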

\begin{proof} Suppose $g = n_1 m n_2 \in U'$.  Write $n_1 = n_1' n_1''$ with $n_1' \in N^{-}(4\Z_2)$ and $n_1'' \in N_{S}^{-}(\Z_2)$.  If $u \in N_S^{+}(4\Z_2)$, then $ug = u n_1' u^{-1} (u n_1'' m) n_2$.  Now, $u n_1' u^{-1}$ is in $N^{-}(4\Z_2)$.  Moreover, $u n_1'' m \in H_R'(4)$.  The lemma follows now by Proposition \ref{prop:HR'4}.
\end{proof}

We now observe:
\begin{lemma} One has $x_{\alpha_1}(\Z_2) \cdot U' = U'$. 
\end{lemma}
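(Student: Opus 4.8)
\emph{Proof plan.} The plan is to establish the inclusion $x_{\al_1}(u)\cdot U' \subseteq U'$ for every $u \in \Z_2$; applying this with $-u$ in place of $u$ and using that $x_{\al_1}(u)\in N^{+}(\Z_2)\subseteq U_R^{+}(4,1)\subseteq U'$, one gets $U' = x_{\al_1}(u)x_{\al_1}(-u)\cdot U'\subseteq x_{\al_1}(u)\cdot U'\subseteq U'$, hence the lemma. Fix $g\in U'$ and write $g = n_1 m n_2$ with $n_1\in U_R^{-}(1,4)$, $m\in K_{M_R}^*(4)$, $n_2\in U_R^{+}(4,1)$. Since $U_R^{-}(1,4)$ is generated by the $x_{-\ga}(\Z_2)$ with $\ga\in\Phi_{N_S}^{+}$ and the $x_{-\de}(4\Z_2)$ with $\de\in\Phi_{N}^{+}=\{\al_1\}\sqcup\Phi_{1,1}^{+}$, we may write $n_1$ as a word of some length $r$ in these generators, and I would induct on $r$.

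For the base case $r=0$ we have $g = mn_2$, and it suffices to note that $m^{-1}x_{\al_1}(u)m\in N^{+}(\Z_2)$: indeed $K_{M_R}^*(4)$ is generated by the $x_{\be}(\Z_2)$ with $\be\in\Phi_{M_R}$ and the $\widetilde{h_{\al_i}}(1+4\Z_2)$ for $i=1,2$; conjugation by $\widetilde{h_{\al_i}}(t)$ replaces $u$ by the unit multiple $t^{\la\al_1,\al_i^{\vee}\ra}u\in\Z_2$, while the commutator of $x_{\be}(\Z_2)$ with $x_{\al_1}(u)$ is trivial because any root $i\be+j\al_1$ with $i,j>0$ has $\al_2$-coefficient $0$ and $\al_1$-coefficient $j\geq 1$, and $\al_1$ is the only positive root with those coefficients (recall $\al_1$ is attached only to $\al_2$ in the Dynkin diagram, and $\Phi_{M_R}$ consists of roots with $m_1=m_2=0$). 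Thus $x_{\al_1}(u)g = m\,(m^{-1}x_{\al_1}(u)m)\,n_2\in K_{M_R}^*(4)\cdot N^{+}(\Z_2)\cdot U_R^{+}(4,1)\subseteq U'$.

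For the inductive step write $n_1 = v\,n_1'$, with $v$ the leftmost generator and $n_1'$ a word of length $r-1$, so that $g' := n_1'mn_2\in U'$. If $v = x_{-\ga}(s)$ with $\ga\in\Phi_{N_S}^{+}$, then $x_{\al_1}(u)$ commutes with $v$: no root $i\al_1-j\ga$ with $i,j>0$ exists, since it would have positive $\al_1$-coefficient and negative $\al_2$-coefficient. If $v = x_{-\de}(s)$ with $\de\in\Phi_{1,1}^{+}$ and $s\in 4\Z_2$, then by the commutator relation $x_{\al_1}(u)\,v\,x_{\al_1}(u)^{-1}$ equals $v$ times a product of terms $x_{i\al_1-j\de}(C_{ij}u^is^j)$; each such root has $\al_2$-coefficient $-j\,m_2(\de)\leq -1$, so it is the negative of a root of $\Phi_{U_R}^{+}$, and its argument lies in $4\Z_2$ because $j\geq 1$ and $s\in 4\Z_2$; hence $x_{\al_1}(u)\,v\,x_{\al_1}(u)^{-1}\in U_R^{-}(1,4)$. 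In both of these cases $x_{\al_1}(u)g = \big(x_{\al_1}(u)vx_{\al_1}(u)^{-1}\big)\cdot\big(x_{\al_1}(u)g'\big)$, and applying the inductive hypothesis to $g'$ and absorbing the leading element of $U_R^{-}(1,4)$ into the $U_R^{-}(1,4)$-component gives $x_{\al_1}(u)g\in U'$. Finally, if $v = x_{-\al_1}(s)$ with $s\in 4\Z_2$, Corollary \ref{Cor: unipotent nice}(2) (applicable since $\val_2(s)\geq 2$) gives
\[
x_{\al_1}(u)\,x_{-\al_1}(s) = x_{-\al_1}\!\Big(\tfrac{s}{1+us}\Big)\,\widetilde{h_{\al_1}}(1+us)\,x_{\al_1}\!\Big(\tfrac{u}{1+us}\Big),
\]
where $\tfrac{s}{1+us}\in 4\Z_2$, $1+us\in 1+4\Z_2$, and $\tfrac{u}{1+us}\in\Z_2$; so $x_{\al_1}(u)g = x_{-\al_1}\!\big(\tfrac{s}{1+us}\big)\,\widetilde{h_{\al_1}}(1+us)\,\big(x_{\al_1}(\tfrac{u}{1+us})g'\big)$, and after applying the inductive hypothesis to $x_{\al_1}(\tfrac{u}{1+us})g'\in U'$ I commute $\widetilde{h_{\al_1}}(1+us)$ past the $U_R^{-}(1,4)$-component (conjugation by $\widetilde{h_{\al_1}}(1+4\Z_2)$ rescales each $x_{-\be}(\cdot)$, $\be\in\Phi_{U_R}^{+}$, by a unit, hence preserves $N_S^{-}(\Z_2)$ and $N^{-}(4\Z_2)$), absorb it into $K_{M_R}^*(4)$, and absorb the leading $x_{-\al_1}\!\big(\tfrac{s}{1+us}\big)\in N^{-}(4\Z_2)$ into the $U_R^{-}(1,4)$-component.

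The only non-formal ingredient is Corollary \ref{Cor: unipotent nice}(2), which is exactly what forces the torus element arising in the $v=x_{-\al_1}(s)$ case to lie in $\widetilde{h_{\al_1}}(1+4\Z_2)\subseteq K_{M_R}^*(4)$; without the hypothesis $s\in 4\Z_2$ one would escape the intended level. Everything else reduces to the commutator relation together with the elementary fact that in $F_4$, $\al_1$ is joined only to $\al_2$, which pins down exactly which $i\al_1+j\be$ are roots in each of the cases above; the bookkeeping of which normal subgroup ($N^{+}(\Z_2)$, $N^{-}(4\Z_2)$, or $N_S^{-}(\Z_2)$) each commutator term lands in is the main place where care is needed.
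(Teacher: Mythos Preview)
Your proof is correct and follows essentially the same approach as the paper's. The paper records the three key facts---$x_{\alpha_1}(\Z_2)$ commutes with $N_S^{-}(\Z_2)$, $x_{\alpha_1}(\Z_2)$ normalizes $U_Q^{-}(4\Z_2)$, and the $\SL_2$ identity for the root $\alpha_1$---and then says ``Combining these formulas, it is easy to see that $x_{\alpha_1}(\Z_2)\cdot U' = U'$''; your induction on the word length of $n_1$ is precisely one way to make that combination explicit, and your case analysis on the leftmost generator unpacks the same root-system observations (in particular, the absorption steps are exactly Lemma~\ref{lem:easyU'}).
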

\begin{proof}  Note that if $a,b$ are positive integers and $\alpha \in \Phi_{N_S}^{+}$ then $\gamma = a \alpha_1 - b \alpha$ is never a root.  Consequently, $x_{\alpha_1}(\Z_2)$ commutes with $N_S^{-}(\Z_2)$.  Moreover, $x_{\alpha_1}(\Z_2)$ normalizes $U_Q^{-}(4\Z_2)$.  Finally, one has the identity $$x_{\alpha_1}(t) x_{-\alpha_1}(s) = x_{-\alpha_1}(s/(1+ts))h_{\alpha_1}(1+ts) x_{\alpha_1}(t/(1+ts))$$ for $t \in \Z_2$ and $s \in 4 \Z_2$.  Combining these formulas, it is easy to see that $x_{\alpha_1}(\Z_2) \cdot U' = U'$.
\end{proof}

Let $K_R'(4,2^m)$ be the subgroup of $\widetilde{F}_4(\Q_2)$ generated by:
\begin{enumerate}
	\item $N^{-}(4\Z_2)$
	\item $N_S^{-}(\Z_2)$
	\item $K_{M_R}^*(4)$
	\item $N_S^{+}(4\Z_2)$
	\item and $N^{+}(2^m \Z_2)$.
\end{enumerate}

Arguing as in Lemma \ref{lem:U+N} and Proposition \ref{prop:U+N}, one obtains the from the above the following.
\begin{proposition} There is $A\gg0$ so that $K_R'(4,2^A) \cdot U' = U'$.
\end{proposition}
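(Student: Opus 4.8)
The plan is to run the argument of Lemma~\ref{lem:U+N} and Proposition~\ref{prop:U+N}, but with left multiplication in place of right multiplication. Let $\mathcal{G}$ denote the subgroup of $\widetilde{F}_4(\Q_2)$ generated by $U_R^{-}(1,4)$, $K_{M_R}^*(4)$, $N_S^{+}(4\Z_2)$, and $x_{\alpha_1}(\Z_2)$. By Lemma~\ref{lem:easyU'} together with the two subsequent lemmas stating $N_S^{+}(4\Z_2)\cdot U' = U'$ and $x_{\alpha_1}(\Z_2)\cdot U' = U'$, each of these four subgroups $S$ satisfies $S\cdot U' = U'$; since each $S$ is a group, every individual $s\in S$ satisfies $sU' = U'$, and hence so does every finite word in such elements. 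Therefore $\mathcal{G}\cdot U' = U'$. The statement thus reduces to the inclusion $K_R'(4,2^A)\subseteq\mathcal{G}$ for $A\gg 0$: granted this, $K_R'(4,2^A)\cdot U'\subseteq\mathcal{G}\cdot U' = U'$, and the reverse inclusion is trivial.

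Of the five families generating $K_R'(4,2^m)$, the first four lie in $\mathcal{G}$ immediately: $N^{-}(4\Z_2)$ and $N_S^{-}(\Z_2)$ lie in $U_R^{-}(1,4)$, while $K_{M_R}^*(4)$ and $N_S^{+}(4\Z_2)$ are among the generators of $\mathcal{G}$. Since $\Phi_{N}^{+} = \{\alpha_1\}\sqcup\Phi_{1,1}^{+}$, the fifth family $N^{+}(2^m\Z_2)$ is generated by $x_{\alpha_1}(2^m\Z_2)\subseteq x_{\alpha_1}(\Z_2)\subseteq\mathcal{G}$ together with $x_\alpha(2^m\Z_2)$ for $\alpha\in\Phi_{1,1}^{+}$. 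So the whole content is to produce $A\gg 0$ with $x_\alpha(2^A\Z_2)\subseteq\mathcal{G}$ for every $\alpha\in\Phi_{1,1}^{+}$, and for this I would copy the argument of Lemma~\ref{lem:U+N}(2). First note that $\mathcal{G}$ contains $x_{\alpha_1}(\Z_2)$, $x_{\alpha_3}(\Z_2)$, $x_{\alpha_4}(\Z_2)$ (the last two from $K_{M_R}^*(4)$, as $\alpha_3,\alpha_4\in\Phi_{M_R}$) and $x_{\alpha_2}(4\Z_2)$ (from $N_S^{+}(4\Z_2)$, as $\alpha_2\in\Phi_{N_S}^{+}$), and that it contains $\widetilde{h}_{\alpha_i}(1+4\Z_2)$ for all $i$ (directly for $i=1,2$, and for $i=3,4$ because $K_{M_R}^*(4)$ contains $x_{\pm\alpha_i}(\Z_2)$, hence $\widetilde{h}_{\alpha_i}(\Z_2^\times)$); by the earlier lemma writing $\widetilde{h}_\beta(t)$ as a product of the $\widetilde{h}_{\alpha_i}(t_i)$ for $t\in 1+4\Z_2$, it follows that $\widetilde{h}_\alpha(1+4\Z_2)\subseteq\mathcal{G}$ for every root $\alpha$. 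Now fix $\alpha\in\Phi_{1,1}^{+}$: since $U_B(\Q_2)$ is generated by the simple root groups, write $x_\alpha(1)$ as a word in the $x_{\alpha_i}(\Q_2)$, and conjugate this word by a $t\in T^{++}$ deep enough that every coefficient $r$ appearing gets moved into $\Z_2$ (for $i=1,3,4$) or into $4\Z_2$ (for $i=2$); this exhibits $x_\alpha(\alpha(t))$ as a product of elements of $\mathcal{G}$, so $x_\alpha(r_\alpha)\in\mathcal{G}$ for the nonzero $r_\alpha := \alpha(t)\in\Z_2$. Since $\widetilde{h}_\alpha(t)$ for $t\in 1+4\Z_2$ normalizes the subgroup generated by those four $x_{\alpha_i}$ (it scales each argument by a unit power of $t$), and $\{\widetilde{h}_\alpha(t),x_\alpha(r_\alpha)\} = x_\alpha((t^2-1)r_\alpha)$ with $t^2-1$ ranging over all of $8\Z_2$ as $t$ ranges over $1+4\Z_2$, we get $x_\alpha(8r_\alpha\Z_2)\subseteq\mathcal{G}$. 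Taking $A = \max_{\alpha\in\Phi_{1,1}^{+}}\bigl(3+\val_2(r_\alpha)\bigr)$ completes the argument.

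The only point needing care relative to Lemma~\ref{lem:U+N}, and the place where I expect the mild obstacle to lie, is that $x_{\alpha_2}$ is available to us only with its argument restricted to $4\Z_2$ rather than with full $\Z_2$-coefficients; this is harmless because $4\Z_2$ is still open in $\Q_2$ and all the commutator and conjugation identities invoked are polynomial, so the restriction only forces $A$ to be chosen a little larger. One should also verify two routine bookkeeping facts used above: that $\widetilde{h}_\alpha(1+4\Z_2)$ genuinely normalizes the relevant integral unipotent subgroup, which holds because conjugation by $\widetilde{h}_\alpha(t)$ sends $x_{\alpha_i}(r)$ to $x_{\alpha_i}(t^{\langle\alpha_i,\alpha^\vee\rangle}r)$ with $t^{\langle\alpha_i,\alpha^\vee\rangle}$ a unit; and that $\{t^2-1 : t\in 1+4\Z_2\} = 8\Z_2$, which one sees by writing $t = 1+4s$, so that $t^2-1 = 8s(1+2s)$ with $1+2s$ a unit.
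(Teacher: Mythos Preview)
Your proof is correct and follows essentially the same approach as the paper, which simply says ``Arguing as in Lemma~\ref{lem:U+N} and Proposition~\ref{prop:U+N}.'' You have filled in the details of that argument in the modified setting: the group $\mathcal{G}$ you introduce plays the role of $U_s$ from Lemma~\ref{lem:U+N}, and the reduction to showing $x_\alpha(2^A\Z_2)\subseteq\mathcal{G}$ for $\alpha\in\Phi_{1,1}^+$ via the conjugation-and-commutator trick is exactly the intended adaptation.

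Two very minor remarks. First, the phrase ``with left multiplication in place of right multiplication'' is slightly off: Proposition~\ref{prop:U+N} already uses left multiplication on $U$; it is only Proposition~\ref{prop:HR'4} that switches to right multiplication, and that is not what is being adapted here. Second, the observation that $\widetilde{h}_\alpha(1+4\Z_2)$ normalizes the integral simple-root subgroup is unnecessary: once you know $\widetilde{h}_\alpha(t)\in\mathcal{G}$ and $x_\alpha(r_\alpha)\in\mathcal{G}$, their commutator is automatically in $\mathcal{G}$. Neither point affects the validity of the argument.
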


We can now prove Theorem \ref{thm:KR'4} by inducting down on $A$:
\begin{proof}[Proof of Theorem \ref{thm:KR'4}] As stated, suppose that $m \geq 0$ is such that $K_R'(4,2^{m+1}) \cdot U' = U'$.  We need to check that $K_R'(4,2^{m}) \cdot U' = U'$.  Thus suppose $g = n_1' n_1'' m n_2 \in U'$ with $n_1' \in N^{-}(4\Z_2)$, $n_1'' \in N_S^{-}(\Z_2)$, $m \in K_{M_R}^*(4)$ and $n_2 \in U_R^{+}(4,1)$.  Let $u = x_{\alpha}(2^m s)$ with $\alpha \in \Phi_{N}^{+}$ and set $h = n_1'' m$. Then $u g = (un_1' u^{-1}) h (h^{-1}uh) n_2$.  Now $h^{-1} u h$ is seen to be in $N^{+}(\Z_2)$.  Moreover, as in the proof of Proposition \ref{prop:inductDown}, one checks that $u n_1' u^{-1} \in K_R'(4,2^{m+1})$.  The theorem follows.
\end{proof}

\section{Integral models}
In the previous sections, we have defined integral models of the algebraic groups $G_2$ and $F_4$ using the Chevalley--Steinberg generators and relations at each finite place. To do computations in the later sections, and to coherently relate these integral models to lattices in $\G_J(\R)$, we will need a somewhat explicit understanding of these integral models.  In this section, we give such explicit integral models for the groups $G_2$ and $F_4$.  Via the work of Steinberg, this amounts to giving a Chevalley basis of the corresponding Lie algebras, which is what we do.

\subsection{Type $G_2$}
We define $\g_{2,\Z} := M_3(\Z)^{\tr=0} \oplus V_3(\Z) \oplus V_3^\vee(\Z)$.  A Chevalley basis can be given by $X_{\alpha}$ being: $E_{ij}$ in $M_3(\Z)^{\tr=0}$, $v_1, v_2, v_3$ in $V_3(\Z)$, and $-\delta_1,-\delta_2,-\delta_3$ in $V_3^\vee(\Z)$.  Here $v_1, v_2, v_3$ is the standard basis of $V_3$ and $\delta_1, \delta_2, \delta_3$ is its dual basis.

\subsection{Type $F_4$}
First we set $J_0 = H_3(\Z)$ to be the symmetric $3 \times 3$ matrices with integer coefficients.  Let $\m_{J}(\Z)$ be the elements of $\m_{J}$ that take $J_0$ to itself.

We set
\[ f_{4,\Z} := (M_3(\Z) \oplus \m_J(\Z))^{2\tr = \mu}/\Z ( \mathbf{1}_3,2\mathbf{1}_{J_0}) \oplus V_3(\Z) \otimes J_0 \oplus V_3(\Z)^\vee \otimes J_0,\]
where the notation is as follows.  Here $\mu: \m_J\rightarrow \Q$ is the map satisfying
\[(\phi X_1,X_2, X_3) + (X_1, \phi X_2, X_3) + (X_1, X_2, \phi X_3) = \mu(\phi)(X_1,X_2,X_3).\]
A pair $(\phi_1,\phi_2) \in M_3(\Z) \oplus \m_J(\Z)$ is in $(M_3(\Z) \oplus \m_J(\Z))^{2\tr = \mu}$ if $2\tr(\phi_1) = \mu(\phi_2)$.  Note that the pair $(\mathbf{1}_3,2\mathbf{1}_{J_0})$ is in $(M_3(\Z) \oplus \m_J(\Z))^{2\tr = \mu}$ and we quotient out by the integer multiples of this pair.

We identify the quotient $(M_3(\Z) \oplus \m_J(\Z))^{2\tr = \mu}/\Z ( \mathbf{1}_3,2\mathbf{1}_{J_0})$ with a subset of $\sl_3 \oplus \m_J^0$ via
\[(\phi_1,\phi_2) \mapsto \phi_1 + \phi_2 - tr(\phi_1)\mathbf{1} := \left(\phi_1 - \frac{\tr(\phi_1)}{3} \mathbf{1}_3\right) + \left(\phi_2 - \frac{\mu(\phi_2)}{3} \mathbf{1}_{J_0}\right) \in \sl_3 + \m_J^0.\]
It is easy to see that this element acts on $V_3(\Z) \otimes J_0$ and $V_3(\Z)^\vee \otimes J_0$ preserving these integral structures.

One has the following proposition.
\begin{proposition} The lattice $f_{4,\Z}$ is closed under the bracket.
\end{proposition}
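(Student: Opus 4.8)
The plan is to verify closure of $f_{4,\Z}$ under the Lie bracket by checking it on each of the graded pieces coming from the $\Z/3$-model $\g(J) = \sl_3 \oplus \m_J^0 \oplus V_3 \otimes J \oplus (V_3 \otimes J)^\vee$, using the explicit formulas for the bracket recalled from \cite[section 4.2.1]{pollackQDS}. Since $\g(J)$ is already known to be a Lie algebra over $\Q$, the only content is \emph{integrality}: one must show that each bracket of two basis-type elements of $f_{4,\Z}$ lands back in the lattice $f_{4,\Z}$, and this reduces to a finite list of cases organized by the grading.

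First I would record the decomposition of $f_{4,\Z}$ into its pieces: the degree-zero piece $\mathfrak{m}_\Z := (M_3(\Z) \oplus \m_J(\Z))^{2\tr = \mu}/\Z(\mathbf{1}_3, 2\mathbf{1}_{J_0})$, sitting inside $\sl_3 \oplus \m_J^0$ via the map displayed just before the proposition; the piece $V_3(\Z) \otimes J_0$; and the piece $V_3(\Z)^\vee \otimes J_0$. Then I would treat the brackets in the following order. (i) $[\mathfrak{m}_\Z, \mathfrak{m}_\Z] \subseteq \mathfrak{m}_\Z$: here one uses that $M_3(\Z)$ is closed under the commutator bracket, that $\m_J(\Z)$ (the elements of $\m_J$ preserving $J_0$) is closed under bracket because it is the Lie algebra of the integral structure-group stabilizing $J_0$, and that the condition $2\tr(\phi_1) = \mu(\phi_2)$ is preserved since $\tr$ and $\mu$ both vanish on commutators; the quotient by $\Z(\mathbf{1}_3, 2\mathbf{1}_{J_0})$ is harmless as that element is central. (ii) $[\mathfrak{m}_\Z, V_3(\Z)\otimes J_0] \subseteq V_3(\Z)\otimes J_0$ and similarly for the dual piece: this is exactly the statement, already noted in the excerpt, that the image of $\mathfrak{m}_\Z$ in $\sl_3 \oplus \m_J^0$ acts on $V_3(\Z)\otimes J_0$ and $V_3(\Z)^\vee \otimes J_0$ preserving these integral structures — the $\sl_3$-part acts through $M_3(\Z)$ on $V_3(\Z)$ and the $\m_J^0$-part acts through $\m_J(\Z)$ on $J_0$, the correction terms $-\tfrac{\tr(\phi_1)}{3}\mathbf{1}_3$ and $-\tfrac{\mu(\phi_2)}{3}\mathbf{1}_{J_0}$ combining so that the total action is integral. (iii) $[V_3(\Z)\otimes J_0, V_3(\Z)^\vee \otimes J_0] \subseteq \mathfrak{m}_\Z$: this is the case requiring the most care. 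For $v \otimes X$ and $\delta \otimes Y$ with $v \in V_3(\Z)$, $\delta \in V_3(\Z)^\vee$, $X, Y \in J_0$, the bracket formula from \cite{pollackQDS} produces an $\sl_3$-component built from $\langle \delta, v\rangle$ and $(X,Y)$ (trace pairing), and an $\m_J^0$-component of the form (some multiple of) the map $Z \mapsto (\text{expression in } X, Y, Z)$ coming from the Freudenthal/Jordan structure on $J$; one checks the $M_3$-part lies in $M_3(\Z)$ because $(X,Y) \in \Z$ for $X, Y \in J_0 = H_3(\Z)$, and the $\m_J^0$-part, up to the allowed correction by a multiple of $(\mathbf{1}_3, 2\mathbf{1}_{J_0})$, lies in $\m_J(\Z)$ because the relevant Jordan-algebra operations (the $\#$-map, the linearized norm, and multiplication $X \cdot Y$) all preserve $H_3(\Z)$. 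Finally (iv) $[V_3(\Z)\otimes J_0, V_3(\Z)\otimes J_0]$ lands in $(V_3\otimes J)^\vee$ via the cross product $V_3 \times V_3 \to V_3^\vee$ and the map $J \otimes J \to J^\vee$, $(X,Y) \mapsto X \times Y$ (the linearization of $\#$); since $v_i \times v_j = \pm\delta_k$ on the standard bases and $X \times Y \in H_3(\Z)^\vee$ for $X, Y \in H_3(\Z)$ — indeed $X\times Y \in H_3(\Z)$ after identifying $J^\vee \simeq J$ by the trace form, one must just track the half-integral normalization coming from $J_0^\vee$ being the half-integral symmetric matrices — one gets the claim; the symmetric case $[V_3(\Z)^\vee\otimes J_0, V_3(\Z)^\vee\otimes J_0] \subseteq V_3(\Z)\otimes J_0$ is identical.

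The main obstacle I anticipate is case (iii), and specifically pinning down the exact integer coefficients in the bracket formula so that the $\m_J^0$-component of $[v\otimes X, \delta\otimes Y]$ is integral \emph{modulo the lattice $\Z(\mathbf{1}_3, 2\mathbf{1}_{J_0})$} rather than merely in $\tfrac13\m_J(\Z)$ or similar — this is precisely why the somewhat unusual quotient and the ``$2\tr = \mu$'' condition appear in the definition of $f_{4,\Z}$, so the bookkeeping must be done carefully. Concretely, I would isolate the trace/scalar parts: write the $\m_J^0$-component as $\Phi_{X,Y} - \tfrac13\mu(\Phi_{X,Y})\mathbf{1}_{J_0}$ where $\Phi_{X,Y} \in \m_J$ is the integral operator $Z \mapsto (X,Z)Y + (Y,Z)X - 2(X\times Y)\times Z$ (or whatever the precise form is in \cite{pollackQDS}), observe $\Phi_{X,Y}$ preserves $J_0$ and has $\mu(\Phi_{X,Y}) = c\,(X,Y)$ for an explicit small integer $c$, pair it with the matching $M_3$-component $\phi_1$ whose trace satisfies $2\tr(\phi_1) = \mu(\Phi_{X,Y})$, and conclude the pair $(\phi_1, \Phi_{X,Y})$ lies in $(M_3(\Z)\oplus\m_J(\Z))^{2\tr=\mu}$. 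The remaining cases (i), (ii), (iv) I expect to be genuinely routine once the bracket formulas are in hand, and I would present them briefly, citing \cite[section 4.2.1]{pollackQDS} for the structure constants and the earlier observation in this section for the action on $V_3(\Z)\otimes J_0$ and its dual.
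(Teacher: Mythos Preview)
The paper does not actually supply a proof of this proposition; it is stated and left to the reader. Your plan---a direct case-by-case check of integrality using the explicit bracket formulas from \cite[section 4.2.1]{pollackQDS} and the $\Z/3$-grading---is exactly the natural argument, and the paper implicitly relies on the reader carrying it out. Your identification of case~(iii) as the one where the ``$2\tr = \mu$'' condition and the quotient by $\Z(\mathbf{1}_3,2\mathbf{1}_{J_0})$ earn their keep is correct.

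One small point on case~(iv): your worry about half-integrality may be unfounded. For $X,Y \in J_0 = H_3(\Z)$ one has $X^\#$ (the adjugate) in $H_3(\Z)$, hence $X \times Y = (X+Y)^\# - X^\# - Y^\# \in H_3(\Z)$ as well, so the Jordan cross product already lands in $J_0$ without passing through $J_0^\vee$. The only thing to watch is the overall structure constant in the bracket formula $[v\otimes X, w\otimes Y]$; once you look it up in \cite{pollackQDS} you should find the result lies in $V_3(\Z)^\vee \otimes J_0$ on the nose.
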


Now, we observe that because $J_0 = H_3(\Z)$, $\m_J = M_3(\Q)$ with $X \in M_3(\Z)$ acting on $Y \in H_3(\Q)$ as $XY + YX^t$.  Moreover, one can check by easy explicit calculation, $M_3(\Z) = \{X \in m_J(\Z): \mu(X) \in 2 \Z\}.$

Consequently, we have
\[f_{4,\Z} = (M_3(\Z) + M_3(\Z))^{\tr_1 = \tr_2}/\Z(\mathbf{1},\mathbf{1}) + V_3(\Z) \otimes J_0 + V_3(\Z)^\vee \otimes J_0.\]

For the Chevalley basis, we take the usual bases of $X_{\alpha} = E_{ij}$ of the two copies of $M_3(\Z)$.  Now $J_0$ is the $\Z$-span of
\[\{e_{11},e_{22},e_{33}, x_1 = e_{23}+e_{32},x_2 = e_{31}+e_{13}, x_3 = e_{12} + e_{21}\},\]
where $e_{ij}$ denotes the element of $M_3(\Z)$ with a $1$ in the $(i,j)$ location and zeros elsewhere.  For the rest of the Chevalley basis, we take the elements $v_j \otimes x_k, v_j \otimes e_{kk}, -\delta_j \otimes x_k$ and $-\delta_j \otimes e_{kk}$.

\section{Splittings}\label{sec:splittings} We may now combine our local results to construct splittings of certain congruence subgroups of $G_2(\R)$ and $F_4(\R)$ into the double cover.

Recall that when $p > 2$ is odd, we have the hyperspecial maximal compact subgroup $K_p= G(\Z_p)$ of $G(\Q_p)$ induced by our integral model.
\begin{lemma}\label{Lem: splitting away from 2}\cite[Proposition 2.1]{lokeSavin}
	The central extension $\G(\Q_p)$ splits over $K_p$. The splitting homomorphism $s_p: K_p\lra \G(\Q_p)$ is unique, and we denote its image by $K_p^\ast$.
\end{lemma}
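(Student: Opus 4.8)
The plan is to transport the Steinberg--Matsumoto presentation of the hyperspecial maximal compact $K_p = G(\Z_p)$ directly into the cover $\widetilde{G}(\Q_p)$, exploiting that the Hilbert symbol $(\cdot,\cdot)_2$ is trivial on pairs of units of $\Z_p$ when $p$ is odd. First I would recall the internal structure of $K_p$: since $\Z_p$ is a local ring and $G$ is the split, simply-connected group of type $G_2$ or $F_4$, the group $K_p = G(\Z_p)$ is generated by the root groups $x_\alpha(\Z_p)$, $\alpha \in \Phi$, and in fact is presented by generators $x_\alpha(u)$ (for $u \in \Z_p$) subject to relations (\ref{1}), (\ref{2}), (\ref{4}) (the last for $t \in \Z_p^\times$) together with the torus relation $h_\alpha(s)h_\alpha(t) = h_\alpha(st)$ for $s,t \in \Z_p^\times$; this is the local-ring instance of the presentation recalled in Section \ref{Sec: double covers gen}. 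I would then define a candidate splitting on generators by sending $x_\alpha(u) \in K_p$ to the element $x_\alpha(u) \in \widetilde{G}(\Q_p)$.

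Next I would verify that every defining relation of $K_p$ survives in $\widetilde{G}(\Q_p)$. Relations (\ref{1}), (\ref{2}), (\ref{4}) hold in $\widetilde{G}(\Q_p)$ by construction of the cover. The torus relation is the only one that is potentially deformed: in $\widetilde{G}(\Q_p)$ one has $\widetilde{h}_\alpha(s)\widetilde{h}_\alpha(t) = \widetilde{h}_\alpha(st)\,(s,t)_2^{2/(\alpha,\alpha)}$, but $(s,t)_2 = 1$ for all $s,t \in \Z_p^\times$ because $p$ is odd, so $s \mapsto \widetilde{h}_\alpha(s)$ is a homomorphism on $\Z_p^\times$ and the torus relation lifts verbatim (in particular $\widetilde{h}_\alpha(1) = 1$). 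Hence the assignment on generators extends to a homomorphism $s_p\colon K_p \to \widetilde{G}(\Q_p)$, and composing it with the covering projection $\widetilde{G}(\Q_p) \to G(\Q_p)$ recovers the identity of $K_p$; this $s_p$ is the desired splitting, and we set $K_p^\ast = s_p(K_p)$. For uniqueness, any two splittings over $K_p$ differ by a homomorphism $\phi\colon K_p \to \mu_2(\Q_p) = \{\pm 1\}$; since $p$ is odd, $2 \in \Z_p^\times$, so the additive group $\Z_p$ is $2$-divisible and each root group $x_\alpha(\Z_p) \cong (\Z_p,+)$ lies in $\ker\phi$, and as these generate $K_p$ we get $\phi \equiv 1$.

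The hard part will be justifying the presentation claim used above: that every relation among the $x_\alpha(\Z_p)$ inside $G(\Q_p)$ is a formal consequence of (\ref{1}), (\ref{2}), (\ref{4}) and the torus relation with \emph{unit} parameters, so that no Steinberg symbol $\{s,t\}$ with $s,t \in \Z_p^\times$ is needed -- this is exactly where the odd-$p$ hypothesis does its work. I would obtain it either from the known presentations of simply-connected Chevalley groups over local (or semilocal) rings, or cohomologically: $K_p$ is an extension of $G(\mathbb{F}_p)$ by a pro-$p$ group, the continuous cohomology $H^i_{\mathrm{cont}}(\,\cdot\,,\Z/2)$ of a pro-$p$ group vanishes for $i \geq 1$ since $p$ is odd, and for $G$ of type $G_2$ or $F_4$ the finite group $G(\mathbb{F}_p)$ is simple with Schur multiplier having no $2$-torsion, so $H^1_{\mathrm{cont}}(K_p,\Z/2) = H^2_{\mathrm{cont}}(K_p,\Z/2) = 0$, which yields existence and uniqueness of the splitting at one stroke.
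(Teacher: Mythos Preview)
The paper does not give its own proof of this lemma; it simply records the statement and cites \cite[Proposition 2.1]{lokeSavin}. So there is nothing in the paper to compare against directly.

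Your argument is correct and is essentially the standard one (and close to what Loke--Savin do). The key observation---that among the Steinberg relations only the torus relation $\widetilde h_\alpha(s)\widetilde h_\alpha(t)=\widetilde h_\alpha(st)(s,t)_2^{2/(\alpha,\alpha)}$ is deformed in the cover, and that $(s,t)_2=1$ for $s,t\in\Z_p^\times$ when $p$ is odd---is exactly the point. Your uniqueness argument via $2$-divisibility of $(\Z_p,+)$ is clean and does not need any cohomology.

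Two remarks. First, you are right to flag the presentation of $K_p=G(\Z_p)$ as the nontrivial input; this is where one appeals to the theory of Chevalley groups over local rings (or, equivalently, to an Iwahori/Bruhat-type decomposition argument as the paper itself carries out in detail at $p=2$ in Theorems \ref{thm:iwahoriSplit} and \ref{thm:KR'4}). Second, your cohomological alternative is fine for the groups at hand but is type-specific: the claim that the Schur multiplier of $G(\mathbb{F}_p)$ has no $2$-torsion holds for $G_2(\mathbb{F}_p)$ and $F_4(\mathbb{F}_p)$ with $p$ odd, but fails for some other simply-connected types, so the direct presentation argument is the more portable one.
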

We now define a congruence subgroup $\Ga_{F_4}(4) \subseteq F_4(\R)$ and explain that this subgroup splits into $\widetilde{F}_4(\R)$.  Let $K_R(4)$ be the image in $F_4(\Q_2)$ of the subgroup $K_R'(4)$, and let $s_2:K_R(4) \rightarrow \widetilde{F}_4(\Q_2)$ be the induced splitting.   Define now
\begin{equation}\label{eqn: good level}
\Ga_{F_4}(4) := F_4(\Q) \cap K_R(4)\prod_{p >2}{K_p}\subset F_4(\Z).
\end{equation}

To construct a splitting of $\Ga_{F_4}(4)$ into $\widetilde{F}_4$, we will use the following lemma.
\begin{lemma}\label{Lem: unique splitting} Suppose $A,B$ are two groups containing a central $\mu_2$, and $\Gamma \subseteq A/\mu_2, B/\mu_2$.  Let $s: \Gamma \rightarrow (A \times B)/\mu_2^{\Delta}$, and $s_A: \Gamma \rightarrow A$ be given splittings. Then there exists a unique splitting $s_B: \Gamma \rightarrow B$ so that $s(\gamma) = (s_A(\gamma),s_B(\gamma))\mu_2^{\Delta}$ for all $\gamma \in \Gamma$. \end{lemma}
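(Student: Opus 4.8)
The plan is to build $s_B$ by choosing a canonical representative in $A \times B$ for each value of $s$. I will regard the data as three central $\mu_2$-extensions: $1 \to \mu_2 \to A \to A/\mu_2 \to 1$, $\ 1 \to \mu_2 \to B \to B/\mu_2 \to 1$, and---using that $\mu_2$ is central in $A$ and in $B$, so the diagonal $\mu_2^{\Delta}$ is central in $A\times B$---the extension $1 \to \mu_2 \to (A\times B)/\mu_2^{\Delta} \to (A/\mu_2)\times(B/\mu_2) \to 1$, with $\Gamma$ embedded diagonally by $\gamma \mapsto (\gamma,\gamma)$ via the two given inclusions, and $s$ a splitting over this diagonal copy of $\Gamma$.

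First I would fix $\gamma \in \Gamma$ and pick any lift $(a,b) \in A \times B$ of $s(\gamma)$; the only ambiguity is multiplication by $\mu_2^{\Delta}$, i.e.\ simultaneously negating $a$ and $b$. Since $(a,b)$ projects to $(\gamma,\gamma)$, the element $a$ lies over $\gamma$ in $A/\mu_2$, and so does $s_A(\gamma)$; hence $a$ equals $s_A(\gamma)$ or $\zeta_A \cdot s_A(\gamma)$, where $\zeta_A$ generates $\mu_2 \subseteq A$. Using the $\mu_2^{\Delta}$-freedom to normalize the first coordinate to $s_A(\gamma)$, I obtain a \emph{unique} representative of $s(\gamma)$ of the form $(s_A(\gamma), b)$, and I define $s_B(\gamma) := b$. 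By construction $s_B(\gamma)$ lies over $\gamma$ in $B/\mu_2$, so $s_B$ is at least a set-theoretic lift of $\Gamma$ into $B$.

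The remaining content is that $s_B$ is a homomorphism, and here the key principle is that an element of $(A \times B)/\mu_2^{\Delta}$ has at most one representative with a prescribed first coordinate, since the residual freedom $\mu_2^{\Delta}$ moves the first coordinate. I would apply this to the two representatives $(s_A(\gamma_1\gamma_2),\, s_B(\gamma_1\gamma_2))$ and $(s_A(\gamma_1)s_A(\gamma_2),\, s_B(\gamma_1)s_B(\gamma_2))$ of the common element $s(\gamma_1\gamma_2) = s(\gamma_1)s(\gamma_2)$: since $s_A$ is a homomorphism these have the same first coordinate, so the second coordinates coincide, giving $s_B(\gamma_1\gamma_2) = s_B(\gamma_1) s_B(\gamma_2)$. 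The same principle gives uniqueness: any splitting $s_B'$ with $s(\gamma) = (s_A(\gamma), s_B'(\gamma))\mu_2^{\Delta}$ produces a representative of $s(\gamma)$ with first coordinate $s_A(\gamma)$, hence agrees with the normalized one, so $s_B' = s_B$. I do not anticipate a real obstacle here; the only point to watch is that $A$ and $B$ are not assumed abelian, so every multiplication must be kept noncommutative and the argument must rely solely on the centrality of $\mu_2$, which is part of the hypothesis.
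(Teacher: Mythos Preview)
Your proof is correct and follows exactly the same approach as the paper's: define $s_B(\gamma)$ as the unique $b$ with $s(\gamma) = (s_A(\gamma),b)\mu_2^{\Delta}$, then verify it is a homomorphism. The paper's proof is terser---it simply says ``one checks that it is a group homomorphism''---whereas you have spelled out that check (and the uniqueness) via the ``same first coordinate forces same second coordinate'' observation, which is precisely what is needed.
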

\begin{proof} By assumption, for each $\gamma \in \Gamma$ one has $s(\gamma) = \pm (s_A(\gamma),s_B(\gamma))$ for a unique $s_B(\gamma) \in B$.  This uniquely determines the map $s_B: \Gamma \rightarrow B$, and one checks that it is a group homomorphism.
\end{proof}

Using the inclusion $\Ga_{F_4}(4)\subset F_4(\Q)\subset F_4(\R)$, we obtain a splitting $s_{\Gamma}: \Ga_{F_4}(4) \rightarrow \widetilde{F}_4(\R)$ by applying Lemma \ref{Lem: unique splitting} with $\Gamma = \Ga_{F_4}(4)$, $A = \widetilde{F}_4(\A_f)$, and $B = \widetilde{F}_4(\R)$. Let $s_{f}:\Ga_{F_4}(4) \rightarrow \widetilde{F}_4(\A_f)$ be the section induced from the local sections $s_p$ from Lemma \ref{Lem: splitting away from 2} and Theorem \ref{thm:iwahoriSplit}.  With this notation, we have obtained

\begin{proposition} There is a unique splitting $s_\Gamma: \Ga_{F_4}(4) \rightarrow \widetilde{F}_4(\R)$ characterized by the fact that $s_\Q(\gamma) = \pm (s_f(\gamma),s_{\Gamma}(\gamma))$ for all $\gamma \in \Ga_{F_4}(4)$.\end{proposition}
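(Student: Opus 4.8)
The plan is to obtain the proposition as a direct application of Lemma \ref{Lem: unique splitting}, after recording the relevant groups and maps. First I would recall from Section \ref{Sec: general covers} that, grouping the finite places, the adelic cover \eqref{eqn: global group} is the pushout of $\widetilde{F}_4(\A_f) \times \widetilde{F}_4(\R)$ along the diagonally embedded $\mu_2$, so that $\widetilde{F}_4(\A) = (\widetilde{F}_4(\A_f) \times \widetilde{F}_4(\R))/\mu_2^{\Delta}$; this is just the restricted-product description $\widetilde{F}_4(\A) = \prod_p \widetilde{F}_4(\Q_p)/\mu_2^{+}$ of Section \ref{Sec: general covers} with the archimedean factor split off. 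We also recall the canonical splitting $s_\Q: F_4(\Q) \to \widetilde{F}_4(\A)$ of \eqref{eqn: global group}. Since $\Ga_{F_4}(4) \subseteq F_4(\Q)$ by \eqref{eqn: good level}, restriction gives a splitting $s_\Q\vert_{\Ga_{F_4}(4)}$ of $\widetilde{F}_4(\A) \to F_4(\A)$ over $\Ga_{F_4}(4)$, the latter viewed inside $F_4(\A_f)$ and inside $F_4(\R)$ via the diagonal embedding $F_4(\Q) \hookrightarrow F_4(\A)$.

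Next I would assemble $s_f$ and verify that it is a splitting of $\widetilde{F}_4(\A_f) \to F_4(\A_f)$ over $\Ga_{F_4}(4)$. For $p > 2$, Lemma \ref{Lem: splitting away from 2} gives the unique splitting $s_p: K_p \to \widetilde{F}_4(\Q_p)$ with image the compact open subgroup $K_p^{*}$; at $p = 2$, Theorem \ref{thm:KR'4} shows that $K_R'(4) \to F_4(\Q_2)$ is injective, hence an isomorphism onto its image $K_R(4)$, and $s_2: K_R(4) \to \widetilde{F}_4(\Q_2)$ is the inverse isomorphism. By \eqref{eqn: good level}, every $\gamma \in \Ga_{F_4}(4)$ lies in $K_R(4) \times \prod_{p>2}K_p$ under the diagonal embedding, hence $(s_2(\gamma), (s_p(\gamma))_{p>2})$ lies in $K_R'(4) \times \prod_{p>2}K_p^{*}$; in particular it has components in the chosen integral compact open subgroups for all but finitely many $p$ and so defines an element of the restricted product, i.e. of $\widetilde{F}_4(\A_f)$. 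Since each local section is a homomorphism, $s_f$ is a homomorphism lifting the inclusion $\Ga_{F_4}(4) \hookrightarrow F_4(\A_f)$, hence a splitting. The only point worth a word here is that the local homomorphisms patch continuously into the restricted product, which is precisely the almost-everywhere-integrality just observed.

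With $\widetilde{F}_4(\A) = (\widetilde{F}_4(\A_f) \times \widetilde{F}_4(\R))/\mu_2^{\Delta}$ and the splittings $s = s_\Q\vert_{\Ga_{F_4}(4)}$ and $s_A = s_f$ in hand, I would apply Lemma \ref{Lem: unique splitting} with $A = \widetilde{F}_4(\A_f)$, $B = \widetilde{F}_4(\R)$ and $\Gamma = \Ga_{F_4}(4)$. It produces a unique splitting $s_\Gamma = s_B: \Ga_{F_4}(4) \to \widetilde{F}_4(\R)$ with $s_\Q(\gamma) = (s_f(\gamma), s_\Gamma(\gamma))\,\mu_2^{\Delta}$, that is $s_\Q(\gamma) = \pm(s_f(\gamma), s_\Gamma(\gamma))$, for all $\gamma \in \Ga_{F_4}(4)$; the uniqueness of $s_\Gamma$ subject to this relation is part of the conclusion of Lemma \ref{Lem: unique splitting}, since a nontrivial element of $\mu_2^{\Delta}$ alters both coordinates. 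I do not anticipate a genuine obstacle: the substantive work was already carried out in Lemma \ref{Lem: splitting away from 2} and Theorem \ref{thm:KR'4}, and what remains is the routine globalization; the only bookkeeping requiring care is the identification of $\widetilde{F}_4(\A)$ with the pushout along the antidiagonal $\mu_2$ compatibly with the finite/archimedean decomposition, which is immediate from the construction recalled in Section \ref{Sec: general covers}.
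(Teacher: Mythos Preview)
Your proposal is correct and follows exactly the paper's approach: apply Lemma \ref{Lem: unique splitting} with $A = \widetilde{F}_4(\A_f)$, $B = \widetilde{F}_4(\R)$, $\Gamma = \Ga_{F_4}(4)$, $s = s_\Q|_{\Ga_{F_4}(4)}$, and $s_A = s_f$ assembled from the local splittings of Lemma \ref{Lem: splitting away from 2} and Theorem \ref{thm:KR'4}. The paper records this in a single sentence just before the proposition; your version simply spells out the routine verifications (that $s_f$ lands in the restricted product and is a homomorphism, and that the adelic cover decomposes as the pushout along $\mu_2^\Delta$).
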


Below we will need the following proposition.
\begin{proposition}\label{prop:sGammaUnip} For all integers $u$, the splitting $s_\Gamma$ satisfies $s_\Gamma(x_\alpha(u)) = x_\alpha(u)$ for all $\alpha \in \Phi_{N}^{+} \cup \Phi_{N_S}^{-} \cup \Phi_{M_R}$ and $s_\Gamma(x_{\alpha}(4u)) = x_{\alpha}(4u)$ for all $\alpha \in \Phi_{N_S}^{+} \cup \Phi_{N}^{-}$.
\end{proposition}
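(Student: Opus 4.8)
The plan is to trace the definition of $s_\Gamma$ back to the canonical splittings of the cover over one-parameter unipotent subgroups. For any place $v$ of $\Q$ and any root $\alpha$, the map $u \mapsto x_\alpha(u)$ is the canonical splitting of the extension $\widetilde{F}_4(\Q_v) \to F_4(\Q_v)$ over $U_\alpha(\Q_v) \cong \Q_v$, and likewise $u \mapsto x_\alpha(u)$ (with the $\mu_2^{+}$-identification built into the restricted product) gives the canonical splitting of $\widetilde{F}_4(\A) \to F_4(\A)$ over $U_\alpha(\A)$. Since $\Hom(\Q,\mu_2) = 0$, this is the \emph{unique} splitting over $U_\alpha(\Q)$, so $s_\Q$, which restricts to a splitting there, must agree with it: $s_\Q(x_\alpha(u)) = x_\alpha(u)$ in $\widetilde{F}_4(\A)$ for all $u \in \Q$ and all $\alpha$.

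Next I would verify that, for the roots and congruence conditions in the statement, $x_\alpha(u)$ (resp.\ $x_\alpha(4u)$) with $u \in \Z$ lies in $\Ga_{F_4}(4)$ and that its finite section $s_f$ is the obvious adelic unipotent element. At each odd prime $p$ we have $u \in \Z_p$, hence $x_\alpha(u) \in K_p = F_4(\Z_p)$; since $U_\alpha(\Z_p) \cong \Z_p$ is a pro-$p$ group and $2 \in \Z_p^\times$, one has $\Hom(U_\alpha(\Z_p),\mu_2) = 0$, so the unique splitting $s_p$ of Lemma~\ref{Lem: splitting away from 2} satisfies $s_p(x_\alpha(u)) = x_\alpha(u)$. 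At $p=2$ the point is to match the five root families in the statement against the generating sets of $K_R'(4)$ supplied by Theorem~\ref{thm:KR'4}: $\Phi_{N}^{+}$ against $N^{+}(\Z_2)$, $\Phi_{M_R}$ against $K_{M_R}^*(4)$, $\Phi_{N_S}^{-}$ against $N_S^{-}(\Z_2)$, $\Phi_{N_S}^{+}$ (with the $4\Z_2$-condition) against $N_S^{+}(4\Z_2)$, and $\Phi_{N}^{-}$ (with the $4\Z_2$-condition) against $N^{-}(4\Z_2)$. In each case the relevant $x_\alpha(u)$ (resp.\ $x_\alpha(4u)$) is a generator of $K_R'(4)$, hence lies in $K_R'(4)$; therefore its image lies in $K_R(4)$, and since $s_2$ is the inverse of the injection $K_R'(4) \hookrightarrow F_4(\Q_2)$ from Theorem~\ref{thm:KR'4}, $s_2$ sends it to the canonical lift $x_\alpha(u)$ (resp.\ $x_\alpha(4u)$). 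Combining the places, $x_\alpha(u) \in \Ga_{F_4}(4)$ and $s_f(x_\alpha(u))$ is the canonical finite-adelic lift of $x_\alpha(u)$.

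Finally I would assemble the pieces. Under the isomorphism $\widetilde{F}_4(\A) \cong \bigl(\widetilde{F}_4(\A_f) \times \widetilde{F}_4(\R)\bigr)/\mu_2^{\Delta}$, the canonical adelic lift $x_\alpha(u)$ is the class of the pair $\bigl(s_f(x_\alpha(u)),\, x_\alpha(u)\bigr)$ whose second coordinate is the canonical real lift. By the first paragraph this class equals $s_\Q(x_\alpha(u))$, while by the characterization of $s_\Gamma$ it also equals the class of $\bigl(s_f(x_\alpha(u)),\, s_\Gamma(x_\alpha(u))\bigr)$. Since $[(a,b)] = [(a,b')]$ in this quotient forces $b = b'$ (the diagonal $\mu_2$ acts by $(a,b) \mapsto (-a,-b)$, so equality of first coordinates precludes the sign flip), we conclude $s_\Gamma(x_\alpha(u)) = x_\alpha(u)$, and likewise $s_\Gamma(x_\alpha(4u)) = x_\alpha(4u)$; note this pins down the sign exactly, with no residual $(-1)^u$ ambiguity even though $\Hom(\Z,\mu_2) \neq 0$. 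The only real work is the bookkeeping in the middle step — checking that the five root families genuinely correspond to the generators of $K_R'(4)$ and that the elements are integral at all odd places — and I expect no genuine obstacle beyond this careful matching of conventions.
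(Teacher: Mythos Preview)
Your proposal is correct and follows essentially the same approach as the paper: verify that the canonical unipotent lift $x_\alpha(u)$ is the value taken by $s_\Q$ and by each local section $s_p$, then conclude via the defining property of $s_\Gamma$ from Lemma~\ref{Lem: unique splitting}. The paper's proof compresses all of this into a single sentence (``this compatibility occurs for $s_\Q$ and $s_p$ for all $p = 2,3,\ldots$''), whereas you have carefully unpacked the uniqueness-of-splitting arguments at each place and the final sign comparison, but the logic is identical.
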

\begin{proof} Indeed, this compatibility occurs for $s_\Q$ and $s_p$ for all $p=2,3,\ldots$.  The proposition thus follows from the definition of $s_\Gamma$.
\end{proof}

In the next section, we recall the inclusion of algebraic $\Q$-groups $G_2 \subseteq F_4$ and prove an inclusion $\widetilde{G}_2(\R) \subseteq \widetilde{F}_4(\R)$.  Assuming these inclusions for the moment, we set $\Gamma_{G_2}(4) = G_2(\R) \cap \Gamma_{F_4}(4)$ and obtain a splitting $\Gamma_{G_2}(4) \rightarrow \widetilde{G}_2(\R)$.

\section{Group embeddings}\label{sec:grpEmbeddings}We conclude this chapter with some remarks about the inclusion of $G_2$ in $F_4$.

\subsection{Algebraic groups over $\Q$}
We recall the following proposition from the theory of algebraic groups; see \cite[Theorem 25.4(c)]{milneBook}.
\begin{proposition}\label{Prop: milne} Suppose $k$ is a field of characteristic $0$, $H, G$ are algebraic groups over $k$, with $H$ semisimple, connected and simply connected.  Suppose $L:\mathfrak{h} \rightarrow \mathfrak{g}$ is an embedding of Lie algebras.  Then there exists a unique map $H \rightarrow G$ of algebraic groups whose differential is $L$.\end{proposition}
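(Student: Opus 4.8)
\section*{Proof proposal}

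The plan is to treat uniqueness and existence separately; since this is a standard structural fact one may simply cite \cite[Theorem 25.4(c)]{milneBook}, but here is the shape of the argument. For uniqueness, suppose $f_1,f_2:H\to G$ both have differential $L$. Their equalizer --- the pullback of the diagonal $\Delta:G\hookrightarrow G\times G$ along $(f_1,f_2):H\to G\times G$ --- is a closed subgroup scheme $H_0\subseteq H$, which is smooth since we are in characteristic $0$, and whose Lie algebra is $\{X\in\mathfrak{h}:df_1(X)=df_2(X)\}=\mathfrak{h}$. Hence $\dim H_0=\dim\mathfrak{h}=\dim H$, so the identity component of $H_0$ is a closed subgroup of the connected group $H$ of full dimension; therefore $H_0=H$ and $f_1=f_2$. (This is also exactly what makes the ``functorial recovery'' step below work.)

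For existence, note that $L(\mathfrak{h})\subseteq\mathfrak{g}$ is a semisimple Lie subalgebra, being a quotient of the semisimple Lie algebra $\mathfrak{h}$. In characteristic $0$ every semisimple Lie subalgebra of the Lie algebra of an affine algebraic group is algebraic, so there is a unique connected closed subgroup $G'\subseteq G$ with $\Lie(G')=L(\mathfrak{h})$, and this $G'$ is connected semisimple. Let $\pi:\widetilde{G'}\to G'$ be its simply connected cover, so $\Lie(\widetilde{G'})=L(\mathfrak{h})$. It now suffices to produce a morphism $H\to\widetilde{G'}$ whose differential is the natural surjection $\mathfrak{h}\twoheadrightarrow L(\mathfrak{h})$; composing with $\pi$ and the inclusion $G'\hookrightarrow G$ then gives the sought map. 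Decomposing $\mathfrak{h}$ into simple ideals and $H$ into the corresponding product of simple simply connected factors reduces this to building, on the relevant product factor, an isomorphism onto $\widetilde{G'}$ inducing $L$; that is, one is reduced to the case where $\mathfrak{h}\to L(\mathfrak{h})$ is an isomorphism and both $H$ and $\widetilde{G'}$ are simply connected semisimple.

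The main point --- and the one place genuine input is needed --- is that over a field of characteristic $0$ a simply connected semisimple group is recovered, functorially for isomorphisms, from its Lie algebra: an isomorphism of semisimple Lie algebras induces an isomorphism of adjoint groups $\Aut(\mathfrak{h})^{\circ}\xrightarrow{\sim}\Aut(L(\mathfrak{h}))^{\circ}$ with the prescribed differential, and this lifts canonically to the simply connected covers; faithfulness of this functor on isomorphisms is precisely the uniqueness part above. Over a non-closed field one must keep track of inner forms, but the Lie algebra already detects them --- for instance $\mathfrak{sl}_1(D)\not\cong\mathfrak{sl}_n(k)$ for $D$ a nonsplit central division algebra --- so nothing is lost. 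Applying this to $L^{-1}:L(\mathfrak{h})\xrightarrow{\sim}\mathfrak{h}$ produces an isomorphism $\widetilde{G'}\xrightarrow{\sim}H$; its inverse, composed with $\pi$ and $G'\hookrightarrow G$, is a morphism $H\to G$ with differential $L$, and this finishes the argument. I expect the algebraicity of semisimple subalgebras and the functorial reconstruction of simply connected groups to be the only non-formal ingredients; everything else is bookkeeping with differentials and connectedness.
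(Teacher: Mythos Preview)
The paper does not supply its own proof of this proposition; it simply records it as a known structural fact and cites \cite[Theorem 25.4(c)]{milneBook}. Your proposal therefore goes well beyond what the paper does, and the argument you sketch is essentially the standard one: uniqueness via the equalizer subgroup having full Lie algebra, existence via algebraicity of the semisimple image $L(\mathfrak{h})$ together with the functorial reconstruction of simply connected groups from their Lie algebras.

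One small point: since $L$ is assumed to be an \emph{embedding}, the map $\mathfrak{h}\to L(\mathfrak{h})$ is already an isomorphism, so your phrasing ``quotient of the semisimple Lie algebra $\mathfrak{h}$'' and ``the natural surjection $\mathfrak{h}\twoheadrightarrow L(\mathfrak{h})$'' is unnecessarily general, and the reduction step decomposing $\mathfrak{h}$ into simple ideals is redundant here. None of this is wrong, but it makes the argument look more involved than it needs to be for the statement as written. If you intend to prove the more general version (arbitrary Lie algebra homomorphism rather than embedding), then the decomposition handles the kernel correctly and your writeup is appropriate; just be explicit about which version you are proving.
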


We first work with algebraic groups over $\Q$.  Either from the proposition or directly, one can see easily that there is a map $\SL_3 \rightarrow F_4$, lifting the Lie algebra embedding $\m_J^0 \rightarrow \mathfrak{f}_4$ in the notation of \cite{pollackQDS}.  Let $\SO(3)$ denote the group of $g \in \SL_3$ with $g^t g = 1$. Composing with the map $\SO(3)\rightarrow \SL_3$, we get an embedding of $\SO(3)$ into $F_4$.
\begin{lemma}
	The connected component of the identity of the centralizer of $\SO(3)$ in $F_4$ is a split form of type $G_2$.
\end{lemma}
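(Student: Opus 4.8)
The plan is to identify the centralizer explicitly using the structure of $F_4$ as the automorphism group of the cubic norm structure $J = H_3(\Q)$, together with the known decomposition of $\mathfrak{f}_4$ under the embedded $\SO(3) \hookrightarrow \SL_3 \hookrightarrow F_4$. Recall from Section \ref{Sec: quatgroups} that $\mathfrak{f}_4 = \mathfrak{g}(J) = \mathfrak{sl}_3 \oplus \m_J^0 \oplus V_3 \otimes J \oplus (V_3 \otimes J)^\vee$ when $J = H_3(\Q)$, and that for $J = H_3(\Q)$ we have $\m_J = M_3(\Q)$ acting on $J = H_3(\Q)$ by $X \cdot Y = XY + YX^t$. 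First I would compute the Lie algebra $\mathfrak{c}$ of the centralizer: since $\SO(3)$ acts on $\mathfrak{sl}_3$ by the adjoint action and on $V_3$ by the standard $3$-dimensional representation, the decomposition of $\mathfrak{sl}_3$ under $\mathfrak{so}_3$ is $\mathfrak{so}_3 \oplus \Sym_0^2(V_3)$ (the trace-free symmetric part, a $5$-dimensional irreducible), so the $\SO(3)$-invariants in $\mathfrak{sl}_3$ are zero. The invariants in $\m_J^0$ are all of $\m_J^0$ (it commutes with $\SL_3$ hence with $\SO(3)$), giving a copy of $\m_J^0 = \mathfrak{sl}_3^{(2)}$, the Lie algebra of $M_J^1$. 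Finally, in $V_3 \otimes J \oplus (V_3 \otimes J)^\vee$, the $\SO(3)$-invariant vectors come from $\Hom_{\SO(3)}(V_3^\vee, J)$; here $J = H_3(\Q)$ decomposes under $\SO(3)$ acting by $Y \mapsto gYg^t$ as $\mathbf{1} \oplus \Sym_0^2(V_3)$ (scalar matrices plus trace-free symmetric), so $\Hom_{\SO(3)}(V_3^\vee, J) = 0$ as well, since $V_3^\vee \cong V_3$ is $3$-dimensional irreducible and does not appear in $J$. Hence $\mathfrak{c} = \m_J^0$, which has dimension $8$ and is a form of $\mathfrak{sl}_3$.

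That computation, however, gives the wrong group — we want type $G_2$, dimension $14$, not dimension $8$. The issue is that the embedding is $\SO(3) \to \SL_3 \to F_4$ where the first map is the \emph{standard} $\SO(3)$, but the relevant copy for getting $G_2$ is the \emph{principal} $\SL_2 \to \SL_3$, i.e. $\SO(3) = \mathrm{PGL}_2$ acting on $V_3 = \Sym^2$ of the standard $2$-dimensional representation. So I would instead recompute with $V_3 = \Sym^2(V_2)$ as a representation of $\SO(3) \cong \mathrm{PGL}_2$: then $\mathfrak{sl}_3 = \Sym^2(V_3) \ominus \mathbf{1} = \Sym^4(V_2) \oplus \Sym^2(V_2)$ under the principal $\mathfrak{sl}_2$, whose invariants are still zero; and $J = H_3(\Q)$, as the space $\Sym^2(V_3)$ with the $\SO(3)$-action $Y \mapsto gYg^t$, decomposes as $\Sym^4(V_2) \oplus \Sym^0(V_2) = \Sym^4 \oplus \mathbf{1}$, so again the invariants in $V_3 \otimes J \oplus (V_3 \otimes J)^\vee$ require a copy of $V_3^\vee = \Sym^2(V_2)$ inside $J = \Sym^4 \oplus \mathbf{1}$, of which there is none — wait, this would \emph{again} give only $\m_J^0$. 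I need to be more careful here: the correct statement, following the classical fact that $G_2$ contains $\SL_2 \times \SL_2 / \mu_2$ (long and short) or contains a principal $\mathrm{PGL}_2$, is that inside $F_4$ the centralizer of a suitably-embedded $\mathrm{SO}(3)$ (via the long-root or short-root $A_1$, composed appropriately) is $G_2$. So the genuinely important first step is to \textbf{pin down exactly which embedding $\SL_3 \to F_4$ and hence which $\SO(3)$} is being used: per the excerpt it is the one lifting $\m_J^0 \to \mathfrak{f}_4$, i.e. $\SL_3 = M_J^1$ for $J = H_3(\Q)$ — \emph{not} the $\SL_3$ with simple roots $\{\alpha_3,\alpha_4\}$, and the two play dual roles in $\mathfrak{g}(J) = \mathfrak{sl}_3 \oplus \m_J^0 \oplus \cdots$.

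So the corrected plan is: (1) Fix $J = H_3(\Q)$ so $\m_J^0 \cong \mathfrak{sl}_3$ and $M_J^1 \cong \SL_3$, and let $\SO(3) \subseteq M_J^1$ be the orthogonal subgroup (acting on $J = H_3(\Q)$ by $Y \mapsto gYg^t$, $g \in \SO(3)$). (2) Compute the $\SO(3)$-invariants in $\mathfrak{f}_4 = \mathfrak{sl}_3 \oplus \m_J^0 \oplus V_3 \otimes J \oplus (V_3\otimes J)^\vee$: the $\mathfrak{sl}_3$ factor on which $\SO(3) = M_J^1 \cap \OO(3)$ acts is now the \emph{first} $\mathfrak{sl}_3$ (the one built from $V_3$), and $\SO(3) \subseteq M_J^1$ acts on this first $\mathfrak{sl}_3$ through $M_J^1$'s action on it, which factors through... — I would carefully use the explicit bracket in \cite[\S4.2.1]{pollackQDS} to determine this action and find the invariants, and similarly for $V_3 \otimes J$. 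The expected outcome is that the invariant subalgebra has dimension $14$, with a $3$-dimensional piece from (part of) $\mathfrak{sl}_3 \oplus \m_J^0$, plus contributions from $V_3 \otimes J$ and its dual matching up $V_3^\vee \cong \Sym^2$ against the $\Sym^2$-isotypic part of $J$. (3) Identify this $14$-dimensional subalgebra as $\mathfrak{g}_2$: since it is a subalgebra of the split $\mathfrak{f}_4$, it is a reductive $\Q$-Lie algebra of dimension $14$; I would verify it is simple (e.g. by exhibiting an $\mathfrak{sl}_3$ or by ruling out $\mathfrak{sl}_2^{\times\text{?}}$ on dimension grounds — $14$ is not a sum of classical-type dimensions other than being $\dim \mathfrak{g}_2$ or $\dim \mathfrak{sp}_4 \oplus \cdots$; in fact the only simple Lie algebra of dimension $14$ is of type $G_2$) and that it is split over $\Q$ (it contains a split torus, visibly, from the split torus of $F_4$ intersected with it). (4) Apply Proposition \ref{Prop: milne}: the simply-connected group of type $G_2$ over $\Q$ maps to $F_4$ with this Lie algebra as image, and since $G_2$ is both simply connected and adjoint, this identifies the identity component of the centralizer with a split $G_2$. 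The main obstacle I anticipate is step (2): correctly bookkeeping the $\SO(3) \subseteq M_J^1$-action on all four summands of $\mathfrak{g}(J)$ using the explicit brackets — in particular realizing that $\SO(3)$ acts \emph{nontrivially} on the first $\mathfrak{sl}_3$ factor (it is $M_J^1$ acting, and $M_J^1$ does move that factor around), which is the whole reason the centralizer jumps from dimension $8$ to dimension $14$ — and then matching $V_3$-isotypic components between $V_3 \otimes J$ and its dual to count invariants precisely.
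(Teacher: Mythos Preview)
Your final plan has the right embedding ($\SO(3) \subseteq M_J^1$), but the key claim in step (2) is backwards: $M_J^1$, and hence your $\SO(3)$, acts \emph{trivially} on the first $\sl_3$ summand of $\g(J)$, not nontrivially. In the $\Z/3$-model the degree-zero piece $\sl_3 \oplus \m_J^0$ is a direct sum of commuting subalgebras, so the adjoint action of $\SO(3) \subseteq M_J^1$ on the first $\sl_3$ factors through $[\m_J^0,\sl_3] = 0$. With that corrected the count is immediate: you get all $8$ dimensions from the first $\sl_3$; you get $0$ from $\m_J^0$ (adjoint action of $\SO(3)$ on $\sl_3$ has no invariants); and since $M_J^1$ acts trivially on $V_3$ and via $Y \mapsto gYg^t$ on $J$, you have $J^{\SO(3)} = \Q\cdot 1_J$, so the invariants in $V_3 \otimes J$ are $V_3 \otimes 1_J$, dimension $3$, plus $3$ from the dual. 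Total $8 + 0 + 3 + 3 = 14$, and the invariant subalgebra is visibly $\sl_3 \oplus V_3 \oplus V_3^\vee = \g(\Q)$, the $\Z/3$-model of split $\g_2$. Your approach is salvageable once this is fixed; the ``whole reason the centralizer jumps from $8$ to $14$'' is precisely that you had your two $\sl_3$'s swapped.

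The paper's proof is different and more group-theoretic. It takes the Lie-algebra fact $\mathfrak{f}_4^{\SO(3)} = \g_2$ as given (essentially the computation above) and then builds mutually inverse algebraic-group maps: since $G' := C_{F_4}(\SO(3))^0$ preserves $\mathfrak{f}_4^{\SO(3)} = \g_2$ under the adjoint action and $G_2$ is defined as $\Aut(\g_2)^0$, one gets $\alpha: G' \to G_2$; conversely Proposition~\ref{Prop: milne} gives $\beta: G_2 \to F_4$, and the image centralizes $\SO(3)$ by the uniqueness clause of that proposition (for each $g \in \SO(3)$, $h \mapsto g\beta(h)g^{-1}$ is another lift, hence equals $\beta$), so $\beta$ lands in $G'$. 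Both composites $\alpha\beta$ and $\beta\alpha$ induce the identity on Lie algebras, hence are the identity. This avoids appealing to the special coincidence that $G_2$ is simultaneously simply connected and adjoint, and makes the isomorphism canonical rather than just an isomorphism of abstract group schemes.
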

\begin{proof}
	Denote by $G'$ this identity component of the centralizer of $\SO(3)$ in $F_4$.  We first observe that on the level of Lie algebras, we have $\g_2 \rightarrow \mathfrak{f}_4$, and this $\g_2$ is the exactly $\mathfrak{f}_4^{\SO(3)}$.  Consequently, the action of $G'$ on $\mathfrak{f}_4$ induces an action of $G'$ on $\g_2$, so we obtain a map $\alpha: G' \rightarrow G_2$, because $G_2$ is defined as the group of automorphisms of its Lie algebra.
	
	In the reverse direction, Proposition \ref{Prop: milne} implies the existence of a map $\beta: G_2 \rightarrow F_4$ lifting the inclusion of Lie algebras $\g_2 \rightarrow \mathfrak{f}_4$.  The image of this $G_2$ centralizes $\SO(3)$ by uniqueness of the lift: if $g \in \SO(3)$ then $g\beta(h)g^{-1}$ is another lift, so is equal to $\beta$.  Consequently, $\beta$ gives a map $G_2 \rightarrow G'$.  The map $\alpha \circ \beta: G_2 \rightarrow G_2$ induces the identity on Lie algebras by construction, so is the identity.  Similarly, the map $\beta \circ \alpha: G' \rightarrow G'$ induces the identity of Lie algebras, so is the identity.
\end{proof}

\subsection{Real Lie groups} We now work with real Lie groups.  We will explain the fact that the (identity component of the) centralizer of $\SO(3)$ in $\widetilde{F}_4$ is the group $\widetilde{G}_2$; see also \cite{HPS}.

First consider the case of the linear group $F_4$.
\begin{lemma}
	The identity component of the centralizer of $\SO(3)$ in $F_4(\R)$ is $G_2(\R)$.
\end{lemma}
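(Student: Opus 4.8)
The plan is to bootstrap from the already-proved statement over $\Q$ that the identity component $G'$ of the centralizer $Z_{F_4}(\SO(3))$ is a split form of type $G_2$, which in fact came with an explicit pair of mutually inverse maps $\alpha: G' \to G_2$ and $\beta: G_2 \to G'$ of algebraic groups over $\Q$. Taking $\R$-points, the real Lie group $G'(\R) = Z_{F_4(\R)}(\SO(3))^{\circ}$ is the real points of a split group of type $G_2$, hence is $G_2(\R)$; so the only thing to check is that the \emph{topological} identity component of the centralizer $Z_{F_4(\R)}(\SO(3))$ agrees with $G'(\R)^{\circ}$. This is where a small argument is needed, since in general $Z_{F_4(\R)}(\SO(3))$ may be larger than $G'(\R)$ (the $\R$-points of the algebraic centralizer), and $G'(\R)$ may itself be disconnected as a Lie group.

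First I would observe that the algebraic centralizer $Z = Z_{F_4}(\SO(3))$ contains $G'$ as an open (and closed) subgroup, $G'$ being its identity component; so $Z(\R) \supseteq G'(\R)$ with $Z(\R)/G'(\R)$ a quotient of $Z(\R)/G'(\R)$-coset space, and in particular $Z(\R)^{\circ} = G'(\R)^{\circ}$ as Lie groups. Thus it suffices to show $G'(\R)$ is connected. Since $G' \cong G_2$ is split semisimple and simply connected (type $G_2$ has trivial fundamental group and trivial center), I would invoke the fact, already cited in the excerpt for exactly this purpose (see the reference to \cite{thang}), that the real points of a simply connected semisimple $\Q$-group form a connected Lie group; equivalently one may use that a split simply connected group has connected real points by the Iwasawa/Bruhat decomposition, being generated by the real points of its root subgroups and a split maximal torus, all of which are connected. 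Therefore $G'(\R) = G_2(\R)$ is connected, so $Z_{F_4(\R)}(\SO(3))^{\circ} = G'(\R) = G_2(\R)$.

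The one genuine subtlety — and the step I expect to be the main obstacle — is the passage from the algebraic centralizer's $\R$-points $Z(\R)$ to the \emph{topological} centralizer $Z_{F_4(\R)}(\SO(3))$ inside the Lie group $F_4(\R)$: a priori an element of $F_4(\R)$ commuting with all of $\SO(3)(\R)$ need not lie in the $\R$-points of the scheme-theoretic centralizer $Z$ of the subgroup scheme $\SO(3)$. Here I would use that $\SO(3)$ is (Zariski-)connected and $F_4(\R)$ is Zariski-dense in $F_4$, together with the fact that $\SO(3)(\R)$ is Zariski-dense in $\SO(3)$ (it is the $\R$-points of a connected $\R$-group with a real point, e.g. it contains a maximal torus $\SO(2)$ which is Zariski dense in its complexification), so that commuting with $\SO(3)(\R)$ is the same condition as commuting with the whole subgroup scheme $\SO(3)$; hence $Z_{F_4(\R)}(\SO(3)) = Z(\R)$ exactly, and the previous paragraph finishes the proof. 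If one prefers to avoid density arguments entirely, an alternative is to differentiate: an element $g \in F_4(\R)$ centralizing $\SO(3)(\R)$ acts trivially on $\so(3) \subseteq \mathfrak{f}_4$ and hence preserves the decomposition of $\mathfrak{f}_4$ under $\SO(3)$, in particular preserves $\g_2 = \mathfrak{f}_4^{\SO(3)}$; this gives the map $\alpha$ on all of $Z_{F_4(\R)}(\SO(3))$ and one checks $\beta\circ\alpha$ is the identity on the identity component as before.
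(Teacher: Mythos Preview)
Your proof is correct, but it follows a genuinely different route from the paper's argument.

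The paper does not pass through the $\R$-points of the algebraic centralizer at all. Instead it argues directly at the level of real Lie groups: the centralizer $C$ acts on $\mathfrak{f}_4^{\SO(3)}=\g_2$, giving a map $C^\circ\to\Aut(\g_2)^\circ=G_2(\R)$; one checks $\Lie(C^\circ)=\g_2$, so this map is a covering. In the other direction, the algebraic embedding $G_2\hookrightarrow F_4$ gives $G_2(\R)\to C^\circ$, and the composite $G_2(\R)\to C^\circ\to G_2(\R)$ is the identity. Were $C^\circ\to G_2(\R)$ a nontrivial cover, this composite would split that cover over $G_2(\R)$; the paper appeals to the fact that the connected double cover of $G_2(\R)$ does \emph{not} split to force $C^\circ\cong G_2(\R)$.

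Your approach instead leverages the previous (algebraic) lemma more fully: you identify $Z_{F_4(\R)}(\SO(3)(\R))$ with $Z(\R)$ via Zariski density of $\SO(3)(\R)$ in $\SO(3)$, take identity components, and then invoke connectedness of $G_2(\R)$ (e.g.\ via \cite{thang} or the split-simply-connected argument). This is arguably more streamlined given what was already proved, and avoids the covering-group trick. The paper's route, on the other hand, is self-contained at the Lie-group level and does not need the density step or the comparison between the two notions of centralizer; its key input is the non-splitting of the double cover, which is in any case needed later for the covering-group version of the lemma.
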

\begin{proof}
	Via its action on $\mathfrak{f}_4^{\SO(3)} = \mathfrak{g}_2$, this centralizer maps to $Aut(\g_2)$.  Thus the identity component of this centralizer maps to the connected Lie group $G_2(\R)$.  Moreover, the identity component of the centralizer of $\SO(3)$ in $F_4$ has Lie algebra exactly $\mathfrak{f}_4^{\SO(3)} = \mathfrak{g}_2$ (it is easy to see that the Lie algebra is contained in this set, and it is surjective by considering the exponential map.) It thus remains to determine which Lie group of type $G_2$ this is.
	
	Because we already know $G_2 \rightarrow F_4$ as real algebraic groups, we obtain $G_2(\R) \rightarrow C_{F_4(\R)}(\SO(3)(\R))$.  Because the connected double cover of $G_2(\R)$ does not split over $G_2(\R)$, the centralizer of $\SO(3)$ must be the linear group $G_2(\R)$.
\end{proof}

Now, for the case of covering groups. First observe that $\SO(3) \subseteq \SL_3(\R) \subseteq R_J^+$, so the $\SO(3)$ splits into $\widetilde{F}_4$ by Lemma \ref{Lem: SL split}; the splitting is unique because $\SO(3)$ is equal to its derived group.
\begin{lemma}
	The identity component of the centralizer $C_{\widetilde{F}_4}(\SO(3))^0$ of $\SO(3)$ in $\widetilde{F}_4$ is identified with $\widetilde{G}_2(\R)$.
\end{lemma}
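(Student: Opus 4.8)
The plan is to realize $C_{\widetilde{F}_4}(\SO(3))^0$ as a connected double cover of $G_2(\R)$ and then invoke the uniqueness of such a cover. Write $\pi\colon \widetilde{F}_4 \to F_4(\R)$ for the covering map with kernel $\mu_2 = \{1,\zeta\}$, and let $\SO(3)\subseteq \widetilde{F}_4$ also denote the unique lift of $\SO(3)$ constructed above. The first step will be to check that $C_{\widetilde{F}_4}(\SO(3)) = \pi^{-1}\big(C_{F_4(\R)}(\SO(3))\big)$. The inclusion ``$\subseteq$'' is clear; for ``$\supseteq$'', given $\widetilde{g}\in\widetilde{F}_4$ with $g:=\pi(\widetilde g)$ centralizing $\SO(3)$, the map $s\mapsto \widetilde g\,\widetilde s\,\widetilde g^{-1}\widetilde s^{-1}$ (with $\widetilde s$ the lift of $s\in\SO(3)$) is a continuous homomorphism $\SO(3)\to\mu_2$, hence trivial since $\SO(3)$ is connected, so $\widetilde g$ centralizes $\SO(3)\subseteq\widetilde F_4$.

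Next I would reduce the lemma to a connectedness statement. By the previous lemma $C_{F_4(\R)}(\SO(3))^0 = G_2(\R)$, and since $\pi$ restricts to a degree-two covering $C_{\widetilde F_4}(\SO(3))\to C_{F_4(\R)}(\SO(3))$, we get $C_{\widetilde F_4}(\SO(3))^0\subseteq\pi^{-1}(G_2(\R))$; conversely $\pi^{-1}(G_2(\R))\subseteq C_{\widetilde F_4}(\SO(3))$ by the first step. Hence it suffices to show that $\pi^{-1}(G_2(\R))$ is \emph{connected}: then $\pi^{-1}(G_2(\R)) = C_{\widetilde F_4}(\SO(3))^0$ is a connected double cover of $G_2(\R)$, and since $\pi_1(G_2(\R))=\Z/2$ there is a unique connected double cover of $G_2(\R)$, namely $\widetilde G_2(\R)$ (the group $\G_J$ of Section~\ref{sec:dcQEG} with $J=\R$), and the identification is automatically compatible with the projections to $G_2(\R)$.

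To prove $\pi^{-1}(G_2(\R))$ is connected it is enough to produce a path in it from $1$ to $\zeta$. I would take $h_0 = \mm{0}{1}{-1}{0}\in\sl_2\subseteq\g(J)$ for $J = H_3(\R)$, as in Section~\ref{sec:dcQEG}. Because in the $\Z/2$-model the factor $\sl_2$ commutes with $\h_J^0\supseteq\m_J^0\supseteq\so(3)$ (recall $M_J^1\subseteq H_J^1$), one has $h_0\in\mathfrak{f}_4^{\SO(3)}=\g_2$, so $\gamma(t):=\exp(th_0)$ lies in $G_2(\R)$ for all $t$ and $\gamma\colon[0,2\pi]\to G_2(\R)\subseteq F_4(\R)$ is a loop based at $1$. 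Its lift $\widetilde\gamma$ to $\widetilde F_4$ with $\widetilde\gamma(0)=1$ then takes values in $\pi^{-1}(G_2(\R))$, and the computation already carried out in the proof that $\G_J$ is a connected topological group---applied now with $J = H_3(\R)$---shows that $j_{1/2}(\widetilde\gamma(t),x_0) = \diag(e^{-it/2},e^{it/2})$, whence $\widetilde\gamma(2\pi) = \zeta$. This produces the required path and completes the argument.

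The step I expect to require the most care is the claim $h_0\in\g_2 = \mathfrak{f}_4^{\SO(3)}$, i.e.\ that the embedded $\SO(3)\subseteq\SL_3\subseteq F_4$ centralizes the $\sl_2$ of the $\Z/2$-model (equivalently, the $\SU(2)$-factor of $K_{F_4}$); establishing this requires tracking how $\SO(3)$ sits relative to the decomposition $\g(J) = \sl_2\oplus\h_J^0\oplus V_2\otimes W_J$ and using that $\SO(3)$ fixes $1_J$ and hence fixes $r_0(i\cdot 1_J)$. An alternative route to this point would be to compute the induced map $\pi_1(G_2(\R))\to\pi_1(F_4(\R))$ directly through a map of maximal compact subgroups and verify it is an isomorphism, but the loop argument above is more self-contained and reuses the computation already present in Section~\ref{sec:dcQEG}.
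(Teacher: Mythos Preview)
Your proof is correct but follows a genuinely different route from the paper's. You argue abstractly via covering-space theory: first $C_{\widetilde F_4}(\SO(3))=\pi^{-1}(C_{F_4(\R)}(\SO(3)))$, then $\pi^{-1}(G_2(\R))$ is connected by reusing the $h_0$ loop from the connectedness proof of $\G_J$ (your observation that $h_0\in\sl_2$ commutes with $\so(3)\subseteq\m_J^0\subseteq\h_J^0$ in the $\Z/2$-model is correct and is the key point), and finally uniqueness of the connected double cover of $G_2(\R)$ pins down the isomorphism. The paper instead works inside the explicit model of $\widetilde F_4$ as pairs $(g,j_g)$: it first shows $K_{G_2}=G_2(\R)\cap K_{F_4}$, giving an embedding $X_{G_2}\hookrightarrow X_{F_4}$, and then builds the map $G'\to\widetilde G_2$ directly by restricting $j_{1/2}(g,\cdot)$ from $X_{F_4}$ to $X_{G_2}$; it concludes by noting that $G_2(\R)$ does not split into $\widetilde G_2$.

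Your approach is cleaner and more self-contained for the bare statement. The paper's approach buys something extra, however: it produces a \emph{specific} identification---namely, restriction of the half-integral factor of automorphy---and this explicit form is exactly what is used later when pulling back the generalized Whittaker functions $\W_{\omega,\alpha_\omega}$ from $\widetilde F_4$ to $\widetilde G_2$ (Lemma~\ref{lem:FErestrict} and Lemma~\ref{lem:notopp2}). Your argument yields the identification only up to the nontrivial deck transformation, which is harmless for the lemma as stated but would require a separate check before using it in those pullback computations.
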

\begin{proof}
	 Let $G'$ be the identity component of the centralizer of this $\SO(3)$ in $\widetilde{F}_4$.  Then $G'$ consists of elements $(g, j_{1/2}(g))$ where $j_{1/2}(g): X_{F_4} \rightarrow \GL_2(\C)$ is a continuous map whose symmetric square is $j_{lin}(g): X_{F_4} \rightarrow \GL_3(\C)$.  Every element $g\in F_4(\R)$ occurring in such a pair commutes with $\SO(3)$, so that $g\in G_2(\R)$.  We thus obtain a map $G' \rightarrow G_2(\R)$.  An argument with the exponential map and Lie algebras proves that this map is surjective, because $G_2(\R)$ is generated by the image of the exponential map.
	
	We now construct a map $G' \rightarrow \widetilde{G}_2$.  We claim that $G_2(\R)/K_{G_2} = X_{G_2}$ embeds into $F_4(\R)/K_{F_4} = X_{F_4}$; this follows from the claim that the maximal compact subgroups $K_{G_2}$ and $K_{F_4}$ satisfy $K_{G_2} = G_2(\R) \cap K_{F_4}$.  Granting this for a moment, if $(g,j_{1/2}(g))$ is in $G'$, restricting $j_{1/2}(g)$ to $X_{G_2}$ gives an element of $\widetilde{G}_2$.  We therefore obtain $G'\rightarrow \widetilde{G}_2$, which covers the identity map on $G_2(\R)$.  Because $G'$ is a connected Lie group with Lie algebra $\mathfrak{g}_2$, and $G_2(\R)$ doesn't split into $\widetilde{G}_2$, the map $G' \rightarrow \widetilde{G}_2$ is an isomorphism.
	
	To see that  $K_{G_2} = G_2(\R) \cap K_{F_4}$, first recall that $K_{G_2}$ and $K_{F_4}$ are the subgroups of $G_2(\R)$, respectively $F_4(\R)$, that also preserve the bilinear form $B_{\theta}(X,Y) := - B(X,\theta Y)$ on $\g_2$, respectively, $\mathfrak{f}_4$, where $\theta$ is the Cartan involution on these Lie algebras.  Because the Cartan involution $\theta$ on $\mathfrak{f}_4$ described in \cite{pollackQDS} restricts to the one on $\mathfrak{g}_2$, it is clear that $G_2(\R) \cap K_{F_4}$ is contained in $K_{G_2}$.  For the reverse inclusion, one notes that $K_{G_2}$ can be generated by the exponentials of elements of $\mathfrak{f}_4^{\SO(3),\theta = 1} \subseteq \mathfrak{f}_4^{\theta=1}$, which are in $K_{F_4}$.
\end{proof}

\begin{remark}
	The fact that the Cartan involution on $\mathfrak{f}_4$ restricts to the one on $\mathfrak{g}_2$ plays a useful role in verifying that the pullback of a modular form on $\widetilde{F}_4$ to $\widetilde{G}_2(\R)$ remains a modular form.
\end{remark}

\chapter{Modular forms} In this chapter, we define  quaternionic modular forms of half-integral weight, generalizing the integral weight theory of \cite{pollackQDS} and prove the main results about their Fourier expansions and Fourier coefficients. We then assert the existence of a certain modular form $\Theta_{F_4}$ of weight $\frac{1}{2}$ on $\widetilde{F}_4(\R)$, the proof of which we defer to Chapter \ref{Chapt: minimal aut}. Finally, we consider the pull back of $\Theta_{F_4}$ to $\widetilde{G}_2(\R)$, proving Theorems \ref{thm:introGWF} and \ref{thm:introMain2} of the introduction. Along the way, we also do arithmetic invariant theory related to cubic rings and their inverse differents.

\section{Quaternionic modular forms}\label{Sec: quatMF} We begin by studying quaternionic modular forms of half-integral weight.  Suppose $\ell \geq 1$ is an odd integer and recall that $\mathbf{V}_{\ell/2} := Sym^{\ell}(\mathbb{V}_2)$.  We consider $\mathbf{V}_{\ell/2}$ as a representation of $\widetilde{K}_J$ via the map $j_{1/2}(\cdot, x_0): \widetilde{K}_J \rightarrow \GL_2(\mathbb{V}_2)$.  A modular form on $G_J(\R)$ (or, more accurately, $\G_J$) of weight $\ell/2$ will be a certain $\mathbf{V}_{\ell/2}$-valued automorphic function.

To define the appropriate sorts of functions on $\G_J$ that we will be considering, we require a certain differential operator. Let $\g(J) \otimes \C = \k \oplus \p$ be the Cartan decomposition of the Lie algebra $\g(J) \otimes \C$, which we identify with the complexified Lie algebra of $\G_J$. In \cite[section 5]{pollackQDS}, an identification is given between $\p$ and $\mathbb{V}_2 \otimes W_J$ over $\C$. Let $\{X_{\alpha}\}$ be a basis of $\p$ and $\{X_{\alpha}^\vee\}$ the dual basis of the dual space $\p^\vee$.  Suppose now that $\varphi$ is a smooth $\mathbf{V}_{\ell/2}$-valued function on $\G_J$ satisfying $\varphi(gk) = k^{-1} \cdot \varphi(g)$ for all $g \in \G_J$ and $k \in \widetilde{K}_J$.  For such a function, we define $D_{\ell/2}' \varphi(g) = \sum_{\alpha}{X_{\alpha}\varphi(g) \otimes X_{\alpha}^\vee}$, which is valued in
\[\mathbf{V}_{\ell/2} \otimes \p^\vee \simeq \Sym^{\ell-1}(\mathbb{V}_2) \otimes W_J \oplus \Sym^{\ell+1}(\mathbb{V}_2) \otimes W_J.\]
Let $pr: \mathbf{V}_{\ell/2} \otimes \p^\vee \rightarrow \Sym^{\ell-1}(\mathbb{V}_2) \otimes W_J$ be the $\widetilde{K}_J$-equivariant projection and define the operator $D_{\ell/2} = pr \circ D_{\ell/2}'$.

Suppose $\Gamma \subseteq G_J$ is a congruence subgroup such that there exists a splitting $s_\Gamma: \Gamma \rightarrow \G_J$.  We fix this splitting, which need not be unique.

\begin{definition}\label{def:QMF} Suppose $\ell \geq 1$ is an odd integer.  A quaternionic modular form on $\G_J$ of weight $\ell/2$ and level $(\Gamma,s_\Gamma)$ is a smooth function $\varphi: s_{\Gamma}(\Gamma)\backslash \G_J \rightarrow \mathbf{V}_{\ell/2}$ of moderate growth satisfying
	\begin{enumerate}
		\item\label{equivar} $\varphi(gk) = k^{-1} \cdot \varphi(g)$ for all $g \in \G_J$ and $k \in \widetilde{K}_J$
		\item $D_{\ell/2}\varphi \equiv 0$.
	\end{enumerate}
\end{definition}

Our first main result will be to show that such a definition of quaterionic modular form of half-integral weight has a robust theory of Fourier coefficients, generalizing the integral weight theory of \cite{pollackQDS} and its antecedents.

\subsubsection{Adelic formulation}\label{Sec: adelic perspective}
Suppose that $\mathbf{G}_J$ is a reductive group over $\Q$ such that $\mathbf{G}_J(\R)$ is an adjoint quaternionic exceptional group. Following our conventions from Section \ref{Sec: general covers}, we further assume we are given a metaplectic double cover $\widetilde{\mathbf{G}}_J(\A)$ of $\mathbf{G}_J(\A)$ coming from the appropriate Brylinski--Deligne extension. We thus have a short exact sequence of locally-compact topological groups
\[
1\lra \mu_2(\Q)\lra \widetilde{\mathbf{G}}_J(\A)\lra \mathbf{G}_J(\A)\lra 1,
\]
which splits canonically over $G_J(\Q)$; let $s_{\Q}$ denote this splitting. There is a decomposition $\widetilde{\mathbf{G}}_J(\A) = \prod_p\widetilde{\mathbf{G}}_J(\Q_p)/\mu_2^{+}$. Our convention implies that $\widetilde{\mathbf{G}}_J(\R)\cong \G_J$.

Then for all but finitely many odd primes $p$, $\mathbf{G}_J$ is unramified and contains a hyperspecial subgroup $K_p:=\mathbf{G}_J(\Z_p)$ over which the cover $\widetilde{\mathbf{G}}_J(\Q_p)\to \mathbf{G}_J(\Q_p)$ splits \cite[Section 7]{Weissman}. Let $T$ be a finite number of primes containing $2$ such that for $p\notin T$, the above statement holds. Let $K^T\subset \mathbf{G}_J(\A_T):=\prod_{p\in T}\mathbf{G}_J(\Q_p)$ be a given compact subgroup equipped with a splitting
\[
\begin{tikzcd}
&\widetilde{\mathbf{G}}_J(\A_T)\ar[d]\\
K^T\ar[ur,"s^T"]\ar[r]&\mathbf{G}_J(\A_T),
\end{tikzcd}
\]
where $\widetilde{\mathbf{G}}_J(\A_T):=\prod_{p\in T}\widetilde{\mathbf{G}}_J(\Q_p)/\mu^+_2$.

Setting $K_T: = K^T\prod_{p\notin T} K_p$, we have a splitting $s_T: K_T\to \widetilde{\mathbf{G}}_J(\A_f)$; let $K^\ast_T$ denote its image.
\begin{definition}\label{def:QMF adelic} Suppose $\ell \geq 1$ is an odd integer.  An adelic quaternionic modular form on $\G_J(\A)$ of weight $\ell/2$ and level $(K_T,s_T)$ is a smooth function
	\[
	\varphi: G_J(\Q)\backslash \widetilde{\mathbf{G}}_J(\A) \rightarrow \mathbf{V}_{\ell/2}
	\]
	of moderate growth satisfying
	\begin{enumerate}
		\item $\varphi(gk_\infty) = k_\infty^{-1} \cdot \varphi(g)$ for all $g \in \widetilde{\mathbf{G}}_J(\A)$ and $k \in \widetilde{K}_\infty$,
		\item $\varphi(gk) = \varphi(g)$ for all $g \in \widetilde{\mathbf{G}}_J(\A)$ and $k \in {K}_T^\ast,$
		\item $D_{\ell/2}\varphi \equiv 0$, where $D_{\ell/2}$ is the Schmid operator from Definition \ref{def:QMF}.
	\end{enumerate}
\end{definition}

The compatibility of the definitions may be seen as follows. Set $\Gamma(K_T) := \mathbf{G}_J(\Q) \cap K_T$. Using the inclusion $\Gamma(K_T) \subset \mathbf{G}_J(\Q)\subset G_J(\R)$, we obtain a splitting $s_{\Gamma}: \Gamma(K_T) \rightarrow \G_J$ via Lemma \ref{Lem: unique splitting}.  Applying this lemma to $A= \widetilde{\mathbf{G}}_J(\A_f)$, $B=\G_J$, and $\Ga = \Ga(K_T)$, we see that the pullback of $\varphi$ to $\G_J \subset \widetilde{\mathbf{G}}_J(\A)$ gives a quaternionic modular form of level $(\Ga(K_T),s_\Gamma)$ as in Definition \ref{def:QMF}.

\section{Generalized Whittaker functions}
 We now investigate the so-called generalized Whittaker functions associated to quaternionic modular forms.  In other words, we reproduce the main result of \cite{pollackQDS} except now in the half-integral weight case.  Because almost all of the proof in \cite{pollackQDS} carries over, we are quite brief.

We begin with the following crucial proposition. Recall that an $\omega = (a,b,c,d) \in W_J(\R)$ is said to be positive semi-definite if the function $p_{\omega}(Z) = aN(Z) + (b,Z^\#) + (c,Z) + d$ is never $0$ on the upper-half space $\mathcal{H}_J=\{X+iY: X,Y \in J, Y > 0\}$.
\begin{proposition} Consider the function $g \mapsto \langle \omega, g r_0(i) \rangle$ on $H_J(\R)^{+}$, and suppose $\omega$ is positive semi-definite.  Then there exists a smooth genuine function $\alpha_\omega(g): \widetilde{H_J(\R)^+} \rightarrow \C$ satisfying $\alpha_\omega(g)^2 = \langle \omega, g r_0(i)\rangle$.
\end{proposition}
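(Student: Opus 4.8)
The plan is to factor $\langle \omega, g\,r_0(i)\rangle$ as $j(g,i)\cdot P_\omega(g\cdot i)$, where $j(g,i)$ is the classical factor of automorphy and $P_\omega$ is a function pulled back from the contractible tube domain $\mathcal{H}_J$; to extract a square root of each factor separately --- the $P_\omega$-factor by contractibility, the $j$-factor by Lemma~\ref{lem:j12}, which is the only place the double cover is actually needed --- and to multiply them.

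First I would rewrite the function using the action of $H_J^+$ on $\mathcal{H}_J$ recalled in Section~\ref{Sec: quatgroups}. Writing $i$ for $i\cdot 1_J\in\mathcal{H}_J$ as in the statement, one has $g\cdot r_0(Z)=j(g,Z)\,r_0(g\cdot Z)$ for $g\in H_J^+$, so taking $Z=i$ gives
\[
\langle \omega, g\,r_0(i)\rangle \;=\; j(g,i)\,P_\omega(g\cdot i),\qquad\text{where}\qquad P_\omega(Z):=\langle \omega, r_0(Z)\rangle .
\]
Since $r_0(Z)=(1,-Z,Z^\#,-N_J(Z))$ is polynomial in $Z$ and the symplectic pairing is bilinear, $P_\omega$ is a holomorphic function on $\mathcal{H}_J$, and expanding the symplectic form identifies $P_\omega(Z)$, up to the sign conventions in force, with the cubic $p_\omega(Z)=aN(Z)+(b,Z^\#)+(c,Z)+d$. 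In particular, the hypothesis that $\omega$ is positive semi-definite says exactly that $P_\omega$ is nowhere zero on $\mathcal{H}_J$ (and, as $j(g,i)\in\C^\times$, that $g\mapsto\langle\omega,g\,r_0(i)\rangle$ is nowhere zero on $H_J^+$).

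Next I would take a square root of each factor. The domain $\mathcal{H}_J=\{X+iY:X,Y\in J,\ Y>0\}$ is the product of the vector space $J$ with an open convex cone in $J$, hence contractible; as $P_\omega$ is holomorphic and nowhere vanishing there, it admits a holomorphic logarithm, so there is a holomorphic $Q_\omega:\mathcal{H}_J\to\C^\times$ with $Q_\omega(Z)^2=P_\omega(Z)$. For the factor $j(g,i)$ I would invoke Lemma~\ref{lem:j12}: the $(1,1)$-entry of $j_{1/2}(\cdot,x_0):\widetilde{H_J^+}\to\GL_2(\C)$ is a smooth function $\beta(g')$ on the cover with $\beta(g')^2=j(g,i)$, where $g$ is the image of $g'$ in $H_J^+$ and $x_0\in X_J$ is the basepoint corresponding to $i\cdot 1_J$; moreover $\beta$ is genuine, since $j_{1/2}(\zeta g',x_0)=-j_{1/2}(g',x_0)$ for the nontrivial central element $\zeta$. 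I would then set
\[
\alpha_\omega(g'):=\beta(g')\cdot Q_\omega(g\cdot i)\qquad(g'\in\widetilde{H_J^+}\text{ mapping to }g).
\]
This is smooth because $g\mapsto g\cdot i$ is a smooth map $H_J^+\to\mathcal{H}_J$ and $\beta$ is smooth; it is genuine because $\beta$ is genuine and $g'\mapsto Q_\omega(g\cdot i)$ factors through $H_J^+$; and $\alpha_\omega(g')^2=j(g,i)\,P_\omega(g\cdot i)=\langle\omega,g\,r_0(i)\rangle$, as required.

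The step I expect to be the main obstacle is not any analytic input but the convention bookkeeping in the second paragraph: one must actually expand the symplectic form on $W_J$ against $r_0(Z)=(1,-Z,Z^\#,-N_J(Z))$ and confirm that $P_\omega(Z)=\langle\omega,r_0(Z)\rangle$ coincides --- up to the signs on the coordinates of $\omega$ --- with $p_\omega(Z)$, or at least has the same zero set on $\mathcal{H}_J$, so that positive semi-definiteness of $\omega$ genuinely yields $P_\omega\neq 0$ on $\mathcal{H}_J$ rather than for some sign variant such as $\langle\omega,r_0(-Z)\rangle$. Once that is pinned down the rest is formal: contractibility of $\mathcal{H}_J$ supplies the holomorphic square root $Q_\omega$ of $P_\omega$, and Lemma~\ref{lem:j12} supplies the genuine square root $\beta$ of $j(g,i)$, which is precisely the obstruction that forces one onto the double cover in the first place.
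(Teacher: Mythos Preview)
Your proof is correct and follows essentially the same route as the paper: factor $\langle\omega,g\,r_0(i)\rangle$ as $j(g,i)$ times a nowhere-vanishing function on the contractible domain $\mathcal{H}_J$, take a square root of the latter by contractibility and of the former via Lemma~\ref{lem:j12}, and multiply. The sign issue you flag is exactly what the paper handles by writing $\langle\omega,g\,r_0(i)\rangle=-j(g,i)p_\omega(g\cdot i)$ and inserting an explicit $\sqrt{-1}$; your $P_\omega=-p_\omega$ absorbs this sign into $Q_\omega$, which is harmless.
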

\begin{proof} Recall from \cite{pollackQDS} that $\langle \omega, gr_0(i)\rangle = - j(g,i) p_{\omega}(g\cdot i)$.  Because $\mathcal{H}_J$ is contractible and $p_{\omega}(Z)$ is never $0$ on $\mathcal{H}_J$, $p_{\omega}(Z)$ has a smooth square root on $\mathcal{H}_J$. This follows from covering space theory: the map $\C^\times \rightarrow \C^\times$ via $z \mapsto z^2$ is a cover, so the map $Z \mapsto p_{\omega}(Z)$ from $\mathcal{H}_J \rightarrow \C^\times$ lifts to the first copy of $\C^\times$.  Let us pick, arbitrarily, one of the two square roots and call it $p_{\omega}(Z)^{1/2}$.
	
	Now, the function $g \mapsto j(g,i)$ on $H_J(\R)^+$ has a genuine square root  $j_{1/2}$ on $\widetilde{H_J(\R)^+}$; such a function was constructed in the Lemma \ref{lem:j12}.  Thus $\alpha_\omega(g) = \sqrt{-1} j_{1/2}(g,x_0) p_\omega(gi)^{1/2}$ is the desired function.\end{proof}

We can now state the main theorem of this section.  To do so, we make some notations. First, let $n = \frac{\ell}{2} \in \frac{1}{2} + \Z_{\geq 0}$. Suppose $\omega \in W_J(\R)$ is positive semi-definite.  Let $\alpha_\omega(g)$ be one of the two square roots of $\langle \omega, g r_0(i)\rangle$ to $\widetilde{H_J(\R)^+}.$  For $g \in \widetilde{H_J(\R)^+}$, define

\begin{equation}\label{eqn: whittaker function}
\W_{\omega,\alpha_\omega}(g) = \nu(g)^{n+1}\sum_{-n\leq v\leq n}{\left(\frac{|\alpha_\omega(g)|}{\alpha_\omega(g)}\right)^{2v} K_v(|\alpha_\omega(g)|^2) \frac{x^{n+v}y^{n-v}}{(n+v)!(n-v)!}}.
\end{equation}
Here the sum is over half-integers $v \in \frac{1}{2} + \Z$ with $-n \leq v \leq n$. Note that
\begin{enumerate}
	\item $n,v$ are half-integers, i.e., in $\frac{1}{2} + \Z$, so that $n+v$ and $n-v$ are integers;
	\item $\nu(g) > 0$ so $\nu(g)^{n+1}$ makes sense;
	\item $2v$ is an odd integer;
	\item one has $\W_{\omega,-\alpha_\omega}(g) = -\W_{\omega,\alpha_\omega}(g)$;
	\item for $\ep\in \mu_2(\Q)$, one has $\W_{\omega,\alpha_\omega}(\ep g) = \ep\W_{\omega,\alpha_\omega}(g)$.
\end{enumerate}

Let $N_J$ be the unipotent radical of the Heisenberg parabolic of $G_J$.  This subgroup of $G_J(\R)$ splits uniquely into $\G_J$ so we also write $N_J(\R)$ for its image in $\G_J$.  One can extend $\W_{\omega,\alpha_\omega}$ to a function on all of $\G_J$ as
\[\W_{\omega,\alpha_\omega}(nmk) = e^{i \langle \omega, \overline{n} \rangle} k^{-1} \W_{\omega,\alpha_\omega}(m)\]
for $n \in N_J(\R)$, $m \in \widetilde{H_J(\R)^{+}}$, and $k \in \widetilde{K}_J$.  One checks immediately that this is well-defined.

Recall that a generalized Whittaker function of weight $n$ for $\omega$ is a function $\phi: \G_J \rightarrow Sym^{2n}(\mathbb{V}_2)$ satisfying
\begin{enumerate}
	\item $\phi(gk) = k^{-1} \phi(g)$ for all $g \in \G_J$ and $k \in \widetilde{K}_J$;
	\item $\phi(ug) = e^{i \langle \omega, \overline{u}\rangle} \phi(g)$ for all $g \in \G_J$ and $u \in N_J(\R)$. Here $\overline{u}$ is the image of $u \in W_J(\R)$;
	\item $D_{n} \phi \equiv 0$.
\end{enumerate}

\begin{theorem}\label{thm:FE} Suppose $\omega \in W_J(\R)$ is non-zero and $n \in \frac{1}{2} +\Z$ is positive.  Suppose moreover that $\phi: \G_J \rightarrow Sym^{2n}(\mathbb{V}_2)$ is a moderate growth generalized Whittaker function of weight $n$ for $\omega$.
	\begin{enumerate}
		\item If $\omega$ is not positive semi-definite, then $\phi \equiv 0$.
		\item If $\omega$ is positive semi-definite, then $\phi$ is proportional to $\W_{\omega, \alpha_\omega}(g)$.
	\end{enumerate}
\end{theorem}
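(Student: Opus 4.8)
The plan is to run the argument of the main Fourier-expansion theorem of \cite{pollackQDS} in the covering setting, using that the archimedean Lie theory underlying the problem --- the complexified Lie algebra $\g(J)\otimes\C$, its Cartan decomposition $\g(J)\otimes\C=\k\oplus\p$ with $\p\simeq\mathbb{V}_2\otimes W_J$, and the operator $D_n$ --- is literally unchanged upon passing to the double cover $\G_J$. The only genuinely new features are that functions now take values in the genuine $\widetilde{K}_J$-representation $\Sym^{2n}(\mathbb{V}_2)$ with $2n$ odd, and that the Bessel index $v$ runs over half-integers rather than integers; the genuine square root needed to write the answer is supplied by the proposition preceding the theorem. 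So the strategy is to indicate the few places where the bookkeeping differs and otherwise to invoke \cite{pollackQDS}.

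First I would reduce to an analysis on $\widetilde{H_J(\R)^+}$. Since $N_J(\R)$ splits uniquely into $\G_J$, $R_J^+\supseteq H_J^1(\R)$ splits into $\G_J$, and $\widetilde{K}_J$ is the full preimage of $K_J$, the Iwasawa decomposition of $G_J(\R)$ relative to the Heisenberg parabolic lifts to $\G_J$; hence a generalized Whittaker function $\phi$ of weight $n$ for $\omega$ is determined by $\phi|_{\widetilde{H_J(\R)^+}}$. Writing $D_n\phi=pr\circ D_n'\phi$ with $D_n'\phi=\sum_\alpha X_\alpha\phi\otimes X_\alpha^\vee$ and using the identification $\p\simeq\mathbb{V}_2\otimes W_J$, the left-equivariance of $\phi$ under $N_J(\R)$ by $e^{i\langle\omega,\overline{u}\rangle}$ turns the ``$W_J$-directions'' of $D_n'$ into multiplication operators recording the entries of $\omega$, and the right $\widetilde{K}_H$-equivariance (with $\widetilde{K}_H$ the preimage of $K_H=H_J^1(\R)\cap K_J$) together with the remaining components of $D_n\phi\equiv0$ localize everything to a small ODE system along a one-parameter subgroup --- exactly the reduction of \cite{pollackQDS} with $\ell$ replaced by $2n$.

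Next I would solve that system. After elimination, each weight component $x^{n+v}y^{n-v}$ of $\phi$ satisfies the modified Bessel equation with argument the positive function $|\alpha_\omega(g)|^2=|\langle\omega,g\,r_0(i)\rangle|$ in the positive semi-definite case. Moderate growth excludes the exponentially increasing solution and singles out the $K$-Bessel function $K_v$; reassembling the components with the genuine weights $(|\alpha_\omega(g)|/\alpha_\omega(g))^{2v}$ (well defined since $2v$ is an odd integer and $\alpha_\omega$ is genuine) and the factor $\nu(g)^{n+1}$ reproduces formula \eqref{eqn: whittaker function}, giving (2). For (1), when $\omega$ is not positive semi-definite the function $p_\omega$ vanishes somewhere on $\mathcal{H}_J$, and restricting the same reduced equations to a suitable one-parameter subgroup forces an ODE whose fundamental system is oscillatory, so the only moderate-growth solution is $0$; this is the same indefiniteness argument as in \cite{pollackQDS}.

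I expect the genuinely new bookkeeping --- and the main place where care is needed --- to be the first step: verifying that, on $\G_J$, a generalized Whittaker function is governed by precisely the reduced ODE system of \cite{pollackQDS}, i.e.\ that the genuineness under the central $\mu_2$ is entirely absorbed into the fixed genuine square root $j_{1/2}(\cdot,x_0)$ defining the $\widetilde{K}_J$-action on $\mathbf{V}_n=\Sym^{2n}(\mathbb{V}_2)$, and that the half-integer values of $v$ thread consistently through the Bessel reduction so that \eqref{eqn: whittaker function} is the unique moderate-growth solution up to scalar. Everything downstream of this is formally identical to the integral-weight case, so I would simply flag those differences rather than reproduce the computation.
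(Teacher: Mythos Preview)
Your proposal is correct and follows essentially the same approach as the paper. Both you and the paper reduce via the Iwasawa decomposition (the paper uses the splitting of $R_J^+$ into $\G_J$ to view $\phi|_{R_J^+}$ as living on the linear group), invoke the explicit differential equations from \cite[Corollary 7.6.1]{pollackQDS}, solve the resulting Bessel system using moderate growth to isolate $K_v$, and then verify directly that $\W_{\omega,\alpha_\omega}$ is annihilated by $D_n$; the paper is slightly more explicit than you are about the key technical point that $|\alpha_\omega(g)|^{-1}$ and the auxiliary function $Y_{1/2}'(m)$ descend to the linear group, which is precisely why the manipulations of \cite{pollackQDS} survive the passage to half-integral weight.
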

\begin{proof} The work is nearly identical to \cite{pollackQDS}, so we only sketch the proof.
		
Let us first review the definition of the right regular action of the Lie algebra $\g(J)$ on smooth functions $\phi$ on $\G_J$.  Thus suppose $X \in \g(J)$.  Then for $t \in \R$ sufficiently small, $\exp(tX)$ is an element of $G_J(\R)$ near the identity.  Because $\G_J\rightarrow G_J(\R)$ is a covering space, there is a unique lift, call it $\exp'(tX)$, of $\exp(tX)$ to $\G_J$ that is near the identity of $\G_J$.  Then $(X \phi)(g) := \frac{d}{dt} \phi(g \exp'(tX))|_{t=0}$.  It is a fact that this definition gives a linear action of the real Lie algebra $\g(J)$ on smooth functions on $\G_J$.  One obtains an action of $\g(J) \otimes \C$ by complexification.

Let now $\phi = \sum_{v}{\phi_v \frac{x^{n+v} y^{n-v}}{(n+v)!(n-v)!}}$ be a generalized Whittaker function.   (To make notation consistent with \cite{pollackQDS}, $\lambda = \omega$.)  By the Iwawasa decomposition $\G_J = R_J^+ \widetilde{K}_J$, and because $\phi$ is $\widetilde{K}_J$-equivariant by definition, to determine $\phi$ it suffices to determine $\phi$ on $R_J^+$.

Now, recall that $R_J^+$ splits into $\G_J$.  Thus $\phi|_{R_J^{+}}$ can be thought of as function on the linear group $G_J(\R)$, so we may apply \cite[Corollary 7.6.1]{pollackQDS} to obtain differential equations satisfied by $\phi$: Indeed, the proof of this corollary is to write a basis of $X \in \p$ as sums $X = X_1 + X_2$, with $X_1 \in Lie(R_J^+) \otimes \C$ and $X_2 \in Lie(K_J) \otimes \C= Lie(\widetilde{K}_J) \otimes \C$, and use the given action of $Lie(K_J) = Lie(\widetilde{K}_J)$ on $\phi$ to write the differential equation $D_{n}\phi \equiv 0$ in explicit coordinates on $R_J^+$.  In \cite[Corollary 7.6.1]{pollackQDS} recall that:
	\begin{itemize}
		\item $w \in \R^\times_{>0}$ is considered as an element in the center of the group $H_J(\R)^{+}$ which acts on $E_{13}$ as the real number $w^2$ (as opposed to $w^{-2}$).  The element $w$ is in $R_J^+$ so the group of such $w$'s splits into $\G_J$.
		\item $\widetilde{Z} = M r_0(i)$ and $r_0(Z) = (1,-Z,Z^\#,-n(Z))$.
		\item for $E \in J$, $D_{Z(E)}$ denotes the action of the Lie algebra element $\frac{1}{2}M(\Phi_{1,E}) - i n_{L}(E)$, where $\Phi_{1,E}$ is the map $J \rightarrow J$ given by $Z \mapsto \{E,Z\}$ (see \cite[Subsection 3.3.2, equation (7)]{pollackQDS}; see also \cite[Subsection 3.3, equation (3)]{pollackQDS}) and $M(\Phi_{1,E})$ is defined in Subsection 3.4.1 at the top of page 1229 of \cite{pollackQDS}.
		\item $V(E)$ is defined in Subsection 5.1, equation (19) of \cite{pollackQDS}.
	\end{itemize}
	
Now, one solves these equations on a connected open subset $U$ of $\mathcal{H}_J$ where $p_\omega(Z) \neq 0$. To do this, one first argues as in section 8.1 of \cite{pollackQDS} that $\phi_v(w,X,Y)$ (see section 8.2, page 1257) is of the form $w^{2n+2}Y_v(m) K_v(|\langle \omega, \widetilde{Z} \rangle|)$ for some function $Y_v(m)$ that does not depend on $w$. Indeed, the differential equations
\begin{enumerate}
	\item $(w \partial_w -2(n+1)+k)\phi_k = - \langle \omega, \widetilde{Z}^* \rangle \phi_{k-1}$
	\item $(w \partial_w -2(n+1)-k)\phi_k = - \langle \omega, \widetilde{Z} \rangle \phi_{k+1}$
\end{enumerate}
from \cite[Corollary 7.6.1]{pollackQDS}, taken together, imply that $w^{-2n-2}\phi_v(w,X,Y)$ satisfies Bessel's differential equation.  The fact that this function must be of moderate growth as $w \rightarrow \infty$ then implies that, as a function of $w$, it is proportional to $K_v(|\langle \omega, \widetilde{Z} \rangle|)$.

To understand the functions $Y_v(m)= Y_v(X,Y)$, one argues as on the top of page 1257 to obtain that $\phi(w,X,Y) = \phi(w,m)$ is of the form $Y_{1/2}'(m) \W_{\omega,\alpha_\omega}(g)$ for some function $Y_{1/2}'(m)$ that does not depend on $w$. In other words, one uses the differential equations in $w$ above again to relate $Y_{v}(m)$ to $Y_{v+1}(m)$ for each $v$.  Note that the function $Y_{1/2}'(m)$ descends to the linear group.
	
Now one proves that the $\W_{\omega,\alpha_\omega}$ are annihilated by the operator $D_{n}$, exactly as in the proof of Proposition 8.25 of \cite{pollackQDS}. Note that in this proof, the term $|\alpha_\omega(g)| \alpha_\omega(g)^{-1}$ is rewritten as a product of $|\alpha_\omega(g)|^{-1}$ and a term that is annihilated by $D_{Z(E)}$. Moreover, the absolute value $|\alpha_\omega(g)|^{-1}$ descends to the linear group. This is why the manipulations of \cite{pollackQDS} carry over to this half-integral weight case.  In any event, it follows from this that $D_{Z(E)}(Y_{1/2}'(m)) = 0$ and $D_{Z(E)^*}(Y_{1/2}'(m)) = 0$, from which one concludes $Y_{1/2}'(m)$ is constant.
	
Thus the $\W_{\omega,\alpha_\omega}$ are annihilated by the operator $D_{n}$, and on an open subset where $p_\omega(Z) \neq 0$, any moderate growth solution agrees with the $\W_{\omega,\alpha_\omega}$ up to constant multiple.  The rest of the argument now follows as in the proof of Proposition 8.2.4 of \cite{pollackQDS}.
\end{proof}

From Theorem \ref{thm:FE} follows immediately the definition of Fourier coefficients of modular forms of weight $\frac{\ell}{2}$: let $Z = [N_J,N_J]$ denote the one-dimensional center of $N_J$. Let $\varphi$ be a modular form for $\G_J(\A)$ of weight $\frac{\ell}{2}$ and level $(K_T,s_T)$ as in Definition \ref{def:QMF adelic}.  Set $\varphi_{Z}(g) = \int_{Z(\Q)\backslash Z(\A)}{\varphi(zg)\,dz}$ and
\[\varphi_{N}(g) = \int_{N_J(\Q) \backslash N_J(\A)}{\varphi(ng)\,dn}.\]
Then we have the following generalization of \cite[Corollary 1.2.3]{pollackQDS}.
\begin{corollary}\label{Cor: fourier}
	For each positive semi-definite for $\omega\in W_J(\Q)$, there exist a constant $a_\varphi(\omega)$, well-defined up to multiplication by $\pm 1$, such that for $g \in \G_J$,
	\[
	\varphi_Z(g) = \varphi_{N}(g) + \sum_{\omega \in W_J(\Q)}{a_\varphi(\omega) \W_{2\pi\omega}(g)},
	\]
	where the sum runs over over positive semi-definite vectors. The function $\W_{2\pi\omega}(g)$ is one element of the set $\{\W_{2\pi\omega,\alpha_{2\pi\omega}},-\W_{2\pi\omega,\alpha_{2\pi\omega}}\}$.
\end{corollary}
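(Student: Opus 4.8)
The plan is to deduce this formally from Theorem \ref{thm:FE}, just as \cite[Corollary 1.2.3]{pollackQDS} is deduced from its integral-weight predecessor. First I would pass to the archimedean fibre: as explained after Definition \ref{def:QMF adelic}, the restriction of $\varphi$ to $\G_J \subset \widetilde{\mathbf{G}}_J(\A)$ is a quaternionic modular form of weight $\ell/2$ and level $(\Gamma(K_T), s_\Gamma)$ in the sense of Definition \ref{def:QMF}. Since $Z = [N_J,N_J]$ is central in $N_J$ while $N_J/Z \cong W_J$ is abelian, I would form the adelic constant term $\varphi_Z$ and Fourier expand it along the compact abelian quotient $N_J(\Q)\backslash N_J(\A)/Z(\A)$; its characters are parametrized by $\omega \in W_J(\Q)$ via the symplectic pairing and the standard additive character $\psi$ of $\A/\Q$ (with $\psi_\infty(x) = e^{2\pi i x}$). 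Setting $\varphi_\omega(g) = \int_{N_J(\Q)\backslash N_J(\A)}\psi(-\langle\omega,\overline{n}\rangle)\,\varphi(ng)\,dn$, one obtains $\varphi_Z = \varphi_N + \sum_{\omega\neq 0}\varphi_\omega$ with $\varphi_N = \varphi_0$, where the sum ranges over the lattice of $\omega$ whose characters are trivial on the finite-adelic level $K_T$; for all other $\omega$ one sets $a_\varphi(\omega) = 0$.

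Next I would verify that for each nonzero $\omega$ the restriction $\varphi_\omega|_{\G_J}$ is a moderate-growth generalized Whittaker function of weight $n = \ell/2$ for the character $u \mapsto e^{2\pi i\langle\omega,\overline{u}\rangle}$ of $N_J(\R)$ — that is, for $2\pi\omega$ in the notation preceding Theorem \ref{thm:FE}. Moderate growth is inherited from $\varphi$ (constant terms and their characterwise averages of moderate-growth automorphic forms remain of moderate growth); the $\widetilde{K}_J$-equivariance $\varphi_\omega(gk) = k^{-1}\cdot\varphi_\omega(g)$ is inherited because left translation by $N_J$ commutes with the right regular action of $\widetilde{K}_J$; the relation $\varphi_\omega(ug) = e^{2\pi i\langle\omega,\overline{u}\rangle}\varphi_\omega(g)$ for $u \in N_J(\R)$ holds by construction; and $D_{\ell/2}\varphi_\omega \equiv 0$ because $D_{\ell/2}$ is built from the right regular action of $\g(J)$, hence commutes with left translation by $N_J$, so averaging $D_{\ell/2}\varphi \equiv 0$ against $\psi(-\langle\omega,\overline{n}\rangle)$ over $N_J/Z$ yields $D_{\ell/2}\varphi_\omega \equiv 0$.

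Then I would apply Theorem \ref{thm:FE} to each $\varphi_\omega$, using that $\omega$ and $2\pi\omega$ are simultaneously positive semi-definite since they differ by a positive scalar. If $2\pi\omega$ is not positive semi-definite then $\varphi_\omega \equiv 0$; if it is, then $\varphi_\omega$ is a scalar multiple of $\W_{2\pi\omega,\alpha_{2\pi\omega}}$, and I would define $a_\varphi(\omega)$ to be that scalar. Because replacing the chosen square root $\alpha_{2\pi\omega}$ of $\langle 2\pi\omega, g\,r_0(i)\rangle$ by $-\alpha_{2\pi\omega}$ negates $\W_{2\pi\omega,\alpha_{2\pi\omega}}$ (property (4) in the list preceding Theorem \ref{thm:FE}), the scalar $a_\varphi(\omega)$ is well-defined only up to sign; equivalently, $a_\varphi(\omega) \in \C/\{\pm 1\}$ is canonical, while $\W_{2\pi\omega}(g)$ denotes either element of $\{\W_{2\pi\omega,\alpha_{2\pi\omega}}(g),\,-\W_{2\pi\omega,\alpha_{2\pi\omega}}(g)\}$. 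Substituting back into $\varphi_Z = \varphi_N + \sum_{\omega\neq 0}\varphi_\omega$ gives the stated expansion.

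I do not anticipate a genuine obstacle: the substantive content is Theorem \ref{thm:FE}, and the rest is bookkeeping. Convergence of the series (uniformly on compacta) follows from the standard argument for automorphic Fourier series — moderate growth of $\varphi_Z$ together with the rapid decay of the Bessel functions $K_v$ in \eqref{eqn: whittaker function} — exactly as in \cite{pollackQDS}. The only points that require attention beyond the integral-weight case are the factor $2\pi$ relating the archimedean Fourier characters to the functions $\W_{2\pi\omega}$, and the $\pm 1$ ambiguity in the square root $\alpha_{2\pi\omega}$, which is precisely why $a_\varphi(\omega)$ takes values in $\C/\{\pm 1\}$ rather than $\C$.
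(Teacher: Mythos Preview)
Your proposal is correct and matches the paper's approach: the paper simply states that the corollary follows immediately from Theorem \ref{thm:FE} (as the half-integral analogue of \cite[Corollary 1.2.3]{pollackQDS}) and gives no further argument, so your detailed unpacking of the Fourier expansion along $N_J/Z$ and the termwise application of Theorem \ref{thm:FE} is exactly the intended deduction.
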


The complex number $a_\varphi(\omega)$ is thus well-defined up to multiplication by $\pm 1$. These numbers $a_\varphi(\omega) \in \C/\{\pm 1\}$ are, by definition, the Fourier coefficients of $\varphi$.
\subsubsection{Remark on $K$-Bessel functions}
The $K$-Bessel functions $K_v(z)$ in the definition of the Whittaker functions $\W_{\omega,\al_\omega}$ only occur for half-integral values of $v$. This is especially nice as these satisfy the following classical lemma.
\begin{lemma}the $K$-Bessel function satisfies the following facts.
	\begin{enumerate}
		\item For any value of $v$, $$-z^v(\partial_z(z^{-v} K_v(z))) = K_{v+1}(z),$$
		\item For any value of $v$, $$K_{-v}(z) = K_v(z).$$
		\item We have
		\[
		K_{1/2}(z) = \sqrt{\frac{\pi}{2z}}e^{-z}.
		\]
	\end{enumerate}
\end{lemma}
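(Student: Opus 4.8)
The plan is to deduce all three statements from standard integral representations of the modified Bessel function $K_v$, without invoking the modified Bessel equation directly. I would first recall the two representations valid for $\Re(z)>0$ and all $v\in\C$: the Schl\"afli-type formula
\[
K_v(z) = \int_0^\infty e^{-z\cosh t}\cosh(vt)\,dt
\]
and the Gamma-integral formula
\[
K_v(z) = \frac{1}{2}\left(\frac{z}{2}\right)^v\int_0^\infty e^{-t-z^2/(4t)}\,t^{-v-1}\,dt,
\]
citing a standard reference (Watson, or the DLMF) for the fact that these agree with the normalization of $K_v$ implicit in \eqref{eqn: whittaker function}. That normalization is in any case forced: the differential equations in $w$ in the proof of Theorem~\ref{thm:FE}, together with the requirement of moderate growth, pin down the $K_v$ up to a scalar, and the scalar is fixed by the asymptotic $K_v(z)\sim\sqrt{\pi/(2z)}\,e^{-z}$ as $z\to\infty$. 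Since only $v\in\tfrac12+\Z$ occur in the Whittaker functions, one could alternatively verify everything by hand for half-integral order, but the general assertions are no harder.

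Statement~(2) is then immediate from the Schl\"afli formula, whose integrand is manifestly even in $v$; thus $K_{-v}(z)=K_v(z)$ for $\Re(z)>0$, and the identity propagates by analytic continuation. For statement~(3) I would put $v=\tfrac12$ in the same formula and substitute $u=\sinh(t/2)$, so that $\cosh t = 1+2u^2$ and $\cosh(t/2)\,dt = 2\,du$; the integral collapses to $2e^{-z}\int_0^\infty e^{-2zu^2}\,du = 2e^{-z}\cdot\tfrac12\sqrt{\pi/(2z)} = \sqrt{\pi/(2z)}\,e^{-z}$, using only the Gaussian integral. For statement~(1) I would use the Gamma-integral formula: write $z^{-v}K_v(z) = 2^{-v-1}\int_0^\infty e^{-t-z^2/(4t)}t^{-v-1}\,dt$ and differentiate in $z$ under the integral sign (legitimate by dominated convergence on compact subsets of $\Re(z)>0$). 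The only $z$-dependence is through $e^{-z^2/(4t)}$, whose $z$-derivative is $-\tfrac{z}{2t}e^{-z^2/(4t)}$; the resulting integral is exactly $-z^{-v}K_{v+1}(z)$ upon comparison with the Gamma-integral formula at order $v+1$. Multiplying by $-z^v$ yields $-z^v\partial_z(z^{-v}K_v(z)) = K_{v+1}(z)$.

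I do not expect any genuine obstacle here: the lemma is entirely classical, and the only things needing care are the bookkeeping to confirm that the normalization of $K_v$ matches the one used implicitly elsewhere in the paper, and the (routine) justification of differentiation under the integral sign. For a published version one could simply cite a Bessel-function handbook for (1) and (2) and observe that (3) is the standard elementary case $K_{1/2}$.
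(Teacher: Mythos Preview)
Your proposal is correct and, in fact, more detailed than the paper's own treatment: the paper gives no proof at all, simply labeling the result a ``classical lemma'' and moving on to apply it. Your computations are sound (I checked the substitution in (3) and the differentiation under the integral in (1)), and your final remark that one could just cite a handbook is precisely what the paper does implicitly.
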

Thus, the functions $\W_{\omega,\al_\omega}$ are particularly simple as functions of $\al_\omega(g)$ and $\nu(g)$. For example, when $l=1$, we have
\[
\W_{\omega,\alpha_\omega}(g) = \sqrt{\frac{\pi\nu(g)^3}{2}}\frac{e^{-|\alpha_\omega(g)|^2}}{|\al_\omega(g)|}\left[{\left(\frac{|\alpha_\omega(g)|}{\alpha_\omega(g)}\right) x+\left(\frac{\alpha_\omega(g)}{|\alpha_\omega(g)|}\right) y}\right]
\]
if $g \in \widetilde{H_J(\R)^+}.$

\section{The minimal modular form of $\widetilde{F}_4(\R)$}

Our first application is the existence of a particular modular form of weight $1/2$ on $\widetilde{F}_4(\R)$ with exceptionally few non-zero Fourier coefficients in the sense of Lemma \ref{lem:FElem} below.  This modular form is constructed from the \emph{automorphic minimal representation} $\Pi_{min}$ of $\widetilde{F}_4(\A)$, which we discuss in Chapter \ref{Chapt: minimal aut}.

\begin{theorem}\label{thm: exists theta}
    There exists a modular form $\Theta_{F_4}$ on $\widetilde{F}_4(\R)$ of weight $\frac{1}{2}$ which satisfies the following properties:
    \begin{enumerate}
        \item $\Theta_{F_4}$ is constructed from the automorphic minimal representation;
        \item the level of $\Theta_{F_4}$ is $\Gamma_{F_4}(4)$ defined in equation \eqref{eqn: good level};
        \item the $(0,0,0,1)$-Fourier coefficient of $\Theta_{F_4}$ is equal to $\pm1$.
    \end{enumerate}
\end{theorem}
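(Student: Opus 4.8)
The plan is to obtain $\Theta_{F_4}$ from the automorphic minimal representation $\Pi_{min}$ of $\widetilde{F}_4^{(2)}(\A)$ studied by Loke--Savin \cite{lokeSavin} and Ginzburg \cite{ginzburgF4}. First I would recall from \cite{ABPTV} that the archimedean component $\Pi_{min,\infty}$ has minimal $\widetilde{K}_{F_4}$-type $\mathbf{V}_{1/2} = \mathbb{V}_2$; as in the discussion relating quaternionic modular forms to representation theory, this produces, for each vector $v_f \in \Pi_{min,f}$, an associated weight $\frac{1}{2}$ automorphic function $\theta(v_f)$ on $\widetilde{F}_4(\R)$ satisfying $\theta(v_f)(gk) = k^{-1}\cdot\theta(v_f)(g)$, and the equation $D_{1/2}\theta(v_f)\equiv 0$ is then automatic precisely because $\mathbf{V}_{1/2}$ is the \emph{minimal} $\widetilde{K}_{F_4}$-type. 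Thus $\theta(v_f)$ is a modular form of weight $\frac12$ in the sense of Definition \ref{def:QMF adelic} for whatever level $v_f$ is invariant under, which settles property (1). It remains to choose $v_f$ realizing the level in (2) and then to compute the Fourier coefficient in (3).

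For the level: away from $2$ the representation $\Pi_{min,p}$ is unramified by \cite{lokeSavin}, hence contains a nonzero $K_p^\ast$-fixed vector; at $p = 2$ I would exhibit a nonzero $K_R'(4)$-fixed vector in $\Pi_{min,2}$ using an explicit model of $\Pi_{min,2}$ together with the Iwahori factorization of $K_R'(4)$ with respect to $Q$ from Corollary \ref{cor:R'IwahoriQ}. Taking $v_f$ to be the corresponding pure tensor, $\theta(v_f)$ has level $\Gamma_{F_4}(4)$.

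For the Fourier coefficient: by Corollary \ref{Cor: fourier} and Theorem \ref{thm:FE}, the $(0,0,0,1)$-Fourier coefficient of $\theta(v_f)$ equals $a_{\theta(v_f)}(0,0,0,1)\cdot\W_{2\pi(0,0,0,1)}$, and since $(0,0,0,1)$ is positive semi-definite --- indeed $p_{(0,0,0,1)}(Z)\equiv 1$ on $\mathcal{H}_J$ --- the Whittaker function $\W_{2\pi(0,0,0,1)}$ is not identically zero. So what must be shown is that the global twisted Jacquet functional $\varphi\mapsto \int_{N_J(\Q)\backslash N_J(\A)}\varphi(ng)\,\overline{\psi_{(0,0,0,1)}(n)}\,dn$ (with $\psi_{(0,0,0,1)}$ trivial on $Z(\A)$ and equal to the character $(0,0,0,1)$ on $N_J/Z$) is nonzero on $\theta(v_f)$ for a suitable $g$. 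The character $(0,0,0,1)$ is of ``rank one'' in Ginzburg's sense, and \cite[Proposition 3]{ginzburgF4} already isolates the rank-one coefficients; the additional input is that this particular functional, after unfolding, factorizes as a product of local twisted Jacquet functionals on the $\Pi_{min,v}$ (uniqueness of the model attached to the minimal orbit). Nonvanishing then decouples over the places: at $\infty$ it follows from the explicit shape of $\W_{2\pi(0,0,0,1)}$ and the minimal-$K$-type property of $\Pi_{min,\infty}$; for $p\neq 2$ it is an unramified computation with the spherical vector; and for $p = 2$ it is the statement that the local $(0,0,0,1)$-twisted Jacquet module of $\Pi_{min,2}$ is nonzero and that the image in it of the chosen $K_R'(4)$-fixed vector is nonzero. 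Once nonvanishing is established, one rescales $v_f$ so that $a_{\Theta_{F_4}}(0,0,0,1)$, a priori only an element of $\C/\{\pm 1\}$, becomes the class of $1$, i.e. $\pm 1$.

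The main obstacle is the $2$-adic part of the last step: identifying the stabilizer of the character $(0,0,0,1)$ inside the Levi $L$ of $Q$ and computing the corresponding twisted Jacquet module of $\Pi_{min,2}$. The plan there is to realize $\Pi_{min,2}$ --- or the relevant partial Jacquet module along $N_J$ --- in terms of a twist of the Weil representation of a double cover of $\GL_2(\Q_2)$, and then import the explicit twisted-Jacquet-module computations of Gelbart--Piatetski-Shapiro \cite{gelbartPSdistinguished} for that Weil representation. The bookkeeping needed to see that the resulting functional pairs nontrivially with a $K_R'(4)$-fixed vector would then be carried out using the Iwahori factorization of Corollary \ref{cor:R'IwahoriQ} and the explicit description of the splitting $s_\Gamma$ on unipotent elements from Proposition \ref{prop:sGammaUnip}.
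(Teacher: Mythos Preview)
Your overall architecture matches the paper's: minimal $\widetilde K_\infty$-type via \cite{ABPTV}, construction of $\theta(v_f)$ from the minimal representation, and a local-to-global argument for nonvanishing of the $(0,0,0,1)$-coefficient using the Weil representation of a double cover of $\GL_2$ and the results of \cite{gelbartPSdistinguished}. Two tactical points differ from the paper and are worth flagging.

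First, you propose to work directly with the $(N_J,\psi_{(0,0,0,1)})$-twisted Jacquet module and to invoke a factorization into local functionals via ``uniqueness of the model attached to the minimal orbit.'' The paper avoids this: it uses \cite[Proposition~4]{ginzburgF4} to rewrite the $(0,0,0,1)$-Fourier coefficient as a $(U_R,\psi_{1,-1})$-Fourier coefficient, where $U_R=U_{\alpha_1}U_Q$ is the unipotent radical of the non-maximal parabolic $R$. This is a genuine reduction step you do not mention, and it is what makes the Jacquet-module computation tractable: one first takes the \emph{untwisted} Jacquet module along $U_Q$, identifies it with the Weil representation $\Omega^{(1)}_{\psi,\chi}$ of $\widetilde{\GL}_2^{(1)}$, and only then twists along $U_{\alpha_1}$. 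The paper never needs or proves an Eulerian factorization of the global functional; it instead proves the purely local statement that any $(U_R,\psi_{1,t})$-functional on $\Pi_{min,p}$ vanishing on all $K_p^*$-fixed vectors is zero, which combined with the global nonvanishing from \cite[Proposition~3]{ginzburgF4} forces nonvanishing on a vector of the correct level by a standard place-by-place replacement argument.

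Second, for the level at $p=2$ you propose to exhibit an explicit $K_R'(4)$-fixed vector in $\Pi_{min,2}$. The paper does not do this directly in $\Pi_{min,2}$: it uses the Iwahori factorization of $K_R'(4)$ with respect to $Q$ together with Casselman's surjectivity $V^{K_R'(4)}\twoheadrightarrow V_{U_Q}^{\widetilde L\cap K_R'(4)}$, and then constructs explicit $\Gamma^*_{1,\GL_2}(4)$-invariant vectors inside the Weil representation $\Omega^{(1)}_{\psi,\chi}$ on which the Whittaker functional is nonzero. This is cleaner than building a vector upstairs, and it is exactly where Corollary~\ref{cor:R'IwahoriQ} is used.
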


The proof of this theorem is representation theoretic, relying on the analysis of the automorphic minimal representation of $\widetilde{F}_4(\A)$, and takes up all of Chapter \ref{Chapt: minimal aut}.  We defer the discussion of this representation until then. We do however need the following properties of $\Theta_{F_4}$, which follow from the minimality of $\Pi_{min}$.

To simplify notation, set $\Theta$ for the adelic modular form such that $\Theta|_{\widetilde{F}_4(\R)} = \Theta_{F_4}$. The automorphic function $\Theta$ has Fourier expansion
\begin{align*}\Theta_Z(g) &= \Theta_N(g) + \sum_{\omega\in W_J(\Q)}{\Theta_\omega(g)}.
\end{align*}

Here, for $g\in \widetilde{F}_4(\A)$, we have
\[
{\Theta_\omega(g)} = \int_{N_J(\Q)\backslash N_J(\A)} \Theta(ng) \psi^{-1}({\la\omega, \overline{n}\ra})dn.
\]
Recall the notion of \emph{rank} of an element $\omega\in W_J(\Q)$ as defined in \cite[Definition 4.2.9 and Definition 4.3.2]{pollackLL}.

\begin{lemma}\label{lem:FElem} Let the notation be as above.
	\begin{enumerate}
		\item If $\gamma \in H^1_J(\Q)$, then $\Theta_\omega(\gamma g) = \Theta_{\omega \cdot \gamma}(g)$.  If $\gamma \in \Ga_{F_4}(4) \cap H^1_J(\R)$, and $g = g_\infty$ is in the image of $\widetilde{F}_4(\R) \rightarrow \widetilde{F}_4(\A)$, then $\Theta_\omega(s_\Gamma(\gamma) g) = \Theta_{\omega \cdot \gamma}(g)$.
		\item One has $\Theta_\omega\equiv0$ unless $\mathrm{rk}(\omega)\leq 1$.
		\item Suppose $g = g_\infty$ is in the image of $\widetilde{F}_4(\R) \rightarrow \widetilde{F}_4(\A)$ and $\omega$ is of rank one. Then $\Theta_\omega(g) \equiv 0$ unless $\omega$ lies in the lattice $W_J(\Z) = \Z \oplus J_0 \oplus J_0 \oplus \Z$.
	\end{enumerate}
\end{lemma}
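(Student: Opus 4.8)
For part (1), this is the standard equivariance of Fourier coefficients under the Levi. Since $\Theta$ is left invariant under $s_{\Q}(G_J(\Q))$, hence under $s_{\Q}(H^1_J(\Q))$, and $H^1_J$ normalizes $N_J$ and acts on $W_J\simeq N_J/Z$, the substitution $n\mapsto s_{\Q}(\gamma)^{-1}\,n\,s_{\Q}(\gamma)$ in the integral defining $\Theta_\omega(\gamma g)$ produces $\Theta_{\omega\cdot\gamma}(g)$, where $\omega\cdot\gamma$ denotes the natural right action of $H^1_J$ on $W_J$; here one uses $\nu(\gamma)=1$, so that the symplectic pairing is preserved exactly. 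For the statement involving $s_\Gamma$, recall from the construction of the splitting (Lemma~\ref{Lem: unique splitting} and the following proposition) that $s_{\Q}(\gamma)=\pm(s_f(\gamma),s_\Gamma(\gamma))$ inside $(\widetilde{F}_4(\A_f)\times\widetilde{F}_4(\R))/\mu_2^{\Delta}$, and that $s_f(\gamma)\in K_T^{\ast}$ because $\gamma\in\Ga_{F_4}(4)$. Writing $s_\Gamma(\gamma)\,g_\infty=\pm s_{\Q}(\gamma)\,s_f(\gamma)^{-1}\,g_\infty$ and commuting the finite factor $s_f(\gamma)^{-1}$ past $g_\infty$, right $K_T^{\ast}$-invariance of $\Theta$ reduces the claim to the case just treated. (The residual sign is controlled by the fact that $\Ga_{F_4}(4)$ is generated by unipotent elements, on which all splittings agree.)

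Part (2) is \cite[Proposition~3]{ginzburgF4} translated into the present setup: because $\Theta$ lies in the automorphic minimal representation $\Pi_{min}$, its Fourier coefficients along the Heisenberg unipotent $N_J$ are supported on the closure of the minimal $H_J$-orbit in $W_J$, which is exactly the set of $\omega$ with $\mathrm{rk}(\omega)\le 1$; hence $\Theta_\omega\equiv 0$ unless $\mathrm{rk}(\omega)\le 1$. For part (3), the plan is first to place $\omega$ in $W_J(\Z)^{\vee}=\Z\oplus J_0^{\vee}\oplus J_0^{\vee}\oplus\Z$ using only the level, and then to descend to $W_J(\Z)$ using the rank-one hypothesis. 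For the level step, note that at each finite prime $p$ the level group contains $N_J(\Z_p)$ with the trivial (unipotent) splitting: for $p>2$ this follows from Lemma~\ref{Lem: splitting away from 2} together with uniqueness of the unipotent splitting, and for $p=2$ the root subgroups $x_\beta(\Z_2)$ with $\beta\in\Phi_N^{+}$ lie in $K_R'(4)$ with trivial splitting as in Proposition~\ref{prop:sGammaUnip} and generate $N_J(\Z_2)$. Substituting $n\mapsto nu$ with $u\in N_J(\Z_p)$ in $\Theta_\omega(g_\infty)=\int_{N_J(\Q)\backslash N_J(\A)}\Theta(ng_\infty)\psi^{-1}(\langle\omega,\overline n\rangle)\,dn$, and using $ug_\infty=g_\infty u$ and right $K_T^{\ast}$-invariance, gives $\Theta_\omega(g_\infty)=\psi_p^{-1}(\langle\omega,\overline u\rangle)\,\Theta_\omega(g_\infty)$; so $\Theta_\omega\not\equiv 0$ forces $\langle\omega,\overline u\rangle\in\Z_p$ for all $\overline u\in W_J(\Z_p)$, i.e.\ $\omega\in W_J(\Z_p)^{\vee}$ for every $p$. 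Since $[J_0^{\vee}:J_0]$ is a power of $2$, the symplectic lattice $W_J(\Z)$ has $2$-power discriminant, and running over all $p$ yields $\omega\in W_J(\Z)^{\vee}$.

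To descend from $W_J(\Z)^{\vee}$ to $W_J(\Z)$ --- which the level alone cannot achieve, as $K_R'(4)$ contains all of $N_J(\Z_2)$ --- write $\omega=(a,b,c,d)$ with $a,d\in\Z$ and $b,c\in J_0^{\vee}$, viewed as half-integral symmetric $3\times 3$ matrices. By the description of rank in \cite[Definition~4.2.9 and Definition~4.3.2]{pollackLL}, $\mathrm{rk}(\omega)\le 1$ entails in particular $b^{\#}=ac$ and $c^{\#}=bd$, where $\#$ is the adjugate. Comparing diagonal entries in $b^{\#}=ac$: one has $(b^{\#})_{ii}=b_{jj}b_{kk}-b_{jk}^{2}$ with $b_{jj},b_{kk}\in\Z$, while $(ac)_{ii}=a\,c_{ii}\in\Z$; hence $b_{jk}^{2}\in\Z$, and since $b_{jk}\in\tfrac12\Z$ this forces $b_{jk}\in\Z$. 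Thus $b\in J_0$, and the same argument applied to $c^{\#}=bd$ gives $c\in J_0$, so $\omega\in W_J(\Z)$. I expect this descent to be the main obstacle: one must confirm that the rank $\le 1$ relations of \cite{pollackLL} deliver these integral identities uniformly over all rank-one $\omega$, including degenerate ones (e.g.\ $a=0$, where $b^{\#}=0$ still constrains the off-diagonal entries of $b$); the purely $2$-adic input --- that an element of $\tfrac12\Z_2$ whose square lies in $\Z_2$ already lies in $\Z_2$ --- is elementary. The remaining content is bookkeeping with the adelic definitions and with the explicit local structure of $\Ga_{F_4}(4)$ established earlier in the paper.
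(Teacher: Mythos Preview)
Your proof is correct and follows essentially the same route as the paper: the change of variables for part~(1), Ginzburg for part~(2), and for part~(3) the two-step argument of first using the level to land in $W_J(\Z)^\vee$ and then using the rank-one relations $b^\#=ac$, $c^\#=db$ to descend to $W_J(\Z)$. Your explicit verification of the ``elementary check'' (that $X\in J_0^\vee$ with $X^\#\in J_0^\vee$ forces $X\in J_0$) is exactly what the paper leaves to the reader.

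One minor point: your parenthetical about the ``residual sign'' in part~(1) is unnecessary and the justification you offer is dubious. By the very construction of $s_\Gamma$ via Lemma~\ref{Lem: unique splitting}, one has $s_\Q(\gamma)=(s_f(\gamma),s_\Gamma(\gamma))$ as elements of $\widetilde{F}_4(\A)=\prod_p\widetilde{F}_4(\Q_p)/\mu_2^+$, so $s_\Gamma(\gamma)g_\infty\cdot s_f(\gamma)=s_\Q(\gamma)g_\infty$ on the nose; there is no sign to control. (The paper writes this chain directly: $\Theta_\omega(s_\Gamma(\gamma)g)=\Theta_\omega(s_\Gamma(\gamma)g\,s_f(\gamma))=\Theta_\omega(s_\Q(\gamma)g)=\Theta_{\omega\cdot\gamma}(g)$.) Your claim that $\Gamma_{F_4}(4)$ is generated by unipotent elements is not established in the paper and is likely false as stated, but since no sign issue actually arises, this does not affect the argument.
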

\begin{proof}
	The first part of the first claim follows easily from the usual change of variables in the integral defining $\Theta_\omega$.  For the second part of the first claim, we have
	\[\Theta_\omega(s_{\Gamma}(\gamma)g) = \Theta_\omega(s_{\Gamma}(\gamma)g s_f(\gamma)) = \Theta_\omega(s_{\Q}(\gamma)g) = \Theta_{\omega \cdot \gamma}(g)\]
	using that $\Theta$ is right invariant under $s_f(\Ga_{F_4}(4))$.
	
	The second claim follows from the construction of $\Theta$ from $\Pi_{min}$ in Chapter \ref{Chapt: minimal aut} and the minimality of $\Pi_{min}$. More specifically, the claim follows directly from Proposition 3 of \cite{ginzburgF4}.
	
 For the final claim, let $W_J(\Z)^\vee$ be the dual lattice to $W_J(\Z)$ under the symplectic form, so that $W_J(\Z)^\vee = \Z \oplus J_0^\vee \oplus J_0^\vee \oplus \Z$.  We first prove that $\Theta_\omega(g)$ vanishes unless $\omega$ is in $W_J(\Z)^\vee$.  To see this, suppose $n_0 \in W_J(\widehat{\Z}) = W_J(\Z) \otimes \widehat{\Z}$ and $n = \exp(n_0) \in \widetilde{F}_4(\A_f) \rightarrow \widetilde{F}_4(\A)$.  Then $n \in K_R(4) \prod_p{K_p}$, so $\Theta$ is right-invariant by $n$.  But then
	\[\Theta_\omega(g) = \Theta_\omega(gn) = \psi(\langle \omega, n_0 \rangle) \Theta_\omega(g).\]
	Consequently, if $\Theta_\omega(g) \neq 0$, then $\langle \omega, n_0 \rangle \in \widehat{\Z}$ for all $n_0 \in W_J(\widehat{\Z})$, so $\omega \in W_J(\Z)^\vee$.
	
	For the stronger claim that $\Theta_\omega(g)$ vanishes unless $\omega \in W_J(\Z) \subseteq W_J(\Z)^\vee$, we use the following lemma. \end{proof}

\begin{lemma} If $\omega \in W_J(\Z)^\vee$ is of rank one, then $\omega \in W_J(\Z)$. \end{lemma}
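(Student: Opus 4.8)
The plan is to check the containment $\omega\in W_J(\Z)$ locally and to reduce it to the prime $2$. Since $\omega\in W_J(\Q)$ and $W_J(\Z)=W_J(\Q)\cap\bigcap_p W_J(\Z_p)$, it suffices to prove $\omega\in W_J(\Z_p)$ for every prime $p$. For odd $p$ this is free: the lattices $W_J(\Z)=\Z\oplus J_0\oplus J_0\oplus\Z$ and $W_J(\Z)^\vee=\Z\oplus J_0^\vee\oplus J_0^\vee\oplus\Z$ differ only through $J_0^\vee/J_0$, which is a $2$-group — recall that the dual of $J_0=H_3(\Z)$ under the trace pairing is the lattice of symmetric matrices with integral diagonal entries and half-integral off-diagonal entries — so $W_J(\Z)^\vee\otimes\Z_p=W_J(\Z_p)$ for $p$ odd, and hence $\omega\in W_J(\Z_p)$. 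Thus we are reduced to showing that a rank-one element $\omega=(a,b,c,d)$ of $W_J(\Z_2)^\vee$ lies in $W_J(\Z_2)$; as the scalar components $a,d$ automatically lie in $\Z_2$, the content is to show $b,c\in J_0\otimes\Z_2$.

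The key input is that $\mathrm{rk}(\omega)\le 1$ imposes algebraic relations among $a,b,c,d$. Recall from \cite[Definition 4.2.9 and Definition 4.3.2]{pollackLL} that if $\mathrm{rk}(\omega)\le 1$ then, among other identities, $b^\#=ac$ in $J^\vee$ and $c^\#=bd$ in $J$ (the precise scalar normalization of these two identities will not matter below). Since $a,d\in\Z_2$ and $b,c\in J_0^\vee\otimes\Z_2$, the products $ac$ and $bd$ lie in $J_0^\vee\otimes\Z_2$; in particular the diagonal entries of $b^\#$ and of $c^\#$ lie in $\Z_2$.

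It now suffices to observe an elementary fact about $3\times 3$ symmetric matrices: if $X=(x_{ij})$ is symmetric over $\Q_2$ with $x_{ii}\in\Z_2$ for all $i$ and the diagonal entries of its adjugate $X^\#$ also lie in $\Z_2$, then $X$ has all entries in $\Z_2$. Indeed $(X^\#)_{11}=x_{22}x_{33}-x_{23}^2$, so $x_{23}^2=x_{22}x_{33}-(X^\#)_{11}\in\Z_2$ and therefore $\val_2(x_{23})\ge 0$; the entries $x_{13}$ and $x_{12}$ are treated identically using $(X^\#)_{22}$ and $(X^\#)_{33}$. Applying this to $X=b$ and to $X=c$ — each has integral diagonal because it lies in $J_0^\vee\otimes\Z_2$, and each has a $\#$ with integral diagonal by the previous paragraph — yields $b,c\in H_3(\Z_2)=J_0\otimes\Z_2$. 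Hence $\omega\in W_J(\Z_2)$, and combined with the first paragraph this gives $\omega\in W_J(\Z)$.

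The argument is short, and I do not expect a serious obstacle; the only things needing care are getting $J_0^\vee$ exactly right (crucially, elements of $J_0^\vee$ already have integral diagonal, and only the off-diagonal entries can be half-integral) and invoking the correct rank-one relations $b^\#=ac$ and $c^\#=bd$. Everything else is the elementary adjugate observation, and no finer information about the minimal $H_J$-orbit in $W_J$ is required.
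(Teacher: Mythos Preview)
Your proof is correct and follows essentially the same route as the paper: use the rank-one identities $b^\#=ac$ and $c^\#=db$ to force $b^\#,c^\#\in J_0^\vee$, then note the elementary fact that $X\in J_0^\vee$ with $X^\#\in J_0^\vee$ implies $X\in J_0$. Your localization to $p=2$ is harmless but unnecessary---the paper simply works over $\Z$ directly---and you have helpfully spelled out the adjugate computation that the paper leaves as ``an elementary check.''
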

\begin{proof} Write $\omega = (a,b,c,d)$.  Then $b^\# = ac \in J_0^\vee$ and $c^\# = db \in J_0^\vee$ by \cite[Proposition 11.2]{ganSavin}.  But an elementary check shows that if $X \in J_0^\vee$ and $X^\# \in J_0^\vee$ then in fact $X  \in J_0$.  The lemma follows. \end{proof}

\section{Pullback to $G_2$}
We have defined an inclusion $\widetilde{G}_2(\R) \subseteq \widetilde{F}_4(\R)$ and a modular form $\Theta_{F_4}$ on the latter group.  Let $\Theta_{G_2}$ be the automorphic function that is the pullback of $\Theta_{F_4}$ to $\widetilde{G}_2(\R)$, which is evidently smooth of moderate growth and satisfies the equivariance property \eqref{equivar}. In fact, it also satisfies the requisite differential equation.

\begin{proposition} The automorphic function $\Theta_{G_2}$ is a weight $\frac{1}{2}$ quaternionic modular form on $\widetilde{G}_2(\R).$
\end{proposition}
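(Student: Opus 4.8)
The plan is to check the single non-obvious condition of Definition~\ref{def:QMF}, namely that $D_{1/2}\Theta_{G_2}\equiv 0$; the other conditions (smoothness, moderate growth, and the equivariance $\varphi(gk)=k^{-1}\cdot\varphi(g)$ for $k\in\widetilde{K}_{G_2}$) are inherited from $\Theta_{F_4}$ together with the compatibilities established in Section~\ref{sec:grpEmbeddings}: under $\widetilde{G}_2(\R)\subseteq\widetilde{F}_4(\R)$ the basepoints $x_0\in X_{G_2}\subseteq X_{F_4}$ and maximal compacts $\widetilde{K}_{G_2}\subseteq\widetilde{K}_{F_4}$ correspond, and $j_{1/2}$ for $\widetilde{G}_2$ is the restriction of $j_{1/2}$ for $\widetilde{F}_4$, so $\mathbf{V}_{1/2}=\Vm_2$ is literally the same $\widetilde{K}_{G_2}$-representation whether restricted from $\widetilde{F}_4$ or defined directly on $\widetilde{G}_2$. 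Restriction of a moderate-growth function to the closed subgroup $\widetilde{G}_2(\R)$ remains of moderate growth, so there is no analytic content.

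The key input is the compatibility of Cartan data recorded in Section~\ref{sec:grpEmbeddings} (and used in the Remark there): the Cartan involution $\theta$ on $\f_4$ restricts to the one on $\g_2$, so the Cartan decompositions are compatible, $\k_{G_2}=\g_2\cap\k_{F_4}$ and $\p_{G_2}=\g_2\cap\p_{F_4}$, giving $\p_{G_2}\subseteq\p_{F_4}$ as $\widetilde{K}_{G_2}$-representations. Moreover $\widetilde{K}_{G_2}\to\widetilde{K}_{F_4}$ restricts to an isomorphism of the ``quaternionic'' $\SU(2)$-factors (both acting through the same faithful representation on $\Vm_2$), so the identification $\p_J\simeq\Vm_2\otimes W_J$ is compatible with restriction: $\p_{G_2}=\Vm_2\otimes W'$ for a subspace $W'\subseteq W_{F_4}$ stable under the complementary factor of $\widetilde{K}_{G_2}$.

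I would then pick a basis $\{X_\alpha\}$ of $\p_{F_4}\otimes\C$ extending a basis $\{X_\beta\}_{\beta\in B}$ of $\p_{G_2}\otimes\C$. For $g\in\widetilde{G}_2(\R)$ and $\beta\in B$ the right-regular derivative $X_\beta\Theta_{F_4}(g)$ depends only on $\Theta_{F_4}|_{\widetilde{G}_2(\R)}=\Theta_{G_2}$, since the lift $\exp'(tX_\beta)$ stays in $\widetilde{G}_2(\R)$; and the restriction map $r\colon\p_{F_4}^\vee\to\p_{G_2}^\vee$ dual to $\p_{G_2}\hookrightarrow\p_{F_4}$ sends $X_\alpha^\vee\mapsto 0$ for $\alpha\notin B$ and $X_\beta^\vee$ to the $\beta$-th dual basis vector of $\p_{G_2}^\vee$. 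Hence for $g\in\widetilde{G}_2(\R)$,
\[
(\mathrm{id}\otimes r)\bigl(D'_{1/2}\Theta_{F_4}(g)\bigr)=\sum_{\beta\in B}X_\beta\Theta_{G_2}(g)\otimes r(X_\beta^\vee)=D'_{1/2}\Theta_{G_2}(g).
\]
Next I would check that $\mathrm{id}\otimes r\colon\mathbf{V}_{1/2}\otimes\p_{F_4}^\vee\to\mathbf{V}_{1/2}\otimes\p_{G_2}^\vee$ commutes with the projections $\mathrm{pr}$ defining $D_{1/2}$: each $\mathrm{pr}$ is the isotypic projection onto the $\Sym^{0}(\Vm_2)$-part for the quaternionic $\SU(2)$ (projection onto $\SU(2)$-invariants, for $\ell=1$), and $\mathrm{id}\otimes r$ is a surjection of representations of that common $\SU(2)$, hence carries isotypic components onto isotypic components, so $\mathrm{pr}^{G_2}\circ(\mathrm{id}\otimes r)=(\mathrm{id}\otimes r)\circ\mathrm{pr}^{F_4}$. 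Combining, for $g\in\widetilde{G}_2(\R)$,
\[
D_{1/2}\Theta_{G_2}(g)=\mathrm{pr}^{G_2}\bigl(D'_{1/2}\Theta_{G_2}(g)\bigr)=(\mathrm{id}\otimes r)\bigl(\mathrm{pr}^{F_4}D'_{1/2}\Theta_{F_4}(g)\bigr)=(\mathrm{id}\otimes r)\bigl(D_{1/2}\Theta_{F_4}(g)\bigr)=0,
\]
since $\Theta_{F_4}$ is a modular form by Theorem~\ref{thm: exists theta}. Thus $D_{1/2}\Theta_{G_2}\equiv 0$ and $\Theta_{G_2}$ is a weight $\tfrac12$ quaternionic modular form on $\widetilde{G}_2(\R)$.

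The main obstacle is entirely in the last two steps: making rigorous that the Schmid-type operators $D_{1/2}$ on $\widetilde{G}_2(\R)$ and on $\widetilde{F}_4(\R)$ are intertwined by restriction. This reduces to the two structural facts isolated above — compatible Cartan involutions, and compatibly nested quaternionic $\SU(2)$-factors (hence compatibly nested symplectic spaces $W_J$) — both of which are provided by Section~\ref{sec:grpEmbeddings}; granting them, the remainder is the elementary linear algebra of dual bases and isotypic projections sketched here.
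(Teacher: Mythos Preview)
Your argument is correct and is essentially the content of the reference the paper cites: the paper's entire proof is the one-line ``This follows just as in \cite[Corollary 4.2.3]{pollackCuspidal}'', and what you have written is a self-contained version of that argument, exploiting the compatibility of Cartan involutions (recorded in Section~\ref{sec:grpEmbeddings}) and the fact that the quaternionic $\SU(2)$-factors coincide under the embedding so that the Schmid projections are intertwined by restriction. Your closing worry is unfounded: the ``obstacle'' you flag is exactly what you have already verified, since $\mathrm{id}\otimes r$ is $\SU(2)$-equivariant and isotypic projections commute with equivariant maps.
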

\begin{proof} This follows just as in \cite[Corollary 4.2.3]{pollackCuspidal}.
\end{proof}

 In this section, we partially compute the Fourier expansion of $\Theta_{G_2}$. For $g\in \widetilde{F}_4(\R)$ we have
\begin{align*}\Theta_Z(g)= \Theta_N(g) + \sum_{\substack{\omega\in W_J(\Z)\\\mathrm{rk}(\omega)=1}}{a(\omega;\alpha_{2\pi\omega}) \W_{2\pi\omega;\alpha_{2\pi\omega}}(g)}\end{align*}
with $a(\omega;-\alpha_{2\pi\omega}) = -a(\omega;\alpha_{2\pi\omega}).$

Suppose $\gamma \in \Ga_{F_4}(4) \cap H_J^1(\R)$.  Define $\alpha_{2\pi\omega}^\gamma(g) = \alpha_{2\pi\omega}(\gamma g)$.  Note that
\[\alpha_{2\pi\omega}^\gamma(g)^2 = 2\pi \langle \omega, \gamma g \cdot r_0(i) \rangle = 2\pi \langle \omega \cdot \gamma, g \cdot r_0(i)\rangle,\]
so that $\alpha_{2\pi\omega}^\gamma$ is an $\alpha_{2\pi\omega \cdot \gamma}$, and $\W_{2\pi\omega;\alpha_{2\pi\omega}}(\gamma g) = \W_{2\pi\omega \cdot \gamma,\alpha_{2\pi \omega}^\gamma}(g)$.
\begin{lemma}  For $\gamma \in \Ga_{F_4}(4) \cap H_J^1(\R)$, one has an equality of Fourier coefficients $a(\omega;\alpha_{2\pi\omega}) = a(\omega \cdot \gamma; \alpha_{2\pi\omega}^\gamma)$.\end{lemma}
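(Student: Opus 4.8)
The plan is to compare the $\omega$-th Fourier coefficient function $\Theta_\omega$ of $\Theta_{F_4}$ with its transform under $s_\Gamma(\gamma)$, using the transformation laws already recorded above. First I would note that, by Corollary \ref{Cor: fourier} applied on the real group, for every positive semi-definite $\omega$ and every $g$ in the image of $\widetilde{F}_4(\R)\to\widetilde{F}_4(\A)$ one has $\Theta_\omega(g) = a(\omega;\alpha_{2\pi\omega})\,\W_{2\pi\omega;\alpha_{2\pi\omega}}(g)$ for any chosen square root $\alpha_{2\pi\omega}$ of $2\pi\langle\omega, g\,r_0(i)\rangle$, the coefficient changing sign if $\alpha_{2\pi\omega}$ is replaced by $-\alpha_{2\pi\omega}$; this is exactly the normalization used in the Fourier expansion of $\Theta$ written just above. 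Since $\gamma\in H_J^1(\R)$ preserves $\mathcal{H}_J$, it preserves positive semi-definiteness (and rank), so $\omega\cdot\gamma$ is again positive semi-definite and the same identity holds for it with any choice of square root; I would choose the square root $\alpha_{2\pi\omega}^\gamma$, which is legitimate precisely because $\alpha_{2\pi\omega}^\gamma(g)^2 = 2\pi\langle\omega\cdot\gamma, g\,r_0(i)\rangle$, as computed in the paragraph preceding the lemma.

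Next I would invoke Lemma \ref{lem:FElem}(1): for $\gamma\in\Ga_{F_4}(4)\cap H_J^1(\R)$ and $g$ in the image of $\widetilde{F}_4(\R)$, one has $\Theta_\omega(s_\Gamma(\gamma)g) = \Theta_{\omega\cdot\gamma}(g)$. Expanding both sides via the previous paragraph, the left-hand side equals $a(\omega;\alpha_{2\pi\omega})\,\W_{2\pi\omega;\alpha_{2\pi\omega}}(s_\Gamma(\gamma)g)$ and the right-hand side equals $a(\omega\cdot\gamma;\alpha_{2\pi\omega}^\gamma)\,\W_{2\pi\omega\cdot\gamma;\alpha_{2\pi\omega}^\gamma}(g)$. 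Using the identity $\W_{2\pi\omega;\alpha_{2\pi\omega}}(s_\Gamma(\gamma)g) = \W_{2\pi\omega\cdot\gamma;\alpha_{2\pi\omega}^\gamma}(g)$ recorded above (applied to the lift $s_\Gamma(\gamma)$ of $\gamma$, which lies in $\widetilde{H_J(\R)^+}$), the two occurrences of the Whittaker function coincide, so we obtain
\[
\big(a(\omega;\alpha_{2\pi\omega}) - a(\omega\cdot\gamma;\alpha_{2\pi\omega}^\gamma)\big)\,\W_{2\pi\omega\cdot\gamma;\alpha_{2\pi\omega}^\gamma}(g) = 0
\]
for all such $g$.

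Finally I would conclude by observing that $\W_{2\pi\omega\cdot\gamma;\alpha_{2\pi\omega}^\gamma}$ is not identically zero: restricted to $\widetilde{H_J(\R)^+}$ it is given by the explicit formula \eqref{eqn: whittaker function}, where $\nu(g)^{n+1}>0$, the $K$-Bessel values $K_v(|\alpha_{2\pi\omega\cdot\gamma}(g)|^2)$ are strictly positive (note $\langle\omega\cdot\gamma, g\,r_0(i)\rangle = -j(g,i)p_{\omega\cdot\gamma}(g\cdot i)$ never vanishes on $\widetilde{H_J(\R)^+}$), and the monomials $x^{n+v}y^{n-v}$ are linearly independent, so the value is a nonzero vector in $\mathbf{V}_{\ell/2}$. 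Hence the scalar factor must vanish, giving $a(\omega;\alpha_{2\pi\omega}) = a(\omega\cdot\gamma;\alpha_{2\pi\omega}^\gamma)$. The only points requiring care are the consistent bookkeeping of the square-root choices on both sides — it is essential to use the \emph{same} $\alpha_{2\pi\omega}^\gamma$ throughout rather than an arbitrary square root of $2\pi\langle\omega\cdot\gamma, g\,r_0(i)\rangle$, since the two candidate square roots give coefficients differing by a sign — together with the remark that positive semi-definiteness is preserved by the $H_J^1(\R)$-action so that the identity from Corollary \ref{Cor: fourier} is available for $\omega\cdot\gamma$; no serious obstacle is expected beyond this.
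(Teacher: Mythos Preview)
Your proof is correct and follows essentially the same approach as the paper: invoke Lemma~\ref{lem:FElem}(1) to get $\Theta_\omega(s_\Gamma(\gamma)g)=\Theta_{\omega\cdot\gamma}(g)$, expand both sides via the Fourier expansion, use the transformation law $\W_{2\pi\omega;\alpha_{2\pi\omega}}(s_\Gamma(\gamma)g)=\W_{2\pi\omega\cdot\gamma;\alpha_{2\pi\omega}^\gamma}(g)$, and compare coefficients. You are somewhat more explicit than the paper about the non-vanishing of the Whittaker function and the preservation of positive semi-definiteness under $H_J^1(\R)$, but the argument is the same.
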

\begin{proof} By Lemma \ref{lem:FElem}, one has $\Theta_\omega(\gamma g) = \Theta_{\omega \cdot \gamma}(g)$.  Thus
	\begin{align*} a(\omega;\alpha_{2\pi \omega})\W_{2\pi\omega;\alpha_{2\pi \omega}}(\gamma g) &= \Theta_\omega(\gamma g) = \Theta_{\omega \cdot \gamma}(g) = a(\omega \cdot \gamma; \alpha_{2\pi\omega}^\gamma) \W_{2\pi\omega\cdot \gamma;\alpha_{2\pi\omega}^\gamma}(g) \\ &= a(\omega \cdot \gamma;\alpha_{2\pi\omega}^\gamma) \W_{2\pi\omega;\alpha_{2\pi\omega}}(\gamma g).\end{align*}
	Consequently, $a(\omega;\alpha_{2\pi\omega}) = a(\omega \cdot \gamma; \alpha_{2\pi\omega}^\gamma)$.
\end{proof}

We now consider the Fourier coefficients of $\Theta_{G_2}=\Theta_{F_4}|_{\G_2(\R)}$. We require the following two lemmas. Recall that the Fourier coefficients of a modular form on $G_2$ are parameterized by elements of $W_{\Q}(\Q)$, which may be thought of as $\mathrm{Sym}^3(\Q^2)$ by sending
\[
(r,s,t,z)\in W_{\Q}(\Q)\longmapsto r u^3 + 3s u^2v + 3t uv^2 + z v^3\in \mathrm{Sym}^3(\Q^2).
\] If $\omega = (a,b,c,d) \in W_J(\Q)$, set $\tr(\omega) = \left(a,\frac{\tr(b)}{3},\frac{\tr(c)}{3},d\right) \in \mathrm{Sym}^3(\Q^2)$, so that $\tr(\omega)$ corresponds to the binary cubic form $a u^3 + \tr(b) u^2v + \tr(c) uv^2 + d v^3$.  Now, for each $\omega' \in \mathrm{Sym}^3(\Q^2)$, fix a choice of $\alpha_{2\pi\omega'}(g)$.  Note that for $\omega\in W_J(\Q)$ the restriction of $\alpha_{2\pi\omega}(g)$ to the Heisenberg Levi in $\widetilde{G}_2(\R)\subset \widetilde{F}_4(\R)$, is of the form $\epsilon(\omega;\tr(\omega))\alpha_{2\pi\tr(\omega)}(g)$ where $\epsilon(\omega;\tr(\omega)) \in\{ \pm 1\}$.

\begin{lemma}\label{lem:FErestrict} Suppose $\varphi$ is a modular form on $\widetilde{F}_4(\R)$ of weight $\frac{\ell}{2}$, with Fourier expansion $\varphi_Z(g) = \varphi_N(g) + \sum_{\omega \in W_J(\Q)}{a(\omega;\alpha_{2\pi\omega}) \W_{2\pi\omega;\alpha_{2\pi\omega}}(g)}$.  Let $\varphi'$ be the restriction of $\varphi$ to $\widetilde{G}_2(\R)$.  Then $\varphi'$ is modular form on $\widetilde{G}_2(\R)$ of weight $\frac{\ell}{2}$, with Fourier expansion \[\varphi'_{Z'}(g) = \varphi'_{N'}(g) + \sum_{\omega' \in \mathrm{Sym}^3(\Q^2)}{b(\omega';\alpha_{2\pi\omega'}) \W_{2\pi\omega';\alpha_{2\pi\omega'}}(g)},\]
	where $N'\subset G_2$ denotes the unipotent radical of the Heisenberg parabolic. The Fourier coefficients $b(\omega';\alpha_{2\pi\omega'})$ are given as follows:
	\[b(\omega';\alpha_{2\pi\omega'}) = \sum_{\omega \in W_J(\Q): \tr(\omega) = \omega'}{\epsilon(\omega;\omega') a(\omega;\alpha_{2\pi\omega})}.\]
	The sum, a priori infinite, is in fact finite. \end{lemma}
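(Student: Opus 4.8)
The plan is to first check that $\varphi'=\varphi|_{\widetilde G_2(\R)}$ is a quaternionic modular form of weight $\ell/2$, and then to read off its Fourier coefficients by restricting the Fourier expansion of $\varphi$ term by term. That $\varphi'$ has moderate growth and satisfies $\varphi'(gk)=k^{-1}\cdot\varphi'(g)$ for $k\in\widetilde K_{G_2}\subseteq\widetilde K_{F_4}$ is immediate; that $D_{\ell/2}^{G_2}\varphi'\equiv 0$ follows exactly as in \cite[Corollary 4.2.3]{pollackCuspidal}, using that the Cartan involution on $\mathfrak f_4$ restricts to the one on $\mathfrak g_2$, so that the Schmid operator for $\widetilde G_2$ is cut out of the one for $\widetilde F_4$ upon restriction. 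Hence Corollary \ref{Cor: fourier} already furnishes $\varphi'$ with a Fourier expansion of the displayed shape, and only the value of $b(\omega';\alpha_{2\pi\omega'})$ is at issue. I would then record the compatibility of the Heisenberg data under $\widetilde G_2(\R)\subseteq\widetilde F_4(\R)$: the one-dimensional centers coincide ($Z'=Z=\exp(\R E_{13})$), one has $N'(\R)\subseteq N_J(\R)$ and $N'(\Z)\subseteq N_J(\Z)$ compatibly with the Chevalley models, and the inclusion $\iota\colon W_{\R}\hookrightarrow W_J$, $(r,s,t,z)\mapsto (r,s\,1_J,t\,1_J,z)$, satisfies $\langle\omega,\iota(w)\rangle_{W_J}=\langle\tr(\omega),w\rangle_{W_{\R}}$, so the restriction to $N'$ of the character of $N_J$ attached to $\omega$ is the character attached to $\tr(\omega)$.

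The heart of the matter is the restriction of the Whittaker functions. Since $r_0^{F_4}(i\cdot 1_J)=\iota(r_0^{G_2}(i))$, for $g$ on the Heisenberg Levi of $\widetilde G_2(\R)$ one has $\langle\omega,g\cdot r_0^{F_4}(i)\rangle=\langle\tr(\omega),g\cdot r_0^{G_2}(i)\rangle$, so the restriction of $\alpha_{2\pi\omega}$ is a continuous genuine square root, on the connected group $\widetilde{H_{G_2}(\R)^+}$, of $g\mapsto 2\pi\langle\tr(\omega),g\cdot r_0^{G_2}(i)\rangle$, hence equals $\epsilon(\omega;\tr(\omega))\,\alpha_{2\pi\tr(\omega)}$ with $\epsilon(\omega;\tr(\omega))\in\{\pm1\}$. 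Substituting into \eqref{eqn: whittaker function} and using that $2v$ is an \emph{odd} integer, so that $\epsilon(\omega;\tr(\omega))^{2v}=\epsilon(\omega;\tr(\omega))$ while $|\epsilon(\omega;\tr(\omega))\alpha|=|\alpha|$, gives
\[
\W_{2\pi\omega;\alpha_{2\pi\omega}}\big|_{\widetilde{H_{G_2}(\R)^+}}=\epsilon(\omega;\tr(\omega))\,\W^{G_2}_{2\pi\tr(\omega);\alpha_{2\pi\tr(\omega)}},
\]
and the same identity propagates to all of $\widetilde G_2(\R)$ by the $N'$- and $\widetilde K_{G_2}$-equivariance of both sides, which match thanks to the character compatibility above. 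I would also note that if $\omega$ is positive semi-definite then $p_\omega$ is nowhere zero on $\mathcal H_J$, in particular not identically zero on $\iota(\mathcal H_{G_2})$; since $p_\omega(z\,1_J)=aN(z\,1_J)+(b,(z\,1_J)^\#)+(c,z\,1_J)+d$ is the cubic polynomial $a z^3+\tr(b)z^2+\tr(c)z+d$ in $z$, this forces $\tr(\omega)\neq 0$, so no nonzero positive semi-definite $\omega$ has $\tr(\omega)=0$.

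Now restrict the Fourier expansion of $\varphi$. Since $Z'=Z$, for $g\in\widetilde G_2(\R)$ one has $\varphi'_{Z'}(g)=\varphi_Z(g)$, whence, by the previous step,
\[
\varphi'_{Z'}(g)=\varphi_N(g)+\sum_{\omega}\epsilon(\omega;\tr(\omega))\,a(\omega;\alpha_{2\pi\omega})\,\W^{G_2}_{2\pi\tr(\omega);\alpha_{2\pi\tr(\omega)}}(g),
\]
the sum over nonzero positive semi-definite $\omega\in W_J(\Q)$. Comparing with the expansion of $\varphi'$ from Corollary \ref{Cor: fourier}, I would extract the $\omega'$-isotypic component of each side by integrating against the inverse of the character attached to $\omega'$ over the compact quotient $(N'/Z')(\Z)\backslash(N'/Z')(\R)$; the absolute, locally uniform convergence of the Fourier expansion of $\varphi$ (a standard feature of these forms, the $K$-Bessel factors decaying rapidly) justifies interchanging the sum and the integral. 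As $\W^{G_2}_{2\pi\omega'';\alpha_{2\pi\omega''}}$ transforms under $N'$ by the character attached to $\omega''$, the $\omega$-term survives exactly when $\tr(\omega)=\omega'$, so
\[
b(\omega';\alpha_{2\pi\omega'})\,\W^{G_2}_{2\pi\omega';\alpha_{2\pi\omega'}}=\Big(\sum_{\omega:\,\tr(\omega)=\omega'}\epsilon(\omega;\omega')\,a(\omega;\alpha_{2\pi\omega})\Big)\W^{G_2}_{2\pi\omega';\alpha_{2\pi\omega'}},
\]
and since $\W^{G_2}_{2\pi\omega';\alpha_{2\pi\omega'}}$ is nowhere zero on the Heisenberg Levi when $\omega'$ is positive semi-definite (and both sides vanish otherwise), this is precisely the asserted formula; comparing the $\omega'=0$ components likewise gives $\varphi'_{N'}=\varphi_N|_{\widetilde G_2(\R)}$.

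It remains to see that the sum is finite. Since $\varphi$ has a level, it is invariant under $N_J(N\Z)$ for some $N\geq 1$, so $a(\omega;\alpha_{2\pi\omega})=0$ unless $\omega$ lies in a fixed lattice $M\subseteq W_J(\Q)$; and by Theorem \ref{thm:FE} also unless $\omega$ is positive semi-definite. Thus the sum runs over $M\cap\{\omega\in W_J(\R):\tr(\omega)=\omega',\ \omega\text{ positive semi-definite}\}$, and finiteness reduces to the \emph{boundedness} of the set of positive semi-definite $\omega\in W_J(\R)$ with $\tr(\omega)$ equal to a fixed vector. I expect this boundedness to be the one step requiring an idea beyond bookkeeping: it should follow from the fact that the closure of the positive semi-definite cone meets $\ker(\tr)$ only in $\{0\}$ — a consequence of the observation of the previous paragraph, namely that $p_\omega$ vanishing on the curve $\{z\,1_J:z\in\mathcal H_{G_2}\}\subseteq\mathcal H_J$ (which is exactly the condition $\tr(\omega)=0$) obstructs $\omega$ from being a limit of positive semi-definite vectors. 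Granting that, a bounded subset of the lattice $M$ is finite, which completes the argument; this is the Freudenthal analogue of the classical fact that a positive semi-definite symmetric matrix of fixed trace is bounded, the very phenomenon underlying the $\Sp_6$ picture of the introduction. Everything else in the proof follows the definitions.
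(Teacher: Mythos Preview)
Your proposal is correct and follows the same approach as the paper: restrict the Fourier expansion term by term, verifying that each $\W_{2\pi\omega;\alpha_{2\pi\omega}}$ restricts to $\epsilon(\omega;\tr(\omega))\,\W_{2\pi\tr(\omega);\alpha_{2\pi\tr(\omega)}}$ on $\widetilde G_2(\R)$. You in fact supply considerably more detail than the paper, which merely asserts the Whittaker-restriction identity and explicitly omits the finiteness argument (citing \cite[Section 5.1]{pollackCuspidal} and noting it is not needed for the application); your sketch of finiteness via the cone/kernel argument is along the right lines, though the closure claim would require a bit more care than you give it.
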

\begin{proof} The point is that one can simply restrict the Fourier expansion of $\varphi$ to $\widetilde{G}_2(\R)$ to obtain the Fourier expansion of $\varphi'$.  In more detail, one checks that when the function $\W_{\omega,\alpha_{2\pi\omega}}$ on $\widetilde{F}_4(\R)$ is restricted to $\widetilde{G}_2(\R)$, one obtains the function $\epsilon(\omega;\tr(\omega))\W_{2\pi\tr(\omega);\alpha_{2\pi\tr(\omega)}}$ on $\widetilde{G}_2(\R)$.  We omit the proof of the finiteness claim, as we do not really need it, but we note that it follows from the vanishing of the Fourier coefficients that are not positive semidefinite, and that a similar argument can be found in \cite[Section 5.1]{pollackCuspidal}. \end{proof}

In particular, if we can control the signs $\epsilon(\omega;\omega')$, we can use our knowledge of the Fourier expansion of $\Theta_{F_4}$ to obtain information about the Fourier expansion of $\Theta_{G_2}$.  The following lemma controls the signs $\epsilon(\omega;\omega')$.

Below, for $T \in J_0$, we set $\overline{n}(T) = \exp(\delta_2 \otimes T)$, which are unipotent elements of $H_J^1 \subseteq F_4$.
\begin{lemma}\label{lem:notopp2} Suppose $\gamma_1 = \overline{n}(T_1)$ and $\gamma_2 = \overline{n}(T_2)$ are such that $\det(T_1 t +1) = \det(T_2 t + 1)$.  Then $\alpha_{2\pi(0,0,0,1)}^{\gamma_1}$ and $\alpha_{2\pi(0,0,0,1)}^{\gamma_2}$ have equal (as opposed to opposite) restrictions on $\widetilde{G}_2(\R).$
\end{lemma}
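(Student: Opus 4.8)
The plan is to reduce the claim to the single fact that $p_{(0,0,0,1)}\equiv 1$ on $\mathcal{H}_J$, together with the connectedness of the Heisenberg Levi of $\widetilde{G}_2(\R)$ and a path argument inside $\SO(3)$. First I would record the consequence of $p_{(0,0,0,1)}\equiv 1$: since $\langle\omega,hr_0(Z)\rangle=-j(h,Z)p_\omega(hZ)$ for $h\in H_J(\R)^+$ and $Z\in\mathcal{H}_J$, one gets $\langle(0,0,0,1),hr_0(Z)\rangle=-j(h,Z)$. Thus if $\widetilde{g}$ lies in the Heisenberg Levi $\widetilde{H_{\mathbf{Q}}^1(\R)^+}$ of $\widetilde{G}_2(\R)$, with image $g\in\SL_2(\R)=H_{\mathbf{Q}}^1(\R)^+$ and $z=g\cdot i$ in the upper half plane, the cocycle property of $j$ and the embedding $\mathcal{H}_{\mathbf{Q}}\hookrightarrow\mathcal{H}_J$, $z\mapsto z\cdot 1_J$, give
\[
\alpha^{\gamma_k}_{2\pi(0,0,0,1)}(\widetilde{g})^2=2\pi\,\langle(0,0,0,1),\gamma_k\widetilde{g}\,r_0(i\cdot 1_J)\rangle=-2\pi\,j(\overline{n}(T_k),z\cdot 1_J)\,j(g,i\cdot 1_J),\qquad k=1,2.
\]

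Next I would show that $j(\overline{n}(T_1),z\cdot 1_J)=j(\overline{n}(T_2),z\cdot 1_J)$ for every $z$. The hypothesis $\det(T_1t+1)=\det(T_2t+1)$ says $T_1,T_2$ are real symmetric $3\times 3$ matrices with the same characteristic polynomial, hence the same eigenvalues, hence conjugate by an element $O\in\SO(3)$ (the $\mathrm{O}(3)$-conjugacy can be realized in $\SO(3)$ since conjugation by $-I$ is trivial). Now $\SO(3)\subseteq\SL_3\subseteq H_J^1$: it fixes $1_J$, hence $z\cdot 1_J$; it acts trivially on the two outer $\mathbf{Q}$-summands of $W_J=\mathbf{Q}\oplus J\oplus J^\vee\oplus\mathbf{Q}$ (the Freudenthal Levi acts block-diagonally); it preserves the symplectic form $\langle\,,\,\rangle$; and, being connected and semisimple, it has no nontrivial character, so $j(O',z\cdot 1_J)=1$ for all $O'\in\SO(3)$. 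Since $\overline{n}(T_2)$ is a conjugate of $\overline{n}(T_1)$ by an element $O'\in\SO(3)$ (conjugation carries $\overline{n}(T)=\exp(\delta_2\otimes T)$ to $\overline{n}$ of an $\SO(3)$-conjugate of $T$, up to a transpose that does not change the $\SO(3)$-orbit), the chain $\langle(0,0,0,1),\overline{n}(T_2)r_0(z\cdot 1_J)\rangle=\langle(0,0,0,1),O'\overline{n}(T_1)O'^{-1}r_0(z\cdot 1_J)\rangle=\langle O'^{-1}(0,0,0,1),\overline{n}(T_1)r_0(z\cdot 1_J)\rangle=\langle(0,0,0,1),\overline{n}(T_1)r_0(z\cdot 1_J)\rangle$ together with $j=-\langle(0,0,0,1),(\cdot)\,r_0\rangle$ yields the desired equality. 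Consequently $\alpha^{\gamma_1}_{2\pi(0,0,0,1)}$ and $\alpha^{\gamma_2}_{2\pi(0,0,0,1)}$ have the same square on $\widetilde{H_{\mathbf{Q}}^1(\R)^+}$, and neither vanishes there (all $j$-values lie in $\C^\times$); since $\widetilde{H_{\mathbf{Q}}^1(\R)^+}$ is connected — it is a connected double cover of $\SL_2(\R)$, as $j_{1/2}$ restricts there to a genuine square root of the automorphy factor (Lemma \ref{lem:j12}) — their ratio is a constant $c\in\{\pm 1\}$.

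It then remains to show $c=1$, which I would do by evaluating at the identity and a path argument. Taking $\widetilde{g}=1$ reduces this to $\alpha_{2\pi(0,0,0,1)}(\overline{n}(T_1))=\alpha_{2\pi(0,0,0,1)}(\overline{n}(T_2))$. Choosing a path $O_s$ in $\SO(3)$ from $O_0=1$ to $O_1=O$ and letting $T_1^{(s)}=O_sT_1O_s^{-1}$, the map $s\mapsto\overline{n}(T_1^{(s)})$ is a continuous path in $\widetilde{H_J(\R)^+}$ (the canonical splitting of unipotents is continuous), running from $\overline{n}(T_1)$ to $\overline{n}(T_2)$. By the $\SO(3)$-invariance established above (now with base point $i\cdot 1_J$), $\alpha_{2\pi(0,0,0,1)}(\overline{n}(T_1^{(s)}))^2=-2\pi\,j(\overline{n}(T_1^{(s)}),i\cdot 1_J)$ is independent of $s$ and nonzero, so the continuous map $s\mapsto\alpha_{2\pi(0,0,0,1)}(\overline{n}(T_1^{(s)}))$ takes values in a two-element set and is therefore constant on $[0,1]$; comparing $s=0$ and $s=1$ gives $c=1$, and the lemma follows.

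I expect the point requiring care to be the behaviour of $\SO(3)$ inside $H_J^1$: that conjugation sends $\overline{n}(T)$ to $\overline{n}$ of an $\SO(3)$-conjugate of $T$, that $\SO(3)$ fixes the base point $z\cdot 1_J$, and that it acts trivially on the two outer $\mathbf{Q}$-coordinates of $W_J$ while preserving $\langle\,,\,\rangle$; granting these, the vanishing of the relevant $\SO(3)$-characters and the $\SO(3)$-invariance of $j(\overline{n}(\,\cdot\,),Z)$ are formal, and the remaining ingredients (connectedness of $\widetilde{H_{\mathbf{Q}}^1(\R)^+}$, continuity of the canonical splitting of unipotents, $p_{(0,0,0,1)}\equiv 1$) are immediate from the earlier sections. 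In particular one never has to write down the explicit factor of automorphy $j(\overline{n}(T),Z)$.
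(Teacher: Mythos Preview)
Your argument is essentially correct and rests on the same core idea as the paper: connect $T_1$ to $T_2$ by a continuous path $s\mapsto T_1^{(s)}$ inside an $\SO(3)$-orbit, observe that $\alpha_{2\pi(0,0,0,1)}(\overline{n}(T_1^{(s)}))^2$ is constant and nonzero along the path, and conclude by continuity. Your first block of work (showing directly via the $\SO(3)$-structure of $H_J^1$ that the squares agree on all of $\widetilde{H^1_\Q(\R)^+}$) is valid but unnecessary: since $\det(T_1 t+1)=\det(T_2 t+1)$ forces $\tr((0,0,0,1)\cdot\gamma_1)=\tr((0,0,0,1)\cdot\gamma_2)$, both restrictions are already of the form $\pm\alpha_{2\pi\omega'}$ for the \emph{same} $\omega'$, so their ratio is automatically a locally constant $\pm 1$. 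The paper skips straight to evaluating at $g=1$, which is all that is actually needed.

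There is one genuine omission. When you write ``Taking $\widetilde g=1$ reduces this to $\alpha_{2\pi(0,0,0,1)}(\overline n(T_1))=\alpha_{2\pi(0,0,0,1)}(\overline n(T_2))$'' and then invoke the continuous unipotent splitting for the path, you are silently assuming that the lift of $\gamma_i=\overline n(T_i)$ used in the definition of $\alpha^{\gamma_i}$ --- namely $s_\Gamma(\gamma_i)$, the $\Gamma_{F_4}(4)$-splitting --- coincides with the canonical unipotent lift. This is exactly the content of Proposition~\ref{prop:sGammaUnip} (for $\alpha\in\Phi_{N_S}^{-}$), and the paper invokes it explicitly at this step. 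Without it, your endpoints $\overline n(T_1),\overline n(T_2)$ in $\widetilde{H_J(\R)^+}$ are not \emph{a priori} the points at which $\alpha^{\gamma_1}(1),\alpha^{\gamma_2}(1)$ are evaluated, and the path argument would not connect the two quantities you need. Once you add this citation, your proof is complete and equivalent to the paper's.
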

\begin{proof} We have $\alpha_{2\pi\omega}(g) = \sqrt{-1} j_{1/2}(g,x_0) p_{2\pi\omega}(gi)^{1/2}$ for a fixed squareroot of $p_{2\pi \omega}(Z)$.  Thus
	\[\alpha_{2\pi(0,0,0,1)}^{\gamma_i}(1)= \alpha_{2\pi(0,0,0,1)}(\gamma_i) = \sqrt{-1}j_{1/2}(\overline{n}(T_i),x_0) p_{2\pi(0,0,0,1)}(\gamma_i \cdot i)^{1/2}.\]
	Note that $p_{2\pi(0,0,0,1)}(Z)^{1/2}$ is constant.  We thus must analyze $j_{1/2}(\overline{n}(T_i),x_0)$.  But now note that there is a unique splitting $\overline{n}(J_3(\R))\rightarrow \widetilde{F}_4(\R)$, this splitting is continuous, and by Lemma \ref{prop:sGammaUnip}, this continuous splitting agrees with the splitting over $\Gamma_{F_4}(4)$. Consequently $j_{1/2}(\overline{n}(T),x_0)$ is a continuous function of $T \in J_3(\R)$, and thus a fixed squareroot of $\det(T i +1)$. Now, by Lemma \ref{lem:SO3trans} proved below, there is a path of $g_t \in \SO_3(\R)$ (which is connected) connecting $T_1$ to $T_2$. Thus $\det(T_1i +1)^{1/2}$ varies continuously to $\det(T_2 i +1)^{1/2}$ via $\det(g_t T_1 g_t^t i + 1)^{1/2}$.  But $\det(g_t T_1 g_t^t i + 1) = \det(T_1 i +1)$ because $g_t \in \SO_3(\R)$.  The lemma follows.
\end{proof}

To describe the Fourier coefficients of $\Theta_{G_2}$, we require the following definition.
\begin{definition} Recall that $J_0 :=S^2(\Z^3)=H_3(\Z)$ denotes the symmetric $3 \times 3$ matrices with integer entries.  If $X \in J_0$, then $\det(tI +X)$ is a monic cubic polynomial with integer coefficients.  For a cubic monic polynomial $p$ with integer coefficients, let
	\[Q_{p} := \{X \in J_0: \det(tI + X) = p(t)\}\]
be the set of $X$ in $J_0$ with $\det(tI + X) = p(t)$.
\end{definition}
The set $Q_p$ is finite, and can only be nonempty when $p(t)$ has three real roots.  In fact, it can be empty even when $p(t)$ has three real roots.

We now assume that $\Theta_{F_4}$ is normalized so that its $(0,0,0,1)$-Fourier coefficient is $\pm1$.  Putting everything together, we have the following result computing a family of Fourier coefficients of $\Theta_{G_2}$.
\begin{theorem}\label{thm:G2main} The pullback $\Theta_{G_2}$ of $\Theta_{F_4}$ to $\widetilde{G}_2(\R)$ has the following Fourier coefficients: If $a,b,c$ are integers and $p(u,v) = au^3 +bu^2v + cuv^2 + v^3$, then the $p(u,v)$ Fourier coefficient of $\Theta_{G_2}$ is $\pm |Q_{p(1,t)}|$.\end{theorem}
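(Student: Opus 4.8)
The plan is to feed the known Fourier support of $\Theta_{F_4}$ into the restriction formula for Fourier coefficients (Lemma~\ref{lem:FErestrict}), classify the vectors that survive, and then track signs using Lemma~\ref{lem:notopp2} together with the $H^1_J$-equivariance of the $\Theta_{F_4}$-coefficients. Let $\omega'\in\mathrm{Sym}^3(\Q^2)$ be the element corresponding to $p(u,v)=au^3+bu^2v+cuv^2+v^3$. By Lemma~\ref{lem:FErestrict} the $p$-Fourier coefficient of $\Theta_{G_2}$ is $\sum_{\omega}\epsilon(\omega;\omega')\,a(\omega;\alpha_{2\pi\omega})$, the sum over $\omega\in W_J(\Q)$ with $\tr(\omega)=\omega'$; and by Lemma~\ref{lem:FElem} only those $\omega\in W_J(\Z)$ of rank one appear in the Fourier expansion of $\Theta_{F_4}$, each of which has last coordinate $1$ since $\tr$ preserves it.

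The first real step is an arithmetic-invariant-theory computation identifying the surviving $\omega$'s with $Q_{p(1,t)}$. Using the rank-one relations $(b')^{\#}=a'c'$ and $(c')^{\#}=b'$ for $\omega=(a',b',c',1)$ (cf.\ \cite[Proposition 11.2]{ganSavin}) together with the adjoint identity $((c')^{\#})^{\#}=N(c')\,c'$, one finds that $\omega$ is necessarily $\omega_X:=(N(X),X^{\#},X,1)$ for a unique $X\in J$, and $\omega_X\in W_J(\Z)$ if and only if $X\in J_0=H_3(\Z)$ (which is closed under $\#$ and has $N=\det$). For $X\in H_3$ one has the characteristic-polynomial identity $\det(tI+X)=t^3+\tr(X)t^2+\tr(X^{\#})t+N(X)$, so the equation $\tr(\omega_X)=\omega'$ is literally $\det(tI+X)=p(1,t)$, i.e.\ $X\in Q_{p(1,t)}$. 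Thus the sum runs over the finite set $\{\omega_X:X\in Q_{p(1,t)}\}$.

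The second step is to evaluate the terms and see they are all equal. The vector $\omega_X$ is the translate $(0,0,0,1)\cdot\overline{n}(X)$ of $(0,0,0,1)$ by the Siegel unipotent element $\overline{n}(X)=\exp(\delta_2\otimes X)$, which lies in $\Gamma_{F_4}(4)\cap H^1_J(\R)$ for $X\in J_0$ by Proposition~\ref{prop:sGammaUnip}. Hence the equivariance $a(\omega;\alpha_{2\pi\omega})=a(\omega\cdot\gamma;\alpha^{\gamma}_{2\pi\omega})$ for $\gamma\in\Gamma_{F_4}(4)\cap H^1_J(\R)$ gives $a(\omega_X;\alpha^{\overline{n}(X)}_{2\pi(0,0,0,1)})=a((0,0,0,1);\alpha_{2\pi(0,0,0,1)})=\pm1$, with the sign independent of $X$ by the normalization of $\Theta_{F_4}$. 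Comparing $\alpha^{\overline{n}(X)}_{2\pi(0,0,0,1)}$ with the fixed choice $\alpha_{2\pi\omega_X}$ (both genuine square roots of $g\mapsto 2\pi\langle\omega_X,g r_0(i)\rangle$) introduces a sign $\eta(X)$, so $a(\omega_X;\alpha_{2\pi\omega_X})=\eta(X)\cdot(\pm1)$; on the other hand, since all $X\in Q_{p(1,t)}$ share the same characteristic polynomial they also satisfy $\det(Xt+1)=\det(X't+1)$, so Lemma~\ref{lem:notopp2} shows the restrictions $\alpha^{\overline{n}(X)}_{2\pi(0,0,0,1)}|_{\widetilde{G}_2(\R)}$ all coincide, which forces $\epsilon(\omega_X;\omega')=\eta(X)\theta_0$ for a sign $\theta_0$ independent of $X$. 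The two copies of $\eta(X)$ cancel: $\epsilon(\omega_X;\omega')\,a(\omega_X;\alpha_{2\pi\omega_X})=\theta_0\cdot(\pm1)$ for every $X$, and summing over $X\in Q_{p(1,t)}$ yields $\pm|Q_{p(1,t)}|$.

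I expect the main obstacle to be this last sign bookkeeping — making precise the factors $\eta(X)$, $\theta_0$ and verifying that they cancel — in combination with the rank-one classification (checking that the rank-one locus with last coordinate $1$ is exactly the image of $X\mapsto\omega_X$, and that $\tr(\omega_X)=\omega'$ unwinds into membership in $Q_{p(1,t)}$). The remaining points — the translation identity $\omega_X=(0,0,0,1)\cdot\overline{n}(X)$ and the containment $\overline{n}(X)\in\Gamma_{F_4}(4)$ — should be routine given Proposition~\ref{prop:sGammaUnip} and the explicit description of the $H^1_J$-action on $W_J$.
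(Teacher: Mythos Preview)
Your proposal is correct and follows essentially the same approach as the paper's proof, which invokes Lemma~\ref{lem:notopp2} and Lemma~\ref{lem:FErestrict} in one sentence and leaves the rank-one classification and sign cancellation implicit. Your write-up simply makes explicit the identification of the contributing $\omega$'s with $\{\omega_X:X\in Q_{p(1,t)}\}$ and the cancellation of the signs $\eta(X)$ between the equivariance step and the restriction step, which is exactly what the paper's terse argument is relying on.
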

\begin{proof}By Lemma \ref{lem:notopp2} and Lemma \ref{lem:FErestrict}, the Fourier coefficient of $\Theta_{G_2}$ corresponding to $p(u,v)$ is the sum of the Fourier coefficients of $\Theta_{F_4}$ corresponding to elements $(\det(T),T^\#,T,1)$ in $W_J$ with $T \in J_0$ and $\det(t1 + T)=p(1,t)$.  Thus the desired Fourier coefficient of $\Theta_{G_2}$ is  given by a sign times the number of $T' \in J_0$ with $\det(tI + T') = p(1,t)$.  This is $|Q_{p(1,t)}|$, as claimed.
\end{proof}

\section{Arithmetic invariant theory}\label{sec:AIT}
The purpose of this section is to do some arithmetic invariant theory related to the set $Q_p$.  In particular, if $R = \Z[t]/(p(t))$, then we relate $Q_p$ to the sets $Q_R$ defined as follows. Set $E = R \otimes \Q$ and assume that $p(t)$ is such that $E$ is an \'etale $\Q$-algebra.  If $I$ is a fractional ideal of $R$ and $\mu \in E^\times$ is totally positive, again as before say that $(I,\mu)$ is \textbf{balanced} if
\begin{itemize}
	\item $\mu I^2 \subseteq \mathfrak{d}_R^{-1}$
	\item $N(\mu) N(I)^2 \mathrm{disc}(R/\Z) = 1$.
\end{itemize}
Note that this all makes sense, regardless of if $E$ is a field.  One puts on pairs $(I,\mu)$ an equivalence relation: $(I,\mu) \sim (\beta I, \beta^{-2} \mu)$ for $\beta \in E^\times$ and lets $Q_R$ denote the set of equivalence classes.

\subsection{The case of a field}
Let $R = \Z[t]/(p(t))$ be a monogenic order in a totally real cubic field $E = R \otimes \Q$.  Observe that the group $\SO_3(\Z)$ acts on the set $Q_p$ by $X \mapsto gXg^t$.
\begin{lemma} Suppose $T \in J_0$ has $\det(tI + T) = p(t)$. Then $\SO_3(\Z)$ acts freely on $T$, i.e., if $g \in \SO_3(\Z)$ and $gTg^t = T$, then $g =1$.
\end{lemma}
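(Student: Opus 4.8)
The plan is to show that the centralizer of any $T\in J_0$ with $\det(tI+T)=p(t)$ inside $\SO_3$ is trivial, under the hypothesis that $p(t)$ is irreducible (equivalently $E=R\otimes\Q$ is a field). First I would observe that since $T$ is a real symmetric matrix it is diagonalizable over $\R$, and since $\det(tI+T)=p(t)$ is separable (a cubic field is separable, so $p$ has three distinct real roots), the eigenvalues $-\lambda_1,-\lambda_2,-\lambda_3$ of $T$ are distinct. Consequently the centralizer of $T$ in $\mathrm{GL}_3(\R)$ is the set of matrices simultaneously diagonalized with $T$, i.e. a two-dimensional torus $\cong (\R^\times)^3\cap\{\det=1\}$ intersected appropriately; in particular any $g\in\mathrm{GL}_3(\R)$ with $gTg^t=T$ that is also orthogonal must be diagonal in the eigenbasis of $T$ with diagonal entries $\pm 1$. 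So the issue is purely to rule out the nontrivial diagonal sign matrices.

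The key step is then to upgrade "$g\in\SO_3(\Z)$ commutes with $T$" to "$g$ acts trivially," using the \emph{integral} and \emph{field} structure. Here I would use that $\Z[T]\subseteq M_3(\Z)$ is isomorphic to $R=\Z[t]/(p(t))$ via $t\mapsto T$ (this uses that $p$ is the characteristic polynomial and irreducible, so $\Z[T]$ is free of rank $3$ and $T$ generates), and that $\Q[T]\cong E$ is a \emph{field}. Now $g\in\SO_3(\Z)$ with $gTg^t=g T g^{-1}=T$ means $g$ commutes with the subalgebra $\Q[T]=E$ of $M_3(\Q)$. Since $E$ is a cubic field acting on $\Q^3$, the commutant of $E$ in $\mathrm{End}_\Q(\Q^3)=M_3(\Q)$ is exactly $E$ itself (by the double-commutant theorem / because $\Q^3$ is a one-dimensional $E$-vector space, so $\mathrm{End}_E(\Q^3)=E$). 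Hence $g\in E^\times$, i.e. $g=q(T)$ for some polynomial $q$ with rational coefficients; combined with $g\in M_3(\Z)$ and $g^{-1}=g^t\in M_3(\Z)$, we get $g$ is a unit of $R$ (as $\Z[T]\cong R$ and $R$ is integrally closed... or at least $g,g^{-1}\in\Z[T]$). Finally $g^t=g$ would follow if we knew $g$ is symmetric, but more directly: $g\in E^\times$ acting on $\R^3$ is orthogonal, and the eigenvalues of $g$ are the images of the unit $g$ under the three real embeddings $E\hookrightarrow\R$; orthogonality forces these to have absolute value $1$, hence to be $\pm1$, hence (being algebraic conjugates and the embeddings distinct) all equal, so $g=\pm 1$ as an element of $E$, i.e. $g=\pm I$; and $g\in\SO_3$ (determinant $1$) kills the $-I$ case since $\det(-I)=-1$ in dimension $3$. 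Therefore $g=I$.

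The main obstacle I anticipate is making the algebra step fully rigorous and self-contained: specifically verifying that $\Z[T]$ is genuinely a copy of $R$ with $T$ a generator, and that the centralizer of $E=\Q[T]$ in $M_3(\Q)$ is $E$ (rather than something larger) — this is where the irreducibility of $p$ is essential, since if $p$ factored one could have a larger commutant (e.g. $\R^3$ decomposing and $g$ permuting or sign-changing factors independently). A clean way around fiddling with integrality of $R$ is to argue directly with eigenvalues as in the first paragraph: $g$ is simultaneously diagonalized with $T$, so $g=\mathrm{diag}(\epsilon_1,\epsilon_2,\epsilon_3)$ in the eigenbasis with $\epsilon_i=\pm1$; but $g\in E^\times$ means $g=q(T)$, so $\epsilon_i=q(-\lambda_i)$, meaning the single polynomial $q$ takes prescribed $\pm1$ values at the three \emph{conjugate} roots $-\lambda_i$; since the $\lambda_i$ are Galois-conjugate (as $p$ is irreducible), the tuple $(\epsilon_1,\epsilon_2,\epsilon_3)$ must be Galois-stable, forcing all $\epsilon_i$ equal, hence $g=\pm I$, hence $g=I$. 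I would write up this eigenvalue/Galois argument as the cleanest route, and only invoke the commutant statement to conclude $g\in\Q[T]$ in the first place, citing the double-commutant theorem or giving the one-line proof via $\Q^3$ being $1$-dimensional over $E$.
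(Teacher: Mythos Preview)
Your argument is correct and shares the paper's key step: from $gTg^{-1}=T$ and the irreducibility of $p$, conclude $g\in\Q[T]\cong E$ via the commutant of the cubic field acting on $\Q^3$. Where you diverge is in the finish. The paper observes that since $T$ is symmetric and $g$ is a polynomial in $T$, $g$ itself is symmetric; then $1=gg^t=g^2$, so $g$ is a square root of $1$ in the field $E$, forcing $g=\pm 1$, and $\det(g)=1$ gives $g=1$. You nearly wrote this yourself (``$g^t=g$ would follow if we knew $g$ is symmetric'') but then pivoted to an eigenvalue/Galois-conjugacy argument. Both finishes are valid, but the symmetry route is a one-liner and avoids the bookkeeping with embeddings and the slightly delicate step of arguing that rational algebraic conjugates must coincide. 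I'd recommend you adopt the symmetry argument: once $g\in\Q[T]$ with $T=T^t$, you get $g=g^t$ for free, and the rest is immediate.
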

\begin{proof}  Suppose $g \in \SO_3(\Z)$, and $T = gTg^t = gTg^{-1}$.  Then $g$ commutes with $T$, so $g \in \Q[T]$.  It follows that $g$ is symmetric, so $1 = gg^t = g^2$.  Thus $g \in \mu_2(\Q[T])$.  But $\Q[T]$ is a field by assumption, so $g = \pm 1$.  Because $\det(g) =1$, $g =1$, proving the lemma.
	
	Note that the lemma is false if we do not assume $R \otimes \Q$ is a field.
\end{proof}

The following lemma is well-known.
\begin{lemma}\label{lem:onbasis} Suppose $M=\Z^3$ has a symmetric bilinear form on it $(\,,\,)$ which is integral, i.e., $(v,w) \in \Z$ for all $v,w \in M$.  Suppose moreover that the bilinear form $(\,,\,)$ is positive-definite and of determinant one, i.e. $\det((v_i,v_j))=1$ for a basis $v_1,v_2,v_3$ of $M$ over $\Z$.  Then $M$ has an orthonormal basis $v_1',v_2',v_3'$.\end{lemma}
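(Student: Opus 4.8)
The plan is to induct on the rank of $M$, peeling off a vector of norm one at each stage. The base case is rank one: if $M = \Z v_1'$ with $(v_1',v_1') = d$ and $\det((v_i,v_j)) = d = 1$, then $v_1'$ already is an orthonormal basis. For the inductive step, suppose $M$ has rank $n \in \{2,3\}$ and the form is integral, positive-definite, and of determinant one. The key claim is that $M$ contains a vector $v$ with $(v,v) = 1$. Granting this, the linear functional $w \mapsto (w,v)$ on $M$ is surjective onto $\Z$, since it sends $v$ to $1$, so its kernel $M' := v^\perp$ is a corank-one direct summand and $M = \Z v \oplus M'$ is an orthogonal decomposition. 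The restriction of the form to $M'$ is again integral and positive-definite, and comparing Gram determinants of a compatible basis gives $1 = \det(M) = (v,v)\cdot \det(M') = \det(M')$, so $M'$ is an integral positive-definite lattice of rank $n-1 \le 2$ and determinant one. By the inductive hypothesis $M'$ has an orthonormal basis $v_2',\dots,v_n'$, and together with $v_1' := v$ this is an orthonormal basis of $M$.

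It remains to establish the key claim: an integral positive-definite lattice of rank $n \le 3$ and determinant one represents the value $1$. For this I would invoke the reduction theory of positive-definite quadratic forms, in the form of Hermite's inequality: a rank-$n$ positive-definite form of determinant $d$ represents a nonzero value that is at most $\left(\tfrac{4}{3}\right)^{(n-1)/2} d^{1/n}$. With $d = 1$ and $n \le 3$ this bound equals $\left(\tfrac{4}{3}\right)^{(n-1)/2} \le \tfrac{4}{3} < 2$. Since the values $(v,v)$ for nonzero $v \in M$ are positive integers, the minimal such value is therefore $1$, and any $v$ attaining it is the desired vector. Alternatively, one can run Minkowski's reduction algorithm by hand for $n = 2$ and $n = 3$ to bound the first diagonal entry of a reduced Gram matrix by $\tfrac{4}{3}$, which avoids citing anything external.

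The main obstacle — in fact the only nontrivial point — is the existence of a norm-one vector, and this is precisely where the low-rank hypothesis enters: the statement fails in rank $8$, where the $E_8$ lattice is unimodular and positive-definite but has minimum $2$, so there is no hope of an argument uniform in $n$. I expect the remaining ingredients (surjectivity of pairing against a norm-one vector, the orthogonal splitting $M = \Z v \oplus v^\perp$, and the determinant bookkeeping) to be entirely routine, so the write-up should consist of little more than the inductive scaffolding above together with the one-line appeal to Hermite's bound.
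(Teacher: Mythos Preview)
Your proof is correct. The paper does not give a proof at all: it simply records the lemma as ``well-known'' and moves on. Your inductive argument via Hermite's inequality (or equivalently Minkowski reduction in low rank) is a standard way to justify this fact, and your remark that it fails for $n=8$ is exactly the right caveat.
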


Here is the main result of this section.
\begin{proposition}\label{prop:AITmain} Suppose $R = \Z[t]/(p(t))$ is an order in a totally real cubic field $E = R \otimes \Q$.  Then there is a bijection (to be given in the proof) between the sets $Q_R$ and $\SO_3(\Z)\backslash Q_p$.  In particular, $|Q_p| = |\SO_3(\Z)| \cdot |Q_R|=24|Q_R|$. \end{proposition}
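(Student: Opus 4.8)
The plan is to set up the standard dictionary between symmetric integral matrices with a fixed characteristic polynomial and fractional ideals equipped with an invariant trace form, and to check that it matches the two equivalence relations in play.

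First I would build the map $\SO_3(\Z)\backslash Q_p \to Q_R$. Given $X\in Q_p$, the subring $\Z[X]\subseteq M_3(\Z)$ is isomorphic to $R$, via the (harmless, since $\Z[-\theta]=\Z[\theta]=R$) identification sending $X$ to $-\theta$, where $\theta$ is the image of $t$; this makes $M=\Z^3$ a torsion-free rank-one $R$-module, since $M\otimes\Q$ is a one-dimensional $E$-vector space. A choice of $E$-linear isomorphism $M\otimes\Q\cong E$ then identifies $M$ with a fractional $R$-ideal $I$. Because $X$ is symmetric, the standard inner product on $M$ is $R$-invariant, hence of the form $\langle v,w\rangle=\tr_{E/\Q}(\mu v w)$ for a unique $\mu\in E^\times$; positive-definiteness forces $\mu$ totally positive, integrality of the form on $I$ is precisely $\mu I^2\subseteq\mathfrak{d}_R^{-1}$, and the elementary identity $\det(\tr_{E/\Q}(\mu v_iv_j))=N(\mu)N(I)^2\,\mathrm{disc}(R/\Z)$ combined with the fact that the standard form is unimodular gives the second balancedness condition. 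Replacing the chosen isomorphism $M\otimes\Q\cong E$ by a different one changes $(I,\mu)$ to $(\beta I,\beta^{-2}\mu)$ for some $\beta\in E^\times$, and replacing $X$ by $gXg^t$ with $g\in\SO_3(\Z)$ gives an isomorphic module-with-form; so the class $[I,\mu]\in Q_R$ depends only on the $\SO_3(\Z)$-orbit of $X$.

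For the inverse, given a balanced pair $(I,\mu)$ I would put on the rank-three $\Z$-lattice $I$ the symmetric bilinear form $\langle v,w\rangle_\mu=\tr_{E/\Q}(\mu v w)$. Balancedness says this form is integral and, since $\mu$ is totally positive, positive definite, with determinant $N(\mu)N(I)^2\,\mathrm{disc}(R/\Z)=1$; here total reality of $E$ is used, so that $\mathrm{disc}(R/\Z)>0$. Lemma \ref{lem:onbasis} now supplies an orthonormal $\Z$-basis of $(I,\langle\,,\,\rangle_\mu)$, i.e.\ an isometry $(I,\langle\,,\,\rangle_\mu)\cong(\Z^3,\mathrm{std})$. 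Transporting multiplication by $-\theta$ through this isometry yields a matrix $X\in M_3(\Z)$; self-adjointness of multiplication for the trace form makes $X$ symmetric, and a direct eigenvalue computation gives $\det(tI+X)=p(t)$, so $X\in Q_p$. Two orthonormal bases differ by an element of $\OO_3(\Z)$, and since $\OO_3(\Z)=\{\pm1\}\times\SO_3(\Z)$ with $-1$ central and acting trivially by conjugation, the $\SO_3(\Z)$-orbit of $X$ is well defined; replacing $(I,\mu)$ by an equivalent pair gives an isometric $R$-module and hence the same orbit.

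Tracing the two constructions through shows they are mutually inverse (the ambiguities match exactly: $E^\times$-scaling of $(I,\mu)$ against the choice of $M\otimes\Q\cong E$, and $\OO_3(\Z)=\pm\SO_3(\Z)$-conjugacy against the choice of orthonormal basis). Finally, combining with the preceding lemma — that $\SO_3(\Z)$ acts freely on $Q_p$ when $E=R\otimes\Q$ is a field — and $|\SO_3(\Z)|=24$, we get $|Q_p|=24\,|\SO_3(\Z)\backslash Q_p|=24\,|Q_R|$. The main obstacle is not any single hard step but the bookkeeping that makes the equivalence relations and orbit relations match up, together with the sign in the determinant computation: it is precisely total reality of $E$ and total positivity of $\mu$ that make Lemma \ref{lem:onbasis} applicable. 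I would record the two auxiliary facts used — that $R$-invariant symmetric forms on a rank-one $R$-module are exactly the $\tr_{E/\Q}(\mu\,\cdot)$, and the determinant formula for them — as short lemmas, as both are elementary.
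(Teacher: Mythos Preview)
Your proposal is correct and follows essentially the same approach as the paper: both directions of the bijection are constructed exactly as in the paper's proof (viewing $\Z^3$ as an $R$-module via multiplication by $-\theta$, recognizing the standard form as a trace form $\tr_{E/\Q}(\mu\,\cdot\,)$, and invoking Lemma \ref{lem:onbasis} for the inverse), and the counting uses the same free-action lemma together with $|\SO_3(\Z)|=24$. Your write-up is in fact slightly more explicit about why the $\OO_3(\Z)$-ambiguity reduces to an $\SO_3(\Z)$-ambiguity, but otherwise there is nothing to add.
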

As mentioned in the introduction, this proposition essentially follows from the work in \cite{swaminathan2021average}.  Because \cite{swaminathan2021average} is much more general, we give a direct proof of this simple case that we need.
\begin{proof} Let $\omega$ be the image of $t$ in $R = \Z[t]/(p(t))$.  Associated to a $T \in J_0$ with $\det(tI + T) = p(t)$, we obtain a module $M = \Z^3$, together with a unimodular quadratic form $(\,,\,)$ and orthonormal basis $e_1, e_2, e_3$.  The element $T$ defines an action of $R$ on $M$, via $\omega m = -Tm$.  Because $T$ is symmetric, this action is symmetric for the bilinear form: $(v,\lambda w) = (\lambda v, w)$ for all $v,w \in M$ and $\lambda \in R$.
	
	We can think of $M$ as a fractional ideal $I$ of $E := R \otimes \Q$. That is, $I = \Z e_1 + \Z e_2 + \Z e_3$ with $e_1,e_2,e_3 \in E$ such that $-\omega e_i = \sum_{j}{T_{ij} e_j}$. Moreover, because the action of $R$ is symmetric, the bilinear form on $I$ is of the form $(v,w) = \tr(\mu v w)$ for some fixed $\mu \in E^\times$.  Because the bilinear form is positive definite and because $E$ is totally real, $\mu$ must be totally positive.  We thus obtain a pair $(I,\mu)$.  The choice of $I$ is well-defined up to scalar multiple.  We claim that the pair $(I,\mu)$ is balanced.  To see this, first note that because our form $(v,w) = \tr(\mu v w)$ is integral on $I$, and $I$ is a fractional ideal, we have $\mu I^2 \subseteq \mathfrak{d}_R^{-1}$.  Now, one checks easily that $\det((\tr(\mu v_i v_j))) = N(\mu) \det((\tr(v_i v_j)))$.  Thus
	\[1 = \det((e_i,e_j)) = N(\mu)\det(\tr(e_i e_j)) = N(\mu) N(I)^2 \mathrm{disc}(R/\Z).\]
	
	Thus, out of $T \in Q_p$, we have constructed a class $[I,\mu]$ in $Q_R$.  Tracing through the maps, one sees that $[I,\mu]$ is well-defined.  Moreover, if $g \in \SO_3(\Z)$, then $g \cdot T$ maps to the same pair $[I,\mu]$, because the action of $g$ just changes the basis $e_1,e_2,e_3$ of $I$.
	
	In the reverse direction, suppose given a pair $(I,\mu)$ with $(I,\mu)$ balanced.  Then the pairing $(v,w) = \tr(\mu vw)$ on $I$ is integral.  Moreover, if $v_1,v_2,v_3$ is an integral basis of $I$, then $\det((v_i,v_j)) = \det(\tr(\mu v_iv_j)) = N(\mu)N(I)^2 \mathrm{disc}(R/\Z) =1.$ By Lemma \ref{lem:onbasis}, $I$ has an orthonormal basis $e_1,e_2,e_3$.  We thus obtain $T := -((e_i,\omega e_j))_{ij}$ with $\det(tI+T) = p(t)$.  The basis $e_1,e_2,e_3$ is well-defined up to the action of $O_3(\Z) = \{\pm 1\} \times \SO_3(\Z)$ so the element $T$ is well-defined in the orbit space $\SO_3(\Z)\backslash Q_p$.
	
	The maps described above are inverse bijections. Noting that $|\SO_3(\Z)|=24$, the proposition follows.
\end{proof}

The following lemma was used above.
\begin{lemma}\label{lem:SO3trans} The group $\SO_3(\R)$ acts transitively on the set of $T \in J_0\otimes \R$ with fixed characteristic polynomial $p(t)$.\end{lemma}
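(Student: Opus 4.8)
\emph{Proof proposal.} The plan is to reduce this to the spectral theorem. Recall that $J_0\otimes\R = H_3(\R)$ is the space of real symmetric $3\times 3$ matrices, with $\SO_3(\R)$ acting by $g\cdot T = gTg^t$. By the spectral theorem, every $T\in H_3(\R)$ can be written $T = ODO^t$ with $O\in O_3(\R)$ and $D$ diagonal, and since $tI+T = O(tI+D)O^t$, the polynomial $\det(tI+T)$ depends only on the multiset of diagonal entries of $D$; conversely any prescribed real multiset is realized by a diagonal matrix. Hence the set of $T\in H_3(\R)$ with a fixed value $\det(tI+T) = p(t)$ is empty when $p$ has a non-real root — in which case there is nothing to prove — and is a single $O_3(\R)$-orbit when $p$ splits over $\R$.

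It then remains to upgrade $O_3(\R)$-transitivity to $\SO_3(\R)$-transitivity. Given $T_1, T_2$ in the (nonempty) set, one picks $g_0 \in O_3(\R)$ with $g_0 T_1 g_0^t = T_2$; if $\det g_0 = 1$ we are done, so suppose $\det g_0 = -1$. Writing $T_1 = O D O^t$ with $O\in O_3(\R)$ and $D$ diagonal, set $s := O\, \diag(-1,1,1)\, O^t \in O_3(\R)$. Then $\det s = -1$, and since diagonal matrices commute,
\[
s T_1 s^t = O\, \diag(-1,1,1)\, D\, \diag(-1,1,1)\, O^t = O D O^t = T_1.
\]
Therefore $g := g_0 s$ lies in $\SO_3(\R)$ and still satisfies $g T_1 g^t = g_0 T_1 g_0^t = T_2$, which gives the claimed transitivity.

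This lemma is entirely routine, so there is no real obstacle; the only point meriting attention is the determinant bookkeeping in the second paragraph, and there the existence of the orientation-reversing symmetry $s$ of $T_1$ — a reflection through a coordinate hyperplane of an orthonormal eigenbasis — handles the matter uniformly, whether or not $p$ has repeated roots. Alternatively, one could invoke the connectedness of $\SO_3(\R)$ and argue with paths as in the proof of Lemma~\ref{lem:notopp2}, but the spectral-theorem reduction is cleaner and supplies the transitivity statement directly.
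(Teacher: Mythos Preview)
Your proof is correct and follows essentially the same route as the paper: diagonalize via the spectral theorem to get $O_3(\R)$-transitivity, then upgrade to $\SO_3(\R)$. The only difference is in that last step—where you construct an explicit reflection $s$ stabilizing $T_1$, the paper simply observes that $O_3(\R) = \{\pm 1\} \times \SO_3(\R)$ (and $-1$ acts trivially since $(-1)T(-1)^t = T$), which handles the determinant bookkeeping in one line.
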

\begin{proof}  Because $O_3(\R) = \{\pm 1\} \times \SO_3(\R)$, it suffices to see that $O_3(\R)$ acts transitively.  But now, every real symmetric matrix can be diagonalized by an element of $O_3(\R)$.  Using the action of the symmetric group $S_3 \subseteq O_3(\R)$ finishes the proof. \end{proof}

We end this section by discussing the set $Q_R$ when $R$ is a maximal order in $E$.
\begin{proposition}\label{prop:AITCl}\label{prop:QRCl2} Suppose $R$ is the maximal order in $E$.  Then if $Q_R$ is non-empty, $|Q_R| = |\mathrm{Cl}_{E}^{+}[2]|$, the size of the two-torsion in the narrow class group of $E$.
\end{proposition}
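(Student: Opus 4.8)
The plan is to set up a map from $Q_R$ to $\mathrm{Cl}_E^+[2]$ and show it is a bijection onto its image, then show the image is all of $\mathrm{Cl}_E^+[2]$ once $Q_R$ is nonempty. First I would use the defining property of a balanced pair: if $R$ is the maximal order, then $(I,\mu)$ balanced means exactly $\mu I^2 = \mathfrak{d}_R^{-1}$ (the norm condition is automatic for the maximal order, as noted in the introduction). Thus an element of $Q_R$ is an equivalence class of pairs $(I,\mu)$ with $\mu$ totally positive and $\mu I^2 = \mathfrak{d}_R^{-1}$, under $(I,\mu)\sim(\beta I,\beta^{-2}\mu)$.

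Next I would fix, once and for all, one balanced pair $(I_0,\mu_0)$ — this exists since $Q_R\neq\emptyset$. Given any other balanced pair $(I,\mu)$, the fractional ideal $\mathfrak{a} = I I_0^{-1}$ satisfies $\mathfrak{a}^2 = (\mu_0/\mu)$ as fractional ideals, so the class $[\mathfrak{a}]$ in the narrow class group $\mathrm{Cl}_E^+$ is $2$-torsion provided we check $\mu_0/\mu$ has a totally positive generator — but indeed $\mu_0/\mu = (I I_0^{-1})^{-2}\cdot(\text{something})$; more carefully, $\mu I^2 = \mu_0 I_0^2 = \mathfrak{d}_R^{-1}$ gives $\mathfrak{a}^2 = I^2 I_0^{-2} = \mu_0 \mu^{-1}$ as fractional ideals, and $\mu_0\mu^{-1}$ is a ratio of totally positive elements hence totally positive, so $[\mathfrak{a}]\in\mathrm{Cl}_E^+[2]$. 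This defines a map $Q_R\to\mathrm{Cl}_E^+[2]$, $[I,\mu]\mapsto[II_0^{-1}]$; I would check it is well-defined on equivalence classes, since replacing $(I,\mu)$ by $(\beta I,\beta^{-2}\mu)$ multiplies $\mathfrak{a}$ by the principal ideal $(\beta)$ — and $(\beta)$ is trivial in the narrow class group only if $\beta$ is totally positive up to units, which need not hold; so in fact the well-definedness must be checked against the narrow class group directly: $[\beta\mathfrak{a}] = [\mathfrak{a}]$ in $\mathrm{Cl}_E^+$ iff $(\beta)$ is narrowly principal, which holds since $(\beta)$ is generated by $\beta$ — wait, that is the definition, so yes it is fine: every principal ideal is narrowly principal, the narrow class group being the quotient of fractional ideals by principal ideals with a totally positive generator is not right — rather $\mathrm{Cl}_E^+$ is fractional ideals modulo principal ideals $(\gamma)$ with $\gamma$ totally positive. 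So I must be careful: $[\beta\mathfrak{a}]=[\mathfrak{a}]$ in $\mathrm{Cl}_E^+$ iff $(\beta)$ is generated by a totally positive element, i.e. iff $\beta$ is totally positive up to a unit. This is exactly why we quotient $Q_R$ by the relation we did: one needs to track the $\mu$'s.

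The cleanest formulation, which I would adopt, is: send $[I,\mu]$ to the class in $\mathrm{Cl}_E^+$ of the ideal $II_0^{-1}$ together with the sign data of $\mu/\mu_0$ — but actually the correct target is simply $\mathrm{Cl}_E^+$ and the map is well-defined because changing $(I,\mu)\to(\beta I,\beta^{-2}\mu)$ changes $II_0^{-1}$ by $(\beta)$ and simultaneously we could have absorbed the totally-positive-unit ambiguity. Concretely: I claim $[I,\mu]\mapsto [I I_0^{-1}]\in \mathrm{Cl}_E^+$ is well-defined and injective. Well-defined: $(\beta)$ is a principal ideal; but we need it narrowly principal — here is the key point, $\beta^{-2}\mu$ must be totally positive (it is, being in a balanced pair), and $\mu$ is totally positive, so $\beta^{-2}$ is totally positive, so $\beta^2$ is totally positive, which says $(\beta) = (\beta^2)(\beta)^{-1}$... this is circular. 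Let me instead just say: $(\beta)$ as a fractional ideal is principal, and in $\mathrm{Cl}_E^+$ the class of $(\beta)$ depends only on the signs of $\beta$ at the three real places modulo the signs of units; since $\beta^{-2}\mu$ and $\mu$ are both totally positive, $\beta$ has even "sign vector" relative to... no. I will resolve this properly in the writeup; the resolution is that $\beta^2$ totally positive forces $\beta$ to be, at each real place, nonzero, and $(\beta)=(\beta)$ has a totally positive generator iff... Actually $(\beta)$ always has generator $\beta$ and also $-\beta$ and $u\beta$ for units $u$; it is narrowly principal iff some $u\beta$ is totally positive. Since $\beta^2\gg 0$ automatically, this gives no constraint. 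So the map to $\mathrm{Cl}_E^+$ is \emph{not} obviously well-defined, and indeed the correct statement is that $Q_R$ is a torsor under $\mathrm{Cl}_E^+[2]$: the difference of two balanced pairs $(I,\mu),(I',\mu')$ gives $\mathfrak{a} = I'I^{-1}$ with $\mathfrak{a}^2 = (\mu/\mu')$ and $\mu/\mu' \gg 0$, so $\mathfrak{a}^2$ is narrowly principal, i.e. $[\mathfrak{a}]\in\mathrm{Cl}_E^+[2]$, and this class is independent of the chosen representatives within equivalence classes precisely because scaling $(I,\mu)\to(\beta I,\beta^{-2}\mu)$ changes $\mathfrak{a}$ to $\beta\mathfrak{a}$ but we should compare $[I,\mu]$ and $[I',\mu']$ as equivalence classes, and the point is that $[\mathfrak{a}]$ and $[\beta\mathfrak{a}]$ differ by $[(\beta)]$, and $[(\beta)]$ is trivial in $\mathrm{Cl}_E^+/(\text{image of }\mathrm{Cl}_E^+[2])$...

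Here is the clean plan. I would prove that $Q_R$ is a \textbf{torsor} under $G := \mathrm{Cl}_E^+[2]$, via the action $[\mathfrak{a}]\cdot[I,\mu] := [\mathfrak{a}I, \gamma^{-1}\mu]$ where $\mathfrak{a}^2 = (\gamma)$ with $\gamma\gg 0$ (possible since $[\mathfrak{a}]\in\mathrm{Cl}_E^+[2]$), noting $\gamma$ is well-defined up to totally positive units so $\gamma^{-1}\mu$ is well-defined up to the equivalence on pairs — wait, totally positive units $v$ change $\gamma^{-1}\mu$ to $v^{-1}\gamma^{-1}\mu$, and this is $(w)^{-2}$-scaling only if $v$ is a square, which is another subtlety; but one can also scale $\mathfrak{a}I$ by $w$ to compensate. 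I will check \textbf{(i)} the action is well-defined, \textbf{(ii)} it is free, \textbf{(iii)} it is transitive. Transitivity and freeness both follow from: given balanced $(I,\mu),(I',\mu')$, set $\mathfrak{a} = I'I^{-1}$; then $\mathfrak{a}^2 = \mathfrak{d}_R^{-1}\mu'^{-1}(\mathfrak{d}_R^{-1}\mu^{-1})^{-1} = \mu/\mu'$, totally positive, so $[\mathfrak{a}]\in G$ and $[\mathfrak{a}]\cdot[I,\mu] = [I',\mu']$; uniqueness of such $[\mathfrak{a}]$ gives freeness. Once $Q_R$ is a nonempty torsor under $G$, we get $|Q_R| = |G| = |\mathrm{Cl}_E^+[2]|$.

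The main obstacle I expect is the bookkeeping of totally positive units in checking the torsor action is well-defined — precisely, showing that the ambiguity in choosing $\gamma$ with $\mathfrak{a}^2=(\gamma)$, $\gamma\gg 0$ (namely $\gamma$ is determined up to $(\mathcal{O}_E^\times)_{>0}$, the totally positive units) is exactly absorbed by the equivalence relation $(I,\mu)\sim(\beta I,\beta^{-2}\mu)$ together with the freedom to rescale $\mathfrak{a}$. One needs the standard fact that for the maximal order, a balanced pair's equivalence class is unchanged under $\mu\mapsto u^{2}\mu$ for units $u$ (via $\beta = u$), but $\mu\mapsto v\mu$ for a totally positive non-square unit $v$ genuinely may change the class — however in the torsor action this is matched by a corresponding change of $\mathfrak{a}$-representative, so the net map on classes is well-defined. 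I would carry this out by choosing representatives carefully and invoking $E^\times/(E^\times)^2$ vs.\ sign data, citing \cite{swaminathan2021average} for the general framework as the authors already indicate. Everything else — that $\mathfrak{d}_R^{-1}$ being a square in $\mathrm{Cl}_E^+$ is the nonemptiness criterion, finiteness of $Q_R$ — is either immediate or already in the excerpt.
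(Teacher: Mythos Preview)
Your instinct that $Q_R$ carries a torsor structure is correct, and the paper uses exactly this idea. However, the group that acts is \emph{not} $\mathrm{Cl}_E^+[2]$ directly, and this is precisely where your plan breaks down. The action you propose, $[\mathfrak{a}]\cdot[I,\mu]=[\mathfrak{a}I,\gamma^{-1}\mu]$ with $\mathfrak{a}^2=(\gamma)$ and $\gamma\gg 0$, is not well-defined: $\gamma$ is determined only up to a totally positive unit $v\in R^\times_{>0}$, and $[\mathfrak{a}I,v^{-1}\gamma^{-1}\mu]=[\mathfrak{a}I,\gamma^{-1}\mu]$ in $Q_R$ would require $v\in(R^\times)^2$, which fails in general. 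Changing the representative $\mathfrak{a}$ within its narrow class does not repair this, as you can check. Worse, the action is not free: if $[\mathfrak{a}]\cdot[I,\mu]=[I,\mu]$ then $\mathfrak{a}=(\beta)$ with $\beta^2=\gamma\gg 0$, but $\beta^2\gg 0$ is automatic and says nothing about whether $\beta$ itself has a totally positive associate, so $[\mathfrak{a}]$ need not be trivial in $\mathrm{Cl}_E^+$.

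The paper's resolution is to replace $\mathrm{Cl}_E^+[2]$ by the group $A_R$ of equivalence classes of pairs $(J,\lambda)$ with $\lambda\gg 0$ and $\lambda J^2=(1)$, under the same equivalence $(J,\lambda)\sim(\beta J,\beta^{-2}\lambda)$ for $\beta\in E^\times$. For this group the torsor structure on $Q_R$ is immediate and free of the unit bookkeeping you flagged. One then introduces an auxiliary group $A_R'$ (same pairs, but equivalence only by $\beta\in E^\times_{>0}$) sitting in two short exact sequences
\[
1\to R^\times_{>0}/(R^\times_{>0})^2\to A_R'\to \mathrm{Cl}_E^+[2]\to 1,\qquad
1\to E^\times/(\pm E^\times_{>0})\to A_R'\to A_R\to 1,
\]
and observes that both kernels have order $4$ (the first because $R^\times_{>0}\cong\Z^2$, the second by the sign map $E^\times\to\{\pm 1\}^3$). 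Hence $|A_R|=|\mathrm{Cl}_E^+[2]|$ and the result follows. So the missing idea in your plan is that one should not expect a natural identification $A_R\cong\mathrm{Cl}_E^+[2]$; only the cardinalities agree, and this is established by counting through $A_R'$.
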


To prove the proposition, we will use the following lemma.  Consider the group $A_R$ of equivalence classes of pairs $(J,\lambda)$ with $\lambda J^2 =(1)$, $J$ a fractional $E$-ideal and $\lambda$ totally positive.  That is, $(J,\lambda)$ is equivalent to $(J',\lambda')$ if there exists $\mu \in E^\times$ so that $J' = \mu J$ and $\lambda' = \mu^{-2} \lambda$.  It is clear that $Q_R$, when non-empty, is a torsor for $A_R$.  Let $A_R'$ denote the set of such pairs $(J,\lambda)$ except modulo the equivalence relation $(J,\lambda)$ is equivalent to $(J',\lambda')$ if there exists $\mu \in E_{>0}^\times$ so that $J' = \mu J$ and $\lambda' = \mu^{-2} \lambda$.
\begin{lemma}\label{lem:ESA'} One has the following exact sequences:
	\begin{equation}\label{eqn:ESA'1} 1 \rightarrow R_{>0}^\times/(R_{>0}^\times)^2 \rightarrow A_R' \rightarrow \mathrm{Cl}^{+}_E[2] \rightarrow 1,\end{equation}
	and
	\begin{equation}\label{eqn:ESA'2} 1 \rightarrow E^\times/\left(\pm E^\times_{>0}\right) \rightarrow A_R' \rightarrow A_R \rightarrow 1. \end{equation}
\end{lemma}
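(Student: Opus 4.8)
The plan is to write down all four arrows explicitly and verify exactness by elementary ideal-theoretic bookkeeping, using throughout that $R$ is the maximal order in $E$ (so that fractional $E$-ideals form a group) and that $E$ is a field whose only roots of unity are $\pm 1$. First I would note that $A_R$ and $A_R'$ are groups under $(J,\lambda)\cdot(J',\lambda')=(JJ',\lambda\lambda')$: from $\lambda J^2=(1)$ and $\lambda' J'^2=(1)$ one gets $(\lambda\lambda')(JJ')^2=(1)$, and totally positive elements are closed under products and inverses, so $(R,1)$ is the identity and $(J,\lambda)^{-1}=(J^{-1},\lambda^{-1})$. The coarsening map $A_R'\to A_R$ (the identity on pairs, with the weaker equivalence relation on the target) is then visibly a surjective homomorphism.

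For the first exact sequence \eqref{eqn:ESA'1} I would define $A_R'\to\mathrm{Cl}_E^+[2]$ by $[(J,\lambda)]\mapsto[J]$. Since $\lambda$ is totally positive and $\lambda J^2=(1)$ we have $J^2=(\lambda^{-1})$ with $\lambda^{-1}$ totally positive, so $[J]^2=1$ in $\mathrm{Cl}_E^+$; this is well-defined on $A_R'$-classes because the $A_R'$-equivalence only alters $J$ by a totally positive principal ideal. Surjectivity: if $[\mathfrak a]\in\mathrm{Cl}_E^+[2]$ then $\mathfrak a^2=(\beta)$ with $\beta$ totally positive, and $(\mathfrak a,\beta^{-1})$ is a preimage. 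For exactness in the middle, $[(J,\lambda)]$ lies in the kernel iff $J=(\mu)$ with $\mu$ totally positive, and then applying the $A_R'$-equivalence with $\nu=\mu^{-1}$ shows $(J,\lambda)\sim(R,\mu^2\lambda)$ where $\mu^2\lambda\in R_{>0}^\times$ (since $(\lambda\mu^2)=\lambda J^2=(1)$). Finally $(R,u)$ and $(R,u')$ with $u,u'\in R_{>0}^\times$ are $A_R'$-equivalent exactly when $u'/u\in(R_{>0}^\times)^2$, so $u\mapsto[(R,u)]$ defines the claimed injection $R_{>0}^\times/(R_{>0}^\times)^2\hookrightarrow A_R'$ with image the kernel.

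For the second exact sequence \eqref{eqn:ESA'2} I would define $E^\times\to A_R'$ by $\mu\mapsto[((\mu),\mu^{-2})]$; here $\mu^{-2}$ is a square, hence totally positive, and $\mu^{-2}(\mu)^2=(1)$, so the pair lies in $A_R'$ and the map is a homomorphism. Its image is precisely $\ker(A_R'\to A_R)$: a class vanishes in $A_R$ iff some (equivalently, any) representative $(J,\lambda)$ is $A_R$-equivalent to $(R,1)$, i.e. $J=(\mu)$ and $\lambda=\mu^{-2}$ for some $\mu\in E^\times$. The kernel of $\mu\mapsto[((\mu),\mu^{-2})]$ consists of those $\mu$ for which there is a totally positive $\nu$ with $(\mu)=(\nu)$ and $\mu^{-2}=\nu^{-2}$; these two conditions force $\mu/\nu\in R^\times$ with $(\mu/\nu)^2=1$, hence $\mu/\nu=\pm1$, i.e. $\mu\in\pm E_{>0}^\times$. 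This yields $E^\times/(\pm E_{>0}^\times)\hookrightarrow A_R'$ with image $\ker(A_R'\to A_R)$, which is exactly \eqref{eqn:ESA'2}.

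The argument is entirely routine; the only points demanding care are keeping the two equivalence relations straight (on $A_R$ versus on $A_R'$) and, in each kernel computation, noting that an equality $J=(\mu)$ of fractional ideals determines $\mu$ only up to $R^\times$, so that the accompanying constraint on $\lambda$ pins that unit down (to $\pm1$ in the second sequence, to a totally positive unit in the first). I do not expect a genuine obstacle here.
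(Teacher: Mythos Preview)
Your argument is correct and follows essentially the same route as the paper's own proof: the same maps $[(J,\lambda)]\mapsto[J]$ and $\mu\mapsto[((\mu),\mu^{-2})]$, and the same kernel computations. If anything, your write-up is slightly more explicit than the paper's (you spell out the group structure on $A_R$, $A_R'$ and track the equivalence relations more carefully), but there is no substantive difference.
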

\begin{proof} We first consider the sequence \eqref{eqn:ESA'1}.  The map $A_R' \rightarrow \mathrm{Cl}^+_E$ is given by sending $[J,\lambda]$ to $[J] \in \mathrm{Cl}_{E}^{+}$.  Because $[J^2] = (\lambda^{-1})$ with $\lambda$ totally positive, $[J] \in \mathrm{Cl}_{E}^+[2]$.  It is clear that this map is surjective.
	
	For the kernel, if $[J] =1$ in $\mathrm{Cl}_E^+$, then $J = (\epsilon)$ with $\epsilon$ totally positive.  Consider $\lambda \epsilon^2$.  This is in $R^\times_{>0}$.  The element $\epsilon$ is well-defined up to multiplication by an $\epsilon_1 \in R^\times_{>0}$, so $\lambda \epsilon^2$ gives a well-defined class in $R^\times_{>0}/(R_{>0}^\times)^2$.  It is checked immediately that this map gives an isomorphism of the kernel of $\{A_R' \rightarrow \mathrm{Cl}_{E}^+[2]\}$ with $R^\times_{>0}/(R_{>0}^\times)^2$.
	
	Now consider the sequence \eqref{eqn:ESA'2}.  The map $A_R' \rightarrow A_R$ is dividing out by the courser equivalence relation.  The kernel of this map is the image in $A_R'$ of the set of pairs $((\mu),\mu^2)$ with $\mu \in E^\times$.  This is trivial in $A_R'$ precisely when there exists $\mu' \in E_{>0}^\times$ so that $((\mu),\mu^2) = ((\mu'),\mu'^2)$, which happens precisely if $\mu \in \pm E^\times_{>0}$.  The lemma follows.
\end{proof}

Proposition \ref{prop:QRCl2} follows from Lemma \ref{lem:ESA'} by observing that both $R_{>0}^\times/(R_{>0}^\times)^2$ and $E^\times/\left(\pm E^\times_{>0}\right)$ have size $4$.  Finally, again assuming that $R$ is the maximal order in $E$, we remark that it follows from \cite[Proposition 3.1]{grossSigns} that $Q_R$ is non-empty if and only if every quadratic extension of $E$ that is unramified at all finite primes is totally real.  Combining Proposition \ref{prop:AITmain} with Theorem \ref{thm:G2main} gives Theorem \ref{thm:introMain2}.  Combining the result with Proposition \ref{prop:AITCl} gives Theorem \ref{thm:introMain}.

\subsection{The general case}\label{sec:gencase} In the previous subsection, we discussed the arithmetic invariant theory of the sets $Q_p$ when $E = R \otimes \Q$ is a field.  We now make some remarks about the arithmetic invariant theory of the sets $Q_p$ when $E$ is just an \'etale cubic $\Q$-algebra.  We omit the proofs, as they are simple generalizations of the proofs in the previous subsection.

Recall that if $p(t) \in \Z[t]$ is cubic and monic then $Q_p$ denotes the set of $T \in J_0 = Sym^2(\Z^3)$ such that $\det(t1_3+T) = p(t)$.

One has the following bijection.
\begin{proposition}  There is a bijection between equivalence classes of balanced pairs $Q_R$ and the $O_3(\Z)$ (or, equivalently $\SO_3(\Z)$) orbits on $Q_p$.  Moreover, the stabilizer of $T \in Q_p$ under the action of $O_3(\Z)$ is $\mu_2(\mathcal{O}_I)$ where
	\[\mathcal{O}_I = \{\alpha \in E: \alpha I \subseteq I\}.\]
\end{proposition}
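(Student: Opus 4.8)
The plan is to transcribe the proof of Proposition \ref{prop:AITmain} almost verbatim, isolating the two places where the hypothesis ``$E$ is a field'' was genuinely used and replacing them by facts that survive in the étale setting. First I would construct the map $O_3(\Z)\backslash Q_p \to Q_R$. Given $T \in Q_p$, let $\omega$ be the image of $t$ in $R = \Z[t]/(p(t))$ and make $M = \Z^3$ into an $R$-module via $\omega m = -Tm$; since $T$ is symmetric for the standard unimodular form $(\,,\,)$ on $M$, this $R$-action is self-adjoint. Because $E$ is étale, $p(t)$ is squarefree, so the minimal polynomial of $T$ equals $p(t)$ and $M\otimes\Q$ is a cyclic, hence free rank-one, module over $\Q[T]\cong E$; thus $M$ is a fractional $R$-ideal $I\subseteq E$. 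Self-adjointness forces $(v,w) = \tr_E(\mu v w)$ for some totally positive $\mu\in E^\times$ (here one uses that $Q_p\neq\emptyset$ forces $E$ totally real), and unimodularity of $(\,,\,)$ gives exactly $\mu I^2\subseteq \mathfrak{d}_R^{-1}$ and $N(\mu)N(I)^2\mathrm{disc}(R/\Z)=1$, so $(I,\mu)$ is balanced. The class $[I,\mu]\in Q_R$ is independent of the choices made, and replacing $T$ by $gTg^t$ with $g\in O_3(\Z)$ only changes the chosen $\Z$-basis of $I$, so the map descends to $O_3(\Z)$-orbits.

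For the reverse map, given a balanced pair $(I,\mu)$ the form $\tr_E(\mu v w)$ is integral, positive definite and unimodular on $I$, so by Lemma \ref{lem:onbasis} $I$ has an orthonormal $\Z$-basis $e_1,e_2,e_3$; then $T = -((e_i,\omega e_j))_{ij}\in J_0$ has $\det(tI+T)=p(t)$ and is well-defined up to $O_3(\Z)$. One checks, exactly as in Proposition \ref{prop:AITmain}, that the two maps are mutually inverse, yielding the asserted bijection. Since $-1\in O_3(\Z)$ acts trivially on $Q_p$ and $O_3(\Z)=\{\pm1\}\times\SO_3(\Z)$, the $O_3(\Z)$- and $\SO_3(\Z)$-orbit spaces coincide.

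For the stabilizer, fix $T\in Q_p$ with associated ideal $I$, and suppose $g\in O_3(\Z)$ satisfies $gTg^t=T$. Then $g$ commutes with $T$, hence is $E$-linear for the action of $\Q[T]\cong E$ on $M\otimes\Q$, hence equals multiplication by some $\alpha\in E$; integrality $g\in\End_\Z(M)$ forces $\alpha I\subseteq I$, i.e. $\alpha\in\mathcal{O}_I$. Moreover $g$ is a polynomial in the self-adjoint operator $T$, so $g^t=g$, and $gg^t=1$ gives $\alpha^2=1$; thus $\Stab_{O_3(\Z)}(T)\subseteq\mu_2(\mathcal{O}_I)$. Conversely, any $\alpha\in\mathcal{O}_I$ with $\alpha^2=1$ acts on $M=I$ by a $\Z$-linear involution commuting with $T$ and preserving $(v,w)=\tr_E(\mu v w)$, since $\tr_E(\mu(\alpha v)(\alpha w))=\tr_E(\mu\alpha^2 v w)=\tr_E(\mu v w)$; hence it lies in $O_3(\Z)$ and fixes $T$, giving the reverse inclusion.

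The only steps that require more than a routine copy of Proposition \ref{prop:AITmain} are exactly these: replacing ``$E$ a field $\Rightarrow$ $M\otimes\Q$ free over $E$'' by ``$p$ squarefree $\Rightarrow$ minimal polynomial of $T$ equals its characteristic polynomial $\Rightarrow$ $M\otimes\Q$ cyclic hence free of rank one over $E$'', and replacing ``$g^2=1$ in a field $\Rightarrow$ $g=\pm1$'' by the observation that nondegeneracy of the trace form $\tr_E(\mu\,\cdot\,)$ together with $g^2=1$ leaves $g$ as an arbitrary element of $\mu_2(\mathcal{O}_I)$. I expect the first of these — pinning down that $M$ really is a fractional ideal and that all choices (of generator, of orthonormal basis) are accounted for by the $O_3(\Z)$-action and the equivalence relation defining $Q_R$ — to be the main point needing care, but it is a short argument, consistent with the authors' remark that this case is a simple generalization.
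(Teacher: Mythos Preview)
Your proposal is correct and is precisely the ``simple generalization'' the paper alludes to when it omits the proof: you follow Proposition \ref{prop:AITmain} line by line, replacing the use of ``$E$ a field'' in the two places it mattered (showing $M\otimes\Q$ is free of rank one over $E$, and analyzing $g^2=1$) with the appropriate étale arguments. One minor wording quibble: in your final paragraph the nondegeneracy of the trace form is not what yields $g\in\mu_2(\mathcal{O}_I)$ --- that comes simply from $g\in\Q[T]\cap\End_\Z(I)=\mathcal{O}_I$ and $g^2=1$ --- but the mathematics is right.
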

As a consequence of the proposition, one obtains:
\[\# Q_p = \sum_{[(I,\mu)] \text{ balanced}}{\frac{\# O_3(\Z)}{\mu_2(\mathcal{O}_I)}}.\]
In particular, if $R$ is maximal so that $\mathcal{O}_I = R$ for all $I$, then
\[\# Q_p = \frac{48}{\mu_2(R)} \times \#\{[(I,\mu)] \text{ balanced}\}.\]

In this maximal case, assuming that $E$ is \'{e}tale, one has that $(I,\mu)$ is balanced precisely if $\mu I^2 = \mathfrak{d}_R^{-1}$.  Now one can consider the exact sequences as in Lemma \ref{lem:ESA'}, which become:
\[1 \rightarrow R^\times_{>0}/(R^\times_{>0})^2 \rightarrow A_R' \rightarrow \mathrm{Cl}_E^{+}[2] \rightarrow 1\]
and
\[1 \rightarrow E^\times/(\mu_2(E) E^\times_{>0}) \rightarrow A_R' \rightarrow A_R \rightarrow 1.\]
Considering the different cases separately, one sees that in all \'etale maximal cases, $\# A_R = \# \mathrm{Cl}_{E}^{+}[2]$. Thus, if $R$ is maximal and $E$ is \'etale, one has the formula
\[\# Q_p = \frac{48}{\mu_2(R)} |\mathrm{Cl}_E^{+}[2]| \times \delta_R\]
where $\delta_R$ is $0$ if the inverse different $\mathfrak{d}_R^{-1}$ is not a square in $\mathrm{Cl}_{E}^+$ and $1$ if it is such a square.  We state this as a proposition.
\begin{proposition} Let the notation be as above, and assume that $R=\Z[t]/(p(t))$ is the maximal order in $E=R\otimes\Q$, which is assumed \'etale. Then $\# A_R = \# \mathrm{Cl}_{E}^{+}[2]$.  Consequently, $\# Q_p = \frac{48}{\mu_2(R)} |\mathrm{Cl}_E^{+}[2]| \times \delta_R$ where $\delta_R$ is $0$ if the inverse different $\mathfrak{d}_R^{-1}$ is not a square in $\mathrm{Cl}_{E}^+$ and $1$ if it is such a square.
\end{proposition}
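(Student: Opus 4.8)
The plan is to establish the two assertions in turn: first $\# A_R = \#\mathrm{Cl}^+_E[2]$ by a short index computation with unit groups, and then the formula for $\# Q_p$ by feeding this into the orbit-counting identity recalled above (the bijection $Q_R \leftrightarrow \SO_3(\Z)\backslash Q_p$ and its numerical consequence).

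For the first assertion, I would begin by noting that $Q_p$ is empty unless $p$ has three real roots (a real symmetric matrix has only real eigenvalues), so one may assume throughout that $E$ is totally real; writing $E = \prod_{i=1}^c E_i$ as a product of totally real number fields, there are exactly three cases, $c=1$ (a totally real cubic field), $c=2$ ($\Q\times K$ with $K$ real quadratic), and $c=3$ ($\Q^3$). The proof of Lemma \ref{lem:ESA'} then carries over verbatim once $\{\pm1\}$ is replaced by $\mu_2(E)$ throughout, giving the two exact sequences
\[1 \to R^\times_{>0}/(R^\times_{>0})^2 \to A_R' \to \mathrm{Cl}^+_E[2] \to 1, \qquad 1 \to E^\times/(\mu_2(E)\,E^\times_{>0}) \to A_R' \to A_R \to 1,\]
where the left map in the second sequence sends the class of $\mu\in E^\times$ to that of $((\mu),\mu^{-2})$. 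Comparing the two expressions for $\#A_R'$ reduces the claim to showing that $R^\times_{>0}/(R^\times_{>0})^2$ and $E^\times/(\mu_2(E)\,E^\times_{>0})$ have the same order.

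That comparison is the crux, and it is where one must take care with torsion in the unit groups. On one side, the sign map $E^\times\to\{\pm1\}^3$ at the three real places is surjective with kernel $E^\times_{>0}$, and its restriction to $\mu_2(E)\cong(\Z/2)^c$ is injective (a nontrivial element of $\mu_2(E_i)$ is $-1$ on $E_i$, hence negative at every real place of $E_i$), so the second group has order $2^3/\#\mu_2(E) = 2^{3-c}$. On the other side, $R^\times_{>0}$ has finite index in $R^\times$ and is torsion-free — a nontrivial root of unity in a totally real $E$ is $-1$ on some factor and hence not totally positive — so by Dirichlet's unit theorem it is free of rank $\sum_i([E_i:\Q]-1) = 3-c$, and the first group also has order $2^{3-c}$. (Alternatively one verifies this case by case, as the text indicates.) Hence the ratio of orders is $1$ and $\#A_R = \#\mathrm{Cl}^+_E[2]$.

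For the formula for $\# Q_p$, I would combine this with the orbit count of the preceding proposition. Since $R$ is maximal, $\mathcal{O}_I = R$ and so $\mu_2(\mathcal{O}_I) = \mu_2(R) = \mu_2(E)$ for every fractional ideal $I$, so that count becomes $\# Q_p = \frac{48}{\#\mu_2(R)}\,\# Q_R$ with $48 = \# O_3(\Z)$. Now $Q_R$ is a torsor under $A_R$ whenever non-empty, so $\# Q_R = \delta_R\cdot\# A_R$ with $\delta_R\in\{0,1\}$ the indicator of $Q_R\neq\emptyset$; and since in the maximal case a balanced pair is exactly a pair with $\mu I^2 = \mathfrak{d}_R^{-1}$, one sees $Q_R\neq\emptyset$ precisely when $\mathfrak{d}_R^{-1}$ is a square in $\mathrm{Cl}^+_E$ — if $(I,\mu)$ is balanced then $[\mathfrak{d}_R^{-1}] = [I]^2$ because $\mu$ is totally positive, and conversely $[\mathfrak{d}_R^{-1}] = [J]^2$ forces $\mathfrak{d}_R^{-1}J^{-2} = (\mu)$ for some totally positive $\mu$, making $(J,\mu)$ balanced. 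Substituting $\# A_R = \#\mathrm{Cl}^+_E[2]$ yields $\# Q_p = \frac{48}{\#\mu_2(R)}\,|\mathrm{Cl}^+_E[2]|\cdot\delta_R$. The only genuine obstacle is the unit-group index comparison of the third paragraph; everything else is bookkeeping with the exact sequences and the orbit count.
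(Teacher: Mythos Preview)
Your proof is correct and follows the same route as the paper: use the two exact sequences of Lemma~\ref{lem:ESA'} (with $\mu_2(E)$ in place of $\{\pm1\}$) and compare the orders of $R^\times_{>0}/(R^\times_{>0})^2$ and $E^\times/(\mu_2(E)E^\times_{>0})$, then feed $\#A_R = \#\mathrm{Cl}^+_E[2]$ into the orbit count. The paper merely says ``considering the different cases separately'' for the index comparison, whereas you give a uniform argument showing both groups have order $2^{3-c}$; this is cleaner and is exactly what the case analysis would produce.
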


Note that if $R = \Z \times \mathcal{O}_K$ with $K$ real quadratic, then $\mathrm{Cl}_{E}^{+} = \mathrm{Cl}_{K}^+$.  For the sake of completeness, we now answer the question of when the maximal order in such a case is monogenic.
\begin{proposition} Set $R = \Z \times \mathcal{O}_K$ with $K$ a real quadratic field.
	\begin{enumerate}
		\item If $\ell$ is squarefree and $\mathcal{O}_K = \Z[\sqrt{\ell}]$, then $R$ is monogenic if and only if $\ell = r^2 \pm 1$ for some $r$ in $\Z$.  In this case, $(r,\sqrt{\ell})$ is a generator of $R$.
		\item If $\mathcal{O}_K = \Z[\omega]$ with $\omega = \frac{1+\sqrt{4\ell+1}}{2}$, then  $R$ is monogenic if and only if the equation $r(r-1) = \ell \pm 1$ has a solution, in which case $(r,\omega)$ is a generator.
	\end{enumerate}
	\end{proposition}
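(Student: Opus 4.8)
The plan is to reduce monogenicity of $R = \Z \times \mathcal{O}_K$ to the vanishing of a single $3 \times 3$ determinant and then to evaluate that determinant explicitly in each case. Recall that $R$ is monogenic precisely when there is $\theta \in R$ with $1_R, \theta, \theta^2$ a $\Z$-basis of $R$ (this suffices because $\theta$ satisfies a monic cubic over $\Z$, so $\Z[\theta]$ is the $\Z$-span of $1_R,\theta,\theta^2$); equivalently, the matrix expressing $1_R, \theta, \theta^2$ in terms of a fixed $\Z$-basis of $R$ must have determinant $\pm 1$. I would fix the basis $e_1 = (1,0)$, $e_2 = (0,1)$, $e_3 = (0,\eta)$ of $R$, where $\eta = \sqrt{\ell}$ in case (1) and $\eta = \omega$ in case (2); here $\eta^2 = \ell$ in the first case, while $\omega^2 = \omega + \ell$ in the second, directly from $\omega = \frac{1+\sqrt{4\ell+1}}{2}$. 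Writing a general element as $\theta = (a, b + c\eta)$ with $a,b,c \in \Z$, one has $1_R = e_1 + e_2$, $\theta = a e_1 + b e_2 + c e_3$, and expanding $\theta^2 = \bigl(a^2, (b+c\eta)^2\bigr)$ by means of the relation for $\eta^2$ gives $\theta^2 = a^2 e_1 + (b^2 + c^2 \ell) e_2 + 2bc\, e_3$ in case (1) and $\theta^2 = a^2 e_1 + (b^2 + c^2\ell) e_2 + (2bc + c^2) e_3$ in case (2).

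The next step is the determinant evaluation. In case (1),
\[
\det\begin{pmatrix} 1 & 1 & 0 \\ a & b & c \\ a^2 & b^2 + c^2\ell & 2bc \end{pmatrix} = c\bigl((a-b)^2 - c^2\ell\bigr),
\]
while the same computation in case (2) yields $c\bigl((a-b)^2 - c(a-b) - c^2\ell\bigr)$. For either expression to equal $\pm 1$ we need $c \mid 1$, hence $c \in \{1,-1\}$. Taking $c = 1$, the determinant condition reads $(a-b)^2 - \ell = \pm 1$ in case (1) and $(a-b)^2 - (a-b) - \ell = \pm 1$ in case (2); writing $r = a - b$ these are $\ell = r^2 \pm 1$ and $r(r-1) = \ell \pm 1$, exactly the stated criteria, and the branch $c = -1$ yields the same conditions after an obvious relabeling of $r$. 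Conversely, given a solution $r$ of the relevant equation, the choice $a = r$, $b = 0$, $c = 1$ produces $\theta = (r, \eta)$, which is $(r, \sqrt{\ell})$ in case (1) and $(r, \omega)$ in case (2), the asserted generators.

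I do not expect a genuine obstacle here: the argument is essentially a determinant computation followed by a case analysis on the unit $c$. The two points needing some care are (a) using the correct minimal relation for $\eta$, namely $\eta^2 = \ell$ versus $\omega^2 = \omega + \ell$, according to whether $\mathcal{O}_K = \Z[\sqrt{\ell}]$ or $\mathcal{O}_K = \Z[\omega]$, and (b) the sign bookkeeping at the end: that $c$ must be a unit, and that the ambiguities in the signs of $c$ and of $r$ are absorbed into the ``$\pm$'' of the final Diophantine equation --- for example, in case (2) the branch $c = -1$ leads to $r(r+1) = \ell \pm 1$, which agrees with $r(r-1) = \ell \pm 1$ after replacing $r$ by $-r$.
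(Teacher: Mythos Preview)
Your argument is correct. The paper actually states this proposition without proof (it is presented ``for the sake of completeness'' in Section~\ref{sec:gencase} and no argument is given), so there is nothing to compare against; your determinant computation is exactly the sort of direct verification one would expect, and the case analysis on $c=\pm 1$ and the sign bookkeeping are handled correctly.
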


\subsection{Table of data}
We now present a table of numerical data for the Fourier coefficients $|Q_p|$ of $\Theta_{G_2}$.  The rings $R$ were checked to be maximal (monogenic) orders by the L-function and Modular Form Database (LMFDB) \cite{lmfdb}.  The computer algebra system SAGE \cite{sagemath} was used to compute the narrow class groups $\mathrm{Cl}_E^{+}$. In the table, the notation $C_n$ denotes the cyclic group of order $n$.
\vskip 15pt
\begin{tabular}{|c|c|c|c|c|}
	\hline
	$p(t)$ &  structure & maximal monogenic (LMFDB) & $\# Q_p$ & $\mathrm{Cl}_E^+$ (SAGE) \\ \hline
	$t^3-t^2-2t+1$ & cubic field & yes & 24 &  1 \\ \hline
	$t^3-3t-1$ & cubic field&  yes & 24 & 1 \\ \hline
	$t^3-t^2-3t+1$ & cubic field& yes & 24& 1\\ \hline
	$t^3-t^2-9t+10$ & cubic field& yes & 48 & $C_4$ \\ \hline
	$t^3-t^2-14t+23$ & cubic field& yes &48 & $C_4$ \\ \hline
	$t^3-t^2-11t+12$ & cubic field & yes & 48 & $C_4$ \\ \hline
	$t^3-t^2-12t-1$ &cubic field& yes & 48 & $C_4$\\ \hline
	$t^3-5t-1$ & cubic field& yes & 24 & 1 \\ \hline
	$t^3-t^2-9t+8$ & cubic field & yes & 0 & $C_6$ \\ \hline
	$t^3-21t-35$ & cubic field & yes & 24 & $C_3$\\ \hline
	$(t-1)(t^2-2) $ & quadratic & yes & 12 & 1\\ \hline
	$(t-2)(t^2-3) $ & quadratic & yes & 0 & $C_2$ \\ \hline
	$(t-3)(t^2-10)$ & quadratic & yes & 24 & $C_2$\\ \hline
	$t^3-t^2-54t+169$ & cubic field & yes & 96 & $C_2 \times C_2$ \\ \hline
	$t^3-t^2-34t-57$ & cubic field & yes & 96 & $C_4 \times C_2$ \\ \hline
\end{tabular}

\chapter{The automorphic minimal representation}\label{Chapt: minimal aut}
In this chapter, we construct and study the modular form $\Theta_{F_4}$ of weight $\frac{1}{2}$ on the double cover of $F_4$ and prove Theorem \ref{thm: exists theta} via a careful analysis of the automorphic minimal representation of $\widetilde{F}_4(\A)$.

\section{Review of the construction}
 We begin by reviewing the construction of the automorphic minimal representation $\Pi_{min}$ on $\widetilde{F}_4(\A)$, following Loke--Savin \cite{lokeSavin}, and then Ginzburg \cite{ginzburgF4}.

Recall that we have ordered the simple roots of $F_4$ in the usual way, so that the Dynkin diagram
\[\circ---\circ=>=\circ---\circ\]
has labels $\alpha_1$ through $\alpha_4$ from left to right.  Define $m_{\alpha_1} = m_{\alpha_2} = 2$ and $m_{\alpha_3} = m_{\alpha_4} = 1$. Let $p$ be a place of $\Q$, allowing $p=\infty$. We begin with the following lemma.
\begin{lemma}\label{lem:Tcenter} Let $\widetilde{T}(\Q_p)$ denote the inverse image of the fixed split maximal torus of $F_4(\Q_p)$ in $\widetilde{F}_4(\Q_p)$, and $Z(\widetilde{T}(\Q_p))$ its center.  Then $t\in Z(\widetilde{T}(\Q_p))$ if and only if $t = \pm \prod_{i}\widetilde{h_{\alpha_i}}(t_i^{m_i})$.\end{lemma}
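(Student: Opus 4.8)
The plan is to reduce the claim to a parity computation in the coroot lattice of $F_4$ and the nondegeneracy of the Hilbert symbol. Write $T(\Q_p)$ for the split maximal torus. Since $F_4$ is simultaneously simply connected and adjoint, the coroots $\alpha_1^\vee,\dots,\alpha_4^\vee$ form a $\Z$-basis of the cocharacter lattice, so $(t_1,\dots,t_4)\mapsto\prod_i h_{\alpha_i}(t_i)$ is an isomorphism $(\Q_p^\times)^4\xrightarrow{\ \sim\ }T(\Q_p)$; lifting, every element of $\widetilde{T}(\Q_p)$ may be written $\pm\,\widetilde{h}_{\alpha_1}(t_1)\widetilde{h}_{\alpha_2}(t_2)\widetilde{h}_{\alpha_3}(t_3)\widetilde{h}_{\alpha_4}(t_4)$, the sign absorbing the central $\mu_2$ and the order of the product fixed once and for all. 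Thus $\widetilde{T}(\Q_p)$ is generated by $\mu_2$ together with the $\widetilde{h}_{\alpha_j}(s)$, so $t\in Z(\widetilde{T}(\Q_p))$ iff $\{t,\widetilde{h}_{\alpha_j}(s)\}=1$ for all $j$ and all $s\in\Q_p^\times$. The commutator pairing is bimultiplicative (its values lie in the central $\mu_2$), so combining this with \eqref{eqn: commutator of tori} gives, for $t=\pm\prod_i\widetilde{h}_{\alpha_i}(t_i)$,
\[
\{t,\widetilde{h}_{\alpha_j}(s)\}=\prod_i (t_i,s)_2^{(\alpha_i^\vee,\alpha_j^\vee)}=\Bigl(\textstyle\prod_i t_i^{(\alpha_i^\vee,\alpha_j^\vee)},\,s\Bigr)_2 .
\]
Since the Hilbert symbol is nondegenerate on $\Q_p^\times/(\Q_p^\times)^2$ for every place $p$ ($p$ odd, $p=2$, and $p=\infty$ alike), this is trivial for all $s$ exactly when $\prod_i t_i^{(\alpha_i^\vee,\alpha_j^\vee)}\in(\Q_p^\times)^2$, i.e. when $\sum_{i\,:\,(\alpha_i^\vee,\alpha_j^\vee)\text{ odd}}[t_i]=0$ in the $\mathbf{F}_2$-vector space $\Q_p^\times/(\Q_p^\times)^2$, where $[t_i]$ denotes the class of $t_i$.

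It then remains only to record the parities of the Gram matrix $B_{ij}=(\alpha_i^\vee,\alpha_j^\vee)$. With the normalization $(\alpha,\alpha)=2$ for long roots, $\alpha_1,\alpha_2$ long forces $\alpha_1^\vee=\alpha_1$, $\alpha_2^\vee=\alpha_2$, while $\alpha_3,\alpha_4$ short forces $\alpha_3^\vee=2\alpha_3$, $\alpha_4^\vee=2\alpha_4$. The crucial point is that whenever one of $i,j$ lies in $\{3,4\}$ one has $(\alpha_i^\vee,\alpha_j^\vee)=2\langle\alpha_i,\alpha_j^\vee\rangle\in 2\Z$, and that $(\alpha_1^\vee,\alpha_1^\vee)=(\alpha_2^\vee,\alpha_2^\vee)=2$ while $(\alpha_1^\vee,\alpha_2^\vee)=\langle\alpha_1,\alpha_2^\vee\rangle=-1$. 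Hence the only odd entry of $B$ (up to symmetry) is $B_{12}=B_{21}$; one can double-check the entire matrix against the standard realization of $F_4$ in $\R^4$, but only this parity statement is used.

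Feeding this back, for $t=\pm\prod_i\widetilde{h}_{\alpha_i}(t_i)$ the condition at $j=1$ reads $[t_2]=0$, the condition at $j=2$ reads $[t_1]=0$, and the conditions at $j=3,4$ are vacuous; so $t$ is central iff $t_1,t_2\in(\Q_p^\times)^2$ and $t_3,t_4$ are arbitrary. Writing $t_1=u_1^2$, $t_2=u_2^2$, $u_3=t_3$, $u_4=t_4$ puts $t$ into the asserted form $\pm\prod_i\widetilde{h}_{\alpha_i}(u_i^{m_i})$ with $m_{\alpha_1}=m_{\alpha_2}=2$ and $m_{\alpha_3}=m_{\alpha_4}=1$. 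For the converse, any element of this form is central: $\widetilde{h}_{\alpha_1}(u_1^2)$ and $\widetilde{h}_{\alpha_2}(u_2^2)$ are central because a square is annihilated by every Hilbert symbol, and $\widetilde{h}_{\alpha_3}(u_3),\widetilde{h}_{\alpha_4}(u_4)$ are central because $\alpha_3^\vee,\alpha_4^\vee$ pair evenly with all coroots; in particular these four factors commute, so the product does not depend on the order. There is no genuine obstacle beyond bookkeeping: the one point needing care is that the reduction rests on nondegeneracy of $(\cdot,\cdot)_2$ on $\Q_p^\times/(\Q_p^\times)^2$, invoked uniformly over all places including $p=2$ and $p=\infty$, together with the short/long dichotomy for $F_4$ that makes the parity pattern of $B$ so simple.
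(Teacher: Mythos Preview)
Your proof is correct and follows exactly the approach the paper indicates: the paper's proof is the single line ``One applies the commutator formula $\{\widetilde{h}_{\alpha}(s),\widetilde{h}_{\beta}(t)\} = (s,t)^{(\alpha^\vee,\beta^\vee)}$,'' and you have simply carried out this application in full, computing the parities of the Gram matrix $(\alpha_i^\vee,\alpha_j^\vee)$ and invoking nondegeneracy of the Hilbert symbol. There is nothing to add.
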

\begin{proof} One applies the commutator formula \eqref{eqn: commutator of tori} $\{\widetilde{h_{\alpha}}(s),\widetilde{h_{\beta}}(t)\} = (s,t)^{(\alpha^\vee,\beta^\vee)}$.
\end{proof}
We will also have need of a maximal abelian subgroup at every local place. This is handled uniformly by the following lemma.
\begin{lemma} For any place $p\leq\infty$, the subgroup
	\[T_{*}(\Q_p):=\pm \widetilde{h_{\alpha_1}}(\Q_p^\times)\widetilde{h_{\alpha_2}}((\Q_p^\times)^2)\widetilde{h_{\alpha_3}}(\Q_p^\times) \widetilde{h_{\alpha_4}}(\Q_p^\times)\]
	is a maximal abelian subgroup of $\widetilde{T}(\Q_p)$.\end{lemma}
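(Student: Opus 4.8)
The statement is that $T_*(\Q_p) = \pm \widetilde{h_{\alpha_1}}(\Q_p^\times)\widetilde{h_{\alpha_2}}((\Q_p^\times)^2)\widetilde{h_{\alpha_3}}(\Q_p^\times)\widetilde{h_{\alpha_4}}(\Q_p^\times)$ is a maximal abelian subgroup of $\widetilde{T}(\Q_p)$. The plan is to verify in order: (i) $T_*(\Q_p)$ is a subgroup, (ii) it is abelian, (iii) it is maximal abelian. The essential input throughout is the commutator relation \eqref{eqn: commutator of tori}, $\{\widetilde{h_\alpha}(s),\widetilde{h_\beta}(t)\} = (s,t)_2^{(\alpha^\vee,\beta^\vee)}$, together with the multiplication rule (6) from Section \ref{Sec: double covers gen}, $\widetilde{h_\alpha}(s)\widetilde{h_\alpha}(t) = \widetilde{h_\alpha}(st)(s,t)_2^{2/(\alpha,\alpha)}$.

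\textbf{Step 1: subgroup and abelianness.} First I would record the Cartan matrix data for $F_4$: with the normalization $(\beta,\beta)=2$ for long roots $\alpha_1,\alpha_2$ and $(\gamma,\gamma)=1$ for short roots $\alpha_3,\alpha_4$, one computes the pairings $(\alpha_i^\vee,\alpha_j^\vee)$ of coroots. The key numerology: $(\alpha_2^\vee,\alpha_j^\vee)$ is even for all $j$ (since $\alpha_2$ is long, its coroot pairs with everything through the Cartan integers, and the relevant entries come out even here — this is precisely why one restricts the $\alpha_2$-argument to squares), while for the short coroots $\alpha_3^\vee,\alpha_4^\vee$ one has $(\alpha_3^\vee,\alpha_3^\vee)=2$, $(\alpha_4^\vee,\alpha_4^\vee)=2$ and $(\alpha_1^\vee,\alpha_1^\vee)=2$. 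Abelianness then follows: for the generators $\widetilde{h_{\alpha_i}}(t_i)$ appearing in $T_*$, every commutator $(s,t)_2^{(\alpha_i^\vee,\alpha_j^\vee)}$ is trivial because the exponent is even whenever at least one argument ranges only over squares (the $\alpha_2$ slot) — and for $i,j\in\{1,3,4\}$ one checks the off-diagonal pairings $(\alpha_i^\vee,\alpha_j^\vee)$ are also even (the short-short pairing $(\alpha_3^\vee,\alpha_4^\vee)$ and the long-short pairings $(\alpha_1^\vee,\alpha_3^\vee)$, $(\alpha_1^\vee,\alpha_4^\vee)$). Once all pairwise commutators vanish, closure under multiplication is immediate from rule (6), with the correction term $(s,t)_2$ absorbed into the $\pm$ factor (recall $(s,t)_2 \in \mu_2$); the square restriction on $\alpha_2$ is preserved since $(\Q_p^\times)^2$ is a subgroup.

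\textbf{Step 2: maximality.} Suppose $x\in \widetilde{T}(\Q_p)$ commutes with all of $T_*(\Q_p)$; I want to show $x\in T_*(\Q_p)$. Write $x = \pm\prod_i \widetilde{h_{\alpha_i}}(u_i)$ for some $u_i\in\Q_p^\times$ (every element of $\widetilde{T}$ has this form, since $\widetilde{T}$ is generated by the $\widetilde{h_{\alpha_i}}(\Q_p^\times)$ and $\mu_2$). The condition that $x$ commutes with $\widetilde{h_{\alpha_j}}(t)$ for all $t\in\Q_p^\times$ (for $j\in\{1,3,4\}$) and for all $t\in(\Q_p^\times)^2$ (for $j=2$) reads, via \eqref{eqn: commutator of tori}, as $\prod_i (u_i,t)_2^{(\alpha_i^\vee,\alpha_j^\vee)} = 1$. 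Using the $j\in\{1,3,4\}$ conditions and the non-degeneracy of the Hilbert symbol pairing $\Q_p^\times/(\Q_p^\times)^2 \times \Q_p^\times/(\Q_p^\times)^2 \to \mu_2$, I extract constraints forcing $u_1,u_3,u_4$ (mod squares) to be trivial — hence $\widetilde{h_{\alpha_1}}(u_1)\in \widetilde{h_{\alpha_1}}((\Q_p^\times)^2)$ etc., which by rule (6) means these lie in $\pm\widetilde{h_{\alpha_i}}(\Q_p^\times)$ anyway, actually I want to argue the reverse: I need to pin down which $u_i$ must be squares. The correct bookkeeping: maximality forces the $\alpha_2$-component $u_2$ to lie in $(\Q_p^\times)^2$ (this is the content of the square restriction), while $u_1,u_3,u_4$ are unrestricted; the $j$-conditions for $j\ne 2$ are automatically satisfied by Step 1's parity computation, and the $j=2$ condition (with $t$ a square) is vacuous, so the only genuine constraint comes from demanding $x$ commute with... hmm. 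I expect \textbf{this maximality step to be the main obstacle}: one must carefully identify, using the precise Hilbert-symbol non-degeneracy and the $F_4$ coroot pairings, exactly which coordinates are pinned down, and check that no element with $u_2\notin(\Q_p^\times)^2$ can centralize the short-root part. The cleanest route may be to pass to the quotient $\widetilde{T}(\Q_p)/(\text{squares in each }\widetilde{h_{\alpha_i}})$, which is a finite group carrying a non-degenerate alternating-type form induced by the Hilbert symbol weighted by the coroot pairings, and then the maximal abelian subgroups are exactly the maximal isotropic subspaces; one identifies $T_*$ with such a subspace by a dimension/rank count using that $p\le\infty$ so $\Q_p^\times/(\Q_p^\times)^2$ has order $1,2,4,8$ depending on $p$. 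I would check the count is consistent in each case ($p$ odd, $p=2$, $p=\infty$) — the $p=2$ case being the tightest.
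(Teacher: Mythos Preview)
Your approach --- compute directly with the commutator formula \eqref{eqn: commutator of tori} and the coroot Gram matrix --- is exactly what the paper intends by ``an easy check, using the commutator formula.'' The abelianness verification is fine. But your maximality step has a genuine gap, and it stems from an error earlier in Step~1.

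You assert that $(\alpha_2^\vee,\alpha_j^\vee)$ is even for all $j$. This is false: with the paper's normalization $(\beta,\beta)=2$ for long roots, the coroot Gram matrix for $F_4$ is
\[
\bigl((\alpha_i^\vee,\alpha_j^\vee)\bigr)_{i,j} \;=\;
\begin{pmatrix} 2 & -1 & 0 & 0 \\ -1 & 2 & -2 & 0 \\ 0 & -2 & 4 & -2 \\ 0 & 0 & -2 & 4 \end{pmatrix},
\]
and in particular $(\alpha_1^\vee,\alpha_2^\vee)=-1$ is \emph{odd}. (Abelianness of $T_*$ still holds, because the $\alpha_2$-slot is restricted to squares, so $(s,t^2)_2^{-1}=1$ regardless.) But this odd entry is exactly what you need for maximality, and it is why your Step~2 got tangled. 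If $x=\pm\prod_i\widetilde{h_{\alpha_i}}(u_i)$ centralizes $T_*$, then in particular $x$ commutes with $\widetilde{h_{\alpha_1}}(t)$ for every $t\in\Q_p^\times$; by \eqref{eqn: commutator of tori} this reads
\[
(u_1,t)_2^{2}\,(u_2,t)_2^{-1}\,(u_3,t)_2^{0}\,(u_4,t)_2^{0} \;=\; (u_2,t)_2 \;=\; 1 \quad\text{for all } t\in\Q_p^\times,
\]
and non-degeneracy of the Hilbert symbol forces $u_2\in(\Q_p^\times)^2$. That single line finishes the argument; the $j=3,4$ conditions and the $j=2$ condition (with $t$ a square) are automatic from the even entries, so they impose nothing further. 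Your instinct that ``no element with $u_2\notin(\Q_p^\times)^2$ can centralize the short-root part'' points to the wrong row: it is the long root $\alpha_1$, not $\alpha_3$ or $\alpha_4$, that detects $u_2$. Once you see this, the alternating-form / maximal-isotropic machinery and the case split over $p$ are unnecessary.
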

\begin{proof} This is an easy check, using the commutator formula.\end{proof}
For each $p$, we let $B_\ast(\Q_p) = T_\ast(\Q_p)U_B(\Q_p)$ denote the associated subgroup of $\widetilde{B}(\Q_p)$.
\begin{definition}
	A genuine character $\chi_p$ of $Z(\widetilde{T}(\Q_p))$ is said to be \textbf{exceptional} if for each simple root $\al$, $\chi_p(\widetilde{h_{\alpha}}(t^{m_{\alpha}})) = |t|_v$. We let $\nu_{exc}:= (1/m_\al)_\al\in X^\ast(T)\otimes_\Z \R$ to be the associated exponent.
\end{definition}
Lemma \ref{lem:Tcenter} implies that there is a unique exceptional character $\chi_p$ on the center of the covering torus of $\widetilde{F}_4(\Q_p)$. Let $\chi_{exc} = \prod_p\chi_p$ be the induced character on $Z(\widetilde{T}(\A))$.  Note that $\chi$ is automatically automorphic by the product formula.

We consider the subgroup of $\widetilde{T}(\A)$ given by
\[
T_\ast(\A):=T(\Q) Z(\widetilde{T}(\A));
\]
this is a maximal abelian subgroup \cite[Theorem 4.1]{WeissmanTori}.
Abusing notation, write $\chi_{exc}$ for the automorphic extension of $\chi_{exc}$ from $Z(\widetilde{T}(\A))$ to $T_*(\A)$.  Inflating $\chi_{exc}$ to a character of $B_{*}(\A) := T_{*}(\A)U_B(\A)$, consider the induced representation
\[
V_0 :=\Ind_{B_*(\A)}^{\widetilde{F}_4(\A)}(\delta_B^{1/2} \chi_{exc}),\]
where $\delta_B$ is the modular character of $B(\A)$.

\begin{remark}
	In their construction of this representation, Loke and Savin instead define a representation $\pi(\chi_{exc})$ of $\widetilde{T}(\A)$, inflate to $\widetilde{B}(\A)$, then induce to $\widetilde{F}_4(\A)$.  It follows from \cite[Proposition 4.1 and Proposition 5.3]{lokeSavin} that their $\pi(\chi_{exc})$ is an irreducible representation of $\widetilde{T}(\A)$ with the same central character as $\Ind_{T_*(\A)}^{\widetilde{T}(\A)}(\chi_{exc})$, so they are isomorphic.  In fact, both representations are realized as spaces of functions on $T(\Q)\backslash \widetilde{T}(\A)$, and we claim that they are identical. This is because there is, in the terminology of \cite{lokeSavin}, a unique genuine representation in $A T(\Q)\backslash \widetilde{T}(\A)$ that is invariant under $M_s T_2^1 \prod_{p >2}{T_p}$; see \cite[Corollary 5.2]{lokeSavin}.  (This is true for $F_4$, but not true in general.)
\end{remark}

For $\mathbf{s} = (s_1,s_2,s_3,s_4)\in \C^4$, define $\omega_{\mathbf{s}}$ a character of $T(\A)$ as $\omega_{\mathbf{s}}(h_{\alpha_i}(t_i)) = |t_i|^{s_i}$.  Set $$V_s = \Ind_{B_*(\A)}^{\widetilde{F}_4(\A)}(\delta_B^{1/2} \chi_{exc} \omega_s).$$  Let $f(g,\mathbf{s})$ be a flat section in this induced representation, and set
\[E(g,f,\mathbf{s}) = \sum_{\gamma \in B(\Q)\backslash F_4(\Q)}{f(\gamma g, \mathbf{s})}.\]
The automorphic minimal representation on $\widetilde{F}_4(\A)$ is constructed as the residue of these Eisenstein series at a distinguished point.

\begin{theorem}\cite[Theorem 7.1]{lokeSavin}\label{thm: residue} The Eisenstein series $E(g,f,\mathbf{s})$ have at worst a simple multi-pole at $\mathbf{s} = 0$.  Let
	\[\theta(g,f) = \lim_{\mathbf{s} \rightarrow 0}{s_1 s_2 s_3 s_4 E(g,f,\mathbf{s})}\]
	and $\Pi_{min}$ be the space of these residues $\theta(g,f)$.  Then $\theta(g,f)$ is a genuine, square-integrable automorphic form on $\widetilde{F}_4(\A)$.  Moreover, the representation $\Pi_{min}$ is irreducible.
\end{theorem}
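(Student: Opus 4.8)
This is \cite[Theorem 7.1]{lokeSavin}; the plan is to follow the general theory of residues of Eisenstein series of Langlands and Moeglin--Waldspurger, adapted to the cover $\widetilde{F}_4(\A)$, combined with the local analysis of Loke--Savin. First I would compute the constant term of $E(g,f,\mathbf{s})$ along the unipotent radical $U_B$ of the Borel. By the Langlands constant-term formula this is a finite sum over the Weyl group $W$ of $F_4$ of terms $M(w,\mathbf{s})f$, where $M(w,\mathbf{s})$ is the standard intertwining operator and $M(w,\mathbf{s})f$ lies in the principal series attached to the twisted exponent $w(\nu_{exc}+\mathbf{s})$. Factoring each $M(w,\mathbf{s})$ along a reduced word into rank-one operators and invoking the Gindikin--Karpelevich formula for the cover, each rank-one factor is, up to an invertible holomorphic factor and corrections at the ramified places, a ratio of completed $\GL_1$ zeta functions of the shape $\zeta(\la\,\cdot\,\ra)/\zeta(\la\,\cdot\,\ra+1)$. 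Because $\chi_{exc}$ has exponent $\nu_{exc}=(1/m_\alpha)_\alpha$, which sits precisely at the edge of the positive chamber where these factors become singular, one checks that the long Weyl element $w_0$ contributes to $M(w_0,\mathbf{s})$ a pole of order exactly one in each of the four variables $s_i$ at $\mathbf{s}=0$, while every other $w$ contributes strictly lower order; this yields the simple multi-pole of order $4$ and identifies $\theta(g,f)=\lim_{\mathbf{s}\to0}s_1s_2s_3s_4\,E(g,f,\mathbf{s})$ as a nonzero residue whose only surviving Borel exponent is $w_0\nu_{exc}$.

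Having the residue, square-integrability follows from Langlands' $L^2$-criterion in the form of Moeglin--Waldspurger: a residual form lies in $L^2$ as soon as the real parts of all exponents in its Borel constant term lie strictly inside the negative obtuse Weyl chamber. The only exponent is $w_0\nu_{exc}$, and a short computation with $\nu_{exc}=(1/m_\alpha)_\alpha$ shows it lies deep enough in that chamber, so $\theta(g,f)\in L^2$. Genuineness is automatic since $\chi_{exc}$ is genuine, hence so is every residue of $V_s$. Thus $\Pi_{min}$ is a nonzero residual subrepresentation of $L^2(F_4(\Q)\backslash\widetilde{F}_4(\A))$.

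For irreducibility I would identify $\Pi_{min}$ with the restricted tensor product $\bigotimes_p'\Pi_{min,p}$ of the local minimal representations, each of which Loke--Savin prove irreducible. The Borel constant-term map realizes $\Pi_{min}$ inside a space of genuine functions on $T(\Q)\backslash\widetilde{T}(\A)$ transforming by $\chi_{exc}$ twisted by $w_0\nu_{exc}$; by \cite[Corollary 5.2]{lokeSavin} there is a unique such genuine representation invariant under the relevant compact-open subgroup, which gives a multiplicity-one statement for the global constant term. Any irreducible constituent of $\Pi_{min}$ therefore has the same constant term, hence the same local components; combined with local irreducibility and the nonvanishing of $\Pi_{min}$, this forces $\Pi_{min}$ to be irreducible.

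I expect the main obstacle to be the precise pole analysis of the metaplectic intertwining operators: one must verify that the covering Gindikin--Karpelevich factors behave as the simply-laced linear-group heuristics predict, so that the pole at $\mathbf{s}=0$ has order exactly $4$ and no larger, and that the residual exponent $w_0\nu_{exc}$ satisfies the strict inequalities of the $L^2$-criterion; the uniqueness input \cite[Corollary 5.2]{lokeSavin} at the metaplectic torus must then be leveraged carefully to upgrade constant-term information to global irreducibility.
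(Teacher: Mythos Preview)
The paper does not give its own proof of this theorem; it simply records the statement with attribution to \cite[Theorem~7.1]{lokeSavin}. Your sketch is a faithful outline of the Loke--Savin argument: constant-term computation via the Weyl-group sum, metaplectic Gindikin--Karpelevich to locate the order-four pole at $\mathbf{s}=0$, and the Langlands/Moeglin--Waldspurger exponent criterion for square-integrability. So at the level of strategy there is nothing to compare---you are reconstructing the cited source, not the present paper.

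One point in your sketch deserves tightening. Your irreducibility argument leans on \cite[Corollary~5.2]{lokeSavin} to get a multiplicity-one statement for the global constant term and then says ``any irreducible constituent has the same constant term, hence the same local components.'' This step is not quite right as stated: two irreducible subquotients of a residual representation can share a Borel exponent without being isomorphic, and the torus-level uniqueness in \cite[Corollary~5.2]{lokeSavin} concerns irreducible genuine representations of $\widetilde{T}(\A)$, not automorphic constituents of $\Pi_{min}$. The cleaner route (and the one Loke--Savin actually take) is to observe that the residue $\theta(g,f)$ is obtained by applying the long intertwining operator $M(w_0,\mathbf{0})$ to $f\in V_0$, so that the residue map factors, place by place, through the unique irreducible quotient of the local principal series $I_p$ (this is \cite[Proposition~6.3]{lokeSavin}, also quoted in the paper). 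Hence $\Pi_{min}\simeq\bigotimes_p'\Pi_{min,p}$ with each factor irreducible, and global irreducibility is immediate. Replacing your constant-term multiplicity argument with this intertwining-operator argument closes the gap.
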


\begin{remark}
	In \cite{lokeSavin}, this theorem is proved for the associated automorphic representation on the double cover of all split simply-connected semi-simple groups over $\Q$. These are examples of \emph{generalized theta representations}, which play a fundamental role in the study of automorphic representations of non-linear covering groups; see for example \cite{Patterson,ChintaFriedHoff,BFGsmall,FriedbergGinzburg,LeslieTheta} for some conjectures and aspects of this area.
\end{remark}

Write $\Pi_{min} = \otimes'_{p} \Pi_{min,p}$.  Then Loke--Savin also identify the representations $\Pi_{min,p}$ in terms of principal series. To do this, extend the character $\chi_p$ of $Z(\widetilde{T}(\Q_p))$ to the subgroup $B_{*}(\Q_p)$, and let $I_p = Ind_{B_{*}(\Q_p)}^{\widetilde{F}_4(\Q_p)}(\delta_{B}^{1/2} \chi_p)$.
\begin{proposition}\cite[Proposition 6.3]{lokeSavin}  The representation $I_p$ has a unique irreducible quotient, which is $\Pi_{min,p}$.\end{proposition}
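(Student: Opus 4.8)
The statement to prove is that the local principal series $I_p = \Ind_{B_*(\Q_p)}^{\widetilde{F}_4(\Q_p)}(\delta_B^{1/2}\chi_p)$ has a unique irreducible quotient, namely $\Pi_{min,p}$. The plan is to combine a local–global argument with standard facts about degenerate principal series of covering groups. First I would note that the automorphic minimal representation $\Pi_{min} = \otimes'_p \Pi_{min,p}$ is realized, by Theorem \ref{thm: residue} above, as a residue of the Eisenstein series built from flat sections in $V_s = \Ind_{B_*(\A)}^{\widetilde{F}_4(\A)}(\delta_B^{1/2}\chi_{exc}\omega_s)$ at $\mathbf{s}=0$. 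The residue map $f \mapsto \theta(g,f)$ is $\widetilde{F}_4(\A)$-equivariant and nonzero, hence factors through a nonzero quotient of $V_0 = \otimes'_p I_p$; since $\Pi_{min}$ is irreducible (Theorem \ref{thm: residue}), each $\Pi_{min,p}$ is therefore a nonzero irreducible quotient of $I_p$. This gives existence; the work is in uniqueness.

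For uniqueness I would argue place-by-place. The key input is that $\chi_p$ is the \emph{exceptional} character, so the exponent $\nu_{exc} = (1/m_\alpha)_\alpha$ is as regular and as "large" as possible subject to the metaplectic constraint. I would invoke the Langlands/Jacquet-style theory of the principal series: the unique irreducible quotient corresponds to the Langlands data attached to the dominant exponent, and one shows $I_p$ has a unique irreducible quotient precisely when the relevant intertwining operators $M_w(\chi_p)$ all have rank one on the appropriate subspaces — equivalently when $\delta_B^{1/2}\chi_p$ lies in the closure of the positive Weyl chamber and no Weyl reflection fixes it in a way that produces reducibility of the associated rank-one operators. At $p$ finite and unramified (all but the place $2$), $I_p$ is an unramified principal series and the Casselman–Shalika/Langlands classification gives a unique irreducible (in fact unramified) quotient directly; this is exactly \cite[Proposition 6.3]{lokeSavin} territory, and I would cite their analysis of the poles of the intertwining operators. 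At $p=2$ and at $p=\infty$ one checks the same Weyl-group computation by hand using the Gindikin–Karpelevich formula for the metaplectic cover (the relevant Plancherel/$c$-function factors for $\widetilde{F}_4$), noting that the square in the $\alpha_2$-slot of $T_*$ is built into the definition of $\omega_s$ and $\chi_{exc}$ so the computation is uniform.

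More concretely, the key steps in order would be: (1) show $\delta_B^{1/2}\chi_p$ is (weakly) dominant, using the definition of exceptional character, so that $I_p$ is a "standard module" in the sense of Langlands; (2) show the image of the long-intertwining operator $M_{w_0}: I_p \to \Ind_{B_*}^{\widetilde{F}_4}(w_0(\delta_B^{1/2}\chi_p))$ is irreducible and is the unique irreducible quotient — this is where one uses that for the exceptional exponent the only "bad" roots are the simple ones, and the rank-one computation along each simple root reduces to the $\widetilde{\SL}_2$ (or $\widetilde{\mathrm{SL}}_2^{(2)}$) case, where $\delta^{1/2}\chi$ has a one-dimensional Jacquet module quotient; (3) identify this unique irreducible quotient with $\Pi_{min,p}$ via the local–global compatibility in the first paragraph. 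The main obstacle will be step (2): carefully controlling the metaplectic intertwining operators — in particular making sure the cocycle twists in the Gindikin–Karpelevich formula for $\widetilde{F}_4(\Q_2)$ do not introduce spurious poles or zeros — and handling the genuine, non-linear nature of the cover so that the reduction to rank-one $\widetilde{\SL}_2$ computations is valid. Fortunately, this is essentially the content of \cite[Section 6]{lokeSavin}, so the honest strategy is to verify that their argument (stated there for all split simply-connected groups) applies verbatim to $F_4$ and to cite it; the only genuinely new checking is that the specific exceptional character for $F_4$ with $m_{\alpha_1}=m_{\alpha_2}=2$, $m_{\alpha_3}=m_{\alpha_4}=1$ satisfies the dominance hypothesis, which follows immediately from $\nu_{exc} = (1/m_\alpha)_\alpha$ being a positive combination of fundamental coweights.
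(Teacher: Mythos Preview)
The paper gives no proof of this proposition; it is simply quoted from \cite[Proposition~6.3]{lokeSavin}. Your outline is essentially a sketch of what Loke--Savin actually do, and you acknowledge as much in your final paragraph: the uniqueness of the irreducible quotient comes from the Langlands classification for the covering group, with the dominance of the exceptional exponent checked directly, and the rank-one reductions controlled via the metaplectic Gindikin--Karpelevich factors. The local--global argument you give for existence (that the residue map realizes $\Pi_{min,p}$ as some irreducible quotient of $I_p$) is the same device the paper uses implicitly in the sentence preceding the proposition. So your proposal is correct and matches the cited source; there is nothing further to compare since the present paper offers no independent argument.
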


The notation $\Pi_{min}$ references Ginzburg's theorem \cite[Theorem 1]{ginzburgF4} that $\Pi_{min}$ is an \textbf{automorphic minimal representation} in the sense that the set of nilpotent elements associated to non-vanishing Fourier--Whittaker coefficients of $\Pi_{min}$ are contained in the minimal nilpotent orbit $\mathcal{O}_{min}\subset \mathfrak{f}_4(\overline{\Q})$; we refer the reader to \cite{ginzburgDimn} for the notions of Fourier--Whittaker coefficients associated to nilpotent orbits. This result plays a central role in our analysis of the Fourier expansion of $\Theta_{F_4}$; see Lemma \ref{lem:FElem}.

\section{Archimedean aspects}
Relating these generalized theta series to quaternionic modular forms requires information of the $\widetilde{K}_\infty$-types of the local representation $\Pi_{min,\infty}$. This representation turns out to be the same as the representation $\Pi_{GW}$ constructed by Gross--Wallach in \cite{grossWallachI}.
\begin{proposition}\label{prop: right Ktypes}
	The representation $\Pi_{min,\infty}$ is isomorphic to the minimal representation $\Pi_{GW}$ constructed by Gross--Wallach; its $\widetilde{K}_\infty=\SU(2)\times \Sp({6})$-types are
	\begin{equation}
	\bigoplus_{n=0}^\infty Sym^{1+n}(\C^2)\boxtimes \mathbf{V}(n\omega_3),
	\end{equation}
	where $\omega_3$ is the $3^{rd}$ fundamental weight of $\Sp(6)$ and $\mathbf{V}(n \omega_3)$ denotes the irreducible representation of $\Sp(6)$ with highest weight $n \omega_3$. In particular, $\Pi_{min,\infty}$ has minimal $\widetilde{K}_\infty$-type $\mathbf{V}_{1/2}$.
\end{proposition}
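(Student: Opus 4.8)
The plan is to identify $\Pi_{min,\infty}$ with the Gross--Wallach representation by matching Langlands data and infinitesimal characters, and then to read off the $\widetilde{K}_\infty$-decomposition from Gross--Wallach. First I would record that, by \cite[Proposition 6.3]{lokeSavin} applied at $p=\infty$, the representation $\Pi_{min,\infty}$ is the unique irreducible quotient of the degenerate principal series $I_\infty = \Ind_{B_*(\R)}^{\widetilde{F}_4(\R)}(\delta_B^{1/2}\chi_\infty)$; it is a genuine $(\mathfrak{f}_4\otimes\C,\widetilde{K}_\infty)$-module, and it is unitary, being the archimedean component of the square-integrable residual representation $\Pi_{min}$ furnished by Theorem \ref{thm: residue}. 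Next I would compute its infinitesimal character: transporting the exponent $\delta_B^{1/2}\chi_{exc}$ through the Harish-Chandra isomorphism shows that $\Pi_{min,\infty}$ carries the ``exceptional'' infinitesimal character $\lambda_{exc}$ determined by $\nu_{exc}$.

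Second, I would bring in the Gross--Wallach construction \cite{grossWallachI}: analytic continuation of the quaternionic discrete series of the quaternionic real form of $F_4$ (which requires passing to the double cover once the weight becomes half-integral) produces an irreducible unitary genuine representation $\Pi_{GW}$ of $\widetilde{F}_4(\R)$ whose $\widetilde{K}_\infty = \SU(2)\times\Sp(6)$-decomposition is $\bigoplus_{n\geq 0}\Sym^{1+n}(\C^2)\boxtimes\mathbf{V}(n\omega_3)$, with minimal $\widetilde{K}_\infty$-type the $n=0$ term $\Sym^{1}(\C^2)\boxtimes\mathbf{V}(0) = \mathbf{V}_{1/2}$; one checks that $\Pi_{GW}$ also carries the infinitesimal character $\lambda_{exc}$. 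Thus both $\Pi_{min,\infty}$ and $\Pi_{GW}$ are irreducible genuine unitary $(\mathfrak{f}_4\otimes\C,\widetilde{K}_\infty)$-modules with the same infinitesimal character.

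To conclude $\Pi_{min,\infty}\cong\Pi_{GW}$ I would argue as follows. By Frobenius reciprocity (the bottom-layer $\widetilde{K}_\infty$-type of a degenerate principal series), $\mathbf{V}_{1/2}$ occurs in $I_\infty$ with multiplicity one; since $\mathbf{V}_{1/2}$ is the minimal $\widetilde{K}_\infty$-type of $\Pi_{GW}$ and $\Pi_{GW}$ is irreducible, a nonzero $\widetilde{K}_\infty$-map $\mathbf{V}_{1/2}\to\Pi_{GW}$ extends, using the $(\mathfrak{f}_4,\widetilde{K}_\infty)$-module structure, to a nonzero surjection $I_\infty\twoheadrightarrow\Pi_{GW}$; by uniqueness of the irreducible quotient of $I_\infty$ this forces $\Pi_{GW}\cong\Pi_{min,\infty}$. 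Alternatively one may invoke the classification of small genuine unitary representations of split real groups in \cite{ABPTV}: at the infinitesimal character $\lambda_{exc}$ there is a unique genuine unitary representation whose associated variety is the closure of the minimal nilpotent orbit $\mathcal{O}_{min}\subset\mathfrak{f}_4(\overline{\Q})$, and both our representations have this property --- for $\Pi_{min,\infty}$ because $\Pi_{min}$ is automorphic minimal by Ginzburg's theorem \cite{ginzburgF4} (so its archimedean wavefront set is minimal), and for $\Pi_{GW}$ by the Gross--Wallach analysis. Either way, the $\widetilde{K}_\infty$-type statement, and in particular the assertion that the minimal $\widetilde{K}_\infty$-type is $\mathbf{V}_{1/2}$, is inherited from $\Pi_{GW}$.

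I expect the main obstacle to be the step matching the two constructions precisely: verifying that the analytically continued quaternionic representation $\Pi_{GW}$ is exactly the Langlands quotient of the degenerate principal series induced from the exceptional character $\chi_\infty$ (equivalently, pinning down the uniqueness of the genuine unitary representation at infinitesimal character $\lambda_{exc}$ with minimal associated variety). This is where the structure theory of the cover $\widetilde{F}_4(\R)$ and the \cite{ABPTV} classification do the real work; the remaining ingredients --- genuineness, unitarity via square-integrability of $\Pi_{min}$, and the bottom-layer computation placing $\mathbf{V}_{1/2}$ in $I_\infty$ --- are routine.
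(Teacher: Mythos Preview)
Your overall strategy---show that $\Pi_{min,\infty}$ and $\Pi_{GW}$ share an infinitesimal character and then invoke a uniqueness principle---is exactly the paper's. However, your Option~A has a genuine gap. A nonzero $\widetilde{K}_\infty$-map $\mathbf{V}_{1/2}\to\Pi_{GW}$ does \emph{not} extend to a $(\g,\widetilde{K}_\infty)$-module map $I_\infty\to\Pi_{GW}$; knowing that both $I_\infty$ and $\Pi_{GW}$ contain $\mathbf{V}_{1/2}$ does not by itself produce a surjection of Harish-Chandra modules. What you need is a uniqueness statement: there is exactly one irreducible genuine $(\g,\widetilde{K}_\infty)$-module with infinitesimal character $\nu_{exc}$ containing $\mathbf{V}_{1/2}$. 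The paper obtains this from the pseudospherical theory of \cite{ABPTV}: one checks that $\mathbf{V}_{1/2}$ is the \emph{unique} pseudospherical $\widetilde{K}_\infty$-type for $\widetilde{F}_4(\R)$, so that $I_\infty$ is the pseudospherical principal series $I(\tilde{\delta},\nu_{exc})$, and then \cite[Section~5]{ABPTV} plus Harish-Chandra's subquotient theorem give the desired uniqueness. This is the work your ``bottom-layer'' sentence is hiding.

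Your Option~B (match via minimal associated variety) is a genuinely different route from the paper's. It can be made to work, but your justification that $\Pi_{min,\infty}$ has minimal wavefront set because the \emph{global} representation is automorphic-minimal (Ginzburg) is indirect; one would need to pass from vanishing of global Fourier coefficients to control of the archimedean wavefront set. The paper avoids this entirely by using the $K$-type criterion.

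Finally, you assert without proof that $\Pi_{GW}$ has infinitesimal character $\lambda_{exc}$. In the paper this is a nontrivial check: Gross--Wallach give $\nu_{GW}=\rho-\tfrac{3}{2}\omega_1$, while the exceptional character corresponds to $\nu_{exc}=\rho-\tfrac{1}{2}(\omega_1+\omega_2)$, and one must verify these are $W$-conjugate (the paper does this by a computer calculation). You should flag this as a concrete computation rather than fold it into ``one checks.''
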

\begin{proof}
	Note that, from \cite[Proposition 6.3]{lokeSavin}, $\Pi_{min,\infty}$ is the Langlands quotient of the principal series representation
	\[
	\Ind_{B_*(\R)}^{\widetilde{F}_4(\R)}(\delta_B^{1/2} \chi_\infty)\cong  \Ind_{\widetilde{B}}^{G}(\pi(\chi_{\infty})),
	\]
	where $\chi_\infty$ is the exceptional character and $\pi(\chi_{\infty})\cong \widetilde{\de}\boxtimes \chi_\infty$ is the induced representation of $\widetilde{T}(\R)=\widetilde{M} \cdot T(\R)^\circ$.  Here $\widetilde{M}$ is a certain finite subgroup of $\widetilde{T}(\R)$ and $T(\R)^\circ$ is the connected component of the identity of the covering torus. Note we use the fact that
	\begin{equation}\label{eqn: exception inf}
	\nu_{exc}=\left(\frac{1}{2},\frac{1}{2},1,1\right)= \rho-\frac{1}{2}(\omega_1+\omega_2)\in \t^\ast:= X^\ast(T)\otimes_\Z\R
	\end{equation}
	lies in the dominant chamber in identifying $\Pi_{min,\infty}$ as the Langlands quotient.
	
	Referring the reader to \cite[Sections 4 and 5]{ABPTV} for the notions of pseudospherical representations and notation, in the decomposition
	\[
	\pi(\chi_{\infty})=\tilde{\de}\boxtimes \chi_{\infty},
	\]
    the two dimensional representation $\tilde{\de}$ is a \emph{pseudospherical representation} of $\widetilde{M}$. It is easy to check that there is a unique such representation for $\G=\widetilde{F}_4(\R)$, and it arises as the restriction of the $\widetilde{K}_\infty=\SU(2)\times \Sp(6)$-representation $\mathbf{V}_{{1}/{2}}$ to $\widetilde{M}\subset\widetilde{K}_\infty$. In particular, $\mathbf{V}_{{1}/{2}}$ is the unique pseudospherical $\widetilde{K}_\infty$-type for $\G$.

    In the notation of \cite{ABPTV}, we see that $\Pi_{min,\infty}$ is the Langlands quotient $J(\tilde{\de},\nu_{exc})$ of the corresponding pseudospherical principal series
	\[
	I(\tilde{\de},\nu_{exc})=\Ind_{\widetilde{B}(\R)}^{\G}(\tilde{\de}\boxtimes(\nu_{exc}+\rho)).
	\]By \cite[Def. 5.5]{ABPTV} and the subsequent discussion, we conclude that $\Pi_{min,\infty}$ has the minimal $\widetilde{K}_\infty$-type $\mathbf{V}_{1/2}$.	The key point, as noted in \cite[Section 5]{ABPTV}, is that this Langlands quotient $J(\tilde{\de},\nu_{exc})$ is the unique irreducible representation of $\G$ containing the $\widetilde{K}_\infty$-type $\mathbf{V}_{{1}/{2}}$ and having infinitesimal character $\nu_{exc}\in \t^\ast/W$. This follows from the analysis of pseudospherical $\tilde{K}_\infty$-types in \emph{loc. cit.} combined with Harish-Chandra's subquotient theorem.

	On the other hand, Gross and Wallach apply cohomological techniques to construct the minimal representation $\Pi_{GW}$ in \cite{grossWallachII}; here, minimal means the ideal of $\mathfrak{U}(\mathfrak{f}_4(\C))$ annihilating $\Pi_{GW}$ is the Joseph ideal. In particular, they compute that the  $\widetilde{K}_\infty$-types of $\Pi_{GW}$ are precisely the representations occurring in the proposition \cite[Section 12]{grossWallachII}. Furthermore, as an element of $\mathfrak{t}^\ast/W$, the infinitesimal character of $\Pi_{GW}$ is
	\[
	\nu_{GW}:=\rho-\frac{3}{2}\omega_1,
	\]
	where $\omega_1$ is the first fundamental weight of $F_4$ \cite[pg.109]{grossWallachII}. Here $W$ denotes the Weyl group of the pair $(F_4,T).$
	
	To complete the proof, it suffices to check that there exists $w\in W$ such that $w(\nu_{GW}) = \nu_\infty$. Referencing \eqref{eqn: exception inf}, this is equivalent to the existence of $w\in W$ such that
	\[
	w\bullet\left(-\frac{3}{2}\omega_1\right) = -\frac{1}{2}(\omega_1+\omega_2),
	\]
	where $\bullet$ denotes the dot action of the Weyl group of $F_4$ on $\mathfrak{t}^\ast.$ The existence of such an element may be verified via a computer calculation, using SAGE \cite{sagemath} for example. By uniqueness, this proves the proposition.
\end{proof}

\subsection{Modular forms of weight $\frac{1}{2}$}\label{Sec: modular forms}
Using Proposition \ref{prop: right Ktypes}, we can now construct modular forms of weight $1/2$ on $\widetilde{F}_4(\A)$ from $\Pi_{min}$.  Let $x,y$ be our fixed weight basis of $\mathbf{V}_{1/2}=\mathbb{V}_2 \simeq \mathbb{V}_2^\vee$. Setting $\Pi_{min,f} = \otimes'_{p<\infty}\Pi_{min,p}$, fix a vector $v_f \in \Pi_{min,f}$ and let
\[\alpha: \Pi_{min}= \Pi_{min,f} \otimes \Pi_{min,\infty} \rightarrow \mathcal{A}(\widetilde{F}_4(\A))\]
be the automorphic embedding in Theorem \ref{thm: residue}.  Define
\begin{equation}\label{eqn:theta}\theta(v_f) := \alpha(v_f \otimes x) \otimes x^\vee + \alpha(v_f \otimes y) \otimes y^\vee \in \mathcal{A}(\widetilde{F}_4(\A))\otimes \mathbb{V}_2^\vee.\end{equation}

Pulling back $\theta(v_f)$ to $\widetilde{F}_4(\R)$ via the map $\widetilde{F}_4(\R) \rightarrow \widetilde{F}_4(\A)$, one obtains a quaternionic modular form of weight $\frac{1}{2}$ on $\widetilde{F}_4(\R)$. Indeed, smoothness of $v_f$ implies that there exists some compact open subgroup $K_f\subset F_4(\ZZ)$ splitting the cover $s: K_f\to \widetilde{F}_4(\A)$ such that if $\Ga:=F_4(\Q)\cap K_f$, then $\theta(v_f)$ descends to a smooth function on $s_f(\Ga)\backslash \widetilde{F}_4(\R)$ satisfying the $\widetilde{K}_\infty$-equivariance \eqref{equivar}. Moreover, the construction of the Schmid operator $D_{{1/}{2}}$ precisely detects the fact that the automorphic function $\theta(v_f)$ corresponds to the minimal $\widetilde{K}_{\infty}$-type $\mathbb{V}_2$, so that $D_{1/2}\theta(v_f)\equiv 0$ for any $v_f$.

Our goal for the remainder of the chapter is to prove that $v_f$ can be chosen so that $\theta(v_f)$ has $\Ga_{F_4}(4)$ level and nonzero $(0,0,0,1)$-Fourier coefficient, as in Theorem \ref{thm:introMin}.

\section{Weil representations for $\GL_2$}\label{Section: GL2}   To accomplish this goal, we will calculate a certain twisted Jacquet module of $\Pi_{min}$.  For this latter calculation, we make a detour to consider the Weil representation of $\GL_2$.

The main results of this section are Corollaries \ref{cor:L02} and \ref{cor:L0p}, asserting that if certain Whittaker functionals vanish on particular subspaces of these Weil representations, then they vanish identically. For this we need to compare a certain double cover of $\GL_2(\Q_p)$ arising in our context with other constructions in the literature. Strictly speaking, we could appeal to the results of Kazhdan--Patterson \cite[Section 1]{kazhdanPatterson} to see that the representation theory of these various covers of $\GL_2(\Q_p)$ are related as described in Proposition \ref{prop:compare}. We have opted for a more-or-less self-contained presentation for the sake of the reader.

\subsection{The double cover of $\SL_2(\Q_p)$ and its Weil representation}\label{Sec: Weil rep on SL2} Now set $k = \Q_p$ for any prime $p$, though the results of this section hold for any local field. We recall various essentially well-known facts about the group $\widetilde{\SL_2}(k)$ and its Weil representation.

Let $(V,q)$ be a quadratic space over $k$, and $B(x,y) = q(x+y) - q(x) - q(y)$ the associated bilinear form.  We define a representation of $\widetilde{\SL}_2(k)$ on $S(V)$, the Schwartz space of $V$, which is genuine if $\dim(V)$ is odd.

We fix the additive character $\psi$ of $k$.  Fix the Haar measure $dv$ on $V$ that is self-dual with respect to the Fourier transform on $V$ as
\[\widehat{\Phi}(v) = \int_{V}{\psi((v,w))\Phi(w)\,dw}.\]  Define $F_{q}(v) = \psi(q(v))$, and let $\gamma(q) \in \C$ be defined as
\begin{equation}\label{eqn: weil index}
\gamma(q) = \lim_{L\subset V}\int_{L}{F_q(v)\,dv},
\end{equation}
where the limit indicates that the value stabilizes for sufficiently large lattices $L$ in $V$ and we take this value.

One defines a Weil representation of $\widetilde{\SL}_2(k)$ on $S(V)$, via:
\begin{enumerate}
	\item $\displaystyle\zeta\cdot \Phi(v) = (-1)^{dim(V)}\Phi(v)$
	\item $\displaystyle x_\al(t)\cdot \Phi(v) = \psi(tq(v))\Phi(v)$.
	\item  $\displaystyle w_1\cdot \Phi(v) = \gamma(q)  \widehat{\Phi}(v)$, where $w_1=\widetilde{w}_\al(1)$.
	\item $\displaystyle \widetilde{h}_\al(y) \cdot\Phi(v) = |y|^{d/2} \frac{\gamma(yq)}{\gamma(q)} \Phi(yv)$.
\end{enumerate}

\begin{proposition} The implied action of $\widetilde{\SL}_2(k)$ on $S(V)$ is well-defined and gives a representation, denoted by $\omega_{\psi,q}$. This representation is genuine when $\dim(V)$ is odd. \end{proposition}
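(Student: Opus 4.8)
The statement to be proved is that the four formulas (1)--(4) define a genuine representation of $\widetilde{\SL}_2(k)$ on $S(V)$. This is a classical result going back to Weil, and the proof strategy is the standard one: verify that the prescribed operators respect the defining presentation of $\widetilde{\SL}_2(k)$ from Section \ref{Sec: double covers gen}, specialized to type $A_1$. The plan is to check the Steinberg relations (1)--(6) of that section one at a time on $S(V)$, using the Weil index $\gamma(q)$ and its transformation properties under scaling of the quadratic form.

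\textbf{Key steps, in order.} First I would record the basic properties of the Weil index: that $\gamma(q)$ is an eighth root of unity (or a nonzero complex number of a specific shape), that $\gamma(aq)\gamma(bq)\gamma(q)^{-1}\gamma(abq)^{-1} = (a,b)_2^{?}$ relates to the Hilbert symbol via the Weil reciprocity/cocycle formulas, and that $\widehat{\widehat{\Phi}}(v) = \Phi(-v)$ with the self-dual measure. These are the inputs that make everything work. Second, relations (1) and (2) (additivity in $t$, and the fact that $x_\al(t)$ acts by multiplication by $\psi(tq(v))$) are immediate since $\psi$ is a character. Relation (5) (centrality of $\zeta$) is clear since $\zeta$ acts by the scalar $(-1)^{\dim V}$. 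Third, the $A_1$-specific relation (4), $w_\al(t) x_\al(u) w_\al(-t) = x_{-\al}(-t^{-2}u)$, together with relation (6) governing $\widetilde{h}_\al(t)\widetilde{h}_\al(s) = \widetilde{h}_\al(ts)(t,s)_2$, are the ones that genuinely use the cocycle identity for $\gamma$: here one computes $\widetilde{h}_\al(y) = \widetilde{w}_\al(y)\widetilde{w}_\al(-1)$ acting on $\Phi$, checks it equals $|y|^{d/2}\frac{\gamma(yq)}{\gamma(q)}\Phi(yv)$, and then the composition law for these operators reduces to the Hilbert-symbol cocycle relation for $\gamma$. Fourth, relation (2) in the form of the commutator formula is vacuous in type $A_1$ since there are no roots $i\al+j\be$. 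Finally, genuineness when $\dim V$ is odd is exactly the content of (1): $\zeta$ acts by $-1$, so the representation does not factor through $\SL_2(k)$.

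\textbf{Main obstacle.} The bookkeeping around the Weil index is the crux. Specifically, the identity
\[
\frac{\gamma(stq)}{\gamma(q)} = \frac{\gamma(sq)}{\gamma(q)}\cdot\frac{\gamma(tq)}{\gamma(q)}\cdot (s,t)_2^{d}
\]
(or the appropriate variant matching the normalization of the Hilbert symbol and of the $A_1$-pairing with $(\al,\al)=2$) must be proved or cited carefully, and the power of $(s,t)_2$ must come out to exactly match relation (6), i.e. $(s,t)_2^{2/(\al,\al)} = (s,t)_2$. Getting the exponent of the Hilbert symbol correct — tracking factors of $\dim V$, the distinction between $q$ and $B$, and the self-dual measure normalization — is where essentially all the care is needed; the rest is routine Fourier analysis on $S(V)$. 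I would verify the case $\dim V = 1$ (i.e. $q(v) = av^2$) by hand first, since that pins down all the normalizations, and then note that the general $\dim V$ case follows by additivity of $\gamma$ under orthogonal direct sums, $\gamma(q_1\perp q_2) = \gamma(q_1)\gamma(q_2)$, reducing to the one-dimensional computation.
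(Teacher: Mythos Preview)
Your plan is correct and is precisely the standard verification one would carry out; the paper, however, simply writes ``We omit the proof, which is well-known'' and moves on. So there is nothing to compare against beyond noting that your sketch is the classical argument the paper is implicitly invoking. One small remark on your ``main obstacle'': the exponent you are looking for is indeed $(s,t)_2^{d}$ with $d=\dim V$, and since $(s,t)_2\in\{\pm1\}$ this equals $(s,t)_2$ exactly when $d$ is odd and $1$ when $d$ is even; combined with $\zeta$ acting by $(-1)^d$, relation~(6) is then satisfied in both parities, and genuineness for $d$ odd drops out immediately.
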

\begin{proof} We omit the proof, which is well-known. \end{proof}

Consider now the special case where $V=k$ and $q(x) = x^2$. The genuine representation $\omega_{\psi,q}$ is not irreducible: if $S^+(k)$ is the subspace of even Schwartz functions (ie: $\Phi(-x) = \Phi(x)$), then $\widetilde{\SL}_2(k)$ preserves this subspace. This gives an irreducible representation, which we will denote by $\omega_\psi^+$.

In \cite{gelbartBook}, Gelbart defines a double cover of $\SL_2(k)$ via an explicit two-cocycle, as follows. For a matrix $s = \mm{a}{b}{c}{d}$ define \[
x(s) = \begin{cases}\quad c&: \text{ if }c \neq 0,\\\quad d&: \text{ if }c = 0.\end{cases}
\]
Define
\[\alpha(g_1,g_2) = (x(g_1),x(g_2))_2(-x(g_1)x(g_2),x(g_1g_2))_2\]
and $\widetilde{\SL}'_2(k)$ as the set of pairs $(g,\zeta)$ with $g \in \SL_2(k)$ and $\zeta \in \{\pm 1\}$ with multiplication
\begin{equation}\label{eqn: Gelbart's SL2}
(g_1,\zeta_1)(g_2,\zeta_2) = (g_1g_2,\alpha(g_1,g_2)\zeta_1 \zeta_2).
\end{equation}
Because of the uniqueness up-to-isomorphism of the nontrivial double cover of $\SL_2(k)$, this double cover is isomorphic to $\widetilde{\SL}_2(k)$.

\subsection{Two double covers of $\GL_2$}\label{Sec: two covers}
We now define two double covers of the group $\GL_2(k)$ and consider extensions of the genuine representation $\omega_\psi^+$ to these groups. Our motivation is to relate a cover arising in our analysis of modular forms on $\widetilde{F}_4(k)$ with one considered in \cite{gelbartPSdistinguished}.

The first construction is given via generators and relations: consider the group $\widetilde{\GL}_2^{(1)}(k)$ generated by $\widetilde{\SL}_2(k)$ and $\widetilde{h_{\al_2}}(t)$ for $t \in k^\times$, subject to the relations: if we let $\al_1$ denote the simple root of $\SL_2$, then
\begin{enumerate}
	\item\label{1here} $\zeta$ is still central;
	\item\label{2here}$\widetilde{h_{\alpha_2}}(t) x_{\pm\alpha_1}(u) \widetilde{h_{\alpha_2}}(t)^{-1} = x_{\pm\alpha_1}(t^{\langle \alpha_2^\vee, \pm\alpha_1 \rangle} u)$ where $\langle \alpha_2^\vee, \pm\alpha_1 \rangle = \mp1$.
	\item $\widetilde{h_{\alpha_2}}(s) \widetilde{h_{\alpha_2}}(t) = \widetilde{h_{\alpha_2}}(st) (s,t)_2$;
\end{enumerate}
One can prove from these relations the following additional relations:
\begin{enumerate}
	\setcounter{enumi}{3}
	\item the commutator $\{\widetilde{h_{\alpha_1}}(s),\widetilde{h_{\alpha_2}}(t)\} = (s,t)_2$.
	\item $\widetilde{w_{\alpha_1}}(t) \widetilde{h_{\alpha_2}}(u) \widetilde{w_{\alpha_1}}(-t) = (u^{-1},u^{-1}t)_2 \widetilde{h_{\alpha_1}}(u) \widetilde{h_{\alpha_2}}(u)$
\end{enumerate}

Sending $\widetilde{h_{\alpha_2}}(t)$ to $\diag(1,t)$, we obtain a surjective homomorphism $ \pi^{(1)}: \widetilde{\GL}_2^{(1)}(k) \longrightarrow \GL_2(k)$, which we claim is a double covering map extending the cover $\pi: \widetilde{\SL}_2(k)\lra \SL_2(k)$.  It is immediately checked that this map is well-defined.  Moreover, by a Bruhat decomposition argument, one sees that the kernel is exactly the image of $\mu_2(k)$ in $\widetilde{\GL}_2^{(1)}(k)$.  To see that this image is nontrivial, so that $\widetilde{\GL}_2^{(1)}(k)$ is really a double cover of $\GL_2(k)$, we note that $\widetilde{\GL}_2^{(1)}(k)$ so defined is precisely the full inverse image of the subgroup $\GL_2(k)\subset F_4(k)$ in the double cover $\widetilde{F}_4(k)$ described in Section \ref{Sec: double covers gen} where $\GL_2(k)\subset F_4(k)$ denotes the subgroup generated by the subgroup isomorphic to $\SL_2(k)$ associated to the simple root $\al_1$ and the coroot associated to the simple root $\al_2$.
\begin{remark}
	In the literature (for example, \cite{kazhdanPatterson}), one often finds this cover described in terms of the inverse image in $\widetilde{\SL}_3(k)$ of the $(2,1)$-Levi subgroup. We opt for the inclusion into $F_4$ as this better illustrates our interest in this covering group. In any case, we have
	\[
	\widetilde{\GL}_2^{(1)}(k)\subset\widetilde{\SL}_3(k)\subset\widetilde{F}_4(k),
	\]
	where the inclusion $\SL_3\subset F_4$ is the one discussed in Section \ref{sec:grpEmbeddings}.
\end{remark}

Let
\begin{equation}
G^*:=\{ g \in \widetilde{\GL}_2^{(1)}(k): \pi^{(1)}(g)\in \GL_2(k)\text{ has determinant a square in $k^\times$}\}.
\end{equation}
As is easily seen, this is the subgroup of $\widetilde{\GL}_2^{(1)}(k)$ generated by $\widetilde{\SL}_2(k)$ and $\widetilde{h_{\alpha_2}}(t^2)$, $t \in k^\times$.
\begin{lemma}
	The group $G^*$ is generated by $\widetilde{\SL}_2(k)$ and $\widetilde{h_{\alpha_2}}(t^2)$ subject only to the relations defining $\widetilde{\GL_2}(k)$, restricted to the $\widetilde{h_{\alpha_2}}(t^2)$.
\end{lemma}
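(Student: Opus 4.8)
The plan is to recognise $G^*$ as an internal semidirect product and then quote the standard presentation of such a group. First I would record that $\widetilde{\SL}_2(k)$ is normal in $\widetilde{\GL}_2^{(1)}(k)$: it is the full preimage under $\pi^{(1)}$ of $\SL_2(k)\trianglelefteq\GL_2(k)$, and conjugation of its Steinberg generators $x_{\pm\alpha_1}(u)$ and $\zeta$ by the remaining generator $\widetilde{h_{\alpha_2}}(t)$ lands back inside $\widetilde{\SL}_2(k)$ by the defining relations~(1) and~(2). The homomorphism $\det\circ\pi^{(1)}$ then identifies $\widetilde{\GL}_2^{(1)}(k)/\widetilde{\SL}_2(k)$ with $k^\times$, under which $G^*$ corresponds exactly to $(k^\times)^2$; this yields a short exact sequence $1\to\widetilde{\SL}_2(k)\to G^*\to(k^\times)^2\to 1$.

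Next I would split this sequence. Because $(s^2,t^2)_2=(s,t)_2^4=1$, defining relation~(3) reads $\widetilde{h_{\alpha_2}}(s^2)\widetilde{h_{\alpha_2}}(t^2)=\widetilde{h_{\alpha_2}}(s^2t^2)$, so $t^2\mapsto\widetilde{h_{\alpha_2}}(t^2)$ is a well-defined homomorphism $(k^\times)^2\to G^*$; post-composing it with $\det\circ\pi^{(1)}$ is the identity of $(k^\times)^2$, so it is injective and its image is a complement to $\widetilde{\SL}_2(k)$. Hence $G^*\cong\widetilde{\SL}_2(k)\rtimes(k^\times)^2$ internally, where $t^2\in(k^\times)^2$ acts by conjugation by $\widetilde{h_{\alpha_2}}(t^2)$ (this is automatically an automorphism of $\widetilde{\SL}_2(k)$ since the latter is normal); on generators this action is $x_{\alpha_1}(u)\mapsto x_{\alpha_1}(t^{-2}u)$, $x_{-\alpha_1}(u)\mapsto x_{-\alpha_1}(t^{2}u)$, $\zeta\mapsto\zeta$, which is precisely the content of relations~(1) and~(2) restricted to squares.

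Finally I would invoke the standard presentation of a semidirect product $A\rtimes B$: a presentation of $A$, a presentation of $B$, and, for each generator $a$ of $A$ and $b$ of $B$, one relation $bab^{-1}=\phi_b(a)$ with $\phi_b(a)$ written as a word in the generators of $A$. Taking for $A$ the Steinberg presentation of $\widetilde{\SL}_2(k)$ (generators $x_{\pm\alpha_1}(u)$, $\zeta$), for $B$ the presentation of $(k^\times)^2$ with generators $\widetilde{h_{\alpha_2}}(t^2)$ and relations~(3), and for the $\phi$-relations the conjugation words identified above, one checks term by term that the result is exactly the restriction of the defining presentation of $\widetilde{\GL}_2^{(1)}(k)$ to the subset of generators $\widetilde{\SL}_2(k)\cup\{\widetilde{h_{\alpha_2}}(t^2)\}$. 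Concretely this amounts to building the obvious surjection from the abstractly presented group onto $G^*$ and an inverse map out of $G^*$ via the universal property of the semidirect product, then verifying the two are mutually inverse on generators --- routine bookkeeping. The only point needing care --- and the ``main obstacle'' such as it is --- is checking that the remaining defining relations of $\widetilde{\GL}_2^{(1)}(k)$ (those involving $\widetilde{h_{\alpha_2}}(t)$ for non-square $t$), and the derived relations~(4) and~(5), impose nothing new upon restriction to $G^*$: this is automatic once the internal semidirect product structure is in hand, since that presentation is already complete, and in any event relations~(4),~(5) restrict trivially on squares because the Hilbert symbols $(s,t^2)_2$ and $(u^{-2},u^{-2}t)_2$ vanish.
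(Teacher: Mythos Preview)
Your argument is correct and takes a genuinely different route from the paper. The paper defines the abstractly presented group $G_1^*$, notes the tautological surjection $G_1^*\to G^*$, and then argues that $G_1^*\to\GL_2(k)$ has kernel at most $\mu_2$ (by the same Bruhat-decomposition reasoning used for $\widetilde{\GL}_2^{(1)}(k)$ just above); since $\mu_2$ survives in $G^*$, the surjection must be an isomorphism. Your approach instead exhibits $G^*$ directly as an internal semidirect product $\widetilde{\SL}_2(k)\rtimes(k^\times)^2$, using the vanishing of $(s^2,t^2)_2$ to split the sequence, and then invokes the standard presentation of a semidirect product. This is more structural and entirely self-contained --- you never need to bound the kernel of anything over $\GL_2(k)$ --- at the cost of being longer. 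The paper's argument is shorter but leans on the implicit Bruhat normal-form computation. Your closing remark about relations~(4) and~(5) is fine: they are derived from~(1)--(3) in $\widetilde{\GL}_2^{(1)}(k)$, so their restrictions are automatically consequences of the restricted relations, and in any case the semidirect-product presentation is already complete without them.
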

\begin{proof}
	Let temporarily $G^*_1$ be the group described in the statement of the lemma. Then one has a tautological surjection $G^*_1 \rightarrow G^*$.  Now $G_1^*$ maps to $\GL_2(k)$, with kernel at most $\mu_2(k)$. Now suppose $\ep$ is in the kernel of $G^*_1 \rightarrow G^*$. Then $\ep \in \mu_2(k)$.  But the image of $\mu_2(k)$ in $G^*$ has size two, so $\ep =1$.
\end{proof}

Fix a character $\chi$ of $k^\times$, with $\chi(-1) = 1$.  Let $S^+(k)$ be the Schwartz space of even functions. We then have the genuine representation $\omega_{\psi}^+$ of $\widetilde{\SL}_2(k)$ on $S^+(k)$. Following \cite{gelbartPSdistinguished}, one can extend the action to an action of $G^*$ on $S^{+}(k)$ by letting
\begin{equation*}
\widetilde{h_{\alpha_2}}(a^2)\phi(x) = \chi(a)|a|^{-1/2}\phi(a^{-1}x).
\end{equation*}
\begin{proposition}\label{Prop: extend to ast} The above action gives a well-defined representation of $G^*$ on $S^+(k)$. We denote the resulting representation as $\omega_{\psi,\chi}$.
\end{proposition}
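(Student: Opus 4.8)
The plan is to verify directly that the four prescribed operators $\zeta$, $x_{\al_1}(t)$, $\widetilde{w}_{\al_1}(1)$, $\widetilde{h}_{\al_1}(y)$ (which already act via $\omega_\psi^+$) together with the new operator $\widetilde{h}_{\al_2}(a^2)\phi(x) = \chi(a)|a|^{-1/2}\phi(a^{-1}x)$ satisfy all the defining relations of $G^*$ listed in Section \ref{Sec: two covers}. By the lemma characterizing $G^*$ as generated by $\widetilde{\SL}_2(k)$ together with the $\widetilde{h}_{\al_2}(t^2)$, subject only to the restriction of the $\widetilde{\GL}_2^{(1)}$-relations involving $\widetilde{h}_{\al_2}(t^2)$, it suffices to check: (i) the formula for $\widetilde{h}_{\al_2}(a^2)$ is independent of the choice of square root $a$ of $a^2$, i.e. $a \mapsto \chi(a)|a|^{-1/2}\phi(a^{-1}x)$ is invariant under $a \mapsto -a$; (ii) $\zeta$ remains central, which is immediate since the new operator is linear and commutes with scalars; (iii) the conjugation relation $\widetilde{h}_{\al_2}(a^2) x_{\pm\al_1}(u)\widetilde{h}_{\al_2}(a^2)^{-1} = x_{\pm\al_1}(a^{\mp 2}u)$; and (iv) the composition law $\widetilde{h}_{\al_2}(a^2)\widetilde{h}_{\al_2}(b^2) = \widetilde{h}_{\al_2}(a^2b^2)(a^2,b^2)_2$, noting $(a^2,b^2)_2 = 1$. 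In addition one should check the derived relation (4), the commutator $\{\widetilde{h}_{\al_1}(s),\widetilde{h}_{\al_2}(a^2)\} = (s,a^2)_2 = 1$, is compatible, and relation (5) likewise; but since $G^*$ is presented by the $\widetilde{\SL}_2$-relations plus only the three restricted relations, checking (i)--(iv) is enough, and the rest follow automatically.

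For (i), the point is that $\chi(-1) = 1$ by hypothesis and $|-a| = |a|$, so replacing $a$ by $-a$ changes $\phi(a^{-1}x)$ to $\phi(-a^{-1}x) = \phi(a^{-1}x)$ since $\phi \in S^+(k)$ is even; this is exactly why one must work with $S^+(k)$ rather than all of $S(k)$ and why the hypothesis $\chi(-1)=1$ appears. For (iii), I would compute: $x_{\al_1}(u)$ acts by $\phi(x) \mapsto \psi(ux^2)\phi(x)$ (taking $q(x) = x^2$), so $\widetilde{h}_{\al_2}(a^2)x_{\al_1}(u)\widetilde{h}_{\al_2}(a^2)^{-1}$ sends $\phi(x)$ first to $\chi(a)^{-1}|a|^{1/2}\phi(ax)$, then to $\psi(u(ax)^2)\chi(a)^{-1}|a|^{1/2}\phi(ax) = \psi(ua^2 x^2)\chi(a)^{-1}|a|^{1/2}\phi(ax)$, then to $\psi(ua^2 x^2)\phi(x)$, which is $x_{\al_1}(a^2 u)\phi(x)$; this matches the required relation since $\langle \al_2^\vee, \al_1\rangle = -1$ forces the exponent $a^{\langle \al_2^\vee,\al_1\rangle\cdot 2} = a^{-2}$ — wait, I need to be careful about orientation, and I would double-check the sign convention against relation (2) in the presentation, adjusting whether one conjugates $x_{\al_1}$ or $x_{-\al_1}$ accordingly. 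For (iv), the composition $\widetilde{h}_{\al_2}(a^2)\widetilde{h}_{\al_2}(b^2)$ sends $\phi(x)$ to $\chi(a)\chi(b)|ab|^{-1/2}\phi((ab)^{-1}x) = \chi(ab)|ab|^{-1/2}\phi((ab)^{-1}x) = \widetilde{h}_{\al_2}((ab)^2)\phi(x)$, and since $(a^2,b^2)_2 = (a,b)_2^4 = 1$ the Hilbert-symbol correction is trivial, so the relation holds.

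The main obstacle — really the only subtle point — is pinning down the precise normalization and sign conventions so that the conjugation relation (iii) comes out with exactly the exponent dictated by $\langle \al_2^\vee, \pm\al_1\rangle$, and making sure the action of the full group $\widetilde{\SL}_2(k)$ (not just the generators $x_{\al_1}, \widetilde{w}_{\al_1}, \widetilde{h}_{\al_1}$) is respected — i.e. that the formulas for $\widetilde{w}_{\al_1}(1)$ (Fourier transform, up to $\gamma(q)$) and $\widetilde{h}_{\al_1}(y)$ interact correctly with $\widetilde{h}_{\al_2}(a^2)$. Concretely, $\widetilde{h}_{\al_1}(y)$ acts by $\phi(x) \mapsto |y|^{1/2}\tfrac{\gamma(yq)}{\gamma(q)}\phi(yx)$ and $\widetilde{h}_{\al_2}(a^2)$ by dilation, so both are "dilation-type" operators and their interaction reduces to a Weil-index identity $\gamma(a^2 y q)/\gamma(q) = \gamma(y q)/\gamma(q)$, which holds because scaling the quadratic form by a square does not change the Weil index; I would cite this standard fact rather than reprove it. Once these checks are assembled, well-definedness of the $G^*$-action follows from the presentation lemma, completing the proof; I would close by remarking that the resulting representation $\omega_{\psi,\chi}$ on $S^+(k)$ is the one used in \cite{gelbartPSdistinguished}.
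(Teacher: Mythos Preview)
Your approach is correct and is precisely the direct verification that the paper omits. Your computation in (iii) has a slip --- applying $x_{\al_1}(u)$ to $\psi_1(x)=\chi(a)^{-1}|a|^{1/2}\phi(ax)$ multiplies by $\psi(ux^2)$, not $\psi(u(ax)^2)$ --- but once this is straightened out the conjugate comes out as $x_{\al_1}(a^{-2}u)$, exactly matching relation (2) with $\langle \al_2^\vee,\al_1\rangle = -1$, so your flagged ``orientation'' worry dissolves.
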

\begin{proof} This is a direct check which we omit.\end{proof}

In \cite{gelbartBook} and \cite{gelbartPSdistinguished}, a different double cover of $\GL_2(k)$ is defined, which we now recall.  For $y \in k^\times$, define
\[
v(y,s) = \begin{cases}\quad 1&: \text{if $c \neq 0$},\\ \:(y,d)_2&:\text{ otherwise,}\end{cases}
\]where $s = \mm{a}{b}{c}{d}$.  Define $s^y = \diag(1,y)^{-1} s \diag(1,y)$. Now, for $\overline{s} = (s,\zeta)\in \widetilde{\SL}'_2(k)$ (defined as in \eqref{eqn: Gelbart's SL2}), let $\overline{s}^y = (s^y,v(y,s)\zeta)$. It is then proved that this gives an action of $k^\times$ on $\widetilde{\SL}'_2(k)$ and one defines $\widetilde{\GL}^{(0)}_2(k)$ to be the semidirect product  $\widetilde{\SL}'_2(k) \rtimes k^\times$.

We now compare the double cover $\widetilde{\GL}^{(0)}_2(k)$ and our $\widetilde{\GL}_2^{(1)}(k)$.  To do this, let $G^{(0)}$ be a group defined as follows.  As a set, it is $\widetilde{\GL}_2^{(1)}(k)$.  The multiplication in $G^{(0)}$ is defined as $$g * h = g\cdot h (\det(g),\det(h))_2,$$ where here $g \cdot h$ is the product in $\widetilde{\GL}_2^{(1)}(k)$.
\begin{proposition}\label{prop:twocovers} The group $G^{(0)}$ is isomorphic to $\widetilde{\GL}^{(0)}_2(k)$.
\end{proposition}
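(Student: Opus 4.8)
The plan is to recognize $G^{(0)}$ as a semidirect product that visibly matches the defining presentation $\widetilde{\GL}^{(0)}_2(k) = \widetilde{\SL}'_2(k)\rtimes k^\times$. The mechanism is that twisting the multiplication of $\widetilde{\GL}_2^{(1)}(k)$ by the $\mu_2$-valued $2$-cocycle $(g,h)\mapsto(\det g,\det h)_2$ (pulled back along $\widetilde{\GL}_2^{(1)}(k)\to\GL_2(k)\xrightarrow{\det}k^\times$) leaves the subgroup $\widetilde{\SL}_2(k)$ untouched while splitting off the covering torus. First I would record: since $\det\equiv 1$ on the preimage $\widetilde{\SL}_2(k)$ of $\SL_2(k)$, the $*$-product and the original product coincide on $\widetilde{\SL}_2(k)$; hence $\widetilde{\SL}_2(k)$ is still a subgroup of $G^{(0)}$, still the full preimage of the normal subgroup $\SL_2(k)\le\GL_2(k)$, hence normal in $G^{(0)}$, and $\mu_2(k)$ is still central. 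Next, using the torus relation $\widetilde{h_{\alpha_2}}(s)\widetilde{h_{\alpha_2}}(t)=\widetilde{h_{\alpha_2}}(st)(s,t)_2$ and $(s,t)_2^2=1$, one gets $\iota(s)*\iota(t)=\iota(st)$ for $\iota(y):=\widetilde{h_{\alpha_2}}(y)$, so $\iota$ is an injective homomorphism $k^\times\to G^{(0)}$ (injective because its image in $\GL_2(k)$ is $\{\operatorname{diag}(1,y)\}$, which meets $\SL_2(k)$ only in the identity). Since $\widetilde{\GL}_2^{(1)}(k)$, hence $G^{(0)}$, is generated by $\widetilde{\SL}_2(k)$ and the $\widetilde{h_{\alpha_2}}(k^\times)$, this yields $G^{(0)}=\widetilde{\SL}_2(k)\rtimes_\theta\iota(k^\times)$ with $\theta(y)$ conjugation by $\iota(y)$.

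The key computation is that, although $\widetilde{h_{\alpha_2}}(y)^{-1}=\widetilde{h_{\alpha_2}}(y^{-1})(y,y^{-1})_2$ inside $\widetilde{\GL}_2^{(1)}(k)$, the $*$-cocycle contributes a compensating factor $(y,y^{-1})_2$, so that $\theta(y)(x)=\widetilde{h_{\alpha_2}}(y)\,x\,\widetilde{h_{\alpha_2}}(y)^{-1}$ where the right-hand side is the conjugation in $\widetilde{\GL}_2^{(1)}(k)$ --- no residual $\mu_2$-factor survives. In particular $\theta(y)$ acts on the unipotent generators by $\widetilde{h_{\alpha_2}}(y)x_{\pm\alpha_1}(u)\widetilde{h_{\alpha_2}}(y)^{-1}=x_{\pm\alpha_1}(y^{\mp1}u)$, by relation \eqref{2here}. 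I would then invoke the uniqueness up to isomorphism of the nontrivial double cover of $\SL_2(k)$ to fix an isomorphism $\Psi\colon\widetilde{\SL}_2(k)\xrightarrow{\sim}\widetilde{\SL}'_2(k)$. The crucial structural fact is that $\SL_2(k)$ is perfect, so an automorphism of a double cover lying over a prescribed automorphism of $\SL_2(k)$ is unique (two such differ by a homomorphism $\SL_2(k)\to\mu_2(k)$, necessarily trivial). Therefore, once one reparametrizes the $k^\times$-factor so that $\theta(y)$ and Gelbart's action $\overline{s}\mapsto\overline{s}^y=(s^y,v(y,s)\zeta)$ lie over the same torus-conjugation automorphism of $\SL_2(k)$ --- a matter of matching which diagonal matrix $\widetilde{h_{\alpha_2}}(t)$ projects to and the direction of conjugation, possibly by replacing $y$ with $y^{-1}$ --- it follows automatically that $\Psi\,\theta(y)\,\Psi^{-1}=(\cdot)^y$, and hence that $(\Psi,\mathrm{id})\colon\widetilde{\SL}_2(k)\rtimes_\theta k^\times\to\widetilde{\SL}'_2(k)\rtimes k^\times$ is an isomorphism $G^{(0)}\cong\widetilde{\GL}^{(0)}_2(k)$. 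If one prefers an explicit check in place of the perfectness argument, it suffices to note that $v(y,s)=1$ whenever $s$ is upper or lower triangular unipotent (directly from Gelbart's formula for $v$), matching the absence of a $\mu_2$-factor in relation \eqref{2here}.

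The hard part will not be any single computation but the careful reconciliation of conventions between the Steinberg presentation of $\widetilde{\GL}_2^{(1)}(k)\subset\widetilde{F}_4(k)$ and Gelbart's explicit cocycle descriptions of $\widetilde{\SL}'_2(k)$ and of the $k^\times$-action. The most delicate point is the Hilbert-symbol bookkeeping --- in particular the cancellation in $\theta(y)(x)=\widetilde{h_{\alpha_2}}(y)x\widetilde{h_{\alpha_2}}(y)^{-1}$ of the two $(y,y^{-1})_2$ contributions (one from $\widetilde{h_{\alpha_2}}(y)^{-1}$, one from the $*$-cocycle) --- so that no spurious $\mu_2$-ambiguity survives the identification. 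The single nonelementary input is the perfectness of $\SL_2(k)$ for $k$ a nonarchimedean local field (equivalently $\Hom(\SL_2(k),\mu_2)=1$), which is exactly what makes the matching of the two $k^\times$-actions reduce to the trivial matching of their underlying actions on $\SL_2(k)$.
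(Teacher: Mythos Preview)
Your approach is correct and follows the same overall structure as the paper: both recognize $G^{(0)}$ as a semidirect product $\widetilde{\SL}_2(k)\rtimes k^\times$ and then match it with Gelbart's $\widetilde{\SL}'_2(k)\rtimes k^\times$. The paper proceeds via an intermediate presentation $G_1^{(0)}$ (same generators as $\widetilde{\GL}_2^{(1)}(k)$ but with the relation $\widetilde{h_{\alpha_2}}(s)\widetilde{h_{\alpha_2}}(t)=\widetilde{h_{\alpha_2}}(st)$), shows $G_1^{(0)}\cong G^{(0)}$ and $G_1^{(0)}\cong\widetilde{\SL}_2(k)\rtimes_{\phi_t}k^\times$, and then explicitly writes down the isomorphism $\widetilde{\SL}_2(k)\to\widetilde{\SL}'_2(k)$ (pinned by the splitting $\mm{1}{}{c}{1}\mapsto(\mm{1}{}{c}{1},1)$, with explicit images of $\widetilde{w}_\alpha(t)$ and $\widetilde{h}_{\alpha_1}(t)$), before omitting the routine verification that the two $k^\times$-actions coincide.

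Your route is slightly more direct and replaces the omitted explicit verification with a clean soft argument: once both $k^\times$-actions lie over the same inner automorphism of $\SL_2(k)$, they must agree because $\Hom(\widetilde{\SL}_2(k),\mu_2)=1$. One minor precision: what you actually need is perfectness of $\widetilde{\SL}_2(k)$ (not just $\SL_2(k)$); this holds because $\widetilde{\SL}_2(k)$ is a quotient of the perfect Steinberg group, or equivalently because the commutator formula $\{\widetilde{h}_{\alpha_1}(s),\widetilde{h}_{\alpha_1}(t)\}=(s,t)_2$ already places $\mu_2(k)$ in the commutator subgroup, so any map to $\mu_2$ factors through the perfect group $\SL_2(k)$. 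The paper's explicit approach has the advantage of recording the precise isomorphism on generators, which can be useful downstream; your approach buys brevity and avoids the cocycle bookkeeping.
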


To prove the proposition, we require a few lemmas.  Let, temporarily, $G^{(0)}_1$ be the group generated by $\widetilde{\SL}_2(k)$ and $\widetilde{h_{\alpha_2}}(t)$ for $t \in k^\times$, subject to the relations \eqref{1here}, \eqref{2here}, and
\begin{enumerate}
	\setcounter{enumi}{2}
	\item $\widetilde{h_{\alpha_2}}(s) \widetilde{h_{\alpha_2}}(t) = \widetilde{h_{\alpha_2}}(st) $.
\end{enumerate}

\begin{lemma} The map $G^{(0)}_1 \rightarrow G^{(0)}$ that is the identity on generators is a well-defined isomorphism.
\end{lemma}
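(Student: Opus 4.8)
The claim is that the map $G^{(0)}_1 \to G^{(0)}$ that is the identity on generators is a well-defined isomorphism. The plan is to verify the three bullet points separately: that the map is well-defined (i.e.\ the defining relations of $G^{(0)}_1$ hold in $G^{(0)}$), that it is surjective, and that it is injective. Surjectivity will be immediate, since $G^{(0)}$ is by construction the set $\widetilde{\GL}_2^{(1)}(k)$, which is generated by $\widetilde{\SL}_2(k)$ and the elements $\widetilde{h_{\alpha_2}}(t)$; the multiplication $*$ differs from $\cdot$ only by a scalar in $\mu_2(k)$, which is itself generated (inside $\widetilde{\SL}_2(k)$) by the given generators, so the same set generates $G^{(0)}$.

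For well-definedness, I would check each relation \eqref{1here}, \eqref{2here}, and the third relation $\widetilde{h_{\alpha_2}}(s)\widetilde{h_{\alpha_2}}(t) = \widetilde{h_{\alpha_2}}(st)$ in $G^{(0)}$. Relations \eqref{1here} and \eqref{2here} involve only products in which at least one factor lies in $\widetilde{\SL}_2(k)$, and such a factor has determinant $1$; thus the twisting cocycle $(\det(g),\det(h))_2$ is trivial on those products, and these relations hold in $G^{(0)}$ exactly because they hold in $\widetilde{\GL}_2^{(1)}(k)$. The only genuinely new check is the third relation: in $\widetilde{\GL}_2^{(1)}(k)$ one has $\widetilde{h_{\alpha_2}}(s)\cdot\widetilde{h_{\alpha_2}}(t) = \widetilde{h_{\alpha_2}}(st)(s,t)_2$, so in $G^{(0)}$ one computes
\[
\widetilde{h_{\alpha_2}}(s) * \widetilde{h_{\alpha_2}}(t) = \widetilde{h_{\alpha_2}}(s)\cdot\widetilde{h_{\alpha_2}}(t)\,(\det\widetilde{h_{\alpha_2}}(s),\det\widetilde{h_{\alpha_2}}(t))_2 = \widetilde{h_{\alpha_2}}(st)(s,t)_2\,(s,t)_2 = \widetilde{h_{\alpha_2}}(st),
\]
using $(s,t)_2^2 = 1$. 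Hence the relation $\widetilde{h_{\alpha_2}}(s)*\widetilde{h_{\alpha_2}}(t) = \widetilde{h_{\alpha_2}}(st)$ holds in $G^{(0)}$, so the map is well-defined.

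For injectivity, both $G^{(0)}_1$ and $G^{(0)}$ surject onto $\GL_2(k)$ with kernel of order at most $2$ (the image of $\mu_2(k)$), and the map $G^{(0)}_1 \to G^{(0)}$ is compatible with these projections. So it suffices to see that the central $\zeta$ of $G^{(0)}_1$ does not die in $G^{(0)}$; but $\zeta$ already has order $2$ inside the subgroup $\widetilde{\SL}_2(k)$, on which $*$ and $\cdot$ agree, so its image is nontrivial. Thus the map is a bijective homomorphism, hence an isomorphism. I do not anticipate a serious obstacle here; the one place requiring a little care is confirming that the cocycle twist $(\det g,\det h)_2$ is genuinely trivial on all products appearing in relations \eqref{1here} and \eqref{2here}, which reduces to the observation that every relation among the generators other than the torus-only relation involves a determinant-one factor.
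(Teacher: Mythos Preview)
Your proposal is correct and follows essentially the same approach as the paper's proof: verify the relations hold in $G^{(0)}$ (the paper calls this ``clear''; you spell out the key computation for relation~(3)), note surjectivity, and deduce injectivity from the fact that both groups are at most $\mu_2$-extensions of $\GL_2(k)$ compatible with the map. The only place where you are slightly terse compared to the paper is the assertion that $\ker(G^{(0)}_1 \to \GL_2(k))$ has order at most $2$---the paper invokes a Bruhat decomposition argument here, as was done earlier for $\widetilde{\GL}_2^{(1)}(k)$---and your handling of relation~(2) is cleanest if you rewrite it as $\widetilde{h_{\alpha_2}}(t)\, x_{\pm\alpha_1}(u) = x_{\pm\alpha_1}(t^{\mp 1}u)\, \widetilde{h_{\alpha_2}}(t)$ so that each side has exactly one factor of nontrivial determinant.
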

\begin{proof} It is clear that the map is a well-defined homomorphism, because the relations satisfied in $G^{(0)}_1$ are again satisfied in $G^{(0)}$.  Moreover, it is clear that the map is surjective, and covers the identity map on the linear group $\GL_2(k)$.  By another Bruhat decomposition argument, the kernel of $G^{(0)}_1 \rightarrow \GL_2(k)$ is at most $\mu_2(k)$.  It follows that the kernel is exactly $\mu_2(k)$ and $G^{(0)}_1 \rightarrow G^{(0)}$ is an isomorphism.\end{proof}

\begin{lemma} Fix $t \in k^\times$.  Define a map $\phi_t:\widetilde{\SL}_2(k) \rightarrow \widetilde{\SL}_2(k)$ on generators as $\zeta \mapsto \zeta$, $x_{\alpha_1}(u) \mapsto x_{\alpha_1}(t^{-1}u)$, $x_{-\alpha_1}(u) \mapsto x_{-\alpha_1}(tu)$.  Then this map is a well-defined isomorphism. \end{lemma}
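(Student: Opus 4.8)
The plan is to prove the lemma by verifying that $\phi_t$ respects the Steinberg--Matsumoto presentation of $\widetilde{\SL}_2(k)$ recalled in Section \ref{Sec: double covers gen}. Since $\widetilde{\SL}_2(k)$ is of type $A_1$, it is generated by the $x_{\pm\alpha_1}(u)$, $u\in k$, together with the central element $\zeta$, subject to relation \eqref{1} (additivity in each argument), the vacuous commutator relation \eqref{2}, the $A_1$-relation \eqref{4}, the centrality of $\zeta$, and the torus relation $\widetilde{h}_{\alpha_1}(s)\widetilde{h}_{\alpha_1}(u)=\widetilde{h}_{\alpha_1}(su)(s,u)_2$ (here $2/(\alpha_1,\alpha_1)=1$). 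Hence it suffices to check that the proposed images $x_{\alpha_1}(t^{-1}u)$, $x_{-\alpha_1}(tu)$, $\zeta$ again satisfy these relations; then $\phi_t$ extends uniquely to a group homomorphism, and since $\phi_{t^{-1}}$ is visibly a two-sided inverse, $\phi_t$ is an isomorphism.

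First I would dispatch the easy relations: \eqref{1} holds because $u\mapsto t^{\pm1}u$ is additive, \eqref{2} is empty in type $A_1$, and centrality of $\zeta$ is preserved. Next I would compute $\phi_t$ on the Weyl and torus elements. Directly from $\widetilde{w}_{\alpha_1}(s)=x_{\alpha_1}(s)x_{-\alpha_1}(-s^{-1})x_{\alpha_1}(s)$ one gets $\phi_t(\widetilde{w}_{\alpha_1}(s))=x_{\alpha_1}(t^{-1}s)x_{-\alpha_1}(-ts^{-1})x_{\alpha_1}(t^{-1}s)=\widetilde{w}_{\alpha_1}(t^{-1}s)$, so both sides of \eqref{4} transform into the same relation with $s$ replaced by $t^{-1}s$ and $u$ by $t^{-1}u$. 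Finally, using $\widetilde{h}_{\alpha_1}(s)=\widetilde{w}_{\alpha_1}(s)\widetilde{w}_{\alpha_1}(1)^{-1}$ together with the previous identity, one finds $\phi_t(\widetilde{h}_{\alpha_1}(s))=\widetilde{h}_{\alpha_1}(t^{-1}s)\widetilde{h}_{\alpha_1}(t^{-1})^{-1}=(t,s)_2\,\widetilde{h}_{\alpha_1}(s)$, where the last equality expands $\widetilde{h}_{\alpha_1}(t^{-1}s)$ via the torus relation and uses that the $\widetilde{h}_{\alpha_1}(\cdot)$ commute; compatibility of $\phi_t$ with the torus relation is then precisely bilinearity of the Hilbert symbol, $(t,s)_2(t,u)_2=(t,su)_2$.

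A cleaner alternative, which I would probably present instead, avoids all of these computations: inside the group $\widetilde{\GL}_2^{(1)}(k)$ constructed above, $\widetilde{\SL}_2(k)$ is the full preimage of the normal subgroup $\SL_2(k)\subseteq\GL_2(k)$, hence normal, so conjugation by $\widetilde{h}_{\alpha_2}(t)$ restricts to an automorphism of $\widetilde{\SL}_2(k)$. By relation \eqref{2here} (with $\langle\alpha_2^\vee,\pm\alpha_1\rangle=\mp1$) and centrality of $\zeta$, this restricted automorphism sends $x_{\pm\alpha_1}(u)\mapsto x_{\pm\alpha_1}(t^{\mp1}u)$ and fixes $\zeta$, i.e.\ it agrees with $\phi_t$ on a generating set; therefore $\phi_t$ is well-defined and equals this inner automorphism, so in particular it is an isomorphism. (This is the natural statement: $\phi_t$ implements the $\GL_2$-conjugation by $\diag(1,t)$ at the metaplectic level.)

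I do not expect a genuine obstacle here; the one place demanding care is the torus relation, where the Hilbert-symbol correction $\phi_t(\widetilde{h}_{\alpha_1}(s))=(t,s)_2\widetilde{h}_{\alpha_1}(s)$ appears and must not be dropped -- but it is harmless exactly because the Hilbert symbol is bilinear. The inner-automorphism argument sidesteps even this, so the lemma is genuinely routine.
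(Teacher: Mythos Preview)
Your first approach is exactly the paper's: verify the Steinberg--Matsumoto relations are preserved under the substitution, then observe that $\phi_{t^{-1}}$ is the inverse. Your explicit computations (in particular $\phi_t(\widetilde{w}_{\alpha_1}(s))=\widetilde{w}_{\alpha_1}(t^{-1}s)$ and $\phi_t(\widetilde{h}_{\alpha_1}(s))=(t,s)_2\,\widetilde{h}_{\alpha_1}(s)$, with the torus relation reducing to bilinearity of the Hilbert symbol) are correct and flesh out what the paper leaves as ``one checks.''

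Your alternative argument---realizing $\phi_t$ as conjugation by $\widetilde{h}_{\alpha_2}(t)$ inside $\widetilde{\GL}_2^{(1)}(k)$---is genuinely different from the paper and is valid here: $\widetilde{\GL}_2^{(1)}(k)$ has already been constructed and shown to be a nontrivial double cover (via the embedding into $\widetilde{F}_4(k)$) before this lemma, so there is no circularity. This route is conceptually cleaner since it explains \emph{why} $\phi_t$ exists (it is the metaplectic lift of conjugation by $\diag(1,t)$) and avoids any symbol manipulation. The paper's direct verification, on the other hand, is self-contained within $\widetilde{\SL}_2(k)$ and makes the lemma portable to contexts where no ambient $\widetilde{\GL}_2$ has been set up.
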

\begin{proof} One checks that the relations in the first copy of $\widetilde{\SL}_2(k)$ are satisfied in the second copy.  Thus the map is a well-defined surjection.  Replacing $t$ by $t^{-1}$ gives a well-defined inverse.  Thus, $\phi_t$ is an isomorphism.
\end{proof}

\begin{lemma} The map $\widetilde{\SL}_2(k) \rtimes_{\phi_t} \langle \widetilde{h_{\alpha_2}}(t) \rangle \rightarrow G^{(0)}_1$ defined for $h \in \widetilde{\SL}_2(k)$ as
\[
(h, \widetilde{h_{\alpha_2}}(t)) \longmapsto h \widetilde{h_{\alpha_2}}(t)\]
is a well-defined isomorphism.\end{lemma}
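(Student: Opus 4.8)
The plan is to check that the given map $\Psi\colon (h,\widetilde{h_{\alpha_2}}(t))\mapsto h\,\widetilde{h_{\alpha_2}}(t)$ is a group homomorphism and then that it is bijective. First I would make sense of the source: here $\langle\widetilde{h_{\alpha_2}}(t)\rangle$ is read as the abelian subgroup of $G^{(0)}_1$ generated by all the $\widetilde{h_{\alpha_2}}(t)$, $t\in k^\times$, which by the relation $\widetilde{h_{\alpha_2}}(s)\widetilde{h_{\alpha_2}}(t)=\widetilde{h_{\alpha_2}}(st)$ is isomorphic to $k^\times$ (injectivity of $t\mapsto\widetilde{h_{\alpha_2}}(t)$ follows by composing with the projection $G^{(0)}_1\to\GL_2(k)$, under which it becomes $t\mapsto\diag(1,t)$, of determinant $t$; also $\widetilde{h_{\alpha_2}}(1)=1$). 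One also needs that $t\mapsto\phi_t$ is a homomorphism $k^\times\to\Aut(\widetilde{\SL}_2(k))$, so that $\widetilde{\SL}_2(k)\rtimes_{\phi}k^\times$ is defined; this is a one-line check on the generators: $\phi_s\circ\phi_t$ sends $x_{\alpha_1}(u)\mapsto x_{\alpha_1}((st)^{-1}u)$ and $x_{-\alpha_1}(u)\mapsto x_{-\alpha_1}(st\,u)$, matching $\phi_{st}$, and $\phi_1=\mathrm{id}$.

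Next I would verify that $\Psi$ is a homomorphism. The inclusion $\widetilde{\SL}_2(k)\hookrightarrow G^{(0)}_1$ (injective since $\zeta\neq1$ in $G^{(0)}_1$, which has already been arranged: $G^{(0)}_1\cong G^{(0)}$ is a genuine double cover) and the map $t\mapsto\widetilde{h_{\alpha_2}}(t)$ are homomorphisms, so by the universal property of the semidirect product $\Psi$ is a homomorphism provided
\[
\widetilde{h_{\alpha_2}}(t)\,h\,\widetilde{h_{\alpha_2}}(t)^{-1}=\phi_t(h)\qquad\text{for all }h\in\widetilde{\SL}_2(k),\ t\in k^\times.
\]
Both sides are homomorphisms in $h$, so it suffices to check this on the generators $\zeta,x_{\pm\alpha_1}(u)$ of $\widetilde{\SL}_2(k)$: for $\zeta$ it holds because $\zeta$ is central in $G^{(0)}_1$, and for $x_{\pm\alpha_1}(u)$ it is exactly relation \eqref{2here}, namely $\widetilde{h_{\alpha_2}}(t)x_{\pm\alpha_1}(u)\widetilde{h_{\alpha_2}}(t)^{-1}=x_{\pm\alpha_1}(t^{\mp1}u)$, which is the definition of $\phi_t$ on those generators. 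Surjectivity of $\Psi$ is then immediate, since its image contains $\widetilde{\SL}_2(k)$ (take $t=1$) and every $\widetilde{h_{\alpha_2}}(t)$ (take $h=1$), and these generate $G^{(0)}_1$ by definition.

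Finally I would prove injectivity. Suppose $h\,\widetilde{h_{\alpha_2}}(t)=1$ in $G^{(0)}_1$. Projecting to $\GL_2(k)$ gives $\overline{h}\cdot\diag(1,t)=1$ with $\overline{h}\in\SL_2(k)$; comparing determinants forces $t=1$, hence $\overline{h}=1$, so $h$ lies in $\ker(\widetilde{\SL}_2(k)\to\SL_2(k))=\mu_2(k)=\{1,\zeta\}$. Since $\zeta\neq1$ in $G^{(0)}_1$, we get $h=1$, and then $\widetilde{h_{\alpha_2}}(t)=\widetilde{h_{\alpha_2}}(1)=1$, so the element of $\widetilde{\SL}_2(k)\rtimes_{\phi}\langle\widetilde{h_{\alpha_2}}(t)\rangle$ was already trivial. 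Thus $\Psi$ is an isomorphism. I do not anticipate a genuine obstacle; the only points needing care are the identification of conjugation by $\widetilde{h_{\alpha_2}}(t)$ with $\phi_t$ (which is precisely the defining relation) and the fact, already established in the preceding lemmas, that $\zeta$ remains nontrivial in $G^{(0)}_1$.
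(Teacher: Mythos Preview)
Your proof is correct and, for the homomorphism check, follows the same reasoning as the paper: both verify that conjugation by $\widetilde{h_{\alpha_2}}(t)$ in $G^{(0)}_1$ agrees with $\phi_t$ on the generators of $\widetilde{\SL}_2(k)$, which is exactly relation~(2).

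The difference lies in how you handle bijectivity. The paper constructs an explicit inverse $G^{(0)}_1\to\widetilde{\SL}_2(k)\rtimes_{\phi}\langle\widetilde{h_{\alpha_2}}(t)\rangle$ by sending generators to generators and verifying that the three defining relations of $G^{(0)}_1$ hold in the semidirect product; the two maps are then visibly inverse to each other. You instead prove surjectivity directly (the image contains all generators of $G^{(0)}_1$) and deduce injectivity by pushing down to $\GL_2(k)$, using that $\zeta\neq 1$ in $G^{(0)}_1$ (established in the immediately preceding lemma identifying $G^{(0)}_1\cong G^{(0)}$). Both arguments are short and valid; the paper's approach avoids invoking the prior lemma at the cost of checking relations in the semidirect product, while yours trades that check for the already-established nontriviality of $\zeta$.
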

\begin{proof}  Checking that it is well defined amounts to the relation that $\widetilde{h_{\alpha_2}}(t_1) h_2 \widetilde{h_{\alpha_2}}(t_1)^{-1} = \phi_{t_1}(h_2)$ in $\widetilde{\SL}_2(k)$, which is clear.
	
	The inverse map is $G^{(0)}_1 \rightarrow \widetilde{\SL}_2(k) \rtimes_{\phi_t} \langle \widetilde{h_{\alpha_2}}(t)\rangle$ given by the obvious map on generators.  The relations defining $G^{(0)}_1$ are again satisfied in the semi-direct product, so the map is well-defined.  It is clear that these maps are inverses to each other, giving the lemma.
\end{proof}

\begin{proof}[Proof of Proposition \ref{prop:twocovers}] Given the previous lemmas, we simply must check that the semi-direct product defining $\widetilde{\GL}^{(0)}_2(k)$ is the same as the one given by $\phi_t$, and one must map our $\widetilde{\SL}_2(k)$ to $\widetilde{\SL}'_2(k)$.  For this latter task, one checks that $\mm{1}{}{c}{1} \mapsto (\mm{1}{}{c}{1},1)$ is a splitting to $\widetilde{\SL}'_2(k)$.  (Use the identity on Hilbert symbols $(a,b)_2(-ab,a+b)_2=1$.)  This splitting pins down the isomorphism $\widetilde{\SL}_2(k) \rightarrow \widetilde{\SL}'_2(k)$. One finds that $\widetilde{w}_\alpha(t) \mapsto (\mm{}{t}{-t^{-1}}{},1)$ and that $\widetilde{h}_{\alpha_1}(t) \mapsto (\diag(t,t^{-1}),(t,t)_2)$.  We omit the rest of the proof.
\end{proof}

 Note that this shows that the subgroup $G^\ast\subset \widetilde{\GL}_2^{(1)}(k)$ naturally occurs as a subgroup of $\widetilde{\GL}^{(0)}_2(k)$, at least once we fix the above isomorphism $G^{(0)} \cong \widetilde{\GL}^{(0)}_2(k)$.

\subsection{The Weil representation for $\GL_2$}\label{Sec: Weil GL2}
The Weil representation of $\widetilde{\GL}_2^{(1)}(k)$ is defined as
\begin{equation}\label{eqn: Weil on GL2}
\Omega_{\psi,\chi}^{(1)}:=\Ind_{G^*}^{\widetilde{\GL}_2^{(1)}(k)}(\omega_{\psi,\chi}).
\end{equation}  In order to use results of \cite{gelbartPSdistinguished}, we will need to compare $ \Omega_{\psi,\chi}^{(1)}$ with the Weil representation studied in \emph{loc. cit.}, which is defined as
\begin{equation*}
\Omega_{\psi,\chi}^{(0)} := \Ind^{\widetilde{\GL}^{(0)}_2(k)}_{G^*}(\omega_{\psi,\chi}) \simeq \Ind^{G^{(0)}}_{G^*}(\omega_{\psi,\chi}).
\end{equation*}

To compare these representations, suppose $V^{(1)}$ is a representation of $\widetilde{\GL}_2^{(1)}(k)$.  Define a representation $V^{(0)}$ of $G^{(0)}$ by letting $V^{(0)} = V^{(1)}$ as vector spaces, with action $g * v = \frac{\gamma(\det(g) q)}{\gamma(q)} gv$.  Here $\ga(q)$ is as in \eqref{eqn: weil index}.

\begin{proposition} \label{prop:compare}Suppose $S$ is a representation of $G^*$, $V^{(1)} = \Ind_{G^*}^{\widetilde{\GL}_2^{(1)}(k)}(S)$, $V^{(0)}$ is as above, and let $V' = \Ind_{G^*}^{G^{(0)}}(S)$.  As representations of $\widetilde{\GL}_2^{(0)}(k)$, $V^{(0)}$ is isomorphic to $V'$ via the map $$f(g) \mapsto \frac{\gamma(\det(g) q)}{\gamma(q)} f(g).$$  In particular, the map $$\left(\Omega_{\psi,\chi}^{(1)}\right)^{(0)} \longrightarrow \Omega_{\psi,\chi}^{(0)}$$ given by $f(g) \mapsto \frac{\gamma(\det(g) q)}{\gamma(q)} f(g)$ is an isomorphism.
\end{proposition}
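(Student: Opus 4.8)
The plan is to exhibit the stated map as an explicit isomorphism and check the three routine things. Write $\Phi\colon V^{(0)}\to V'$ for the map $\Phi(f)(g)=\frac{\gamma(\det(g)q)}{\gamma(q)}\,f(g)$, and abbreviate $c(g):=\gamma(\det(g)q)/\gamma(q)$, a nowhere-vanishing scalar-valued function on the common underlying set of $\widetilde{\GL}_2^{(1)}(k)$ and $G^{(0)}$. I will prove $\Phi$ is well defined, that it intertwines the two $G^{(0)}$-actions, and that it is bijective; the ``in particular'' assertion is then the case $S=\omega_{\psi,\chi}$, where $V^{(1)}=\Omega_{\psi,\chi}^{(1)}$ and, via the identification $G^{(0)}\cong\widetilde{\GL}^{(0)}_2(k)$ of Proposition \ref{prop:twocovers}, $V'=\Omega_{\psi,\chi}^{(0)}$.

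The one input beyond bookkeeping is the cocycle behaviour of the Weil index for the one-dimensional form $q(x)=x^2$: writing $\gamma_\psi(a):=\gamma(aq)/\gamma(q)$, one has $\gamma_\psi(ab)=\gamma_\psi(a)\gamma_\psi(b)(a,b)_2$ and $\gamma_\psi(ab^2)=\gamma_\psi(a)$. (These classical identities of Weil are, moreover, precisely what makes the twisted action $g\mapsto c(g)\cdot(\text{right translation by }g)$ a representation of $G^{(0)}$ on $V^{(1)}$, i.e.\ what makes $V^{(0)}$ well defined.) Since $\det(g*h)=\det(g)\det(h)$ and the kernel element $(\det g,\det h)_2\in\mu_2(k)$ has trivial determinant, these translate into the two facts I will use: $c(g*h)=c(g)c(h)(\det g,\det h)_2$, and $c(hg)=c(g)$ for $h\in G^*$ (the latter because $\det(h)$ is a square).

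Now the checks. For $f\in V^{(1)}$ and $h\in G^*$ we have $h*g=h\cdot g$ (the Hilbert symbol being trivial on square determinants), so $\Phi(f)(h*g)=c(hg)f(hg)=c(g)S(h)f(g)=S(h)\Phi(f)(g)$, using the equivariance defining $V^{(1)}$ together with $c(hg)=c(g)$; hence $\Phi(f)\in V'$. To see that $\Phi$ intertwines the $G^{(0)}$-actions, evaluate both sides at a point $x\in G^{(0)}$: on the $V^{(0)}$-side the action of $g$ sends $f$ to $c(g)$ times right translation by $g$, so its image under $\Phi$ at $x$ is $c(x)c(g)f(xg)$; on the $V'$-side the action of $g$ is right translation by $*$, so the result at $x$ is $c(x*g)\,f(x*g)=c(xg)(\det x,\det g)_2\,f(xg)$, where I used that the central factor $(\det x,\det g)_2$ acts by its sign on the genuine representation $S$ and does not change the determinant. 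These agree by $c(x)c(g)=c(xg)(\det x,\det g)_2$. Finally $\Phi$ is a linear bijection with inverse $f\mapsto c^{-1}f$ (which lands in $V^{(0)}$ by the identical computation), hence an isomorphism of $G^{(0)}$-representations. There is no substantive obstacle; the only point demanding care is aligning the Hilbert-symbol twist in the multiplication law of $G^{(0)}$ with the Weil-index cocycle and the sign by which the central $\mu_2(k)$ acts, which is exactly why the claim can honestly be presented as a direct check.
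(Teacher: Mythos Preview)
Your proof is correct and is precisely the direct verification the paper alludes to with its one-line ``This is a simple check.'' You correctly identify the Weil-index cocycle identity $\gamma_\psi(ab)=\gamma_\psi(a)\gamma_\psi(b)(a,b)_2$ as the mechanism aligning the twisted multiplication in $G^{(0)}$ with the modified action defining $V^{(0)}$, and your bookkeeping of the three checks (well-definedness, intertwining, bijectivity) is clean. One small remark: you rightly invoke genuineness of $S$ when the central sign $(\det x,\det g)_2$ appears; this hypothesis is implicit in the paper's construction of $V^{(0)}$ (it is needed already for $V^{(0)}$ to be a representation of $G^{(0)}$) even though the proposition statement does not say so explicitly.
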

\begin{proof} This is a simple check. \end{proof}

We may now derive certain properties of $\Omega_{\psi,\chi}^{(1)}$ from the corresponding results of Gelbart--Piatetski-Shapiro \cite{gelbartPSdistinguished}.  Let temporarily $U(k) =\{x_{\al_1}(t): t \in k\}$ denote the unipotent radical of the upper triangular Borel subgroup of $\GL_2(k)$.  This subgroup splits uniquely into both $\widetilde{\GL}_2^{(1)}(k)$ and $\widetilde{\GL}^{(0)}_2(k)$, so let $U(k)$ also denote the image under the splitting.  If $V$ is a representation of either double cover, and $t \in k^\times$, a linear functional $L: V \rightarrow \C$ is said to be a $(U,\psi_t)$-functional if $L(x_\al(u) v) = \psi(t u) L(v)$ for all $u \in k$ and $v \in V$.
\begin{proposition}\label{prop:L0} The space of $(U,\psi_t)$-functionals on $\Omega_{\psi,\chi}^{(1)}$ is one-dimensional.  A basis of this space of functionals is given by
	\[f \in \Omega_{\psi,\chi}^{(1)} \mapsto f(h_{\alpha_2}(t^{-1}))(1).\]
\end{proposition}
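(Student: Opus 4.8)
The plan is to reduce the statement to the corresponding result of Gelbart--Piatetski-Shapiro for the Weil representation $\Omega_{\psi,\chi}^{(0)}$ of $\widetilde{\GL}^{(0)}_2(k)$, using the isomorphism of Proposition \ref{prop:compare}. First, I would recall from \cite{gelbartPSdistinguished} that the space of $(U,\psi_t)$-functionals on the corresponding Weil representation $\Omega_{\psi,\chi}^{(0)} = \Ind^{G^{(0)}}_{G^*}(\omega_{\psi,\chi})$ is one-dimensional; this is the uniqueness-of-Whittaker-model statement in \emph{loc. cit.} for this particular cover of $\GL_2$. Since $U(k)$ splits the same way into both $\widetilde{\GL}_2^{(1)}(k)$ and $\widetilde{\GL}^{(0)}_2(k) \cong G^{(0)}$, and the passage $V^{(1)} \rightsquigarrow V^{(0)}$ of Proposition \ref{prop:compare} (twisting the action by $\gamma(\det(g)q)/\gamma(q)$) does not change the underlying vector space nor the action of $U(k)$ — because $\det$ is trivial on $U(k)$, so $\gamma(\det(u)q)/\gamma(q) = 1$ for $u \in U(k)$ — a linear functional on $\Omega_{\psi,\chi}^{(1)}$ is a $(U,\psi_t)$-functional if and only if the same functional, viewed on $\left(\Omega_{\psi,\chi}^{(1)}\right)^{(0)}$, is a $(U,\psi_t)$-functional. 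Transporting via the explicit isomorphism $\left(\Omega_{\psi,\chi}^{(1)}\right)^{(0)} \xrightarrow{\sim} \Omega_{\psi,\chi}^{(0)}$, $f \mapsto \frac{\gamma(\det(g)q)}{\gamma(q)} f(g)$, of Proposition \ref{prop:compare}, one-dimensionality of the $(U,\psi_t)$-functionals transfers to $\Omega_{\psi,\chi}^{(1)}$.

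Next, I would exhibit the asserted basis functional $f \mapsto f(h_{\alpha_2}(t^{-1}))(1)$ explicitly and verify it is a nonzero $(U,\psi_t)$-functional. Here $f \in \Omega_{\psi,\chi}^{(1)} = \Ind_{G^*}^{\widetilde{\GL}_2^{(1)}(k)}(\omega_{\psi,\chi})$ is a function on $\widetilde{\GL}_2^{(1)}(k)$ valued in $S^+(k)$ with the $G^*$-equivariance on the left; evaluating at the point $h_{\alpha_2}(t^{-1})$ gives an element of $S^+(k)$, and then $(\cdot)(1)$ means evaluating that Schwartz function at $1 \in k$. To check the equivariance, I compute $f(x_{\alpha_1}(u) h_{\alpha_2}(t^{-1}))$: conjugating, $x_{\alpha_1}(u) h_{\alpha_2}(t^{-1}) = h_{\alpha_2}(t^{-1}) x_{\alpha_1}(t u)$ using relation \eqref{2here} (with $\langle \alpha_2^\vee,\alpha_1\rangle = -1$), so $f(x_{\alpha_1}(u)h_{\alpha_2}(t^{-1})) = f(h_{\alpha_2}(t^{-1}) x_{\alpha_1}(tu)) = \omega_{\psi,\chi}(x_{\alpha_1}(tu))\big(f(h_{\alpha_2}(t^{-1}))\big)$ — wait, more carefully, the induction has $G^*$-equivariance on the left, so I instead write $x_{\alpha_1}(u) h_{\alpha_2}(t^{-1})$ and use that $x_{\alpha_1}(u) \in \widetilde{\SL}_2(k) \subset G^*$ to move it to the left of $f$: $f(x_{\alpha_1}(u) h_{\alpha_2}(t^{-1})) = \omega_{\psi,\chi}(x_{\alpha_1}(u))\big(f(h_{\alpha_2}(t^{-1}))\big)$. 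By the Weil representation formula (2) of Section \ref{Sec: Weil rep on SL2}, $\omega_{\psi,\chi}(x_{\alpha_1}(u))\Phi(v) = \psi(u q(v))\Phi(v) = \psi(u v^2)\Phi(v)$, so evaluating at $v = 1$ gives $\psi(u) \Phi(1)$. That yields a $(U,\psi_1)$-functional, not $(U,\psi_t)$; the point of evaluating at $h_{\alpha_2}(t^{-1})$ rather than $1$ is precisely to twist this. So I should instead move $h_{\alpha_2}(t^{-1})$ through: writing $x_{\alpha_1}(u) h_{\alpha_2}(t^{-1}) = h_{\alpha_2}(t^{-1})\big(h_{\alpha_2}(t) x_{\alpha_1}(u) h_{\alpha_2}(t^{-1})\big) = h_{\alpha_2}(t^{-1}) x_{\alpha_1}(t^{-1} u)$ — here I must be careful with the sign of $\langle\alpha_2^\vee,\alpha_1\rangle$ and with which side is $G^*$; I will choose the sign convention so that the computation produces exactly the factor $\psi(tu)$, matching the claim.

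Finally, I would confirm nonvanishing: the functional $f \mapsto f(h_{\alpha_2}(t^{-1}))(1)$ is manifestly nonzero because, given any $\Phi \in S^+(k)$ with $\Phi(1) \neq 0$, a standard partition-of-unity argument produces $f \in \Omega_{\psi,\chi}^{(1)}$ with $f(h_{\alpha_2}(t^{-1})) = \Phi$ (the $G^*$-equivariance only constrains $f$ on the coset $G^* h_{\alpha_2}(t^{-1})$, and $S^+(k)$ is a smooth representation so such sections exist). Combined with one-dimensionality, this functional is therefore a basis. I expect the main obstacle to be purely bookkeeping: tracking the various twisting cocycles ($\gamma(\det q)/\gamma(q)$, the Hilbert symbol discrepancies between $\widetilde{\GL}_2^{(1)}$ and $G^{(0)}$, and the sign conventions in relation \eqref{2here}) carefully enough to be sure that the uniqueness statement genuinely transports without introducing a spurious factor, and to nail down the precise normalization of the basis functional so that the character is exactly $\psi_t$ and not $\psi_{ct}$ for some constant $c$. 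None of this is deep, but it requires care to state cleanly; once the dictionary of Proposition \ref{prop:compare} is set up, the result is essentially a corollary of \cite{gelbartPSdistinguished}.
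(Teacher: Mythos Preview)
Your approach is essentially the same as the paper's: reduce multiplicity one to the Gelbart--Piatetski-Shapiro result for $\Omega_{\psi,\chi}^{(0)}$ via Proposition \ref{prop:compare} (noting that the twist by $\gamma(\det(g)q)/\gamma(q)$ is trivial on $U$), and separately check directly that $f \mapsto f(h_{\alpha_2}(t^{-1}))(1)$ is a nonzero $(U,\psi_t)$-functional. Your equivariance computation is slightly muddled---the group acts on the induced space by right translation, so you should be evaluating $f(h_{\alpha_2}(t^{-1})x_{\alpha_1}(u))$ and then using relation \eqref{2here} to write this as $f(x_{\alpha_1}(tu)h_{\alpha_2}(t^{-1}))$, after which left $G^*$-equivariance gives the factor $\psi(tu)$---but this is exactly the bookkeeping you anticipated and not a real gap.
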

\begin{proof} It is immediately checked that the map $f \mapsto f(h_{\alpha_2}(t^{-1}))(1)$ is a non-zero $(U,\psi_t)$-functional.  Thus, the key statement is the multiplicity-one claim.  For the representation $\Omega_{\psi,\chi}^{(0)}$, this is due to Gelbart--Piatetski-Shapiro \cite{gelbartPSdistinguished}.  Comparing $\Omega_{\psi,\chi}^{(1)}$ with $\Omega_{\psi,\chi}^{(0)}$ using Proposition \ref{prop:compare}, we see that
	\[
	\mathrm{Hom}_{U}(\Omega_{\psi,\chi}^{(1)},\psi_t)=\mathrm{Hom}_{U}(\Omega_{\psi,\chi}^{(0)},\psi_t);
	\]
	the multiplicity one for $\Omega_{\psi,\chi}^{(1)}$ follows.\end{proof}

We will also require some results on invariant vectors of $\Omega_{\psi,\chi}^{(1)}$. To state the first result, let $k=\Q_2$ and let $\Gamma_{1,\GL_2}(4)$ be the subgroup of $\GL_2(k)$ generated by $x_{\alpha}(u)$, $x_{-\alpha}(4u)$, $h_{\alpha_1}(t), h_{\alpha_2}(t)$ with $u \in \Z_2$ and $t \in 1 + 4\Z_2$. Using the generators and relations, an easy analogue of Theorem \ref{thm:iwahoriSplit} implies that $\Gamma_{1,\GL_2}(4)$ splits the cover $\widetilde{\GL}_2^{(1)}(\Q_2)$; we set $\Gamma^\ast_{1,\GL_2}(4)$ for the image of the splitting.  Similarly, we denote by $\Gamma_{1,\SL_2}^*(4)$ the subgroup of $\widetilde{\SL}_2(\Q_2)$ generated by $x_{\alpha}(u)$, $x_{-\alpha}(4u)$, $h_{\alpha_1}(t)$ with $u \in \Z_2$ and $t \in 1 + 4\Z_2$.
\begin{corollary}\label{cor:L02} Let $L_t$ denote the  non-zero $(U,\psi_t)$-functional given in the statement of Proposition \ref{prop:L0}.  If $t = 1$ or $t=-1$, there is a $\Gamma^\ast_{1,\GL_2}(4)$-invariant vector $f_t \in \Omega_{\psi,\chi}^{(1)}$ so that $L_t(f_t) = 1$.  In particular, if $t=1$ or $t=-1$ and a $(U,\psi_t)$-functional $L$ on $\Omega_{\psi,\chi}^{(1)}$ vanishes on the $\Gamma^\ast_{1,\GL_2}(4)$-invariant vectors, then $L = 0$.
\end{corollary}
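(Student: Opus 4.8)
The plan is to construct an explicit $\Gamma^\ast_{1,\GL_2}(4)$-invariant vector $f_t$ in the induced space $\Omega_{\psi,\chi}^{(1)} = \Ind_{G^*}^{\widetilde{\GL}_2^{(1)}(\Q_2)}(\omega_{\psi,\chi})$ on which the functional $L_t$ of Proposition \ref{prop:L0} takes the value $1$. Recall that $L_t(f) = f(h_{\alpha_2}(t^{-1}))(1)$, so for $t = \pm 1$ we have $L_t(f) = f(1)(1)$ since $h_{\alpha_2}(\pm 1) \in G^*$ acts trivially. Thus I want a function $f$ supported near the identity coset $G^* \subseteq \widetilde{\GL}_2^{(1)}(\Q_2)$, valued in $\omega_{\psi,\chi} = S^+(\Q_2)$, with $f(1) = \phi_0$ for a suitable even Schwartz function $\phi_0$ normalized so that $\phi_0(0) = 1$ (recall that the $(U,\psi_t)$-functional on $S^+(\Q_2)$ itself is evaluation at $0$, up to normalization). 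First I would observe that $\Gamma^\ast_{1,\GL_2}(4)$ has the Iwahori-type factorization into its intersections with $U(\Q_2)$, the torus, and the opposite unipotent, by the analogue of Theorem \ref{thm:iwahoriSplit} for $\widetilde{\GL}_2^{(1)}$ alluded to in the statement; in particular $\Gamma^\ast_{1,\GL_2}(4) \subseteq G^*$ since all determinants lie in $1 + 4\Z_2 \subseteq (\Q_2^\times)^2$.

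The key steps, in order: (1) Since $\Gamma^\ast_{1,\GL_2}(4) \subseteq G^*$, a $\Gamma^\ast_{1,\GL_2}(4)$-invariant vector in the induced representation supported on the single coset $G^*$ is the same data as a $\Gamma^\ast_{1,\GL_2}(4)$-invariant vector $\phi_0$ in $\omega_{\psi,\chi}$ itself (extend $f$ by $f(g \cdot \gamma) = \omega_{\psi,\chi}(\gamma)^{-1}\phi_0$ for $g \in G^*$, $\gamma$ a coset representative, and by $0$ off $G^*$; one must check this is well-defined and genuinely $\Gamma^\ast_{1,\GL_2}(4)$-invariant, which reduces to $\phi_0$ being $\Gamma^\ast_{1,\GL_2}(4)$-fixed). (2) Produce a $\Gamma^\ast_{1,\GL_2}(4)$-fixed even Schwartz function $\phi_0 \in S^+(\Q_2)$ with $\phi_0(0) = 1$: take $\phi_0 = \mathbbm{1}_{2^N\Z_2}$ (or a symmetrization thereof — it is automatically even) for $N$ large. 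The actions of $x_{\alpha_1}(u) = x_\alpha(u)$ for $u \in \Z_2$, of $\widetilde{h}_{\alpha_1}(t)$ and $\widetilde{h}_{\alpha_2}(t^2)$ for $t \in 1 + 4\Z_2$, and of $x_{-\alpha_1}(4u)$ each preserve $\mathbbm{1}_{2^N\Z_2}$ for $N$ sufficiently large: the first by $\psi(u q(v)) = 1$ on $2^N\Z_2$, the torus elements by rescaling $v$ by a unit in $1 + 4\Z_2$ (which preserves $2^N\Z_2$) together with the Weil-index ratios $\gamma(t^2 q)/\gamma(q) = 1$ for $t$ a $2$-adic unit close to $1$, and the lower unipotent using the $\SL_2(\Z_2)$-triviality of the Weil representation on deep enough lattices (e.g.\ via $x_{-\alpha}(4u) = w_\alpha x_\alpha(\text{something}) w_\alpha^{-1} \cdot (\text{torus})$ and the fact that $\widehat{\mathbbm{1}_{2^N\Z_2}}$ is a scalar multiple of $\mathbbm{1}_{2^{-N}\Z_2}$). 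I would choose $N$ so that all these vanishing/invariance conditions hold simultaneously. (3) With $f_t$ defined from this $\phi_0$, compute $L_t(f_t) = f_t(1)(0) = \phi_0(0) = 1$ after normalizing, giving the first assertion; the second assertion ("$L$ vanishing on all invariant vectors forces $L = 0$") is then immediate from multiplicity one in Proposition \ref{prop:L0}, since any $(U,\psi_t)$-functional is a scalar multiple of $L_t$, and that scalar is detected by evaluation on $f_t$.

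The main obstacle I anticipate is step (2): verifying that a single lattice-indicator function $\mathbbm{1}_{2^N\Z_2}$ is fixed by \emph{all} the generators of $\Gamma^\ast_{1,\GL_2}(4)$ at once, in particular controlling the normalizing constants (the Weil indices $\gamma(yq)/\gamma(q)$ and the factor $\chi(a)|a|^{-1/2}$ appearing in the action of $\widetilde{h}_{\alpha_2}(a^2)$) so that they are exactly $1$ — not merely a root of unity — for $a \in 1 + 4\Z_2$; this is where the specific congruence condition "$\equiv 1 \bmod 4$" at the prime $2$ is essential, since the Hilbert symbol and Weil index become trivial there but not on all of $\Z_2^\times$. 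The action of the lower-triangular generator $x_{-\alpha_1}(4u)$ requires the most care, as it mixes Fourier transform with a torus twist; I would handle it by writing it in terms of $\widetilde{w}_{\alpha_1}(1)$ and using $x_\alpha(t) x_{-\alpha}(s)$-type identities (Corollary \ref{Cor: unipotent nice}, case \ref{case long}, which applies precisely when $\val_2(s) \geq 2$) to reduce to the already-checked generators plus a $4$-adically-trivial torus piece. Everything else is a routine but somewhat delicate bookkeeping of $2$-adic valuations.
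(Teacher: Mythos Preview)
Your approach has the right overall shape, but there is a genuine arithmetic error at the prime $2$ that breaks steps (1) and (2). You assert that $\Gamma^\ast_{1,\GL_2}(4) \subseteq G^*$ because ``all determinants lie in $1 + 4\Z_2 \subseteq (\Q_2^\times)^2$'', and similarly that $h_{\alpha_2}(-1) \in G^*$. Neither inclusion holds: in $\Q_2$ one has $(\Z_2^\times)^2 = 1 + 8\Z_2$, so $5 \in 1 + 4\Z_2$ and $-1$ are both non-squares. Consequently $\Gamma^\ast_{1,\GL_2}(4)$ meets two $G^*$-cosets, not one, and a right-$\Gamma^\ast_{1,\GL_2}(4)$-invariant vector cannot be supported on the single coset $G^*$. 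The paper handles this by supporting $f_1$ on $G^* \cup G^* h_{\alpha_2}(5)$ and $f_{-1}$ on $G^* h_{\alpha_2}(-1) \cup G^* h_{\alpha_2}(-5)$, taking the value $\phi_0$ on each coset, and then checks invariance using that $h_{\alpha_2}(5)$ and $h_{\alpha_2}(-1)$ normalize $\Gamma^\ast_{1,\SL_2}(4)$.

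There is a second, independent slip: the $(U,\psi_t)$-functional on $S^+(\Q_2)$ is \emph{not} evaluation at $0$. From $x_\alpha(u)\Phi(v) = \psi(uv^2)\Phi(v)$ one sees the functional must be evaluation at a point with $v^2 = t$, i.e.\ at $v = 1$ (this is exactly what $f \mapsto f(h_{\alpha_2}(t^{-1}))(1)$ does). So your choice $\phi_0 = \mathbbm{1}_{2^N\Z_2}$ with $N$ large would give $L_t(f_t) = \phi_0(1) = 0$, not $1$. The paper instead takes $\phi_0 = \mathbbm{1}_{\Z_2}$, which already is $\Gamma^\ast_{1,\SL_2}(4)$-invariant (no need to pass to a deep lattice) and has $\phi_0(1) = 1$. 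Once you correct the coset support and use $\phi_0 = \mathbbm{1}_{\Z_2}$, the rest of your outline --- including the final deduction from multiplicity one --- goes through as in the paper.
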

\begin{proof} Let $\phi_0 \in S^{+}(\Q_2)$ be the characteristic function of $\Z_2$.  Define $f_1 \in \Omega_{\psi,\chi}^{(1)}$ via $f_1(1) = \phi_0$, $f_1(h_{\alpha_2}(5)) = \phi_0$ and if $g \notin G^* \cup G^* h_{\alpha_2}(5)$ then $f_1(g) = 0$.  Define $f_{-1} \in \Omega_{\psi,\chi}^{(1)}$ via $f_{-1}(h_{\alpha_2}(-1)) = \phi_0$, $f_{-1}(h_{\alpha_2}(-5)) = \phi_0$, and if $g \notin G^*h_{\alpha_2}(-1) \cup G^* h_{\alpha_2}(-5)$ then $f_{-1}(g) = 0$.
	
	By construction, $L_t(f_t)=1$ for $t=1,-1$.  One readily verifies that $f_1$ and $f_{-1}$ are $\Gamma^\ast_{1,\GL_2}(4)$-invariant: For this, one uses that $\phi_0$ is $\Gamma_{1,\SL_2}^*(4)$ invariant under the action of $\omega_{\psi}$, and that $h_{\alpha_2}(5)$, $h_{\alpha_2}(-1)$ normalize $\Gamma_{1,\SL_2}^*(4)$.  The corollary follows.
\end{proof}

We have an analogous statement at the odd primes.  Let $k=\Q_p$ with $p$ odd and let $\GL_2^*(\Z_p)$ be the subgroup of $\widetilde{\GL}_2^{(1)}(k)$ generated by $x_{\pm \alpha}(u)$, $\widetilde{h}_{\alpha_2}(t)$ with $u \in \Z_p$ and $t \in \Z_p^\times$; this is the image of a splitting of $\widetilde{\GL}_2^{(1)}(\Q_p)$ over $\GL_2(\Z_p)$.

\begin{lemma}\label{lem:f0p} Suppose $p$ is odd.  Let $\phi_0 \in S^{+}(\Q_p)$ be the characteristic function of $\Z_p$.  Let $\{1,\mu,p,\mu p\}$ with $\mu \in \Z_p^\times$ be representatives for $\Q_p^\times/(\Q_p^\times)^2$.  Define $f_0 \in \Ind_{G^*}^{\widetilde{\GL}_2^{(1)}(k)}(S^{+}(\Q_p))$ by $f_0(1) = \phi_0$, $f_0(\widetilde{h}_{\alpha_2}(\mu)) = \phi_0$, $f_0(\widetilde{h}_{\alpha_2}(p)) = 0$ and $f_0(\widetilde{h}_{\alpha_2}(p \mu)) = 0$. Then $f_0$ is $\GL_2^*(\Z_p)$-invariant.\end{lemma}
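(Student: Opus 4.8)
The plan is to verify directly that $f_0(gk) = f_0(g)$ for every $k \in \GL_2^*(\Z_p)$, exploiting the coset structure of $G^* \backslash \widetilde{\GL}_2^{(1)}(k)$. First I would note that the determinant induces an isomorphism $\widetilde{\GL}_2^{(1)}(k)/G^* \xrightarrow{\ \sim\ } \Q_p^\times/(\Q_p^\times)^2$, so that, as $p$ is odd, the four elements $1,\ \widetilde{h}_{\alpha_2}(\mu),\ \widetilde{h}_{\alpha_2}(p),\ \widetilde{h}_{\alpha_2}(p\mu)$ form a full set of coset representatives, the first two of which moreover lie in $\GL_2^*(\Z_p)$. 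Since $\det k \in \Z_p^\times$ for $k \in \GL_2^*(\Z_p)$, right translation by $k$ preserves the pair of cosets $\{G^*,\, G^*\widetilde{h}_{\alpha_2}(\mu)\}$ (those on which $\det$ has even valuation) and likewise preserves $\{G^*\widetilde{h}_{\alpha_2}(p),\, G^*\widetilde{h}_{\alpha_2}(p\mu)\}$. On the latter pair $f_0$ vanishes by definition, so $f_0(gk) = 0 = f_0(g)$ there, and it remains only to treat $g \in \{1, \widetilde{h}_{\alpha_2}(\mu)\}$.

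For such $g$ and $k \in \GL_2^*(\Z_p)$, let $g'$ be the coset representative of $gk$, again in $\{1,\widetilde{h}_{\alpha_2}(\mu)\}$, and write $gk = h_0\, g'$ with $h_0 = g\, k\, (g')^{-1} \in G^*$. By the defining property of the induced representation, $f_0(gk) = \omega_{\psi,\chi}(h_0)\, f_0(g') = \omega_{\psi,\chi}(h_0)\,\phi_0$, so it suffices to prove $\omega_{\psi,\chi}(h_0)\,\phi_0 = \phi_0$. Since $g, g', k$ all lie in $\GL_2^*(\Z_p)$, we have $h_0 \in \GL_2^*(\Z_p) \cap G^*$. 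The next step is to identify this intersection: pushing every factor $\widetilde{h}_{\alpha_2}(t)$ with $t \in \Z_p^\times$ in a word to the right --- using $\widetilde{h}_{\alpha_2}(t) x_{\pm\alpha_1}(u) \widetilde{h}_{\alpha_2}(t)^{-1} = x_{\pm\alpha_1}(t^{\mp 1} u)$, which keeps the arguments in $\Z_p$ since $t$ is a unit, together with $\widetilde{h}_{\alpha_2}(s)\widetilde{h}_{\alpha_2}(t) = \widetilde{h}_{\alpha_2}(st)$ for $s,t \in \Z_p^\times$ (the Hilbert symbol being trivial on $\Z_p^\times$ as $p$ is odd) --- any element of $\GL_2^*(\Z_p)$ is written as $w\,\widetilde{h}_{\alpha_2}(t)$ with $w$ a word in $x_{\pm\alpha_1}(\Z_p)$ and $t \in \Z_p^\times$; its determinant is $t$, so membership in $G^*$ forces $t \in (\Z_p^\times)^2$.

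Finally I would check that $\omega_{\psi,\chi}$ fixes $\phi_0$ on the generators $x_{\pm\alpha_1}(\Z_p)$ and $\widetilde{h}_{\alpha_2}((\Z_p^\times)^2)$. The subgroup generated by $x_{\pm\alpha_1}(\Z_p)$ maps isomorphically onto $\SL_2(\Z_p)$, and $\phi_0 = \mathbf{1}_{\Z_p}$ is the spherical vector of $\omega_\psi$ on $S^+(\Q_p)$: a standard computation on generators, using $\omega_\psi(x_{\alpha_1}(u))\phi_0(x) = \psi(ux^2)\phi_0(x) = \phi_0(x)$ for $u \in \Z_p$, the vanishing of the Weil index corrections $\gamma(q) = 1 = \gamma(yq)/\gamma(q)$ for $q(x)=x^2$, $y\in\Z_p^\times$ (valid since $p$ is odd and $\psi$ is unramified), and hence $\omega_\psi(\widetilde{w}_{\alpha_1}(1))\phi_0 = \widehat{\phi_0} = \phi_0$ with the self-dual measure, which also handles the $x_{-\alpha_1}(\Z_p)$ obtained from $x_{\alpha_1}$ by conjugation. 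For the torus generator, the formula defining $\omega_{\psi,\chi}$ gives $\omega_{\psi,\chi}(\widetilde{h}_{\alpha_2}(a^2))\phi_0(x) = \chi(a)\,|a|^{-1/2}\phi_0(a^{-1}x) = \chi(a)\,\phi_0(x) = \phi_0(x)$ for $a \in \Z_p^\times$, using $|a| = 1$, $a^{-1}\Z_p = \Z_p$, and that $\chi$ is unramified at $p$. Combining these gives $\omega_{\psi,\chi}(h_0)\phi_0 = \phi_0$, hence $f_0(gk) = f_0(g)$. The only points requiring care are the bookkeeping with the four square classes of $\Q_p^\times$ and the triviality on $\Z_p^\times$ of the relevant Hilbert symbols and of $\psi$ and $\chi$; I do not expect a genuine obstacle.
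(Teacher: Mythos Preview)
Your proposal is correct and is precisely the ``relatively direct check'' that the paper omits. The paper gives no argument beyond that phrase, so there is nothing to compare: your coset analysis, reduction to $\GL_2^*(\Z_p)\cap G^*$, factorization $w\,\widetilde{h}_{\alpha_2}(t)$ with $t\in(\Z_p^\times)^2$, and verification that $\phi_0$ is the spherical vector for $\omega_{\psi,\chi}$ on these generators constitute exactly the intended proof.
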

\begin{proof} This is a relatively direct check, which we omit. \end{proof}

It is proved in \cite{gelbartPSdistinguished} that $\Omega_{\psi,\chi}^{(0)}$, and thus $\Omega_{\psi,\chi}^{(1)}$, is irreducible.  We will see in Section \ref{sec:JFs} that $\Omega_{\psi,\chi}^{(1)}$ embeds in a certain principal series representation, from which it follows that the space of $\GL_2^*(\Z_p)$-invariant vectors of $\Omega_{\psi,\chi}^{(1)}$ is at most one-dimensional \cite[Section 9.2]{GanGao}, and thus exactly one-dimensional, spanned by the $f_0$ of Lemma \ref{lem:f0p}.  We obtain the following corollary.

\begin{corollary}\label{cor:L0p} Suppose $t=1$ or $t=-1$, $k=\Q_p$ with $p$ odd, and $L$ is $(U,\psi_t)$-functional that is $0$ on the unique line of $\GL_2^*(\Z_p)$-invariant vectors of $\Omega_{\psi,\chi}^{(1)}$.  Then $L = 0$.
\end{corollary}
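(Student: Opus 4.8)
The plan is to reduce the statement to a single non-vanishing check. By Proposition \ref{prop:L0}, the space of $(U,\psi_t)$-functionals on $\Omega_{\psi,\chi}^{(1)}$ is one-dimensional, spanned by $L_t\colon f \mapsto f(\widetilde{h}_{\alpha_2}(t^{-1}))(1)$, so the given functional $L$ equals $c\,L_t$ for some $c \in \C$. As recalled just before the corollary, the space of $\GL_2^*(\Z_p)$-invariant vectors of $\Omega_{\psi,\chi}^{(1)}$ is exactly one-dimensional, spanned by the vector $f_0$ of Lemma \ref{lem:f0p}. Hence $L$ vanishes on the invariant vectors precisely when $c\,L_t(f_0) = 0$, and it suffices to verify $L_t(f_0) \ne 0$ for $t = 1$ and $t = -1$; this forces $c = 0$ and so $L \equiv 0$.

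For $t = 1$ we have $\widetilde{h}_{\alpha_2}(1) = 1$ (immediate from relation (3) for the $\widetilde{h}_{\alpha_2}$'s together with $(1,1)_2 = 1$), so $L_1(f_0) = f_0(1)(1) = \phi_0(1) = 1$, where $\phi_0 = \mathbf{1}_{\Z_p}$ is the characteristic function of $\Z_p$. For $t = -1$ I would split on whether $-1$ is a square in $\Q_p^\times$. Since $p$ is odd, $-1$ is a unit, so if $-1 = a^2$ then $a \in \Z_p^\times$; then $\widetilde{h}_{\alpha_2}(-1) = \widetilde{h}_{\alpha_2}(a^2) \in G^*$, and the extended $G^*$-action (Proposition \ref{Prop: extend to ast}) gives $f_0(\widetilde{h}_{\alpha_2}(-1)) = \omega_{\psi,\chi}(\widetilde{h}_{\alpha_2}(a^2))\phi_0$, whose value at $1$ is $\chi(a)|a|^{-1/2}\phi_0(a^{-1}) = \chi(a) \ne 0$. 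If instead $-1$ is a non-square unit, then $-1 = b^2\mu$ with $b \in \Z_p^\times$ and $\mu$ the non-square unit representative of Lemma \ref{lem:f0p}; since $(b^2,\mu)_2 = 1$, relation (3) gives $\widetilde{h}_{\alpha_2}(-1) = \widetilde{h}_{\alpha_2}(b^2)\,\widetilde{h}_{\alpha_2}(\mu)$ with $\widetilde{h}_{\alpha_2}(b^2) \in G^*$, so $f_0(\widetilde{h}_{\alpha_2}(-1)) = \omega_{\psi,\chi}(\widetilde{h}_{\alpha_2}(b^2))\,f_0(\widetilde{h}_{\alpha_2}(\mu)) = \omega_{\psi,\chi}(\widetilde{h}_{\alpha_2}(b^2))\,\phi_0$, whose value at $1$ is $\chi(b)\phi_0(b^{-1}) = \chi(b) \ne 0$. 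In every case $L_t(f_0) \ne 0$, which completes the argument.

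The substantive content has already been placed in the preceding results: multiplicity one for the $(U,\psi_t)$-functional (Proposition \ref{prop:L0}, itself resting on the comparison with the Gelbart--Piatetski-Shapiro cover via Proposition \ref{prop:compare}), together with the one-dimensionality of $\GL_2^*(\Z_p)$-invariants and the explicit spanning vector $f_0$. Consequently I expect no real obstacle here; the only point requiring a little care is the last computation, namely correctly identifying which $G^*$-coset contains $\widetilde{h}_{\alpha_2}(\pm 1)$ and tracking the Weil-representation scalars and Hilbert symbols — all routine once one uses that $(x^2,y)_2 = 1$, so that $\widetilde{h}_{\alpha_2}(-1)$ may be factored through $G^*$ without an extra sign.
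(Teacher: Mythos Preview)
Your proposal is correct and follows the same approach the paper indicates (``a similar argument to the $p=2$ case''): use the multiplicity-one result of Proposition \ref{prop:L0} to write $L = c\,L_t$, and then check directly that $L_t(f_0)\neq 0$ on the explicit spherical vector $f_0$ of Lemma \ref{lem:f0p}. Your case split on whether $-1$ is a square in $\Q_p^\times$ and the handling of the $G^*$-coset via $(b^2,\mu)_2=1$ are exactly the routine checks needed.
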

\begin{proof}
	This follows from a similar argument to the $p=2$ case.
\end{proof}

\section{Jacquet functors}\label{sec:JFs}
For any finite prime $p$, let $V_{min}=\Pi_{min,p}$ denote the local component of $\Pi_{min}$ at $p$. Recall that $Q = L U_{Q}$ denotes the standard maximal parabolic of $F_4$ associated to the simple root $\alpha_2$.   In this subsection, we identify the Jacquet module $V_{min,U_Q}$ of $V_{min}$ with respect to $U_Q$ with the representation $\Omega_{\psi,\chi}^{(1)}$ of $ \widetilde{\GL}_2^{(1)}(\Q_p)$ considered in Section \ref{Sec: Weil GL2}. For this to make sense, we first explicate a map $\widetilde{L}(\Q_p) \rightarrow \widetilde{\GL}_2^{(1)}(\Q_p)$.

Recall the subgroup $\SL_3(\Q_p)$ of $F_4(\Q_p)$ as described before Lemma \ref{lem:inj1}.
\begin{proposition}\label{Prop: mod sl3} The group $\SL_3(\Q_p)$ splits into $\widetilde{F}_4(\Q_p)$, is normal in $\widetilde{L}(\Q_p)$, and one has $$\widetilde{L}(\Q_p)/\SL_3(\Q_p) \simeq \widetilde{\GL}_2^{(1)}(\Q_p).$$\end{proposition}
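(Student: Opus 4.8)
The plan is to analyze the structure of the Levi $L$ of the parabolic $Q$ associated to the simple root $\alpha_2$, and to exhibit the claimed isomorphism by tracking roots and coroots through the Steinberg presentation. First I would recall that, as an algebraic group, $L$ has derived group of type $A_2$ (generated by the root subgroups $x_{\pm\alpha_1}, x_{\pm\alpha_3}, x_{\pm\alpha_4}$ and the roots they generate), with a one-dimensional central torus; concretely $L \cong (\SL_3 \times \GL_1)/\mu_3$, where the $\SL_3$ is the one with simple roots $\{\alpha_1\}$ together with... — wait, more carefully: the $A_2$ inside $L$ is generated by $\alpha_3,\alpha_4$ only if those are the simple roots of the $\SL_3$ appearing in Lemma \ref{Lem: SL split}; but the relevant $\SL_3(\Q_p) \subset F_4(\Q_p)$ for this proposition is the one associated to $\alpha_3,\alpha_4$. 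So the first step is to fix notation: let $M_L$ denote the semisimple part of $L$, which is this copy of $\SL_3$, and note it is $\Q$-split and simply connected.

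Next I would invoke Lemma \ref{Lem: SL split}, which tells us that $\widetilde{F}_4(\Q_p)$ splits uniquely over $\SL_3(\Q_p)$; write $s$ for this splitting and identify $\SL_3(\Q_p)$ with its image. To see $\SL_3(\Q_p)$ is normal in $\widetilde{L}(\Q_p)$, I would argue on the linear level first: $M_L$ is normal in $L$ (it is the derived group), so $\SL_3(\Q_p)$ is normalized by $L(\Q_p)$, hence its preimage relation gives that $\widetilde{L}(\Q_p)$ normalizes the preimage of $\SL_3(\Q_p)$; but by uniqueness of the splitting, conjugation by any element of $\widetilde{L}(\Q_p)$ carries the splitting $s$ to another splitting of $\SL_3(\Q_p)$, which must coincide with $s$, so $\SL_3(\Q_p)$ itself (not just its preimage) is normal in $\widetilde{L}(\Q_p)$. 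Then I would compute the quotient: $L(\Q_p)/M_L(\Q_p) \cong \GL_2(\Q_p)$ — this is the standard fact that the $(2,1)$-type Levi of $F_4$ associated to $\alpha_2$ has $L/[L,L]_{\mathrm{sc}}$ equal to $\GL_2$, realized via the $\SL_2$ with simple root $\alpha_1$... no: the $\GL_2$ here is generated by the $\SL_2$ associated to $\alpha_2$ together with a torus, matching the description just before Proposition \ref{prop:twocovers} in Section \ref{Sec: two covers}. Precisely, $\widetilde{\GL}_2^{(1)}(\Q_p)$ was defined there as exactly the full preimage in $\widetilde{F}_4(\Q_p)$ of the subgroup $\GL_2(\Q_p) \subset F_4(\Q_p)$ generated by the $\SL_2$ attached to $\alpha_1$ and the coroot of $\alpha_2$ — so I would instead identify $\widetilde{L}(\Q_p)/\SL_3(\Q_p)$ with this preimage by checking that the composite $\widetilde{\GL}_2^{(1)}(\Q_p) \hookrightarrow \widetilde{L}(\Q_p) \twoheadrightarrow \widetilde{L}(\Q_p)/\SL_3(\Q_p)$ is an isomorphism: it is surjective because $L(\Q_p)$ is generated by $\GL_2(\Q_p)$ and $M_L(\Q_p)=\SL_3(\Q_p)$ (a root-datum check), and injective because $\widetilde{\GL}_2^{(1)}(\Q_p) \cap \SL_3(\Q_p)$ inside $\widetilde{F}_4(\Q_p)$ is trivial, the two subgroups meeting only over $\mu_2$ which is identified compatibly.

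The main obstacle I anticipate is the last injectivity point: showing the copy of $\widetilde{\GL}_2^{(1)}(\Q_p)$ and the copy of $\SL_3(\Q_p)$ inside $\widetilde{L}(\Q_p)$ intersect trivially, and more subtly that the cover structure on the quotient is genuinely $\widetilde{\GL}_2^{(1)}$ rather than some twist. I would handle this by a Bruhat-decomposition argument in $\widetilde{F}_4(\Q_p)$ analogous to the ones used in the proofs of the lemmas preceding Proposition \ref{prop:twocovers}: decompose an element of the intersection according to the Bruhat cell of its image in $\GL_2(\Q_p) \cap M_L(\Q_p) = \{1\}$ on the linear level, forcing the element into $\mu_2$, then use that $\mu_2$ injects into both subgroups identically (this is built into the Steinberg presentation) to conclude the element is trivial. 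The identification of the quotient cover with $\widetilde{\GL}_2^{(1)}(\Q_p)$ on the nose — not up to a cohomologically trivial twist — follows because the quotient map restricted to the embedded $\widetilde{\GL}_2^{(1)}(\Q_p)$ is already an isomorphism of covers of $\GL_2(\Q_p)$, so no further identification of cocycles is needed. Everything else (normality of $M_L$ in $L$, the generation statement, the root-datum bookkeeping) is routine and I would only sketch it.
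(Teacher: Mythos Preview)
Your approach is essentially the same as the paper's: splitting via Lemma \ref{Lem: SL split}, normality via uniqueness of the splitting (using that $\SL_3$ is perfect), and the quotient via the natural map from $\widetilde{\GL}_2^{(1)}(\Q_p)$.

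One correction worth flagging: the semisimple part of $L$ is not $\SL_3$ but $\SL_2 \times \SL_3$. The Levi of $Q$ has simple roots $\{\alpha_1,\alpha_3,\alpha_4\}$, giving type $A_1 \times A_2$. So your justification ``it is the derived group'' for normality of $\SL_3$ on the linear level is not quite right; you should instead say that $\SL_3$ is a simple factor of the derived group, or just check directly that the torus and the $\alpha_1$-root subgroups normalize the $\{\alpha_3,\alpha_4\}$-subsystem. This does not affect the rest of your argument.

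For the final isomorphism the paper takes a slightly cleaner route than your surjectivity/injectivity check: it simply observes that $\widetilde{L}(\Q_p)/\SL_3(\Q_p)$ is itself a non-split $\mu_2$-cover of $\GL_2(\Q_p)$, and then notes that the map $\widetilde{\GL}_2^{(1)}(\Q_p) \to \widetilde{L}(\Q_p)/\SL_3(\Q_p)$ fits into a commutative diagram of central extensions
\[
\begin{tikzcd}
1\ar[r]&\mu_2\ar[d,"="]\ar[r]&\widetilde{\GL}_2^{(1)}(\Q_p)\ar[d]\ar[r]&\GL_2(\Q_p)\ar[d,"="]\ar[r]&1\\
1\ar[r]&\mu_2\ar[r]&\widetilde{L}(\Q_p)/\SL_3(\Q_p)\ar[r]&\GL_2(\Q_p)\ar[r]&1,
\end{tikzcd}
\]
whence the middle arrow is an isomorphism. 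This bypasses the need to verify that $L(\Q_p)$ is generated by $\GL_2(\Q_p)$ and $\SL_3(\Q_p)$ and to compute the intersection $\widetilde{\GL}_2^{(1)}(\Q_p)\cap \SL_3(\Q_p)$ inside $\widetilde{F}_4(\Q_p)$. Your Bruhat-decomposition plan for the intersection would work, but the diagram argument is shorter.
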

\begin{proof} We first note that $\SL_3(\Q_p)$ is a normal subgroup of $L(\Q_p)$ such that
	\[
	{L}(\Q_p)/\SL_3(\Q_p) \simeq \GL_2(\Q_p).
	\] That $\SL_3(\Q_p)$ splits into $\widetilde{F}_4(\Q_p)$ is Lemma \ref{Lem: SL split}.
	
	To see that this $\SL_3(\Q_p)$ is normal, let $s$ denote the splitting of $\SL_3(\Q_p)$ into $\widetilde{F}_4(\Q_p)$.  Because $\SL_3(\Q_p)$ is its own derived group, the splitting $s$ is unique.  Now, let $g' \in \widetilde{L}(\Q_p)$ with image $g \in L(\Q_p)$.  Define $s_g: \SL_3(\Q_p) \rightarrow \widetilde{F}_4(\Q_p)$ as $s_g(h) = g's(g^{-1}hg)(g')^{-1}$.  Since $\SL_3(\Q_p)$ is normal in $L(\Q_p)$, $s_g$ is another splitting; thus $s_g = s$ by uniqueness. This implies $(g')^{-1}s(h)g' = s(g^{-1}hg)$, proving $s\left(\SL_3(\Q_p)\right)$ is normal.
	
	Finally, we have a map $\widetilde{\GL}_2^{(1)}(\Q_p) \rightarrow \widetilde{L}(\Q_p)$, because we know that the relations defining $\widetilde{\GL}_2^{(1)}(\Q_p)$ are satisfied in $\widetilde{L}(\Q_p)$.  This induces $\widetilde{\GL}_2^{(1)}(\Q_p) \rightarrow \widetilde{L}(\Q_p)/\SL_3(\Q_p)$. The latter group is a non-split double cover of $\GL_2(\Q_p)$, as is $\widetilde{\GL}_2^{(1)}(\Q_p)$. Since the map $\widetilde{\GL}_2^{(1)}(\Q_p) \rightarrow \widetilde{L}(\Q_p)/\SL_3(\Q_p)$ is defined in terms of generators and relations, it fits into a commutative diagram
	\[
	\begin{tikzcd}
	1\ar[r]&\mu_2\ar[d,"="]\ar[r]&\widetilde{\GL}_2^{(1)}(\Q_p)\ar[d]\ar[r]&\GL_2(\Q_p)\ar[d,"="]\ar[r]&1
	\\1\ar[r]&\mu_2\ar[r]&\widetilde{L}(\Q_p)/\SL_3(\Q_p)\ar[r]&\GL_2(\Q_p)\ar[r]&1,
	\end{tikzcd}
	\]
	and is thus an isomorphism.
\end{proof}

Let $\chi_{exc}$ denote the unique exceptional character of $Z(\widetilde{T}(\Q_p))$; by an abuse of notation, we use the same symbol for the extension to $T_\ast(\Q_p)$ defined by setting
\begin{equation}\label{eqn: extend exceptional character}
\chi_{exc}(h_{\al_1}(t)) = |t|^{1/2} \frac{\gamma(q)}{\gamma(tq)}
\end{equation}
for $t\in \Q_p$; here $\ga(q)$ is defined in \eqref{eqn: weil index}. We set $B_L = L\cap B=TU_{B_L}$ the associated Borel subgroup of the Levi subgroup $L$ and set $B_{L,\ast}(\Q_p) = T_\ast(\Q_p)U_{B_L}(\Q_p)$.

It follows from \cite[Section 6]{lokeSavin} that there is an embedding $V_{min} \hookrightarrow \Ind_{{B}_\ast(\Q_p)}^{\widetilde{F}_4(\Q_p)}(\delta_{B}^{1/2} \chi_{exc}^{-1})$ and thus
\begin{equation}\label{eqn: into the induction}
V_{min,U_Q} \longrightarrow \\Ind_{{B}_\ast(\Q_p)}^{\widetilde{Q}(\Q_p)}(\delta_{B}^{1/2} \chi_{exc}^{-1})\cong \Ind_{B_{L,\ast}(\Q_p)}^{\widetilde{L}(\Q_p)}(\delta_{B}^{1/2} \chi_{exc}^{-1}).
\end{equation}
This latter map sends a function $f \in \Ind_{{B}_\ast(\Q_p)}^{\widetilde{F}_4(\Q_p)}(\delta_{B}^{1/2} \chi_{exc}^{-1})$ to its restriction $f|_{\widetilde{Q}}$.  It is clear that this factors through the Jacquet functor $V_{min,U_Q}$.  It is also clear that the map is non-zero.

\begin{proposition} The Jacquet functor $V_{min,U_Q}$ is irreducible as a representation of $\widetilde{L}(\Q_p)$.  Moreover, the representation $\Ind_{B_{L,\ast}(\Q_p)}^{\widetilde{L}(\Q_p)}(\delta_{B}^{1/2} \chi_{exc}^{-1})$ has a unique irreducible subrepresentation, which is thus identified with $V_{min,U_Q}$ under the above morphism.
\end{proposition}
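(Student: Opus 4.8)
The plan is to identify $V_{min,U_Q}$ with the inflation to $\widetilde{L}(\Q_p)$ of the Weil representation $\Omega_{\psi,\chi}^{(1)}$ of $\widetilde{\GL}_2^{(1)}(\Q_p)$, and then read off both assertions from the known irreducibility of that Weil representation together with a socle computation for the ambient $\widetilde{L}(\Q_p)$-principal series.

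First I would apply the exact Jacquet functor $(\cdot)_{U_Q}$ to the Loke--Savin embedding $V_{min} \hookrightarrow \Ind_{B_\ast(\Q_p)}^{\widetilde{F}_4(\Q_p)}(\delta_B^{1/2}\chi_{exc}^{-1})$ recalled in \eqref{eqn: into the induction}. By the geometric lemma of Bernstein--Zelevinsky/Casselman, adapted to the genuine principal series (with the usual care tracking the $2$-cocycle so that the subquotients are honest induced representations of $\widetilde{L}(\Q_p)$), the Jacquet module $\big(\Ind_{B_\ast}^{\widetilde{F}_4}(\delta_B^{1/2}\chi_{exc}^{-1})\big)_{U_Q}$ carries a finite filtration whose subquotients are the normalized principal series $\Ind_{B_{L,\ast}}^{\widetilde{L}}\big(w\cdot(\chi_{exc}^{-1})\big)$ indexed by minimal-length representatives $w$ of $W_L\backslash W$, with the $w=e$ subquotient equal to $\Ind_{B_{L,\ast}}^{\widetilde{L}}(\delta_B^{1/2}\chi_{exc}^{-1})$. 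The morphism \eqref{eqn: into the induction}, being $f\mapsto f|_{\widetilde{Q}}$, factors through this $w=e$ piece, and it is nonzero.

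Second I would analyze $\Ind_{B_{L,\ast}}^{\widetilde{L}}(\delta_B^{1/2}\chi_{exc}^{-1})$ using Proposition \ref{Prop: mod sl3}. Its normalized exponent $-\nu_{exc}$ restricts on the coroots $\alpha_3^\vee,\alpha_4^\vee$ to the value $-1/m_{\alpha_i}=-1$, i.e.\ to $-\rho_{\SL_3}$, so the restriction to the normal subgroup $\SL_3(\Q_p)$ is a direct sum of copies of the $\SL_3(\Q_p)$-principal series whose socle is the trivial representation. Hence the $\SL_3(\Q_p)$-socle of $\Ind_{B_{L,\ast}}^{\widetilde{L}}(\delta_B^{1/2}\chi_{exc}^{-1})$ is $\mathbf{1}_{\SL_3}$-isotypic, and taking $\SL_3(\Q_p)$-invariants yields $\Ind_{\widetilde{B}_{\GL_2}}^{\widetilde{\GL}_2^{(1)}}$ of the corresponding genuine character of the covering torus of $\widetilde{\GL}_2^{(1)}(\Q_p)$. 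Using the extension formula \eqref{eqn: extend exceptional character} for $\chi_{exc}$ along $h_{\alpha_1}$ and the Weil index $\gamma$ of \eqref{eqn: weil index}, I would check that this $\widetilde{\GL}_2^{(1)}$-principal series is exactly the one in which $\Omega_{\psi,\chi}^{(1)}$ embeds as its unique irreducible submodule (the embedding referenced before Corollary \ref{cor:L0p}). Consequently every irreducible submodule of $\Ind_{B_{L,\ast}}^{\widetilde{L}}(\delta_B^{1/2}\chi_{exc}^{-1})$ is $\SL_3(\Q_p)$-trivial and equals $\Omega_{\psi,\chi}^{(1)}$, which is the ``unique irreducible subrepresentation'' assertion; and since $\Omega_{\psi,\chi}^{(0)}$ is irreducible by Gelbart--Piatetski-Shapiro, Proposition \ref{prop:compare} gives that $\Omega_{\psi,\chi}^{(1)}$ is irreducible. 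The nonzero map \eqref{eqn: into the induction} out of $V_{min,U_Q}$ therefore surjects onto this irreducible submodule, and to conclude $V_{min,U_Q}$ is irreducible (hence equal to $\Omega_{\psi,\chi}^{(1)}$) I would invoke that $V_{min}$ is the unique irreducible submodule of the ambient full principal series — equivalently the minimality of $\Pi_{min}$ (cf.\ the cited result of Ginzburg) — to rule out that $V_{min,U_Q}$ picks up extra constituents from the higher ($w\neq e$) steps of the geometric-lemma filtration.

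The main obstacle will be the middle step: making the geometric lemma genuinely precise for the metaplectic principal series on $\widetilde{F}_4(\Q_p)$, and then matching the resulting character of the covering torus of $\widetilde{\GL}_2^{(1)}(\Q_p)$ — which involves the modulus characters $\delta_B$ and $\delta_{U_Q}$ and the Weil index $\gamma$ — with the character that appears in the standard realization of the even Weil representation $\omega_\psi^+$, so that the $\SL_3(\Q_p)$-invariants of the $w=e$ subquotient are literally the principal series whose socle is $\Omega_{\psi,\chi}^{(1)}$. A secondary subtlety, as noted, is verifying that no extra constituents of $V_{min,U_Q}$ arise from the $w\neq e$ pieces, for which the irreducibility-as-a-submodule of $V_{min}$ inside the full principal series is the essential input.
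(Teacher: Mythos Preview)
Your socle analysis for the second assertion is a reasonable alternative to the paper's bare citation of the Loke--Savin argument, and it has the pleasant feature of anticipating Corollary~\ref{Cor: Weil in Jacquet}. The paper instead proves the proposition without ever mentioning $\Omega_{\psi,\chi}^{(1)}$: for the unique-irreducible-submodule claim it simply says the argument is identical to the semisimple case in \cite{lokeSavin}, and the identification with the Weil representation is deferred to a separate step afterwards.

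Where your proposal has a genuine gap is the irreducibility of $V_{min,U_Q}$. Knowing that $V_{min}$ is the unique irreducible submodule of the full $\widetilde{F}_4$-principal series, or invoking Ginzburg's minimality, does not by itself prevent $V_{min,U_Q}$ from meeting the $w\neq e$ pieces of the geometric-lemma filtration of $(I_p)_{U_Q}$: Jacquet modules of irreducible subrepresentations routinely spread across several filtration steps. Moreover, your phrase ``surjects onto this irreducible submodule'' is not what the map \eqref{eqn: into the induction} does; its image is a nonzero submodule of the $\widetilde L$-principal series, hence \emph{contains} the socle $\Omega_{\psi,\chi}^{(1)}$, but you have given no reason why the image is not larger, nor why the map is injective on $V_{min,U_Q}$.

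The paper's argument for irreducibility is quite different and avoids the geometric lemma entirely. It takes an arbitrary submodule $V_1\subseteq V_{min,U_Q}$ with quotient $V_2$, applies the exact Jacquet functor down to $U_{B_L}$, and uses that $(V_{min,U_Q})_{U_{B_L}}\cong V_{min,U_B}$ is already known to be irreducible as a $\widetilde T(\Q_p)$-module \cite[Proposition~6.4]{lokeSavin}. Hence one of $V_{1,U_{B_L}}$ or $V_{2,U_{B_L}}$ vanishes. But then that $V_i$ would have to be supercuspidal (or have a supercuspidal Jacquet module along some intermediate parabolic), contradicting the Bernstein--Zelevinsky fact \cite[Corollary~2.13(b)]{BZinduced} that Jacquet modules of principal series have no supercuspidal subquotients. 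This is the missing idea in your sketch; if you want to salvage your approach you should replace the vague appeal to minimality by this $V_{min,U_B}$-irreducibility input.
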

\begin{proof} To prove the irreducibility of $V_{min,U_Q}$, we follow the argument of \cite[Theorem 2.2, 2.3]{BFGsmall}. This relies on the fact that the Jacquet functor of $\Ind_{{B}_\ast(\Q_p)}^{\widetilde{F}_4(\Q_p)}(\delta_B^{1/2}\chi_{exc})$ associated to any standard non-minimal parabolic subgroup has no supercuspidal subquotients \cite[Corollary 2.13(b)]{BZinduced}.
	
	Suppose $V_1 \subseteq V_{min,U_Q}$ is an $\widetilde{L}(\Q_p)$-invariant subspace, and ${V}_2$ the quotient of $V_{min,U_Q}$ by $V_1$, giving the short exact sequence of $\widetilde{L}(\Q_p)$-representations
	\[
	0\lra V_1\lra V_{min,U_Q}\lra {V}_2\lra0.
	\] By exactness of the Jacquet functor down to the unipotent radical $U_{B_L}$ of the Borel subgroup of $L$, we obtain
	\[
	0\lra V_{1,U_{B_L}}\lra \left(V_{min,U_Q}\right)_{U_{B_L}}\cong V_{min, U_B} \lra {V}_{2,U_{B_L}}\lra0.
	\]
	The Jacquet functor $V_{min, U_B}$ associated to the Borel subgroup of $F_4$ is irreducible \cite[Proposition 6.4]{lokeSavin}. In particular, either $V_{1,U_{B_L}}=0$ or $V_{2,U_{B_L}}=0$; suppose it is $V_{1,U_{B_L}}=0$.
	
	If $V_1$ has no non-zero Jacquet modules, we must have $V_1=0$ by \cite[Corollary 2.13(b)]{BZinduced}. Otherwise, let $P_L=M_LN_L\subset L$ be the standard parabolic subgroup that is minimal among those such that $V_{1,N_L}\neq0$. By assumption $P_L\neq B_L$, so that $V_{1,N_L}$ is a non-zero supercuspidal representation of $\widetilde{M}_L(\Q_p)$ and also a subquotient of the Jacquet module
	$
	\Ind_{{B}_\ast(\Q_p)}^{\widetilde{F}_4(\Q_p)}(\delta_B^{1/2}\chi_{exc})_{N_L},
	$
	which is a contradiction. An argument is identical if we assume $V_{2,U_{B_L}}=0$, completing the proof of the irreducibility of $V_{min,U_Q}$.
	
	The proof that $\Ind_{{B}_\ast(\Q_p)}^{\widetilde{Q}(\Q_p)}(\delta_{B}^{1/2} \chi_{exc}^{-1})$ has a unique irreducible subrepresentation is exactly the same as the semisimple case treated in \cite{lokeSavin}. Now recall that one has a non-zero map \eqref{eqn: into the induction}, giving the final claim.
\end{proof}

Pulling back along the quotient map from Proposition \ref{Prop: mod sl3}, we now analyze the representation $\Omega_{\psi,\chi}^{(1)}$ as a representation of $\widetilde{L}(\Q_p)$.
Define the multiplicative character $\chi(v) = |v|^{3/2}$, and recall that $\chi$ determines an extension of the representation on $S^{+}(\Q_p)$ from $\widetilde{\SL}_2(\Q_p)$ to the group $G^*$; see Proposition \ref{Prop: extend to ast}. Consider the corresponding Weil representation $\Omega_{\psi,\chi}^{(1)}=Ind_{G^*}^{\widetilde{\GL}_2^{(1)}(\Q_p)}(S^{+}(\Q_p))$ of $\widetilde{\GL}_2^{(1)}(\Q_p)$.

\begin{lemma} Consider the functional
	\begin{align*}
	B:\Omega_{\psi,\chi}^{(1)} &\longrightarrow \C\\
	B(f) &= f(1)(0).
	\end{align*}
	Then $B(t \cdot f) = (\delta_{B}^{1/2}\chi_{exc}^{-1})(t) B(f)$ for all $t \in T_{*}(\Q_p)$, where $\chi_{exc}$ is is the exceptional character $\chi_{exc}$ of $T_{*}(\Q_p)$ given by \eqref{eqn: extend exceptional character}.
\end{lemma}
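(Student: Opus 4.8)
The plan is to compute directly how each generator of $T_*(\Q_p)$ acts on $f \in \Omega_{\psi,\chi}^{(1)}$ via the functional $B$, and to match the resulting scalar against $(\delta_B^{1/2}\chi_{exc}^{-1})(t)$. Recall that $T_*(\Q_p)$ is generated (up to the central $\mu_2$) by $\widetilde{h}_{\alpha_1}(a)$, $\widetilde{h}_{\alpha_2}(a^2)$, and the $\SL_3$-part $\widetilde{h}_{\alpha_3}(a)$, $\widetilde{h}_{\alpha_4}(a)$; after passing to the quotient $\widetilde{L}(\Q_p)/\SL_3(\Q_p) \simeq \widetilde{\GL}_2^{(1)}(\Q_p)$ of Proposition \ref{Prop: mod sl3}, only $\widetilde{h}_{\alpha_1}(a)$ and $\widetilde{h}_{\alpha_2}(a^2)$ survive (the torus elements from the $\SL_3$-factor act trivially on $\Omega_{\psi,\chi}^{(1)}$, which is consistent with $\chi_{exc}$ being trivial on them up to the modulus character bookkeeping). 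So it suffices to check the identity on these two one-parameter subgroups, plus the central element $\zeta$, where it is immediate since both sides are genuine.

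First I would treat $t = \widetilde{h}_{\alpha_2}(a^2)$, which lies in $G^*$. Then $(t\cdot f)(1) = f(t) = \omega_{\psi,\chi}(t^{-1})(f(1))$ by the definition of the induced representation, wait --- more precisely $(t\cdot f)(g) = f(gt)$ and for $g=1$, $gt = t \in G^*$, so $(t\cdot f)(1) = \omega_{\psi,\chi}(t)^{-1}\cdot$ hmm; I would be careful with the left/right convention, but in any case the value is obtained by applying the $G^*$-action formula $\widetilde{h}_{\alpha_2}(a^2)\phi(x) = \chi(a)|a|^{-1/2}\phi(a^{-1}x)$ from Proposition \ref{Prop: extend to ast}, and then evaluating at $x=0$: the argument $a^{-1}\cdot 0 = 0$ is fixed, so $B(t\cdot f) = \chi(a)^{\pm 1}|a|^{\mp 1/2} B(f)$. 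With $\chi(v) = |v|^{3/2}$ this is $|a|^{\pm 3/2 \mp 1/2} = |a|^{\pm 1}$. I would then compute $(\delta_B^{1/2}\chi_{exc}^{-1})(\widetilde{h}_{\alpha_2}(a^2))$ from the value $\chi_{exc}(\widetilde{h}_{\alpha_2}(a^2)) = |a^2| = |a|^2$ (from the exceptional condition $\chi_p(\widetilde{h}_\alpha(t^{m_\alpha})) = |t|$ with $m_{\alpha_2}=2$) together with the relevant exponent of $\delta_B$ on $h_{\alpha_2}$, and verify the two scalars agree.

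Next I would treat $t = \widetilde{h}_{\alpha_1}(a)$. This element is \emph{not} in $G^*$ in general (its image in $\GL_2$ is $\diag(a,a^{-1})$, determinant $1$, so actually it is in $G^*$ --- but it is the $\SL_2$-torus, living inside $\widetilde{\SL}_2(\Q_p) \subset G^*$). So again $(t\cdot f)(1) = $ the $\omega_{\psi}$-action of $\widetilde{h}_{\alpha_1}(a)$ applied to $f(1) \in S^+(\Q_p)$, which by the Weil representation formula (iv) of Section \ref{Sec: Weil rep on SL2} equals $|a|^{1/2}\frac{\gamma(aq)}{\gamma(q)}f(1)(a\cdot x)$ evaluated at $x = 0$, giving $B(t\cdot f) = |a|^{1/2}\frac{\gamma(aq)}{\gamma(q)} B(f)$ (with $q(x)=x^2$, $d=1$). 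This is exactly $\chi_{exc}(\widetilde{h}_{\alpha_1}(a))^{-1}$ by the defining formula \eqref{eqn: extend exceptional character}, and I would then check that the remaining modulus factor $\delta_B^{1/2}(\widetilde{h}_{\alpha_1}(a))$ is trivial --- or absorb whatever power of $|a|$ appears --- so that the product matches. The one genuine subtlety, and the step I expect to be the main obstacle, is pinning down the precise normalization of $\delta_B^{1/2}$ restricted to the covering torus and the left-versus-right action in the induced model, since a sign or a reciprocal there would throw off the $\gamma$-factor matching; I would resolve this by comparing with the corresponding computation in \cite[Section 6]{lokeSavin} for the embedding $V_{min} \hookrightarrow \Ind_{B_*(\Q_p)}^{\widetilde{F}_4(\Q_p)}(\delta_B^{1/2}\chi_{exc}^{-1})$, which already fixes all these conventions consistently. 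Once the two one-parameter subgroups and $\zeta$ check out, the identity holds on all of $T_*(\Q_p)$ by multiplicativity, completing the proof.
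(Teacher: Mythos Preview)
Your approach is exactly the paper's: compute $B(t\cdot f)$ for each generator $\widetilde{h}_{\alpha_i}$ of $T_*(\Q_p)$ using the Weil representation formulas and match against $(\delta_B^{1/2}\chi_{exc}^{-1})(t)$. The left/right convention is not a real obstacle; since every torus generator lies in $G^*$, one simply has $(t\cdot f)(1)=\omega_{\psi,\chi}(t)\bigl(f(1)\bigr)$ and evaluates at $0$, exactly as the paper does.

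However, two of your numerical claims are wrong and would make the matching fail. First, the exceptional condition $\chi_p(\widetilde{h}_\alpha(t^{m_\alpha}))=|t|$ with $m_{\alpha_2}=2$ gives $\chi_{exc}(\widetilde{h}_{\alpha_2}(a^2))=|a|$, not $|a|^2$. Second, $\delta_B^{1/2}(\widetilde{h}_{\alpha_1}(a))$ is not trivial: the paper records (and you should verify) that $\delta_B^{1/2}(h_\alpha(t))=|t|$ for \emph{every} simple root $\alpha$ of $F_4$. With these two corrections the bookkeeping is clean: for $\alpha_1$ one gets $|a|\cdot\bigl(|a|^{1/2}\tfrac{\gamma(q)}{\gamma(aq)}\bigr)^{-1}=|a|^{1/2}\tfrac{\gamma(aq)}{\gamma(q)}$, matching the Weil formula; for $\alpha_2$ one gets $|a|^2\cdot|a|^{-1}=|a|$, matching $\chi(a)|a|^{-1/2}$; and for $\alpha_3,\alpha_4$ the $\SL_3$-action on $\Omega_{\psi,\chi}^{(1)}$ is trivial while $(\delta_B^{1/2}\chi_{exc}^{-1})(h_{\alpha_j}(v))=|v|\cdot|v|^{-1}=1$, so these also match.
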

\begin{proof} Using the formulas in Section \ref{Sec: Weil rep on SL2}, one has
	$$B(h_{\alpha_1}(t) \cdot f) = |t|^{1/2} \frac{\gamma(t q)}{\gamma(q)} B(f)$$ and $$B(h_{\alpha_2}(v^2) \cdot f) = \chi(v) |v|^{-1/2} B(f) = |v| B(f).$$
	
	Moreover, $B(h_{\alpha_3}(v) \cdot f) =B(h_{\alpha_4}(v) \cdot f)=B(f).$ Now observe that for each simple root $\delta_{B}^{1/2}(h_\al(t)) = |t|$.  The lemma now follows from the definition of $\chi_{exc}$.
\end{proof}

Because $\Omega_{\psi,\chi}^{(1)}$ is irreducible \cite{gelbartPSdistinguished}, Frobenius reciprocity provides an embedding of $\widetilde{L}(\Q_p)$-representations
\[\Omega_{\psi,\chi}^{(1)} \longrightarrow \Ind_{B_{L,\ast}(\Q_p)}^{\widetilde{L}(\Q_p)}(\delta_{B}^{1/2}\chi_{exc}^{-1}).\]

\begin{corollary}\label{Cor: Weil in Jacquet} The Jacquet module $V_{min,U_Q}$ is isomorphic to $\Omega_{\psi,\chi}^{(1)}$.
\end{corollary}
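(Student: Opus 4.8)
The plan is to combine the three facts already assembled in this section. We have constructed a non-zero $\widetilde{L}(\Q_p)$-equivariant map
\[
V_{min,U_Q} \longrightarrow \Ind_{B_{L,\ast}(\Q_p)}^{\widetilde{L}(\Q_p)}(\delta_{B}^{1/2} \chi_{exc}^{-1})
\]
from \eqref{eqn: into the induction}, we know (from the preceding proposition) that $V_{min,U_Q}$ is irreducible as an $\widetilde{L}(\Q_p)$-representation and that the target induced representation has a \emph{unique} irreducible subrepresentation, which is therefore identified with the image of $V_{min,U_Q}$. On the other hand, the functional $B$ above exhibits a $T_\ast(\Q_p)$-eigenvector in $\Omega_{\psi,\chi}^{(1)}$ of eigencharacter $\delta_B^{1/2}\chi_{exc}^{-1}$, so Frobenius reciprocity produces a non-zero $\widetilde{L}(\Q_p)$-map $\Omega_{\psi,\chi}^{(1)} \to \Ind_{B_{L,\ast}(\Q_p)}^{\widetilde{L}(\Q_p)}(\delta_{B}^{1/2}\chi_{exc}^{-1})$; since $\Omega_{\psi,\chi}^{(1)}$ is irreducible (by \cite{gelbartPSdistinguished}), this map is injective, and its image is an irreducible subrepresentation of the target.

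First I would note that the target $\Ind_{B_{L,\ast}(\Q_p)}^{\widetilde{L}(\Q_p)}(\delta_{B}^{1/2}\chi_{exc}^{-1})$ has a unique irreducible subrepresentation, as established in the previous proposition. Both $V_{min,U_Q}$ (via \eqref{eqn: into the induction}) and $\Omega_{\psi,\chi}^{(1)}$ (via Frobenius reciprocity applied to $B$) embed into this induced representation as irreducible subrepresentations. By the uniqueness of the irreducible subrepresentation, both images coincide, and hence $V_{min,U_Q} \cong \Omega_{\psi,\chi}^{(1)}$ as $\widetilde{L}(\Q_p)$-representations, as desired. One should be slightly careful that the $\widetilde{L}(\Q_p)$-action on $\Omega_{\psi,\chi}^{(1)}$ is the one obtained by pulling back the $\widetilde{\GL}_2^{(1)}(\Q_p)$-action along the quotient map of Proposition \ref{Prop: mod sl3}; this is exactly the action under which the Frobenius reciprocity argument was carried out, so there is no ambiguity.

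The only point requiring genuine care is the bookkeeping of central characters and the precise normalization of $\chi_{exc}$ on $T_\ast(\Q_p)$: one must check that the extension \eqref{eqn: extend exceptional character} used in the computation of the eigencharacter of $B$ agrees with the extension appearing in \eqref{eqn: into the induction} coming from \cite[Section 6]{lokeSavin}. Both are pinned down by the requirement that they restrict to the unique exceptional character on $Z(\widetilde{T}(\Q_p))$ together with the Weil-index correction factor $\gamma(q)/\gamma(tq)$ on $h_{\alpha_1}(t)$, so they match; I would simply invoke the lemma computing $B(t\cdot f)$ together with \eqref{eqn: extend exceptional character} to close the loop. I do not anticipate any serious obstacle here: the hard analytic and combinatorial work (irreducibility of $V_{min,U_Q}$, uniqueness of the irreducible sub, the explicit comparison of the two covers of $\GL_2$, and the irreducibility of $\Omega_{\psi,\chi}^{(1)}$) has all been done already, and this corollary is the formal assembly of those pieces.
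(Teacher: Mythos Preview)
Your proposal is correct and follows essentially the same approach as the paper: both $V_{min,U_Q}$ and $\Omega_{\psi,\chi}^{(1)}$ are irreducible and embed into $\Ind_{B_{L,\ast}(\Q_p)}^{\widetilde{L}(\Q_p)}(\delta_{B}^{1/2}\chi_{exc}^{-1})$, which has a unique irreducible subrepresentation, so they coincide. The paper states the corollary without proof immediately after noting the Frobenius reciprocity embedding of $\Omega_{\psi,\chi}^{(1)}$; your write-up simply makes the assembly explicit, and your extra remark about matching the extensions of $\chi_{exc}$ is harmless caution rather than a new ingredient.
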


\section{The minimal modular form}\label{sec:minMF}
We return now to the global setting. Let $J=H_3(\Q)$ be the symmetric $3\times 3$ matrices with $\Q$ coefficients.  Fourier coefficients of modular forms on $F_4$ are parameterized by elements $\omega = (a,b,c,d) \in W_J(\Q)$ where
\[
W_J(\Q) = \Q\oplus J\oplus J^\vee \oplus \Q = \Q \oplus J \oplus J \oplus \Q
\]
as $J^\vee$ is identified with $J$ via the trace pairing.  In this subsection, we show that we may choose $v_f\in \Pi_{min,f}$ such that the modular form $\Theta_{F_4}:=\theta(v_f)$ satisfies that it has
\begin{enumerate}
	\item $\Ga_{F_4}(4)$ level and
	\item non-zero $(0,0,0,1)$-Fourier coefficient.
\end{enumerate}

This will rely on the following purely local result. Let $p$ be a finite prime. Denote by $K^\ast_p$ the compact open subgroup of $\widetilde{F}_4(\Q_p)$ at $p$ introduced in Section \ref{sec:splittings}, so that $K^\ast_2 = K_R'(4)$ and $K^\ast_p=F_4^\ast(\Z_p)$ for odd $p$.  Let $U_R = U_{\alpha_1} U_Q$ be the unipotent radical of the  parabolic subgroup $R\subset F_4$ associated to the simple roots $\al_1$ and $\al_2$; it splits canonically into $\widetilde{F}_4(\Q_p)$.  For $t=1$ or $t=-1$, define a character $\psi_{1,t}$ on $U_{R}(\Q_p)$ by using the fixed additive character $\psi_t$ on the root space $U_{\alpha_1}$.
\begin{theorem}\label{thm:localFC} Let $V_p$ denote the vector space underlying $\Pi_{min,p}$. Suppose $L$ is $(U_{R},\psi_{1,t})$-functional such that $L$ is $0$ on the $K^\ast_p$-fixed vectors of $V_p$.  Then $L = 0$. In particular, the twisted Jacquet functor associated to $(U_{R},\psi_{1,t})$ induces a surjection
	\[
	V_p^{K^\ast_p}\lra V_{p,(U,\psi_{1,t})},
	\]
	which is an isomorphism for $p\neq2$.
\end{theorem}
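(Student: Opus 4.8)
The plan is to reduce Theorem \ref{thm:localFC} to the representation theory of the double covers of $\GL_2$ developed in Section \ref{Section: GL2}. Observe first that the character $\psi_{1,t}$ is trivial on $U_Q$, being supported on the root group $U_{\alpha_1}\cong U_R/U_Q$. Hence any $(U_R,\psi_{1,t})$-functional $L$ on $V_p$ is $U_Q$-invariant, so it descends to the Jacquet module $V_{min,U_Q}$, and the induced functional is a $(U,\psi_t)$-functional in the sense of Section \ref{Sec: Weil GL2}. Using the identification $V_{min,U_Q}\cong \Omega_{\psi,\chi}^{(1)}$ of Corollary \ref{Cor: Weil in Jacquet}, together with the fact that $\SL_3(\Q_p)$ acts trivially on $V_{min,U_Q}$ and $\widetilde L(\Q_p)/\SL_3(\Q_p)\cong \widetilde{\GL}_2^{(1)}(\Q_p)$ (Proposition \ref{Prop: mod sl3}), this sets up a linear bijection between $(U_R,\psi_{1,t})$-functionals on $V_p$ and $(U,\psi_t)$-functionals on $\Omega_{\psi,\chi}^{(1)}$, the latter space being one-dimensional by Proposition \ref{prop:L0}.

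The crux is then to show that the image of $V_p^{K_p^\ast}$ under the projection $V_p\to V_{min,U_Q}$ is exactly the space of invariant vectors appearing in Corollaries \ref{cor:L02} and \ref{cor:L0p}. For $p$ odd, $K_p^\ast=F_4^\ast(\Z_p)$ is hyperspecial and therefore admits an Iwahori factorization with respect to $Q$; for $p=2$, such a factorization is Corollary \ref{cor:R'IwahoriQ}. The covering-group analogue of Casselman's theorem on Jacquet modules of fixed vectors then gives that the projection $V_p^{K_p^\ast}\to (V_{min,U_Q})^{K_p^\ast\cap \widetilde L(\Q_p)}$ is surjective. Unwinding the factorization, for $p=2$ one finds $K_2^\ast\cap \widetilde L(\Q_2)=x_{-\alpha_1}(4\Z_2)\,K_{M_R}^\ast(4)\,x_{\alpha_1}(\Z_2)$, whose image in $\widetilde{\GL}_2^{(1)}(\Q_2)$ is $\Gamma_{1,\GL_2}^\ast(4)$ (the subgroup $\SL_3(\Q_2)$ being the kernel of $\widetilde L(\Q_2)\to\widetilde{\GL}_2^{(1)}(\Q_2)$, and the remaining factors reassembling via the $\GL_2$-analogue of Theorem \ref{thm:iwahoriSplit}); for $p$ odd the image of $K_p^\ast\cap \widetilde L(\Q_p)$ is $\GL_2^\ast(\Z_p)$. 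Consequently the image of $V_p^{K_p^\ast}$ in $\Omega_{\psi,\chi}^{(1)}$ is $(\Omega_{\psi,\chi}^{(1)})^{\Gamma_{1,\GL_2}^\ast(4)}$ for $p=2$ and $(\Omega_{\psi,\chi}^{(1)})^{\GL_2^\ast(\Z_p)}$ for $p$ odd.

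Granting this, if $L$ vanishes on $V_p^{K_p^\ast}$ then the associated $(U,\psi_t)$-functional on $\Omega_{\psi,\chi}^{(1)}$ vanishes on the relevant space of invariant vectors, so Corollary \ref{cor:L02} (resp. Corollary \ref{cor:L0p}) forces that functional, hence $L$, to be zero. For the remaining assertions: the full linear dual of $V_{p,(U,\psi_{1,t})}\cong (\Omega_{\psi,\chi}^{(1)})_{(U,\psi_t)}$ is the one-dimensional space of $(U,\psi_t)$-functionals, so $V_{p,(U,\psi_{1,t})}$ is one-dimensional; since no nonzero functional on it annihilates the image of $V_p^{K_p^\ast}$, that image is all of $V_{p,(U,\psi_{1,t})}$, giving the claimed surjection. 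When $p\neq 2$, $\Pi_{min,p}$ is the spherical constituent of an unramified principal series (as recalled from \cite{lokeSavin}), so $V_p^{K_p^\ast}$ is one-dimensional as well, and a surjection of one-dimensional spaces is an isomorphism.

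The step I expect to be the main obstacle is the second paragraph: establishing the covering-group form of Casselman's surjectivity on $K$-fixed vectors in the precise generality needed, and then carrying out the explicit computation of $K_p^\ast\cap\widetilde L(\Q_p)$ and of its image in $\widetilde{\GL}_2^{(1)}(\Q_p)$ — which at $p=2$ rests on the delicate Iwahori factorization of $K_R'(4)$ established earlier (Theorem \ref{thm:KR'4} and Corollary \ref{cor:R'IwahoriQ}) and requires matching it, root group by root group, against the generators of $\Gamma_{1,\GL_2}^\ast(4)$.
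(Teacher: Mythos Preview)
Your argument at $p=2$ is essentially the paper's argument: Iwahori factorization of $K_R'(4)$ with respect to $Q$, Casselman's surjectivity onto $(V_{min,U_Q})^{K_R'(4)\cap\widetilde L(\Q_2)}$, identification of that space of invariants with $(\Omega_{\psi,\chi}^{(1)})^{\Gamma_{1,\GL_2}^\ast(4)}$, and then Corollary \ref{cor:L02}. Good.

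There is, however, a genuine gap at $p$ odd. You write that $K_p^\ast=F_4^\ast(\Z_p)$ ``is hyperspecial and therefore admits an Iwahori factorization with respect to $Q$.'' This is false: hyperspecial maximal compact subgroups do \emph{not} have Iwahori factorizations with respect to proper parabolics, and Casselman's surjectivity theorem (e.g.\ \cite[Theorem 3.3.3]{casselmanBook}) does not apply to them. So the mechanism you invoke for surjectivity of $V_p^{K_p^\ast}\to (V_{min,U_Q})^{K_p^\ast\cap\widetilde L(\Q_p)}$ breaks down.

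The paper avoids this by arguing differently for $p$ odd: it shows directly that the map $V_p^{K_p^\ast}\to (V_{min,U_Q})^{\widetilde L(\Q_p)\cap K_p^\ast}$ is nonzero (by composing further to $V_{U_B}$ and using the evaluation-at-$1$ functional on the principal series embedding), and then uses that both the source and target are at most one-dimensional spherical lines to conclude the map is an isomorphism. Alternatively (as the Remark after the proof indicates), one may replace $K_p^\ast$ by the Iwahori subgroup $I_p^\ast$, which \emph{does} have the required factorization, and then use the nontrivial input $V_p^{I_p^\ast}\cong V_p^{K_p^\ast}$ coming from the Shimura correspondence of \cite{lokeSavin}. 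Either route repairs your argument, but neither is the one you wrote down.
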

\begin{proof}
	There are two cases: $p=2$ and $p >2$.
	
	Let us first handle the case $p$ odd.  First observe that $V_p^{K^\ast_p} \rightarrow V_{U_Q}^{\widetilde{L} \cap K^\ast_p}$ is well-defined and non-zero.  Indeed, it is clear that the map is well-defined.  To see that it is non-zero, consider the further map to $V_{U_B}$ (recall $U_B$ denotes the unipotent radical of the Borel.)  Recalling the embedding of $V_p$ into $\Ind_{{B}_\ast(\Q_p)}^{\widetilde{F}_4(\Q_p)}(\delta_B^{1/2} \chi_{exc}^{-1})$, we may consider the linear functional on $V_p$ given by composing this map can with the evaluation-at-$1$ map: this gives a non-zero functional
	\[
	V_p\lra V_{U_Q}\lra V_{U_B}\lra \C.
	\] The spherical vector in this induced representation is non-zero at $t=1$, so that this functional is non-vanishing on $V_p^{K^\ast_p}$. In particular, the composition
	\begin{equation}\label{eqn:VpKp}
	V_p^{K^\ast_p} \longrightarrow V_{U_Q}^{\widetilde{L} \cap K^\ast_p}\longrightarrow V_{U_B}^{\widetilde{T} \cap K^\ast_p}
	\end{equation}
	is non-zero.

	Now observe that both $V_p^{K^\ast_p}$ and $V_{p,U_Q}^{\widetilde{L} \cap K^\ast_p}$ are at most one dimensional \cite[Section 9.2]{GanGao}.  In fact, each is exactly one-dimensional: in the case of $V_p$, this follows from the intertwining operator calculations of \cite{lokeSavin}. In the case of $V_{p,U_Q}$, it now follows from the non-vanishing of the map \eqref{eqn:VpKp}, and in any case, we constructed a spherical vector in Lemma \ref{lem:f0p}.
	The claim of the theorem now follows by Corollary \ref{cor:L0p} and the isomorphism
	\[
	V_{p,(U_R,\psi_{1,t})}\cong \left(V_{p,U_Q}\right)_{U_{\al_1},\psi_t}\cong \left(\Omega_{\psi,\chi}^{(1)}\right)_{U_{\al_1},\psi_t}.
	\]

	We now discuss the case of $p=2$.  First observe that $K^\ast_2=K'_R(4)$ has an Iwahori factorization with respect to $Q = L U_Q$, as proved in Corollary \ref{cor:R'IwahoriQ}.  Now, it follows by \cite[Theorem 3.3.3]{casselmanBook} that $V^{K_R^*(4)} \rightarrow V_{U_Q}^{\widetilde{L} \cap K_R^*(4)}$ is surjective.  In light of Corollary \ref{Cor: Weil in Jacquet}, the claim of the theorem thus follows as above by Corollary \ref{cor:L02}.
\end{proof}

\begin{remark}
    The $p$ odd case may also be handled in a similar fashion to the $p=2$ case by instead considering the subgroup $I^\ast_p\subset K_p^\ast$ associated to the Iwahori subgroup. The only non-trivial step is noting that
    \[
    V_p^{I_p^\ast}\cong V_p^{K_p^\ast}
    \]
    as both are one dimensional. This follows for $K_p^\ast$ as noted above and follows for $I_p^\ast$ as $V_p=\Pi_{min,p}$ corresponds to the trivial representation of the Iwahori--Hecke algebra under the Shimura correspondence proved in \cite[Section 9]{lokeSavin}. We thank Gordan Savin for pointing this out to us.
\end{remark}

Using Theorem \ref{thm:localFC}, we obtain the following corollary, completing the proof of Theorem \ref{thm: exists theta}.
\begin{corollary}\label{Cor: first coeff} There is a quaternionic modular form $\Theta_{F_4}$ of weight $\frac{1}{2}$ on $\widetilde{F}_4$ with $\Ga_{F_4}(4)$ level and non-zero $(0,0,0,1)$-Fourier coefficient.
\end{corollary}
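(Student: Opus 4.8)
The plan is to assemble the global modular form $\Theta_{F_4}$ out of the local data produced in the previous sections, using the adelic picture of Section \ref{Sec: modular forms}. Recall that for any $v_f \in \Pi_{min,f}$ we have constructed the automorphic function $\theta(v_f)$ of equation \eqref{eqn:theta}, which is a weight $\frac{1}{2}$ quaternionic modular form on $\widetilde{F}_4(\R)$ for \emph{some} level; the content of the corollary is to choose $v_f$ so that the level is exactly $\Gamma_{F_4}(4)$ and the $(0,0,0,1)$-Fourier coefficient is nonzero. First I would take $v_f = \otimes_p v_p$ with $v_p$ the spherical vector of $\Pi_{min,p}$ (fixed by $K_p^*$) for $p > 2$, and $v_2$ a $K_R'(4)$-fixed vector of $\Pi_{min,2}$. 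Since $\Gamma_{F_4}(4) = F_4(\Q) \cap K_R(4)\prod_{p>2}K_p$ by \eqref{eqn: good level}, the resulting $\theta(v_f)$, pulled back to $\widetilde{F}_4(\R)$ and descended via the splitting $s_\Gamma$ of Section \ref{sec:splittings}, is automatically of level $\Gamma_{F_4}(4)$; the compatibility of $s_\Gamma$, $s_f$, and $s_\Q$ established there is exactly what makes this descent legitimate.

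Next I would show the $(0,0,0,1)$-Fourier coefficient of $\theta(v_f)$ is nonzero for a suitable choice of $v_2$ inside the one-dimensional space $V_2^{K_R'(4)}$. The key is that the $(0,0,0,1)$-Fourier coefficient, as an integral over $N_J(\Q)\backslash N_J(\A)$ against the relevant character, factors (after unfolding, using the Eisenstein series realization of $\Pi_{min}$ and Ginzburg's rank-one result imported as Lemma \ref{lem:FElem}) through a product of local twisted Jacquet functionals. The character $\psi_{(0,0,0,1)}$ on $N_J$ restricts on the root subgroup $U_{\alpha_1} \subseteq U_R$ to $\psi_t$ with $t = \pm 1$, which is precisely the situation of Theorem \ref{thm:localFC}. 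That theorem says the local twisted Jacquet functional of type $(U_R,\psi_{1,t})$ does \emph{not} vanish on the $K_p^*$-fixed line $V_p^{K_p^*}$, at every finite $p$ (via Corollary \ref{cor:L0p} for $p$ odd and Corollary \ref{cor:L02} at $p=2$, using the identification of $V_{min,U_Q}$ with the Weil representation $\Omega_{\psi,\chi}^{(1)}$ of Corollary \ref{Cor: Weil in Jacquet}). Combined with the nonvanishing of the corresponding archimedean generalized Whittaker integral — which follows because $\Pi_{min,\infty}$ has minimal $\widetilde{K}_\infty$-type $\mathbf{V}_{1/2}$ (Proposition \ref{prop: right Ktypes}) and Theorem \ref{thm:FE} pins down the shape of the Whittaker function against the positive semi-definite vector $(0,0,0,1)$ — the global coefficient is a nonzero product of nonzero local factors.

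The one subtlety I would be careful about is the passage from ``some local twisted Jacquet functional is nonzero on the spherical/level-$K_R'(4)$ line'' to ``the chosen global vector $v_f$ has nonzero coefficient''. Because each local space $V_p^{K_p^*}$ is one-dimensional, any nonzero choice of $v_p$ in it works, and the global coefficient is (up to the usual harmless normalizing constants and the $\pm 1$ ambiguity inherent in Corollary \ref{Cor: fourier}) the product over all places of the local functionals evaluated on $v_p$; Theorem \ref{thm:localFC} guarantees none of these vanishes. I would then normalize $\Theta_{F_4} := \theta(v_f)$ by an overall scalar so that its $(0,0,0,1)$-Fourier coefficient equals $\pm 1$ in $\C/\{\pm 1\}$, which is possible exactly because that coefficient is nonzero. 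The main obstacle — already dispatched by the heavy lifting in Sections \ref{Section: GL2} and \ref{sec:JFs} — is the nonvanishing of the $2$-adic twisted Jacquet functional on the non-maximal level $K_R'(4)$; granting Theorem \ref{thm:localFC}, the present corollary is essentially bookkeeping: choose $v_f$, invoke the local nonvanishing at every place, multiply, and normalize.
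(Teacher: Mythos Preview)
Your proposal has the right overall shape but contains two genuine gaps that the paper's proof handles differently.

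First, you never explain how the $(0,0,0,1)$-Fourier coefficient, which is an integral over $[N_J]$, becomes a $(U_R,\psi_{1,t})$-functional. These are different unipotent groups: $U_R = N_J \cdot N_S$ with $N_S$ the Siegel unipotent radical inside $H_J$, so an $N_J$-integral is not automatically a $U_R$-integral. The paper closes this gap by invoking Ginzburg's Proposition~4, which shows (as a consequence of minimality) that the $\omega_1$-Fourier coefficient is already $N_S(\A)$-invariant, so the extra $[N_S]$-integration is harmless and one obtains a genuine $(U_R,\psi_{1,-1})$-functional. Your sentence ``$\psi_{(0,0,0,1)}$ on $N_J$ restricts on $U_{\alpha_1}\subseteq U_R$ to $\psi_t$'' does not accomplish this.

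Second, your claimed Euler product via ``unfolding the Eisenstein series realization'' is not justified, and Lemma~\ref{lem:FElem} (the rank-one vanishing) does not give such a factorization. The paper instead argues by contrapositive: Ginzburg's Proposition~3 gives that the global $(U_R,\psi_{1,-1})$-functional on $\Pi_{min,f}$ is nonzero; by local multiplicity one of the twisted Jacquet module (Proposition~\ref{prop:L0} combined with Corollary~\ref{Cor: Weil in Jacquet}) it factors as $\otimes_p L_p$ with each $L_p\neq 0$; Theorem~\ref{thm:localFC} then forces each $L_p$ to be nonzero on $V_p^{K_p^*}$. So the global functional is nonzero on $\otimes_p V_p^{K_p^*}$. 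Note also that the paper does not assert $V_2^{K_R'(4)}$ is one-dimensional; Theorem~\ref{thm:localFC} only gives a surjection at $p=2$, and this is all that is needed.
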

\begin{proof} Let $\omega_1:=(0,0,0,1)\in W_J(\Q)$ and consider the $\omega_1$-Fourier coefficient
	\[
	\theta\longmapsto
	\int_{[N_J]} \theta(n) \psi^{-1}({\la\omega_1, \overline{n}\ra})dn,
	\]
	where $\theta$ is a vector in the space of automorphic forms $\Pi_{min}$. By \cite[Proposition 3]{ginzburgF4}, this gives a non-zero linear functional $L_{\omega_1}$ on  $\Pi_{min}$; that is, there are vectors in $\Pi_{min}$ with nonzero $\omega_1$-Fourier coefficient.  Moreover, such a vector can be chosen to be a quaternionic modular form (in other words, to lie in the minimal $\widetilde{K}_\infty$-type at the archimedean place) by the explicit formula for the generalized Whittaker function proved in Theorem \ref{thm:FE}.  Indeed, a corollary of the proof of the explicit formula is that there is a unique moderate growth $(N_{J}(\R),\psi(\langle \omega_1, - \rangle ))$-equivariant functional on $\Pi_{min,\infty}$ up to scalar multiple, and these functionals are nonvanishing on the minimal $\widetilde{K}_\infty$-type in $\Pi_{min,\infty}$.
	
	Now consider the linear map on $\Pi_{min,f}$ given by $v_f \mapsto L_{\omega_1}(\theta(v_f))$; see equation \eqref{eqn:theta} for the notation.  By what was just said, this map is non-zero on $\Pi_{min,f}$. Moreover, \cite[Proposition 4]{ginzburgF4} implies that for any $\theta$, we have
	\[
	\int_{[N_J]} \theta(n)\psi^{-1}({\la\omega_1, \overline{n}\ra})dn =\int_{[N_S]}  \left(\int_{[N_J]} \theta(nn') \psi^{-1}({\la\omega_1, \overline{n}\ra})dn\right)dn',
	\]
	where $N_S$ denote the unipotent radical of the Siegel parabolic subgroup of $H_J = \GSp_6$. But
	\[
	\int_{[N_S]}  \left(\int_{[N_J]} \theta(nn') \psi^{-1}({\la\omega_1, \overline{n}\ra})dn\right)dn' =\int_{[U_R]}\theta(u) \psi_{1,-1}^{-1}(u)du,
	\]
	where $U_R$ is the unipotent radical of the parabolic $R$ from Theorem \ref{thm:localFC} and $\psi_{1,-1} = \prod_v\psi_{1,-1,v}$ is the global analogue of the character considered locally. By that result, the non-zero linear map on $\Pi_{min,f}$ given by $v_f \mapsto L_{\omega_1}(\theta(v_f))$ does not vanish on the $\prod_p{K^\ast_p}$-invariant vectors.  The corollary follows by switching to semi-classical notation.
\end{proof}

\bibliography{nsfANT2020new}
\bibliographystyle{amsalpha}
\end{document}